\documentclass[11pt]{amsart}

\usepackage{amsmath,amssymb,amsthm,mathtools}
\usepackage{xcolor}
\usepackage{tikz-cd}
\usetikzlibrary{arrows.meta,positioning}
\usepackage[colorlinks=true,citecolor=blue,linkcolor=blue,urlcolor=blue]{hyperref}

\newcommand{\CC}{\mathbb C}
\newcommand{\RR}{\mathbb R}
\newcommand{\QQ}{\mathbb Q}

\newcommand{\cL}{\mathcal L}
\newcommand{\cO}{\mathcal O}
\newcommand{\Id}{\operatorname{Id}}
\newcommand{\Tor}{\operatorname{Tor}}
\newcommand{\Gr}{\operatorname{Gr}}
\newcommand{\Sym}{\operatorname{Sym}}
\newcommand{\Vol}{\operatorname{Vol}}
\newcommand{\codim}{\operatorname{codim}}
\newcommand{\pardeg}{\operatorname{par-deg}}
\newcommand{\parch}{\operatorname{par}ch}
\newcommand{\parc}{\operatorname{par}c}
\newcommand{\rk}{\operatorname{rk}}
\newcommand{\Tr}{\operatorname{Tr}}
\newcommand{\Supp}{\operatorname{Supp}}
\newcommand{\Hom}{\operatorname{Hom}}
\newcommand{\End}{\operatorname{End}}
\newcommand{\im}{\operatorname{im}}
\newcommand{\diag}{\operatorname{diag}}
\newcommand{\op}{\mathrm{op}}
\newcommand{\an}{\mathrm{an}}

\newcommand{\ii}{\sqrt{-1}}

\theoremstyle{plain}
\newtheorem{thm}{Theorem}[section]
\newtheorem{lem}[thm]{Lemma}
\newtheorem{prop}[thm]{Proposition}
\newtheorem{cor}[thm]{Corollary}

\newtheorem{alphthm}{Theorem}

\theoremstyle{definition}
\newtheorem{defn}[thm]{Definition}
\newtheorem{setup}[thm]{Setup}

\theoremstyle{remark}

\numberwithin{equation}{section}

\title[Parabolic BG inequality and pluriharmonic metrics]{Parabolic Bogomolov--Gieseker Inequality and Pluriharmonic Metrics on Regular Parabolic Higgs Sheaves over Compact Kähler Manifolds}
\author{Tianshu Jiang, Jiayu Li}
\date{}
\subjclass[2020]{53C07, 32Q15, 14J60}
\keywords{regular parabolic Higgs sheaf, Bogomolov--Gieseker inequality, pluriharmonic metric, tame harmonic bundle, compact Kähler manifold}

\let\originalsection\section
\RenewDocumentCommand{\section}{s o m}{%
  \IfBooleanTF{#1}{%
    \originalsection*{#3}%
    \markboth{#3}{#3}%
  }{%
    \IfNoValueTF{#2}{%
      \originalsection{#3}%
      \expanded{\noexpand\markboth
        {\number\value{section}.\ \unexpanded{#3}}%
        {\number\value{section}.\ \unexpanded{#3}}}%
    }{%
      \originalsection[#2]{#3}%
      \expanded{\noexpand\markboth
        {\number\value{section}.\ \unexpanded{#2}}%
        {\number\value{section}.\ \unexpanded{#2}}}%
    }%
  }%
}
\makeatletter
\renewcommand{\leftmark}{\expandafter\@firstoftwo\botmark{}{}}
\makeatother

\begin{document}
\raggedbottom

\begin{abstract}
We prove a parabolic Bogomolov--Gieseker inequality for stable regular parabolic Higgs sheaves on a compact Kähler manifold \((X,\omega)\) equipped with a simple normal crossing divisor \(D\). We also prove that a polystable regular parabolic Higgs sheaf admits a pluriharmonic metric on \(X\setminus D\), provided that each stable summand has parabolic degree zero and vanishing integrated second parabolic Chern character.
\end{abstract}

\maketitle

\section{Introduction}\label{sec:introduction}

\subsection{Background}

The Kobayashi--Hitchin correspondence relates the algebraic stability of a
holomorphic bundle to the existence of a Hermitian--Einstein metric.
For Higgs bundles on compact K\"ahler manifolds, this correspondence underlies
nonabelian Hodge theory; see, for example,
\cite{Simpson1988,Simpson1992}. On the complement of a divisor, however,
the metric is generally singular, and its asymptotic
behavior becomes part of the correspondence. Parabolic filtrations encode
this behavior algebraically.

For a noncompact algebraic curve \(U=\overline X\setminus D\), Simpson introduced tame harmonic bundles and established their correspondence with filtered regular Higgs bundles and filtered regular \(\mathcal D\)-modules on the smooth compactification \(\overline X\) \cite{Si2}. In this correspondence, the parabolic filtration records the growth of the harmonic metric at the punctures. This provides the one-dimensional starting point for the results considered here.

Mochizuki extended this picture to arbitrary dimension. Let
\(\overline X\) be a smooth projective variety and let \(D\subset
\overline X\) be a simple normal crossing divisor, and put
\(U=\overline X\setminus D\). Mochizuki established the higher-dimensional
Kobayashi--Hitchin correspondence between tame harmonic bundles on \(U\)
and stable parabolic Higgs or \(\lambda\)-flat bundles on
\((\overline X,D)\) whose parabolic degree and second parabolic
Chern-character number vanish; he also proved the corresponding
parabolic Bogomolov--Gieseker inequality. See
\cite{Mochizuki2006}. Biquard's work on logarithmic Higgs bundles
for a smooth divisor \cite{Bi}, and the results of Li--Narasimhan and Li
on parabolic Hermitian--Einstein metrics and Chern-number inequalities
over K\"ahler manifolds \cite{Li-Na,Li}, form closely related parts of
this development.

The present paper extends these two conclusions to regular parabolic
Higgs sheaves on compact K\"ahler pairs. We prove a parabolic
Bogomolov--Gieseker inequality and construct an adapted
pluriharmonic metric when each stable summand satisfies the numerical
vanishing conditions in Theorem~\ref{thm:harmonic-main}.

\subsection{Difficulties}

Three difficulties arise in carrying out this extension. The first comes
from the residues of the Higgs field. If the residue induced on a graded
piece of the parabolic filtration has a nonzero nilpotent part, the direct
construction of an adapted Hermitian metric with the required curvature
control becomes problematic. Mochizuki's solution is to perturb the
parabolic structure so that the resulting object is graded semisimple,
that is, the nilpotent part of every induced graded residue vanishes
\cite[Subsection~2.2 and Section~16]{Mochizuki2006}. We follow this
approach by refining the residue-invariant filtrations and then
separating their weights.

The second difficulty is specific to carrying out this perturbation
directly in higher dimension. In Mochizuki's projective argument, the
essential perturbation is performed on a surface, and the
higher-dimensional statement is recovered by a Mehta--Ramanathan type
restriction theorem \cite[Sections~17 and~26]{Mochizuki2006}. On a
surface, the graded pieces lie on curves, where their saturated
torsion-free refinements are locally free; the perturbed object therefore
remains a parabolic Higgs bundle. The projective restriction argument is
not available in this form for an arbitrary compact K\"ahler manifold.
We must refine the graded residues in the ambient dimension, and the
resulting saturated eigensheaves and kernel filtrations need only be
torsion-free along the components of \(D\). Consequently, the perturbed
object may be a parabolic Higgs sheaf rather than a parabolic Higgs
bundle.

Although our main goal is to treat locally abelian parabolic Higgs
bundles, the intermediate perturbation requires us to allow sheaves.
We work with regular parabolic Higgs sheaves, whose underlying
sheaves are reflexive and whose divisor filtrations are saturated. We
then resolve such a sheaf into a logarithmic parabolic Higgs bundle on a
modification. This construction must preserve the prescribed flags and
residue information. We also need stability under small perturbations
and a comparison of the relevant parabolic characteristic numbers.

After this resolution, our proof uses a sequence of strictly ordered
rational parabolic weights converging to the original weights. For each
member of this approximating sequence, we construct a global adapted
Hermitian--Einstein metric on the divisor complement of the
modification. Its defining equation, together with the parabolic
Chern--Weil energy formula, shows that the corresponding
parabolic discriminant number is nonnegative.
Taking the limit of these numbers proves the Bogomolov--Gieseker
inequality. To obtain a pluriharmonic metric, however,
one must also take a limit of the metrics themselves.

This leads to the third, and principal analytic, difficulty. The
existence of a Hermitian--Einstein metric for each fixed approximation
does not by itself give convergence as the weights return to their
original values. Moreover, the K\"ahler forms used on the resolution also
vary and degenerate toward the pullback of \(\omega\). We must obtain
compactness for the global Hermitian--Einstein metrics, control the
source integrals near the divisor, and retain every parabolic growth
exponent in the limit. Under the vanishing Chern number conditions, the limiting
metric must then be shown to have zero Hitchin--Simpson curvature energy and
to recover the original filtered lattices. Establishing this metric limit is a main contribution of the paper.

\subsection{Main results}

Let \(X\) be a connected compact K\"ahler manifold of complex dimension
\(n\ge2\), let \(\omega\) be a K\"ahler form, and let \(D\subset X\) be a
simple normal crossing divisor. We work with the decreasing real-indexed
convention for parabolic filtrations described in
Section~\ref{sec:preliminaries}. For a rank-\(r\) regular parabolic Higgs
sheaf \((E_*,\theta)\), set
\[
 \Delta_{\mathrm{par}}(E_*)
 =\parc_1(E_*)^2-2r\,\parch_2(E_*).
\]

\begin{alphthm}\label{thm:bg-main}
Let \((E_*,\theta)\) be an \(\omega\)-stable regular parabolic Higgs sheaf of rank \(r\) on \((X,D)\). Then
\[
 \int_X\Delta_{\mathrm{par}}(E_*)
 \wedge\frac{\omega^{n-2}}{(n-2)!}\ge0.
\]
\end{alphthm}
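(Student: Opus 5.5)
The plan follows the strategy sketched in the introduction: perturb, resolve, solve the Hermitian--Einstein equation for approximating weights, and pass to the limit in the characteristic numbers.

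\emph{Perturbation.} First we make \((E_*,\theta)\) graded semisimple. Along each component \(D_i\), we refine the parabolic filtration by the generalized eigensheaves of the graded residues \(\operatorname{Gr}\operatorname{Res}_{D_i}\theta\). We then refine further by the weight (kernel) filtrations of their nilpotent parts. To these refinements we assign weights perturbed by \(\epsilon\) times integers, so that every induced graded residue becomes semisimple. The perturbed object \((E^{(\epsilon)}_*,\theta)\) is still a regular parabolic Higgs sheaf: the refined pieces are saturated, though only torsion-free along \(D_i\). Two facts are needed:
\begin{enumerate}
\item \(\omega\)-stability is open, so \((E^{(\epsilon)}_*,\theta)\) remains stable for small \(\epsilon\). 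This holds because the set of parabolic degrees of saturated Higgs subsheaves is discrete and bounded above in the relevant range.
\item \(\int_X\Delta_{\mathrm{par}}(E^{(\epsilon)}_*)\wedge\omega^{n-2}\) depends continuously (in fact polynomially) on \(\epsilon\) and tends to the original number as \(\epsilon\to0\).
\end{enumerate}
By the same continuity, we may further perturb the weights to be rational and strictly ordered. It therefore suffices to prove the inequality for graded semisimple objects with rational, strictly ordered weights.

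\emph{Resolution.} Next we resolve the reflexive sheaf, with its saturated filtrations, into a locally abelian logarithmic parabolic Higgs bundle \((\widetilde E_*,\widetilde\theta)\) on a modification \(\pi\colon\widetilde X\to X\). The modification is an isomorphism over \(X\) minus a codimension-\(\ge2\) set, and \(\widetilde D=\pi^{-1}(D)\cup\mathrm{Exc}\) is simple normal crossing. We use the Kähler forms \(\omega_\delta=\pi^*\omega+\delta\eta\) on \(\widetilde X\). The comparison step must show two things:
\begin{enumerate}
\item \(\int_{\widetilde X}\Delta_{\mathrm{par}}(\widetilde E_*)\wedge\omega_\delta^{n-2}\to\int_X\Delta_{\mathrm{par}}(E_*)\wedge\omega^{n-2}\) as \(\delta\to0\). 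For this, the second parabolic Chern character computed on \(\widetilde X\) must push forward correctly; \(\pi_*\) of the relevant classes agrees off codimension \(2\), and the flags are preserved.
\item \((\widetilde E_*,\widetilde\theta)\) is \(\omega_\delta\)-stable for \(\delta\) small, again by an openness argument using boundedness of destabilizing subsheaves.
\end{enumerate}

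\emph{Hermitian--Einstein metric and Chern--Weil.} On \(\widetilde X\setminus\widetilde D\) we construct an adapted Hermitian--Einstein metric \(h\) for \((\widetilde E_*,\widetilde\theta)\). Since the weights are rational, we could use a Biswas/Kawamata-type orbifold cover to reduce to ordinary Higgs bundles and apply Simpson's theorem. Alternatively, we could follow Simpson/Mochizuki, starting from an initial model metric with bounded curvature, which exists because of graded semisimplicity. The parabolic Chern--Weil formula then identifies the characteristic numbers with curvature integrals of the Hitchin--Simpson curvature \(F_h\), and the standard pointwise identity for HE metrics gives
\[
\int_{\widetilde X}\Delta_{\mathrm{par}}(\widetilde E_*)\wedge\frac{\omega_\delta^{n-2}}{(n-2)!}=C\int\bigl|F_h^\perp\bigr|^2\,\frac{\omega_\delta^n}{n!}\ge0,
\]
where \(F_h^\perp\) is the trace-free part of \(F_h\) and \(C>0\) is a constant. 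Letting \(\delta\to0\) and then \(\epsilon\to0\) gives the theorem.

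\emph{Main obstacle.} I expect the hardest step to be the Chern--Weil identification for the perturbed sheaf-type object after resolution. One must show that \(\Delta_{\mathrm{par}}\) computed from the filtered bundle on \(\widetilde X\) matches the sheaf-theoretic \(\operatorname{par}ch_2\) on \(X\), including the contributions of the codimension-\(2\) strata \(D_i\cap D_j\) and of the exceptional divisors. One must also show that the curvature integrals converge for the adapted metric, which requires \(L^2\) control of \(F_h\) near \(\widetilde D\). I would first attempt the orbifold-cover reduction, since it turns the Chern--Weil step into the classical one and leaves only a combinatorial check of the parabolic Chern character formulas.
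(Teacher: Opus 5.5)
\textbf{Verdict: genuine gap.} Your outline has the same architecture as the paper: refine the residues to a graded-semisimple object, pass to a logarithmic flag model on a modification, take rational strictly ordered weights with classes \(\pi^*[\omega]+\delta[\vartheta]\), construct Hermitian--Einstein metrics, apply Chern--Weil, and take the limit. But the Hermitian--Einstein step you would attempt first fails. On the root (orbifold) cover the Higgs field does not become holomorphic. Since \(p^*(dz/z)=m\,dt/t\), on each character summand the residue is \(m\) times the graded residue. After your perturbation that residue is semisimple, but it is in general nonzero, because residue eigenvalues do not change when weights are perturbed. So you get a \emph{logarithmic} Higgs bundle on a compact orbifold, and Simpson's compact theorem does not apply. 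Graded semisimplicity also gives less than a bounded-curvature model. For a smooth Kähler form, it only lets you choose the split model metric with orthogonal residue eigenspaces, which improves \([\theta,\theta^{\dagger}]\) to \(O(|z|^{-2+\epsilon})\) but not to a bounded term. A genuinely noncompact existence argument is therefore needed. In the paper this consists of the following steps. First, a bounded Green potential for this integrable error, together with local Dirichlet problems near the divisor, produces a metric with compactly supported error. Next, Dirichlet problems are solved on an exhaustion. Finally, to use stability for the \(C^0\) bound, weakly holomorphic Higgs-invariant finite-energy projections on the complement are extended to saturated parabolic Higgs subsheaves whose analytic degree equals their parabolic degree (Proposition~\ref{prop:power-log-interface}). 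Your alternative ``Simpson/Mochizuki'' route needs all of this and supplies none of it. You must also check that graded semisimplicity survives along the new exceptional components; perturbing before resolving does not give this for free. The paper uses the square-free relation \(P_i(A_i)\mathcal F\subset x_i\mathcal F\) and writes the new residue as \(\sum_i m_{ia}A_i\), a combination of commuting semisimple operators.

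The Chern--Weil equality you assert for \(h\) requires exactly the \(L^2\) curvature control near \(\widetilde D\) that you list as the obstacle, and you give no way to obtain it. The paper never proves that equality. It proves the identity \eqref{eq:literal-canonical-quadratic} only for the explicit metric \(K\), using three ingredients: root-chart estimates, residue orthogonality, and vanishing flux of the Higgs transgression \(P(\mathcal F_K)-P(F_K)=-2\partial\bar\partial\Tr(\theta\wedge\theta^{\dagger K})\). It then transgresses from \(K\) to Dirichlet solutions \(H_k\) with boundary value \(K\). This uses cutoffs \(f_k=1-\phi/\beta_k\) with \(\|\partial\bar\partial f_k\|_\infty=O(1/\beta_k)\), together with uniform \(L^\infty\) and \(L^2\)-derivative bounds on \(\log(K^{-1}H_k)\). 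The limit of the \(H_k\) is identified with \(h\) by the uniqueness theorem. The result is only the inequality \(\int(|\Psi_h^0|^2+2|B_h|^2)\le8\pi^2J\), which suffices.

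Your comparison step also needs repair. ``Agrees off codimension \(2\)'' is not enough for \(\parch_2\in H^4\), because a class supported on a codimension-two set can be nonzero in \(H^4\). You need the modification to be biholomorphic off a set \(Z\) of codimension at least three. This holds here: reflexive sheaves are locally free off codimension three, torsion-free graded pieces jump rank in codimension \(\ge2\) on each \(D_i\), and triple points have codimension three. The double crossings \(D_i\cap D_j\) outside \(Z\) must then be checked directly. Common split frames there show that the crossing contributions recombine correctly when the refined weights coalesce.
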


The second theorem constructs a pluriharmonic Hermitian metric \(H\) on
\(E|_{X\setminus D}\) adapted to the parabolic structure \(E_*\).
Adaptedness means that the growth of holomorphic sections with respect
to \(H\) recovers the given parabolic filtration. More precisely, on an
SNC chart where \(D=\{z_1\cdots z_\ell=0\}\), and for a real
multi-index \(\mathbf c=(c_1,\ldots,c_\ell)\), let \(P^{\mathbf c}(H)\)
consist of the holomorphic sections on the divisor complement satisfying
\[
 |s|_H\le C_{s,\epsilon}\prod_{i=1}^{\ell}|z_i|^{c_i-\epsilon}
 \qquad\text{for every }\epsilon>0
\]
on every smaller chart. The metric \(H\) is \emph{adapted} to \(E_*\) if
\[
 P^{\mathbf c}(H)=E^{\mathbf c}
 \qquad\text{for every real local multi-index }\mathbf c=(c_i).
\]

\begin{alphthm}
\label{thm:harmonic-main}
Let \((E_*,\theta)\) be a polystable regular parabolic Higgs
sheaf on \((X,D)\). Assume that every stable summand
\((E_{\alpha,*},\theta_\alpha)\) satisfies
\[
 \pardeg_\omega(E_{\alpha,*})=0,
 \qquad
 \int_X\parch_2(E_{\alpha,*})
 \wedge\frac{\omega^{n-2}}{(n-2)!}=0.
\]
Then there is a Hermitian metric \(H\) on \(E|_{X\setminus D}\) such that
\[
 F_H+[\theta,\theta^{\dagger H}]=0,
 \qquad
 \partial_H\theta=0.
\]
In other words, \(H\) is pluriharmonic. Moreover, it is adapted to
\(E_*\), and the Hermitian bundle \((E|_{X\setminus D},H)\) is
acceptable in the sense of Mochizuki
\cite[Definition~2.7]{Moc03-ATHB-I}. The growth filtration
of \(H\), together with \(\theta\), is a regular filtered Higgs bundle
\cite[Subsection~2.5.1, Proposition~2.50]{Moc06-KH2}.
Equivalently, \(E_*\) is a locally abelian parabolic logarithmic Higgs bundle.
\end{alphthm}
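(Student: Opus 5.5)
First reduce to the stable case. A direct sum of pluriharmonic metrics, each adapted to one summand, is pluriharmonic, adapted to the sum, and acceptable. So it suffices to treat one $\omega$-stable regular parabolic Higgs sheaf $(E_*,\theta)$ with $\pardeg_\omega(E_*)=0$ and $\int_X\parch_2(E_*)\wedge\omega^{n-2}/(n-2)!=0$. Following the scheme already used for Theorem~\ref{thm:bg-main}, the construction has four steps.

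\textbf{Step 1 (approximation).} Refine the residue-invariant filtrations and separate their weights, as in Mochizuki's perturbation. This gives a sequence of regular parabolic Higgs sheaves $E^{(k)}_*$ that are graded semisimple, have strictly ordered rational weights, and whose weights converge to those of $E_*$.

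\textbf{Step 2 (resolution).} Resolve each $E^{(k)}_*$ to a logarithmic parabolic Higgs bundle on a modification $\pi\colon\widetilde X\to X$. The modification is an isomorphism over $X\setminus D$, and the resolution preserves the flags and residues. We use Kähler forms $\omega_k=\pi^*\omega+\epsilon_k\eta$ with $\epsilon_k\to0$.

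Stability is open under small perturbations. Hence for large $k$ the resolved object is $\omega_k$-stable, possibly after tensoring by a rank-one parabolic line bundle to normalize the degree to zero. It therefore carries an adapted Hermitian--Einstein metric $H_k$ on $X\setminus D$ with $\Lambda_{\omega_k}(F_{H_k}+[\theta,\theta^{\dagger}])=\gamma_k\,\Id$. By the comparison of characteristic numbers, $\gamma_k\to0$.

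\textbf{Step 3 (energy).} The parabolic Chern--Weil formula for the adapted Hermitian--Einstein metric gives
\[
\int\bigl|F^\perp_{H_k,\theta}\bigr|^2\frac{\omega_k^n}{n!}
= C_n\int\Bigl(2r\parch_2-\parc_1^2\Bigr)\bigl(E^{(k)}_*\bigr)\wedge\frac{\omega_k^{n-2}}{(n-2)!}
+O(\gamma_k^2).
\]
Because $\pardeg=0$ and $\int\parch_2=0$ for the limit, the Hodge index inequality bounds $\int\parc_1^2\wedge\omega^{n-2}\le0$. Combined with Theorem~\ref{thm:bg-main}, this forces $\int\parc_1(E_*)^2\wedge\omega^{n-2}=0$. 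Consequently the right-hand side tends to $0$, and so does the trace part. Thus the full Hitchin--Simpson curvature energy $\|F_{H_k}+[\theta,\theta^{\dagger}]\|_{L^2}^2\to0$.

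\textbf{Step 4 (limit).} This is the main obstacle: passing $H_k\to H$ locally on $X\setminus D$ while keeping adaptedness. First normalize $H_k$ by a fixed reference metric $h_0$ at a point, using $\det$ and $\sup$ normalizations. Next, apply Simpson's $C^0$ estimate $\sup\log\Tr(h_0^{-1}H_k)\le C+C\int|\log h_0^{-1}H_k|$. The constant $C$ must be made uniform in $k$ despite the degenerating $\omega_k$ and the varying weights, which we handle through:
\begin{itemize}
\item Donaldson-functional lower bounds;
\item a uniform acceptability bound for the model metrics (curvature $L^p$ bounded in Poincaré-type coordinates);
\item control of the source integrals $\int|\Lambda F_{h_0}|$ near $D$.
\end{itemize}
If the $L^1$ bound fails, rescale. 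The standard Uhlenbeck--Yau/Simpson argument then yields a destabilizing saturated Higgs subsheaf of $E_*$, and its weak-limit projection respects the limiting filtration, contradicting stability.

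With uniform $C^0$ bounds on compacts, elliptic estimates give $C^\infty_{loc}$ convergence to a metric $H$. For this limit, $F_H+[\theta,\theta^\dagger]=0$ holds in $L^2$ and hence pointwise. Then the usual Kähler identity argument (Simpson's) upgrades this to $\partial_H\theta=0$ and full pluriharmonicity.

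Adaptedness is obtained by comparing $H$ with a uniform family of model metrics $h_0^{(k)}\to h_0$ adapted to $E_*$. We need two-sided bounds $C^{-1}h_0^{(k)}\le H_k\le C h_0^{(k)}$ up to factors $\prod|z_i|^{\pm\delta}$ with $\delta\to0$. These follow from a maximum principle applied to $\log\Tr$, using subharmonicity of $\log|s|_{H_k}$ for holomorphic sections together with the Poincaré-type barrier $\prod(-\log|z_i|)$. This gives $P^{\mathbf c}(H)=E^{\mathbf c}$.

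Acceptability follows from Mochizuki's result that tame pluriharmonic metrics on curves transversal to $D$ are acceptable, applied via the norm estimate. The final assertion, that $E_*$ is a locally abelian parabolic logarithmic Higgs bundle, follows from Mochizuki's theorem that the prolongation of a tame harmonic bundle is a regular filtered Higgs bundle \cite[Proposition~2.50]{Moc06-KH2}, together with adaptedness.
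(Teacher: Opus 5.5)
Your outline has the same architecture as the paper: one fixed resolution carrying strictly ordered rational graded-semisimple stages, stability for perturbed classes, Hermitian--Einstein metrics, Chern--Weil energy decay, a stability-based compactness argument, and Mochizuki's prolongation. However, Step~4, where essentially all the work lies, is asserted rather than proved, and the mechanism you sketch fails as stated.

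\textbf{The uniform $C^0$ estimate.} A global estimate \(\sup\log\Tr(h_0^{-1}H_k)\le C+C\int|\log h_0^{-1}H_k|\) with \(C\) independent of \(k\) is not available. Suppose an original graded residue has a nonzero nilpotent part. Then the refinement creates nonzero Higgs entries between refined quotients whose weights coalesce. For power-type model metrics, the normal coefficient of \([\theta,\theta^{\dagger}]\) then has size \(|z|^{2(\beta_i-\beta_j)-2}\), whose \(L^1\) norm is of order \((\beta_i-\beta_j)^{-1}\to\infty\). Moreover, the limiting harmonic metric differs from any power model by unbounded logarithmic factors.

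The paper's remedy is the comparison metrics \(k_\nu\), which carry factors \(\tau_a^{-4(p_{a,i}-\bar p_a)}\) indexed by the fixed flag positions (Lemma~\ref{lem:doubled-reference}). Even for these, the Hermitian--Einstein error \(\mathfrak e_\nu\) is only uniformly Poincar\'e-integrable, not bounded (Proposition~\ref{prop:local-contraction-estimates}). So the \(C^0\) estimate is replaced by a mean-value bound on a fixed compact set \(W_1\), plus a scalar barrier \(b_\nu\ge0\) with \(\mathcal L_\nu b_\nu\ge2\mathfrak e_\nu\) off \(W_0\) and \(\sup_\nu\int\mathfrak e_\nu(1+b_\nu)\,dV_{\omega_\nu}<\infty\) (Proposition~\ref{prop:source-carrier}). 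This first-moment bound is exactly what controls the source pairing in the normalized Donaldson identity.

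A barrier \(\prod(-\log|z_i|)\) cannot do this, because \(-\log|z|\) is pluriharmonic and cannot absorb a Poincar\'e-type source; the barrier must be of log--log type. Near \(\pi^{-1}(Z)\) there is a further problem: the forms \(\omega_k\) degenerate, and \(\Lambda_{\omega_k}\) of a smooth form can be of size \(\epsilon_k^{-1}\). The barrier therefore needs extra terms built from a \(\pi\)-ample exceptional divisor, and the scalar Green and Sobolev bounds must be made uniform for the degenerating \(\omega_k\). Your plan addresses neither point. Likewise, ``stability is open'' needs the uniform argument of Proposition~\ref{prop:uniform-fixed-model-stability}, because the limiting class \(\pi^*[\omega]\) is not K\"ahler and the weights move at the same time.

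\textbf{The endgame.} At points of \(Z\), \(E\) need not be locally free and the flags need not split, so there is no model metric adapted to \(E_*\). The barrier growth near exceptional divisors is also not uniform in \(k\). Hence the two-sided growth comparison can only be proved on \(X\setminus Z\) (Corollary~\ref{cor:nonexceptional-growth}). One then uses Mochizuki's prolongation to get locally free \(P^{\mathbf c}(H)\), and reflexivity of \(E^{\mathbf c}\) with \(\codim_X Z\ge2\), to conclude \(P^{\mathbf c}(H)=E^{\mathbf c}\) everywhere (Proposition~\ref{prop:acceptable-prolongation}). Local abelianness is a conclusion here, so adaptedness near \(Z\) cannot come first.

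The contradiction with stability also needs more than ``the weak-limit projection respects the filtration.'' The weakly holomorphic projections must be extended across the divisor itself, not only across codimension two. You must also prove \(\deg_{\an,k_\infty}(F)=\pardeg_\omega(F_*)\) for the singular limiting metric (Proposition~\ref{prop:downstairs-reflexive-degree-bridge}); otherwise the limiting Donaldson inequality does not become a slope inequality.

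Finally, \(\partial_H\theta=0\) is not a pointwise consequence of \(F_H+[\theta,\theta^{\dagger H}]=0\) on \(X\setminus D\). In rank one the latter only says \(H\) is flat, while \(\partial_H\theta=\partial\theta\) does not depend on \(H\). Keep \(2|\partial_{H_k}\theta|^2\) in the Chern--Weil energy, which uses closedness of the logarithmic form \(\Tr\theta\); Lemma~\ref{lem:energy-closure} then gives flatness directly. Your Hodge-index-plus-Theorem~\ref{thm:bg-main} step is correct but unnecessary, since the \(\parc_1^2\) terms cancel in the full energy bound.
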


\subsection{Outline of the proof}

For both proofs we work with a stable parabolic Higgs sheaf
\((E_*,\theta)\).
We first resolve the singularities of the sheaf and refine its divisor
filtrations. This produces a smooth modification
\(\pi:(Y,B)\to(X,D)\) carrying a logarithmic Higgs bundle
\((V,\theta_Y)\) and a fixed collection \(\mathcal F_B\) of divisor
flags whose graded residues are semisimple. The modification is an
isomorphism away from \(D\), so the metrics constructed on
\(Y\setminus B\) are also metrics on the original open manifold
\(X\setminus D\).

On these fixed flags we choose rational weights approaching values that
recover the original filtration away from the exceptional set. Together
with suitable K\"ahler forms \(\omega_\nu\), these weights give stable
parabolic Higgs bundles \(V_{\nu,*}\) admitting Hermitian--Einstein
metrics \(h_\nu\). The approximation changes the numerical weights,
the K\"ahler form, and the metric, while keeping the modification,
holomorphic bundle, Higgs field, and flags along the divisor fixed.

The two proofs use the curvature estimates for this family in different
ways. For Theorem~\ref{thm:bg-main}, they give inequalities between
parabolic Chern numbers, which pass to the original sheaf as the weights
converge. For Theorem~\ref{thm:harmonic-main}, the vanishing assumptions
force the curvature energies to tend to zero. We then need uniform
metric and boundary estimates to obtain a pluriharmonic limit whose
growth recovers the original parabolic structure.
Figure~\ref{fig:proof-outline} summarizes this common construction and
the two proofs.

\begin{figure}[t]
\centering
\begin{tikzpicture}[
  trunk/.style={draw, rounded corners=2pt, fill=black!3,
    align=center, text width=8.1cm, minimum height=.88cm,
    inner sep=4pt, font=\scriptsize},
  branch/.style={draw, rounded corners=2pt, fill=black!7,
    align=center, text width=5.15cm, minimum height=1.02cm,
    inner sep=4pt, font=\scriptsize},
  conclusion/.style={draw, double, rounded corners=2pt,
    align=center, text width=5.15cm, minimum height=.82cm,
    inner sep=4pt, font=\scriptsize},
  flow/.style={-{Latex[length=1.8mm]}, line width=.45pt},
  node distance=3.5mm
]
\node[trunk] (original) {Original regular parabolic Higgs sheaf
  \((E_*,\theta)\) with arbitrary real weights};
\node[trunk, below=of original] (refine) {Insert saturated
  \(\theta\)-invariant residue filtrations; separate the new weights\\
  \(\Longrightarrow\) graded-semisimple parabolic Higgs sheaves};
\node[trunk, below=of refine] (resolve) {Passage to a root stack,
  flattening, boundary resolution, destackification, and
  an exceptional twist\\
  \(\Longrightarrow\) one fixed model \((Y,B;V,\theta_Y,\mathcal F_B)\)};
\node[trunk, below=of resolve] (rational) {Strict rational weights
  \(w^{(\nu)}\to w^\infty\) and smooth K\"ahler forms
  \(\omega_\nu\to\pi^*\omega\) off \(B\)};
\node[trunk, below=of rational] (he) {Global adapted
  Hermitian--Einstein metrics \(h_\nu\) on the fixed Higgs bundle over
  \(Y\setminus B\)};

\node[branch, below=6mm of he, xshift=-2.75cm] (bg) {Parabolic Chern--Weil identities and curvature energy bounds
  for the global Hermitian--Einstein metrics};
\node[branch, below=6mm of he, xshift=2.75cm] (harm) {Numerical vanishing on each stable summand
  forces the Hitchin--Simpson energies of these metrics to tend to zero};
\node[branch, below=of bg] (bglimit) {Nonnegative stage discriminants;
  coalescence of weights and classes, followed by pushforward to \(X\)};
\node[branch, below=of harm] (compact) {Comparison metrics, scalar barriers, and parabolic stability
  give uniform metric bounds on compact subsets};
\node[conclusion, below=of bglimit] (thma)
  {Theorem~\ref{thm:bg-main}};
\node[branch, below=of compact] (prolong) {A smooth pluriharmonic limit whose growth
  recovers every parabolic lattice; locally free prolongations};
\node[conclusion, below=of prolong] (thmb)
  {Theorem~\ref{thm:harmonic-main}};

\draw[flow] (original) -- (refine);
\draw[flow] (refine) -- (resolve);
\draw[flow] (resolve) -- (rational);
\draw[flow] (rational) -- (he);
\draw[flow] (he) -- (bg);
\draw[flow] (he) -- (harm);
\draw[flow] (bg) -- (bglimit);
\draw[flow] (harm) -- (compact);
\draw[flow] (bglimit) -- (thma);
\draw[flow] (compact) -- (prolong);
\draw[flow] (prolong) -- (thmb);
\end{tikzpicture}
\caption{The common construction and its use in the proofs of
Theorems~\ref{thm:bg-main} and~\ref{thm:harmonic-main}.}
\label{fig:proof-outline}
\end{figure}

\medskip
\noindent\textit{The fixed model.}
Since the Higgs field preserves the parabolic filtration, its residue
induces an endomorphism on each graded quotient. We refine the divisor
flags by saturated residue-invariant subsheaves
so that the new graded residues are semisimple. Rational auxiliary
weights allow us to encode the refined sheaf on a root stack.
Flattening, boundary resolution, and destackification then give the
fixed logarithmic flag model of
Theorem~\ref{thm:fixed-terminal-data}. These operations leave
\(X\setminus Z\) unchanged, where \(Z\subset D\) has codimension at
least three. Proposition~\ref{prop:fixed-model-correction} supplies
the exceptional twist needed to include the torsion-free pullback of
the original sheaf in the resulting bundle.

We next approximate the original parabolic structure, whose weights may
be irrational, using rational weights on these fixed flags. To specify
the target of this approximation, assign each refined quotient along a strict transform the
weight of the original block containing it, and assign zero weights on
the exceptional components. Denote this weight vector by \(w^\infty\).
Giving the quotients inserted within one original block the same weight
makes them a single filtration jump, recovering the original filtration
on \(X\setminus Z\). We then choose strictly ordered rational weights
\(w^{(\nu)}\to w^\infty\), together with K\"ahler forms \(\omega_\nu\),
so that the resulting parabolic Higgs bundles remain stable, as ensured
by Corollary~\ref{cor:stable-rational-approximations}.
The accompanying K\"ahler forms \(\omega_\nu\) are defined in
\eqref{eq:kahler-schedule-form}; their explicit choice provides the
uniform scalar estimates needed for the metric limit.

\medskip
\noindent\textit{Hermitian--Einstein metrics at each rational stage.}
The parabolic weights define a metric \(K_\nu\), smooth on the
root stack and adapted to the stage filtration.
Proposition~\ref{prop:compact-source-reference} constructs a comparable
metric \(H_{0,\nu}\) whose Hermitian--Einstein error is bounded and
compactly supported. With \(H_{0,\nu}\) as Dirichlet boundary data on an
exhaustion, stability gives estimates independent of the domain and
hence a global Hermitian--Einstein metric \(h_\nu\);
see Theorem~\ref{thm:fixed-stage-he-metrics}. These estimates
may depend on \(\nu\).

\medskip
\noindent\textit{The Bogomolov--Gieseker inequality.}
The curvature integrals of \(K_\nu\) represent the relevant parabolic
Chern numbers. Cutoff and transgression arguments transfer the resulting
bounds to \(h_\nu\), giving both the discriminant inequality
and the curvature energy estimate in
Theorem~\ref{thm:fixed-stage-energy-ledger}.
The parabolic characteristic numbers vary continuously as the weights
coalesce, and Proposition~\ref{prop:parabolic-class-bridge} identifies
their limits with the original numbers on \(X\).
Passing to the limit proves Theorem~\ref{thm:bg-main}.

\medskip
\noindent\textit{The pluriharmonic metric.}
To prove Theorem~\ref{thm:harmonic-main}, we start with the sequence of
Hermitian--Einstein metrics \(h_\nu\) constructed above for each stable
summand. We seek a subsequence which, after constant rescaling, converges
to a smooth positive metric on \(X\setminus D\) adapted to the original
parabolic structure. The preceding estimate gives a uniform bound for
their curvature energies; under the numerical vanishing assumptions,
Corollary~\ref{cor:factor-energy} shows that the energies in fact tend to
zero. Thus a smooth positive limit will be pluriharmonic.

To obtain this limit, Section~\ref{sec:compactness} constructs comparison
metrics \(k_\nu\) and scalar barriers. They reduce uniform metric bounds
to integral bounds for the relative logarithms on a fixed compact domain.
If these integral bounds failed, the normalized Donaldson identity
would produce proper invariant parabolic subsheaves. The degree
identities convert the limiting inequality into a contradiction with
the stability of the original summand.
Proposition~\ref{prop:moving-endpoints} therefore gives smooth
subsequential convergence on compact subsets, and
Corollary~\ref{cor:nonexceptional-growth} identifies the limiting growth
lattices on \(X\setminus Z\).
Proposition~\ref{prop:acceptable-prolongation} extends this identification across \(Z\)
and establishes acceptability of the Hermitian bundle and local freeness
of its growth prolongations.
Taking the finite orthogonal sum proves Theorem~\ref{thm:harmonic-main}.

\subsection{Organization of the paper}

Section~\ref{sec:preliminaries} fixes the definitions and conventions.
Section~\ref{sec:resolution-fixed-model} constructs the logarithmic flag model.
Section~\ref{sec:fixed-model} develops the rational approximations and
their curvature estimates, from which Section~\ref{sec:bg} proves
Theorem~\ref{thm:bg-main}. The extension results in
Section~\ref{sec:analytic-tools} are used in
Section~\ref{sec:compactness}, which proves compactness and boundary
growth estimates and concludes with Theorem~\ref{thm:harmonic-main}.

\subsection{Suggestions for reading}

We apologize to the reader for the length of this paper. Much of the
detail concerns the resolution of the sheaf and its flags, and the
estimates needed near the divisor. The main strategy is nevertheless
straightforward: construct a fixed geometric model, approximate its
weights by rational ones, and study the resulting Hermitian--Einstein
metrics. The preceding outline and Figure~\ref{fig:proof-outline}
describe how this construction leads to the two main theorems.

On a first reading, we suggest focusing on the statements of
Theorem~\ref{thm:fixed-terminal-data} and
Proposition~\ref{prop:fixed-model-correction}, together with
Subsection~\ref{subsec:fixed-flag-weights}. These explain what the
resolution produces and how the original weights are recovered.
The proofs of the intermediate resolution results may be deferred.
Next, read the statements of
Corollary~\ref{cor:stable-rational-approximations},
Theorem~\ref{thm:fixed-stage-he-metrics}, and
Corollary~\ref{cor:factor-energy}: they provide the stable rational
approximations, the sequence of Hermitian--Einstein metrics, and its
energy decay under the hypotheses of Theorem~\ref{thm:harmonic-main}.

With these results in hand, the reader can proceed directly to
Section~\ref{sec:compactness}. We recommend reading carefully the
construction of scalar barriers in
Subsection~\ref{subsec:curvature-barriers}, the use of stability in
Proposition~\ref{prop:normalized-tail-no-escape}, and especially the
convergence proof in Proposition~\ref{prop:moving-endpoints}.
Corollary~\ref{cor:nonexceptional-growth} then explains how the limit
recovers the parabolic growth, and
Subsection~\ref{subsec:adapted-pluriharmonic-metrics} completes the
proof of Theorem~\ref{thm:harmonic-main}. The proofs of the results
from Sections~\ref{sec:fixed-model}--\ref{sec:analytic-tools} used
along this route can be read when they are invoked.

\section{Definitions and conventions}\label{sec:preliminaries}

Unless explicitly stated otherwise, every complex manifold and every
complex-analytic Deligne--Mumford stack used as an ambient space below is
connected.

We use the filtered-sheaf formalism of Mochizuki
\cite[Chapter~3, especially Sections~3.1--3.3]{Mochizuki2006}, with two
conventions adapted to the present paper.  Our indices are decreasing,
whereas Mochizuki uses increasing indices.  We also formulate graded
semisimplicity for reflexive saturated sheaves by testing their torsion-free
graded pieces at the generic points of the boundary components.

\begin{defn}[Regular parabolic Higgs sheaves]\label{defn:all-real}
Let \(X\) be a complex manifold, let
\(D=\bigcup_{i\in I}D_i\) be an SNC divisor, and put
\[
 U=X\setminus D,\qquad j_D:U\hookrightarrow X.
\]
A regular parabolic Higgs sheaf \((E_*,\theta)\) consists of the following
data and conditions.

\begin{enumerate}
\item \(E\) is a reflexive coherent \(\cO_X\)-module and
      \(E_U:=E|_U\) is locally free.
\item For every \(i\in I\), there is a decreasing filtration
      \(\{F_i^aE\}_{a\in\RR}\) by coherent \(\cO_X\)-submodules of
      \(j_{D,*}E_U\) such that
      \[
       F_i^bE\subset F_i^aE\quad(a\le b),\qquad
       F_i^{a+1}E=F_i^aE(-D_i),
      \]
      and
      \[
       F_i^0E=E,\qquad F_i^1E=E(-D_i).
      \]
      The filtration has only finitely many jumps in every unit interval
      and is left-continuous: for every \(a\) there is an
      \(\epsilon>0\) such that
      \[
       F_i^bE=F_i^aE\qquad(a-\epsilon<b\le a).
      \]
\item The filtration is saturated.  More precisely, for
      \(0\le a\le1\), the quotient \(E/F_i^aE\), regarded as an
      \(\cO_{D_i}\)-module, is torsion-free.  Equivalently, if
      \[
       F_i^{>a}E:=\bigcup_{b>a}F_i^bE,
       \qquad
       \Gr_i^a(E_*):=F_i^aE/F_i^{>a}E,
      \]
      then every nonzero \(\Gr_i^a(E_*)\) is torsion-free on \(D_i\).
\item The logarithmic Higgs field
      \[
       \theta:E\longrightarrow
       E\otimes\Omega_X^1(\log D)
      \]
      is integrable, \(\theta\wedge\theta=0\), and preserves every
      filtration term:
      \[
       \theta(F_i^aE)\subset
       F_i^aE\otimes\Omega_X^1(\log D)
       \qquad(i\in I,\ a\in\RR).
      \]
\end{enumerate}

We write parabolic multi-indices in bold, such as
\(\mathbf{c}=(c_i)_{i\in I}\), while their scalar components remain
\(c_i\). Inequalities between multi-indices are componentwise, and
\(\mathbf{e}_i\) denotes the \(i\)-th standard basis vector.
For \(\mathbf{c}=(c_i)\in[0,1)^I\), we use the notation
\[
 E^{\mathbf{c}}:=\bigcap_{i\in I}F_i^{c_i}E
 \subset j_{D,*}E_U,
 \tag{2.1}\label{eq:parabolic-lattice-intersection}
\]
where the intersection is taken inside the common ambient sheaf
\(j_{D,*}E_U\).  Saturation makes every \(F_i^{c_i}E\) reflexive, and
finite intersections of reflexive lattices remain reflexive.
For arbitrary \(\mathbf{c}\in\RR^I\), write uniquely
\(\mathbf{c}=\mathbf{a}+\mathbf{m}\) with \(\mathbf{a}\in[0,1)^I\) and \(\mathbf{m}\in\mathbb Z^I\), and define
\[
 E^{\mathbf{c}}:=E^{\mathbf{a}}\left(-\sum_{i\in I}m_iD_i\right).
 \tag{2.2}\label{eq:periodic-lattice-notation}
\]
Then every \(E^{\mathbf{c}}\) is reflexive, restricts to \(E_U\) on \(U\), and
\[
 E^{\mathbf{d}}\subset E^{\mathbf{c}}\quad(\mathbf{c}\le\mathbf{d}),\qquad
 E^{\mathbf{c}+\mathbf{e}_i}=E^{\mathbf{c}}(-D_i).
\]
\end{defn}

\begin{defn}[Locally abelian parabolic structures]
\label{defn:locally-abelian}
A regular parabolic Higgs sheaf is locally abelian at \(x\in X\) if there
are a neighborhood \(W\) of \(x\) and parabolic line bundles
\(L_{1,*},\ldots,L_{r,*}\) on \((W,D|_W)\) such that its underlying
parabolic sheaf decomposes as
\[
 E_*|_W=\bigoplus_{\nu=1}^rL_{\nu,*}.
\]
The parabolic structure is locally abelian if this holds at every point.
Hereafter, we write ``parabolic bundle'' for ``locally abelian parabolic
bundle.'' We use the same convention for parabolic Higgs bundles.
\end{defn}

\begin{defn}[Graded semisimplicity]
\label{defn:graded-semisimple}
Because \(\theta\) preserves the parabolic filtration, its residue along \(D_i\) induces an endomorphism
\[
 R_{i,a}:=\Gr_i^a\operatorname{Res}_{D_i}(\theta):
 \Gr_i^a(E_*)\longrightarrow\Gr_i^a(E_*)
\]
for every jump \(a\).  The parabolic Higgs sheaf is graded semisimple if
every \(R_{i,a}\) is semisimple at the generic point of \(D_i\), or
equivalently if the nilpotent part of every nonzero graded residue
vanishes.  Since \(\Gr_i^a(E_*)\) is torsion-free, this generic condition
is well-defined.
\end{defn}

\begin{defn}[Parabolic Chern classes]\label{defn:parabolic-chern}
Let \(X\) be compact Kähler of dimension \(n\), and let \(E_*\) be a
regular parabolic sheaf on \((X,D)\). With our decreasing indices, the
weighted formula of \cite[Theorem~1.1 and Section~8.5]{IS06} gives the
following definition of its parabolic Chern character:
\[
 \parch(E_*)=
 \frac{\displaystyle\int_{[0,1]^I}
 e^{-\sum_{i\in I}t_i[D_i]}\operatorname{ch}(E^{-\mathbf{t}})
 \,d\mathbf{t}}
 {\displaystyle\int_{[0,1]^I}
 e^{-\sum_{i\in I}t_i[D_i]}\,d\mathbf{t}}
 \in\bigoplus_{k=0}^n H^{2k}(X,\RR),
 \tag{2.3}\label{eq:weighted-parabolic-chern}
\]
where \(\mathbf{t}=(t_i)_{i\in I}\) and
\(\operatorname{ch}(E^{-\mathbf{t}})\) is the ordinary Chern character of
the coherent lattice. The integrals are taken coefficientwise,
with all expressions truncated above degree \(2n\); the denominator is
invertible because its degree-zero part is one. Only finitely many
lattices occur on the cube, so the same formula applies to real weights.
The negative index \(-\mathbf{t}\) converts the increasing-index convention
of \cite{IS06} to ours.

Writing \(\parch_k(E_*)\) for the degree-\(2k\) component, we define
\[
 \begin{aligned}
 \parc_1(E_*)&=\parch_1(E_*),\\
 \parc_2(E_*)&=\frac12\parc_1(E_*)^2-\parch_2(E_*).
 \end{aligned}
\]
In particular, our sign convention gives
\[
 \parc_1(E_*)=c_1(E)+
 \sum_{i\in I}\sum_{a\in[0,1)}
 a\,\rk\Gr_i^a(E_*)[D_i],
\]
where the inner sum runs over the finitely many jumps along \(D_i\).
\end{defn}

\begin{defn}[Parabolic slope and stability]\label{defn:parabolic-stability}
For a nonzero regular parabolic Higgs sheaf \((E_*,\theta)\) on a compact
Kähler manifold \((X,\omega)\) of dimension \(n\), set
\[
 \begin{aligned}
 \pardeg_\omega(E_*)&=
 \int_X\parc_1(E_*)\wedge\frac{\omega^{n-1}}{(n-1)!},\\
 \mu_\omega(E_*)&=\frac{\pardeg_\omega(E_*)}{\rk E}.
 \end{aligned}
\]
These quantities depend only on \([\omega]\). For a saturated
\(\theta\)-invariant subsheaf \(F\subset E\), let \(F_*\) carry the
filtrations obtained by saturated intersection with those of \(E_*\),
with the inherited weights. The sheaf \((E_*,\theta)\) is
\(\omega\)-stable if
\[
 \mu_\omega(F_*)<\mu_\omega(E_*)
 \qquad\text{for every such }F\text{ with }0<\rk F<\rk E.
\]
Replacing \(<\) by \(\le\) defines semistability. It is polystable if it
is a direct sum of stable parabolic Higgs sheaves of the same parabolic
slope.
\end{defn}

\begin{defn}[Metric growth prolongation]\label{defn:growth}
Let \(E\) be a reflexive coherent sheaf on \(X\) which is locally free on
\(X\setminus D\), let \(j_D:X\setminus D\hookrightarrow X\), and let
\(H\) be a smooth positive Hermitian metric on
\(E|_{X\setminus D}\). The decreasing growth
prolongation \(P^{\mathbf{c}}(H)\subset j_{D,*}(E|_{X\setminus D})\) consists of
the holomorphic sections which, on every sufficiently small SNC chart
where \(D\) has equation \(z_1\cdots z_\ell=0\), satisfy
\[
 |s|_H\le C_{s,\epsilon}\prod_{i=1}^\ell|z_i|^{c_i-\epsilon}
\]
for every \(\epsilon>0\), with a finite constant \(C_{s,\epsilon}\)
allowed to depend on \(s\) and \(\epsilon\).
These prolongations satisfy
\[
 P^{\mathbf{d}}(H)\subset P^{\mathbf{c}}(H)\quad(\mathbf{c}\le\mathbf{d}),\qquad
 P^{\mathbf{c}+\mathbf{e}_i}(H)=P^{\mathbf{c}}(H)(-D_i).
\]
For a regular parabolic structure on \(E\), the torsion-free lattice
\(E^{\mathbf{c}}\) restricts to \(E|_{X\setminus D}\), so restriction gives
the canonical inclusion
\[
 E^{\mathbf{c}}\hookrightarrow j_{D,*}(E^{\mathbf{c}}|_{X\setminus D})
 =j_{D,*}(E|_{X\setminus D}).
\]
Using this inclusion, we say that \(H\) is \emph{adapted} to \(E_*\) if
\[
 P^{\mathbf{c}}(H)=E^{\mathbf{c}}
 \qquad\text{inside }j_{D,*}(E|_{X\setminus D})
 \quad\text{for every }\mathbf{c}\in\RR^I.
\]
\end{defn}

\begin{defn}[Pluriharmonic metrics]\label{defn:pluriharmonic}
Let \(h\) be a Hermitian metric on a holomorphic Higgs bundle
\((V,\theta)\). Denote by \(\partial_h\) and \(F_h\) the \((1,0)\)-part
and the curvature of its Chern connection, respectively, and by
\(\theta^{\dagger h}\) the adjoint of \(\theta\). Set
\[
 \Psi_h=F_h+[\theta,\theta^{\dagger h}],
 \qquad
 B_h=\partial_h\theta.
\]
The Hitchin--Simpson connection is
\[
 D_h=\bar\partial_V+\partial_h+\theta+\theta^{\dagger h}.
\]
The metric \(h\) is pluriharmonic if \(D_h\) is flat. Equivalently,
\[
 \Psi_h=0,\qquad B_h=0.
\]
\end{defn}

\section{Resolution and the fixed logarithmic flag model}
\label{sec:resolution-fixed-model}

We construct a smooth compact K\"ahler modification
\(\pi:(Y,B)\to(X,D)\) carrying one logarithmic Higgs bundle with fixed
divisor flags, on whose graded quotients the residues are semisimple.
The construction preserves the original data outside a set
\(Z\subset D\) of codimension at least three.
Theorem~\ref{thm:fixed-terminal-data} provides the modification and
flags; Subsection~\ref{subsec:fixed-flag-weights} assigns their limiting
weights, and Proposition~\ref{prop:fixed-model-correction} arranges
the pullback inclusion used later in extending subsheaves.

\subsection{Local splitting and refinement of parabolic filtrations}

\begin{lem}
\label{lem:simultaneous-splitting}
Let \(E_*\) be a regular parabolic sheaf on an SNC pair \((X,D)\). Let
\(\Sigma\) be the non-locally-free locus of \(E\), and let
\(Z_{\mathrm{flag}}\subset D\) be the union of the rank-jump loci of the
finite parabolic flags and their intersections, together with the
intersections of three or more components of \(D\). Then every component of
\[
 Z=\Sigma\cup Z_{\mathrm{flag}}
\]
has codimension at least three in \(X\), and
\(E_*|_{X\setminus Z}\) is a parabolic bundle.
\end{lem}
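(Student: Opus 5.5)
We prove the codimension bound first and then establish local abelianness at each point of \(X\setminus Z\), separating points on one component of \(D\) from points on the intersection of two components.

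\emph{Codimension.} Since \(E\) is reflexive, its non-locally-free locus \(\Sigma\) has codimension at least three. This is the standard fact that a reflexive sheaf on a complex manifold is locally free off a closed analytic set of codimension \(\ge 3\), because reflexive sheaves satisfy Serre's condition \(S_2\).

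For the flags, fix a component \(D_i\) and a jump \(a\in[0,1)\). By saturation (Definition~\ref{defn:all-real}(3)), each quotient \(E/F_i^aE\) is a torsion-free \(\cO_{D_i}\)-module. Likewise each \(\Gr_i^a(E_*)\) is torsion-free on \(D_i\), which has dimension \(n-1\). A torsion-free sheaf on \(D_i\) is locally free off a subset of codimension \(\ge 2\) in \(D_i\), that is, of codimension \(\ge 3\) in \(X\). We define the rank-jump loci as the non-locally-free loci of these finitely many quotients; each then has codimension \(\ge 3\) in \(X\). Their intersections with other components, and the triple intersections \(D_i\cap D_j\cap D_k\), are also of codimension \(\ge 3\) by the SNC hypothesis. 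Hence every component of \(Z\) has codimension at least three.

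\emph{Local abelianness at a point on one component.} Let \(x\in D\setminus Z\). Then \(E\) is locally free at \(x\), and at most two components pass through \(x\). Suppose first that \(x\) lies only on \(D_i\). Near \(x\) all quotients \(E/F_i^aE\) are locally free on \(D_i\). So \(E|_{D_i}\) carries a filtration by subbundles, namely the images of the \(F_i^aE\). Choose a frame of \(E|_{D_i}\) adapted to this flag and lift it to a frame of \(E\) near \(x\). Each \(F_i^aE\) is the preimage of a subbundle, so it equals the span of the frame elements in that subbundle together with \(z_i\) times the others. This exhibits \(E_*\) as a sum of parabolic line bundles.

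\emph{Local abelianness at a double point.} This is the main step. Suppose \(x\in D_i\cap D_j\) with \(x\notin Z\). Near \(x\) we have two flags of subbundles: one on \(E|_{D_i}\) and one on \(E|_{D_j}\). Both restrict to flags of subbundles of \(E|_{D_i\cap D_j}\); here we use that the flags are locally free along the intersection, because \(x\) lies off their intersection rank-jump loci.

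The key linear-algebra fact is that two flags in a vector space always admit a common adapted basis (Bruhat decomposition). We need this in families over the smooth base \(D_i\cap D_j\). Off \(Z_{\mathrm{flag}}\) the relative position of the two flags is constant, since the ranks of all pairwise intersections are constant. Therefore the intersections \(F_i^a\cap F_j^b\) restricted to \(D_i\cap D_j\) are subbundles, and a simultaneously adapted frame exists locally on \(D_i\cap D_j\).

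We extend this frame in two stages. First, extend it to frames on \(D_i\) and on \(D_j\) adapted to the respective flags, using that each flag is by subbundles near \(x\). Then lift to a frame of \(E\) near \(x\) that is compatible with both restrictions. Compatibility can be arranged by a Chinese-remainder argument for the ideals \((z_i)\) and \((z_j)\), which are transverse, so that \(\cO/(z_iz_j)\) is the fiber product of \(\cO/(z_i)\) and \(\cO/(z_j)\) over \(\cO/(z_i,z_j)\).

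Finally, check with this frame that every lattice \(E^{\mathbf c}\) is the corresponding diagonal sum of line bundles. This yields the decomposition required by Definition~\ref{defn:locally-abelian}.

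The expected obstacle is making the constancy of relative position precise. We must verify that the rank-jump loci included in \(Z_{\mathrm{flag}}\) cover exactly the locus where the pairwise intersections of the two flags fail to be subbundles. If needed, we enlarge \(Z_{\mathrm{flag}}\) by these degeneracy loci inside \(D_i\cap D_j\). Such loci are proper analytic subsets of \(D_i\cap D_j\), so they have codimension \(\ge 3\) in \(X\), and the codimension bound is preserved.
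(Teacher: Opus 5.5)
Your proposal is correct and follows the same route as the paper: reflexivity bounds \(\Sigma\), torsion-freeness of the graded pieces on the smooth \(D_i\) bounds the flag loci, triple intersections are removed, and off \(Z\) the subbundle flags admit a common local splitting. Your Bruhat-in-families argument and the fiber-product lifting over \(\cO/(z_iz_j)\) spell out the splitting step that the paper states in one line, and the enlargement you anticipate at double crossings is already built into the paper's \(Z_{\mathrm{flag}}\): its phrase ``and their intersections'' covers the rank-jump loci of the lattices \(F_i^aE\cap F_j^bE\), which are proper analytic subsets of \(D_i\cap D_j\) and hence have codimension at least three in \(X\).
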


\begin{proof}
Reflexivity implies \(\codim_X\Sigma\ge3\). The parabolic graded pieces
are torsion-free on the smooth divisors \(D_i\), so their rank-jump loci,
and the corresponding rank-jump loci at double crossings, have codimension
at least three in \(X\); the same is true of triple intersections. After
removing these finitely many loci, the parabolic flags are subbundle flags
and admit a common local splitting. Thus the restriction to
\(X\setminus Z\) is a parabolic bundle.
\end{proof}

\begin{lem}
\label{lem:residue-refinement}
Let \(X\) be compact, let \(D\) be an SNC divisor, and let
\((E_*,\theta)\) be a regular parabolic Higgs sheaf on \((X,D)\). For every
nonzero graded block
\[
 Q=\Gr_i^a(E_*),
\]
there is a finite filtration by saturated subsheaves
\[
 0=Q_0\subset Q_1\subset\cdots\subset Q_m=Q
\]
preserved by the residue \(R_{i,a}\) and by the tangential Higgs operators
induced on \(Q\), such that every nonzero quotient \(Q_k/Q_{k-1}\) is
torsion-free and the residue induced on it is semisimple. The inverse images
of the \(Q_k\) under
\[
 F_i^aE\longrightarrow \Gr_i^a(E_*)
\]
insert \(\theta\)-invariant steps between \(F_i^{>a}E\) and \(F_i^aE\).
These refinements give a graded-semisimple regular
parabolic Higgs sheaf without changing any original filtration term.
\end{lem}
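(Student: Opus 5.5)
The plan is to build the refinement from the Jordan--Chevalley theory of the residue at the generic point of \(D_i\), and then to saturate. Let \(K\) be the function field of \(D_i\) (or the stalk at its generic point, which is enough because \(D_i\) is irreducible). The block \(Q\) is torsion-free on \(D_i\), so it embeds in the \(K\)-vector space \(Q_K=Q\otimes K\), and \(R:=R_{i,a}\) acts \(K\)-linearly on it. Write \(N\) for the nilpotent part of \(R\) over an algebraic closure. Since \(K\) has characteristic zero, \(N\) is a polynomial in \(R\) with coefficients in \(K\), so \(N\) is already defined over \(K\).

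We use the kernel filtration \(W_k:=\ker N^k\subset Q_K\), \(k=0,\dots,m\), where \(N^m=0\). Each \(W_k\) is \(R\)-invariant, and \(N\) maps \(W_k\) into \(W_{k-1}\). Hence on each \(W_k/W_{k-1}\) the induced \(R\) equals its semisimple part and is therefore semisimple.

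For the tangential Higgs operators, write \(\theta=\sum_j\theta_j\,dz_j/z_j+\sum_{j'}\theta_{j'}\,dz_{j'}\) locally. Integrability \(\theta\wedge\theta=0\) makes the residue \(\theta_i|_{D_i}\) commute with the operators induced on \(\Gr_i^a\) by the other components, both the log components along the other \(D_j\) and the tangential components. Anything commuting with \(R\) commutes with the polynomial \(N\), so it preserves every \(\ker N^k\). This step is the main thing to check. One has to verify that the induced operators are well defined on \(Q\) as maps \(Q\to Q\otimes\Omega^1_{D_i}(\log D\cap D_i)\), which uses \(\theta(F_i^a)\subset F_i^a\otimes\Omega^1_X(\log D)\), and that the commutation holds after restriction. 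Both follow from the coefficient of \(dz_i/z_i\wedge dz_j\) in \(\theta\wedge\theta=0\) evaluated along \(z_i=0\).

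Next comes saturation. Set \(Q_k:=Q\cap W_k\) inside \(Q_K\). This is the largest subsheaf of \(Q\) with generic fiber \(W_k\), so \(Q/Q_k\) injects into \(Q_K/W_k\) and is torsion-free. Consequently each \(Q_k\) is saturated and each \(Q_k/Q_{k-1}\) is torsion-free, since it is a subsheaf of \(Q/Q_{k-1}\). The operators preserve \(Q\) and \(W_k\), hence preserve \(Q_k\). The residue on \(Q_k/Q_{k-1}\) is semisimple at the generic point by the previous paragraph, which is exactly the generic condition in Definition~\ref{defn:graded-semisimple}.

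Finally we lift. Let \(\widetilde F_k\subset F_i^aE\) be the preimage of \(Q_k\), so that \(F_i^{>a}E=\widetilde F_0\subset\cdots\subset\widetilde F_m=F_i^aE\). Each \(\widetilde F_k\) is \(\theta\)-invariant: \(\theta\) maps \(F_i^aE\) into \(F_i^aE\otimes\Omega^1(\log D)\), and its image in \(\Gr\) is given by \(R\) together with the induced operators, all of which preserve \(Q_k\). The other components of \(\theta\) land in \(F_i^{>a}\) terms only through \(z_i\cdot(\cdot)\), which lies in \(F_i^{a+1}\subset F_i^{>a}\).

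We assign the new steps distinct weights in \((a-\epsilon,a]\), chosen below every other jump, and extend periodically by \(\widetilde F(-D_i)\). This gives a filtration that is decreasing, left-continuous and saturated, because \(E/\widetilde F_k\) is an extension of torsion-free \(\cO_{D_i}\)-modules, and the original terms are unchanged. Doing this block by block for every \(i\) and every jump \(a\) produces a graded-semisimple regular parabolic Higgs sheaf.
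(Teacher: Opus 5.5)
Your proof is correct, but it takes a different route from the paper's.

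\emph{The paper's route.} It first uses compactness: the characteristic coefficients of the graded residue are holomorphic on a compact component of \(D_i\), hence constant. It then splits \(Q\) generically into saturated generalized eigensheaves, using the coprime factors of the minimal polynomial. Finally it filters each eigensheaf by the saturated kernels of \((R_{i,a}-\lambda)^k\), so that every graded piece carries the scalar residue \(\lambda\).

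\emph{Your route.} You take the Jordan--Chevalley decomposition \(R=S+N\) at the generic point and use a single chain of saturated kernels of \(N^k\). The residue on each graded piece is then the induced semisimple part \(S\), which may have several eigenvalues.

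\emph{Comparison.} Your argument needs neither compactness nor eigenvalue bookkeeping. The observation that anything commuting with \(R\) commutes with \(N\in K[R]\) also handles all the Higgs coefficients in one stroke. The price is that, without constant eigenvalues, \(N\) is a priori only a meromorphic endomorphism of \(Q\). In the analytic setting you should therefore read ``function field'' as the fraction fields of the local rings \(\cO_{D_i,x}\), since the global meromorphic function field of a compact complex manifold can be too small. You then glue the local decompositions by uniqueness of the Jordan--Chevalley decomposition and realize \(Q_k\) as the kernel of a denominator-cleared \(gN^k\). The paper's route produces the holomorphic operators \((R-\lambda)^k\) directly. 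It also yields graded residues that are scalar everywhere, not merely generically semisimple, which is the form used later: the square-free polynomial \(P_i\) with constant roots in Theorem~\ref{thm:fixed-terminal-data}, and the eigensheaf decompositions behind the joint-spectral frames. With your coarser filtration these properties still hold, but only after invoking constancy of eigenvalues and torsion-freeness at that later stage.

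One small slip in your last paragraph: placing the new weights in \((a-\epsilon,a]\) violates the normalization \(F_i^0E=E\) and \(0\le w<1\) for the block at \(a=0\). Choose them in \([a,a+\epsilon)\) instead, with the smallest equal to \(a\). The lemma itself does not require fixing weights; the paper chooses them only in Theorem~\ref{thm:fixed-terminal-data}.
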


\begin{proof}
Fix an original graded block \(Q\) on a connected component of \(D_i\), and let
\(R_i\) be the induced residue. The coefficients of the characteristic
polynomial are holomorphic on the compact component, hence constant.
Factor the minimal polynomial as
\[
 \prod_{\lambda}(T-\lambda)^{m_\lambda}.
\]
The coprime factors give the generalized-eigensheaf decomposition on the
generic locus. Take the saturated kernels of these factors in \(Q\); their
quotients are torsion-free. Inside the \(\lambda\)-summand, put
\[
 K_{\lambda,k}
 =\operatorname{Sat}_Q\ker(R_i-\lambda)^k,
 \qquad 0\le k\le m_\lambda.
\]
After deleting the codimension-two nonfree loci on \(D_i\), these are
subbundles. Since
\[
 (R_i-\lambda)K_{\lambda,k}\subset K_{\lambda,k-1},
\]
the induced residue on
\(K_{\lambda,k}/K_{\lambda,k-1}\) is multiplication by \(\lambda\), hence is
semisimple. Saturation makes every nonzero quotient torsion-free. Taking
inverse images in the original filtration term lifts these chains without
altering either endpoint of the block. Integrability makes every
tangential logarithmic Higgs coefficient commute with \(R_i\); it therefore
preserves the generalized eigensheaves and all the saturated kernels \(K_{\lambda,k}\).
\end{proof}

\subsection{Parabolic sheaves and analytic root stacks}

The equivalence between parabolic bundles with fixed rational denominators
and vector bundles on the corresponding root stack is due to Borne
\cite{Borne2007}. Borne--Vistoli extended it to parabolic sheaves on
logarithmic schemes \cite{BorneVistoli}. The following proposition gives the
analytic logarithmic-Higgs version needed here.

For denominators \(\mathbf r=(r_i)\), a coherent parabolic system means
the version of \cite[Definition~5.6]{BorneVistoli} with coherent
analytic terms and decreasing indices.
Its transition maps commute, and the periodicity isomorphisms
\[
 E^{(\mathbf k+r_i\mathbf e_i)/\mathbf r}
 \simeq E^{\mathbf k/\mathbf r}(-D_i)
\]
commute with one another and with the transitions. Under these
isomorphisms, the composite of \(r_i\) successive transitions in the
\(i\)-th direction must be the canonical divisor morphism
\(E^{\mathbf k/\mathbf r}(-D_i)\to E^{\mathbf k/\mathbf r}\).
Morphisms commute with all these maps. The transitions need not be
injective, unlike the filtration inclusions in
Definition~\ref{defn:all-real}.

\begin{prop}
\label{prop:analytic-root-correspondence}
Let \(X\) be a complex manifold, let
\(D=\sum_{i\in I}D_i\) be an ordered SNC divisor, and choose a tuple
\(\mathbf r=(r_i)\) of positive integers. Write
\[
 p:\mathcal R=\sqrt[(r_i)]{(X,D)}\longrightarrow X
\]
for the analytic root stack, with tautological line bundles
\(\mathcal N_i\) and reduced boundary \(D_{\mathcal R}\).
The assignment
\[
 \mathcal F\longmapsto
 \left\{E^{\mathbf k/\mathbf r}
 =p_*\left(\mathcal F\otimes\bigotimes_i\mathcal N_i^{-k_i}\right)
 \right\}_{\mathbf k\in\mathbb Z^I},
\]
where \(\mathbf k/\mathbf r=(k_i/r_i)_{i\in I}\), defines an exact
equivalence between coherent analytic sheaves on \(\mathcal R\) and
coherent parabolic systems on \((X,D)\) with these denominators.

Under this equivalence, vector bundles correspond exactly to locally
abelian parabolic bundles. It extends to an equivalence for integrable
logarithmic Higgs sheaves, with poles along \(D_{\mathcal R}\) upstairs
and compatible logarithmic Higgs fields along \(D\) on the parabolic
terms downstairs. If every term \(E^{\mathbf k/\mathbf r}\) is
torsion-free, then \(\mathcal F\) is torsion-free.
\end{prop}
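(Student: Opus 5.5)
The plan is to reduce everything to a local statement and then glue. Both sides of the correspondence are stacks for the analytic topology on \(X\): coherent sheaves on \(\mathcal R\) form a stack over \(X\) via \(p\), and so do coherent parabolic systems. The functor \(\mathcal F\mapsto\{E^{\mathbf k/\mathbf r}\}\) is compatible with restriction to open sets. It therefore suffices to prove that it is an exact equivalence over a small SNC chart \(W\subset X\) where \(D=\{z_1\cdots z_\ell=0\}\).

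On such a chart, \(\mathcal R|_W\) is the quotient stack \([\widetilde W/\mu]\). Here \(\widetilde W\) is the chart with coordinates \(w_i\), \(w_i^{r_i}=z_i\) for \(i\le\ell\), and \(\mu=\prod_{i\le\ell}\mu_{r_i}\) acts diagonally. The tautological bundle \(\mathcal N_i\) is the trivial bundle twisted by the \(i\)-th character, with section \(w_i\). A coherent sheaf on \(\mathcal R|_W\) is a \(\mu\)-equivariant coherent \(\cO_{\widetilde W}\)-module \(\widetilde{\mathcal F}\). Since \(\mu\) is finite and linearly reductive, \(p_*\) amounts to taking invariants, which is exact. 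Decompose \(\widetilde{\mathcal F}=\bigoplus_{\chi}\widetilde{\mathcal F}_\chi\) into isotypic pieces as modules over \(\cO_W=\cO_{\widetilde W}^\mu\). Then \(E^{\mathbf k/\mathbf r}\) is the isotypic component of \(\widetilde{\mathcal F}\) of character \(\chi^{\mathbf k}\) (with the appropriate sign). The transition maps are multiplication by \(w_i\). The periodicity isomorphisms come from \(w_i^{r_i}=z_i\), and the required compatibility (the \(r_i\)-fold composite of transitions equals the divisor map) holds by construction.

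The inverse functor sends a coherent parabolic system to \(\bigoplus_{\mathbf k\in\prod[0,r_i)}E^{\mathbf k/\mathbf r}\). The \(\cO_{\widetilde W}\)-module structure is defined by letting \(w_i\) act through the transitions, using periodicity to wrap around. The commutativity axioms make this an associative, commutative action, and coherence is clear since the sum is finite over \(\cO_W\). The two composites are naturally isomorphic, and exactness follows from exactness of invariants. This is the analytic version of the argument in Borne--Vistoli \cite{BorneVistoli}, and no GAGA is needed because everything is finite over \(\cO_W\).

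Next comes the vector-bundle statement. If \(\widetilde{\mathcal F}\) is equivariantly locally free, then by linear reductivity near a fixed point it is a sum of equivariant line bundles \(\cO\cdot\chi\). Each of these gives a parabolic line bundle, so the system is locally abelian. Conversely, a direct sum of parabolic line bundles corresponds to a sum of characters twisted by \(\cO\), which is locally free.

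For Higgs fields, a logarithmic form on \(\widetilde W\) satisfies \(dw_i/w_i=r_i^{-1}dz_i/z_i\). Hence \(p^*\Omega^1_X(\log D)\cong\Omega^1_{\mathcal R}(\log D_{\mathcal R})\) is invariant. An equivariant logarithmic Higgs field therefore preserves isotypic components and commutes with multiplication by \(w_i\). That is exactly a compatible family of logarithmic Higgs fields on the \(E^{\mathbf k/\mathbf r}\), and integrability corresponds termwise.

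For torsion-freeness, a torsion section of \(\widetilde{\mathcal F}\) has torsion isotypic components. These lie in the torsion-free \(\cO_W\)-modules \(E^{\mathbf k/\mathbf r}\), since \(\cO_{\widetilde W}\) is finite over \(\cO_W\) and annihilators meet \(\cO_W\) nontrivially, so they vanish.

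The main obstacle is the gluing step. One must check that the local equivalences are canonical, meaning independent of the chosen coordinates and chart presentation, so that they descend to an equivalence of stacks over \(X\). The approach is to express the functor intrinsically as \(p_*(\mathcal F\otimes\mathcal N^{-\mathbf k})\), which is manifestly global. The local computation then only has to show that the unit and counit maps of this global functor are isomorphisms, which is a local property.
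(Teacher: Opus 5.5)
Your proposal is correct and follows essentially the same route as the paper. Both arguments use the local presentation $[\widetilde W/\mu]$ with the Borne--Vistoli character-component reconstruction, gluing by locality of the globally defined functor, equivariant character frames for the vector-bundle statement, and $\mu$-invariance of the logarithmic forms $dw_i/w_i$ for Higgs fields; for torsion-freeness, your nonzero invariant annihilator is the paper's explicit norm $\prod_{g}g(b)$, and, as in the paper, that argument is best run at the center of a root chart, where the local ring upstairs is a domain.
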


\begin{proof}
We follow the graded-module proof of
\cite[Theorem~6.1 and Example~6.2]{BorneVistoli}, with the following
analytic and logarithmic-Higgs adaptations. Our decreasing index
\(\mathbf k/\mathbf r\) corresponds to the index
\(-\mathbf k/\mathbf r\) there, accounting for the inverse
tautological twists in the statement.

On a coordinate polydisc \(P\) with
\(D\cap P=\{z_1\cdots z_s=0\}\), the root stack is
\([\widetilde P/G]\), where \(\widetilde P\to P\) is the finite cover
with algebra
\[
 B=\cO_P[w_1,\ldots,w_s]/(w_i^{r_i}-z_i),
 \qquad G=\prod_{i=1}^s\mu_{r_i}.
\]
Coherent analytic sheaves on \([\widetilde P/G]\) are represented by
coherent \(B\)-modules with a compatible \(G\)-action. Give \(w_i\) character
\(-\mathbf e_i\) and a local frame \(n_i\) of \(\mathcal N_i\)
character \(\mathbf e_i\), so that the tautological section \(w_in_i\)
is invariant. The functor in the statement selects the character component
of a module \(M\) indexed by the class
\(\overline{\mathbf k}\in\prod_i\mathbb Z/r_i\mathbb Z\).
In this chart, the graded reconstruction in the cited proof reduces,
by periodicity, to the finitely many terms \(0\le k_i<r_i\), with
the transitions giving multiplication by \(w_i\).
The periodicity condition gives the relation
\(w_i^{r_i}=z_i\). These operations preserve analytic coherence:
\(B\) is finite locally free over \(\cO_P\), and character projections
are finite averages. The same observations prove exactness.

The functor is defined by the global tautological line
bundles and sections and commutes with restriction. Local full
faithfulness lifts the identity of a restricted parabolic system to
unique overlap isomorphisms between its reconstructed sheaves.
Uniqueness gives the cocycle condition, and coherent analytic sheaves
glue for open covers. Thus the local equivalences give the asserted
global equivalence.

At the center of a root chart, a character basis of a vector-bundle
fiber lifts, after averaging and shrinking, to an equivariant frame.
The resulting character lines correspond to rank-one parabolic
bundles, and conversely their reconstruction is locally free. This
proves the vector-bundle assertion.

For logarithmic Higgs fields, the local identity
\(p^*(dz_i/z_i)=r_i\,dw_i/w_i\) gives
\[
 p^*\Omega_X^1(\log D)
 \simeq\Omega_{\mathcal R}^1(\log D_{\mathcal R}).
\]
The logarithmic coordinate forms are \(G\)-invariant. Hence equivariant
\(B\)-linear Higgs fields correspond to fields on the character
components compatible with transitions and periodicity.
The condition \(\theta\wedge\theta=0\) is checked componentwise, so
the equivalence preserves integrability.

Finally, work at the center \(x\) of a root chart, with
\(A=\cO_{P,x}\). The algebra \(B_x\) is a domain with \(B_x^G=A\).
If all parabolic terms are torsion-free, their finite character
decomposition makes \(M_x\) torsion-free over \(A\).
Every nonzero \(b\in B_x\) has the nonzero invariant multiple
\(\prod_{g\in G}g(b)\in A\). Thus \(M_x\) is also torsion-free over
\(B_x\), proving the final assertion.
\end{proof}

\subsection{Canonical flattening and boundary-compatible resolution}

The next propositions make a torsion-free logarithmic Higgs sheaf locally
free after a modification, with an SNC boundary on the source. The
construction leaves unchanged a prescribed open subset where the sheaf
is already locally free.

\begin{prop}
\label{prop:grassmannian-flattening}
Let \(\mathcal X\) be a smooth reduced complex-analytic Deligne--Mumford
stack, and let \(\mathcal F\) be a torsion-free coherent sheaf of constant
rank \(r\). Let \(\mathcal L\subset\mathcal X\) be its locally free
locus. There exist a reduced analytic Deligne--Mumford stack
\(\mathcal X_{\mathcal F}\), a representable projective proper modification
\[
 \rho:\mathcal X_{\mathcal F}\longrightarrow\mathcal X
\]
which is an isomorphism over \(\mathcal L\), and a rank-\(r\) vector bundle
\[
 \mathcal Q
 =\rho^*\mathcal F/\Tor(\rho^*\mathcal F).
\]
The construction is canonical under automorphisms of
\((\mathcal X,\mathcal F)\), and is equivariant on every finite quotient
chart.
\end{prop}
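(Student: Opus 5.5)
The plan is the classical Grassmannian (Rossi--Riemenschneider) flattening, carried out on an étale atlas and descended using canonicity. First I work on a single chart \(U\to\mathcal X\), with \(U\) a smooth complex manifold. Since \(\mathcal F\) is coherent, after shrinking \(U\) there is a presentation \(\cO_U^{N}\twoheadrightarrow\mathcal F\). At a point \(x\in\mathcal L\), the fiber \(\mathcal F(x)\) is an \(r\)-dimensional quotient of \(\CC^N\). This gives a holomorphic map \(\mathcal L\cap U\to\Gr(\CC^N,r)\), the Grassmannian of \(r\)-dimensional quotients. I take \(U_{\mathcal F}\) to be the closure of its graph in \(U\times\Gr(\CC^N,r)\), with the reduced structure. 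By Remmert's proper mapping theorem together with the torsion-free hypothesis, this closure is an analytic subset. The cleanest route to analyticity is the standard one: \(U\setminus\mathcal L\) is a proper analytic subset, and the graph closure agrees with the closure of the locus of points where the quotient \(\CC^N\to\mathcal F(x)\otimes\ldots\) factors through the tautological quotient. This locus is cut out by the vanishing of the induced maps \(\Lambda^{r+1}\) applied to \(\ker\), so it is analytic. I then take the union of those irreducible components that meet the graph of \(\mathcal L\). The projection \(\rho_U\) is proper, because the Grassmannian is compact, and projective, since it is the restriction of a projection from a relative Grassmannian. It is also bimeromorphic and an isomorphism over \(\mathcal L\cap U\).

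Second, I show that \(\mathcal Q=\rho^*\mathcal F/\Tor\) is locally free of rank \(r\). On \(U_{\mathcal F}\) the tautological rank-\(r\) quotient bundle \(T\) of \(\Gr\) receives a surjection from \(\rho^*\cO^N\). The surjection \(\rho^*\cO^N\to\rho^*\mathcal F\) and the map \(\rho^*\cO^N\to T\) have the same kernel over the dense open set \(\rho^{-1}(\mathcal L)\). Because \(T\) is torsion-free on the reduced space, whose components all dominate, the kernel of \(\rho^*\cO^N\to\rho^*\mathcal F\) maps to zero in \(T\) generically, hence identically. So \(\rho^*\mathcal F\to T\) is a surjection which is an isomorphism on the dense open set, and its kernel is exactly the torsion. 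Therefore \(\mathcal Q\cong T\) is a vector bundle.

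Third, independence of the presentation and canonicity. Given two presentations \(\cO^N\twoheadrightarrow\mathcal F\) and \(\cO^{M}\twoheadrightarrow\mathcal F\), I compare both with the presentation \(\cO^{N+M}\twoheadrightarrow\mathcal F\). The induced closed embedding of Grassmannians \(\Gr(\CC^N,r)\hookrightarrow\Gr(\CC^{N+M},r)\), defined over the locus where the first summand already surjects, identifies the graph closures. An alternative intrinsic description avoids this comparison: \(U_{\mathcal F}\) is the unique reduced modification, universal among those on which \(\rho^*\mathcal F/\Tor\) is locally free, which settles the question via the universal property. With the universal property, any automorphism of \((\mathcal X,\mathcal F)\), and in particular the descent data between overlapping étale charts and the finite-group actions on quotient charts \([V/G]\), lifts uniquely to \(U_{\mathcal F}\). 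The local flattenings therefore glue to a representable modification \(\mathcal X_{\mathcal F}\to\mathcal X\), and this modification is equivariant on every quotient chart. Representability is automatic, because the construction is a closed substack of a relative Grassmannian over \(\mathcal X\).

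I expect the main obstacle to be the universal property, and therefore canonicity, rather than the local construction. Any \(f:W\to U\) with \(f^{-1}\mathcal L\) dense and \(f^*\mathcal F/\Tor\) locally free of rank \(r\) yields a quotient \(\cO^N\twoheadrightarrow f^*\mathcal F/\Tor\), and hence a map to \(\Gr\) which lands in the graph closure. I would verify this first, using density of \(f^{-1}\mathcal L\) and reducedness. It gives both the independence of choices and the equivariance in one stroke.
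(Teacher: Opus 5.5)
Your proposal is correct and is essentially the paper's proof. The paper also takes the reduced closure of the identity section over \(\mathcal L\) inside the relative Grassmannian \(\Gr_{\mathcal X}(\mathcal F,r)\), which is your locus in \(U\times\Gr(\CC^N,r)\) where the relations of \(\mathcal F\) vanish in the tautological quotient, and it identifies the restricted universal quotient with \(\rho^*\mathcal F/\Tor(\rho^*\mathcal F)\) by the same generic-kernel and torsion argument. The only difference is that the paper gets presentation-independence, gluing and canonicity at once from the fact that \(\Gr_{\mathcal X}(\mathcal F,r)\) represents the functor of rank-\(r\) locally free quotients of \(\mathcal F\). Your comparison of presentations and your universal property are therefore valid but unnecessary, and the component-closure argument you give supersedes your appeal to Remmert's theorem, which does not by itself justify analyticity of the closure.
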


\begin{proof}
Let
\[
 \mathcal G=\operatorname{Gr}_{\mathcal X}(\mathcal F,r)
\]
be the relative analytic Grassmannian of locally free rank-\(r\) quotients.
On an atlas chart \(W\), choose a finite presentation
\[
 \cO_W^a\longrightarrow\cO_W^b\longrightarrow\mathcal F|_W
 \longrightarrow0.
\]
The ordinary relative Grassmannian of rank-\(r\) quotients of \(\cO_W^b\)
is \(W\times\operatorname{Gr}(r,b)\). The condition that the image of
\(\cO_W^a\) vanish in the universal quotient is a closed analytic condition.
These local spaces represent the same quotient functor on overlaps, so they
glue to a representable analytic stack \(\mathcal G\), projective over
\(\mathcal X\), with a universal quotient
\[
 p^*\mathcal F\twoheadrightarrow\mathcal Q_{\mathcal G}.
\]

Over \(\mathcal L\), a surjection between locally free sheaves of the same
rank is an isomorphism. Thus \(\mathcal G|_{\mathcal L}\) is the section
defined by the identity quotient. Define \(\mathcal X_{\mathcal F}\) to be its
reduced analytic closure in \(\mathcal G\). Over each connected component
of the smooth stack \(\mathcal X\), this closure is irreducible. Its
restriction \(\rho\) is
projective and proper, its image is closed and contains the dense open
\(\mathcal L\), and it is the identity over \(\mathcal L\).

Restrict the universal quotient to obtain
\[
 \rho^*\mathcal F\twoheadrightarrow\mathcal Q.
\]
Its kernel vanishes generically, hence is torsion. Conversely, a torsion
section maps to zero in the locally free target, since multiplication by a
local non-zero-divisor is injective on a
free module over the reduced source. The kernel is therefore exactly
\(\Tor(\rho^*\mathcal F)\), proving the asserted description of \(\mathcal Q\).

An automorphism of \((\mathcal X,\mathcal F)\) acts on the represented
quotient functor, preserves the identity section, and hence preserves its
reduced closure and universal quotient. This proves canonicity and
equivariance.
\end{proof}

\begin{prop}
\label{prop:relative-boundary-resolution}
Let \(\mathcal X\) be a reduced compact complex-analytic Deligne--Mumford
stack with a finite ordered divisorial boundary \(\mathcal B\), and let
\(\mathcal W\subset\mathcal X\) be an open subset on which
\((\mathcal X,\mathcal B)\) is a smooth SNC pair. Then there is a finite
composition of representable analytic blow-ups in smooth centers,
\[
 g:(\mathcal X',\mathcal B')
 \longrightarrow(\mathcal X,\mathcal B)
\]
such that every center is disjoint from the inverse image of
\(\mathcal W\). Here \(\mathcal B'\) is the ordered complete transform of
\(\mathcal B\); in particular, \(g\) is boundary-compatible, i.e.,
\[
 g^{-1}\bigl(\Supp\mathcal B\bigr)\subset\Supp\mathcal B'.
\]
Moreover, \(\mathcal X'\) is smooth and compact, \(\mathcal B'\) is SNC,
and \(g\) is a representable projective proper morphism which is an
isomorphism over \(\mathcal W\). If the stabilizers of \(\mathcal X\) are
finite abelian, the same holds for \(\mathcal X'\); if the stabilizer of
\(\mathcal X\) is trivial on a dense open of every irreducible component,
the same holds for \(\mathcal X'\).
\end{prop}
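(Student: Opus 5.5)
Proposition~\ref{prop:relative-boundary-resolution} will follow from a functorial embedded resolution theorem for complex-analytic spaces, applied first to \(\mathcal X\) and then to the boundary, and transported to Deligne--Mumford stacks by functoriality. The natural tool is the canonical (functorial) resolution of singularities and principalization of Bierstone--Milman, or Temkin's extension to analytic spaces and stacks. These algorithms are defined by blow-ups in smooth centers lying over the locus where the object is not yet ``good'': non-smooth points, or points where the boundary is not SNC. They are compatible with smooth (in particular étale) morphisms, and over a relatively compact base they terminate after finitely many steps.

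\textbf{Step 1 (resolving \(\mathcal X\)).} Choose an étale atlas \(U\to\mathcal X\) by analytic spaces, with \(R=U\times_{\mathcal X}U\rightrightarrows U\) étale. Apply canonical resolution to \(U\). Functoriality for étale maps means the two pullbacks of the resolution sequence to \(R\) agree, so each center descends to a closed smooth substack and each blow-up is representable. The centers lie over the singular locus. That locus is disjoint from \(\mathcal W\), since \(\mathcal W\) is smooth, so the centers avoid the preimage of \(\mathcal W\). Compactness of \(\mathcal X\) ensures that the invariant takes finitely many values, so the process terminates after finitely many blow-ups.

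\textbf{Step 2 (making the boundary SNC).} On the smooth stack obtained in Step 1, consider the reduced total transform of \(\mathcal B\) together with the exceptional divisors. Apply canonical embedded principalization, i.e.\ the log-resolution of the ideal of \(\Supp\) of this divisor. It uses blow-ups in smooth centers with SNC crossings with the accumulated exceptional divisor, located over the non-SNC locus. Again étale functoriality gives descent, and the centers avoid \(\mathcal W\), where the pair is already SNC.

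After these steps, define \(\mathcal B'\) as the ordered complete transform: the strict transforms of the \(\mathcal B_i\) in their original order, followed by the exceptional divisors in the order they were created. Boundary compatibility, \(g^{-1}(\Supp\mathcal B)\subset\Supp\mathcal B'\), holds because every center lies over \(\Supp\mathcal B\). For Step 2 this is immediate. For Step 1 we may also add the exceptional divisors whose centers are not in \(\Supp\mathcal B\); alternatively, note that only these need adding, and the definition of complete transform includes all exceptional components. Projectivity and properness hold because each blow-up is projective and representable.

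\textbf{Step 3 (stabilizers).} Blowing up along a smooth \(G\)-invariant center in a chart \([V/G]\) gives \([\mathrm{Bl}V/G]\) with the same \(G\). Stabilizers of points on the blow-up are therefore subgroups of stabilizers downstairs, so finite abelian groups persist. The generic-triviality statement follows because each blow-up is an isomorphism over a dense open of every irreducible component. Exceptional divisors are not new components, and the dense open where the stabilizer is trivial meets the complement of the center.

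The main obstacle is Step 1 and Step 2 descent in the analytic, non-algebraic stack setting. The requirements are: the resolution algorithm must be genuinely functorial for étale (not just open) analytic morphisms, and it must terminate globally rather than only locally. I would first invoke Temkin's functorial desingularization for complex-analytic spaces (or Bierstone--Milman's, which is functorial for local isomorphisms), and then use compactness of \(\mathcal X\) to bound the number of steps.
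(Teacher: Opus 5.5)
Your architecture is the same as the paper's: a functorial desingularization on an étale atlas $U\to\mathcal X$, descent through $R=U\times_{\mathcal X}U\rightrightarrows U$, compactness for finiteness, and injectivity of stabilizers under representable blow-ups. The gap is in Step~2, which invokes a tool that lacks the property you attribute to it. Ordinary embedded principalization of the ideal of the support of the boundary does not keep its centers over the non-SNC locus. For example, take $\mathcal B=(\{x=0\},\{y=0\},\{z=0\})$ on $\CC^3$ with $\mathcal W=\CC^3$. The ideal $(xyz)$ has maximal order three at the origin, so order reduction blows up the origin, and the centers meet $\mathcal W$.

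There is a second, related problem. You discard the ordering and work with the reduced union on the atlas. But simplicity of normal crossings for an unordered divisor is not an étale-local condition, so it cannot be the target of an algorithm whose étale functoriality you then use for descent. Take $\mathcal B=(\mathcal B_1)$ with $\mathcal B_1$ a nodal curve on a smooth surface. The node lies outside $\mathcal W$, yet on a small étale chart the union is two transverse smooth branches. An étale-functorial procedure that avoids the étale-local SNC locus therefore leaves the node untouched. The strict transform of $\mathcal B_1$, which you list as a single member of $\mathcal B'$, then stays singular, so $\mathcal B'$ is not SNC.

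The missing ingredient is a desingularization of pairs with an \emph{ordered} boundary, with three properties. Each member must become regular and the members must cross normally. The centers must avoid the locus where the pair is already regular with SNC boundary. And it must be functorial for regular morphisms that are strict for the pulled-back ordered boundaries, as both projections $R\rightrightarrows U$ are. This is Temkin's strong boundary desingularization, which the paper applies directly to the ordered pair on the atlas. Since it also resolves the singularities of $\mathcal X$, your Step~1 becomes unnecessary. With that substitution, your descent, finiteness, boundary-compatibility and stabilizer arguments go through essentially as in the paper.
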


\begin{proof}
Apply the strong boundary desingularization of
\cite[Thms.~1.1.6 and~1.1.13]{Temkin12} to an analytic étale atlas
\(U\to\mathcal X\). We use ``semi-regular'' and ``strict regular'' in
the terminology of that reference. The centers avoid all inverse images
of the semi-regular locus, and the construction is functorial for strict regular morphisms after
empty blow-ups are omitted. For
\[
 R=U\times_{\mathcal X}U,
\]
both projections to \(U\) are étale and strict regular for the pulled-back
ordered boundaries. The two pullbacks of the resolution to \(R\) therefore
agree. The centers, blow-ups, and ordered complete transforms descend stage
by stage through the analytic groupoid.

We check that only finitely many blow-ups are needed. When the atlas is
not quasi-compact, the analytic theorem gives a locally finite
hypersequence. Every point of the compact stack has a
neighborhood meeting only finitely many nonempty centers. A finite
subcover therefore meets only finitely many stages, so deletion of empty
stages gives the required finite sequence of blow-ups.
Smoothness of the centers and terminal atlas is étale-local. The terminal
complete transform is a divisorial semi-regular boundary on a smooth stack,
hence is SNC. By construction it contains the reduced inverse image of the
original boundary.

The open subset \(\mathcal W\) is contained in the semi-regular locus, so
every center avoids its inverse image. Analytic blow-ups are representable,
projective, and proper, and
these properties are stable under composition. For a representable
morphism, each stabilizer upstairs injects into a stabilizer downstairs;
subgroups of finite abelian groups remain finite abelian. Finally, the
composite is an isomorphism on a dense open subset of each original
component. Intersecting this subset with the locus of trivial stabilizers
gives a dense open with trivial stabilizers on its strict transform.
Thus generic triviality is preserved.
\end{proof}

\begin{lem}
\label{lem:total-torsion-free-pullback}
Let \(h:\mathcal T\to\mathcal X\) and
\(\delta:\mathcal Y\to\mathcal T\) be holomorphic morphisms between smooth
reduced analytic Deligne--Mumford stacks. Let \(\mathcal F\) be torsion-free
on \(\mathcal X\), and let
\[
 q_T:h^*\mathcal F\twoheadrightarrow
 \mathcal V_T:=h^*\mathcal F/\Tor(h^*\mathcal F)
\]
be the canonical quotient map. Suppose that \(\mathcal V_T\) is locally
free and that \(q_T\) is an isomorphism over an open subset
\(\mathcal W\subset\mathcal T\) such that
\(\delta^{-1}(\mathcal W)\) is dense in \(\mathcal Y\). If
\(f=h\circ\delta\), then \(q_T\)
induces an isomorphism
\[
 \overline q:\mathcal V_Y
 :=f^*\mathcal F/\Tor(f^*\mathcal F)
 \xrightarrow{\ \simeq\ }\delta^*\mathcal V_T.
 \tag{3.1}\label{eq:composed-tf-pullback}
\]

Suppose in addition that the three stacks carry SNC divisors
\(D_{\mathcal X},D_{\mathcal T},D_{\mathcal Y}\), and that
\[
 h:(\mathcal T,D_{\mathcal T})\longrightarrow
   (\mathcal X,D_{\mathcal X}),\qquad
 \delta:(\mathcal Y,D_{\mathcal Y})\longrightarrow
   (\mathcal T,D_{\mathcal T})
\]
are boundary-compatible. Thus there are natural logarithmic differential
maps
\[
\begin{aligned}
 d_{\log}h:\;&h^*\Omega_{\mathcal X}^1(\log D_{\mathcal X})
 \longrightarrow\Omega_{\mathcal T}^1(\log D_{\mathcal T}),\\
 d_{\log}\delta:\;&
 \delta^*\Omega_{\mathcal T}^1(\log D_{\mathcal T})
 \longrightarrow\Omega_{\mathcal Y}^1(\log D_{\mathcal Y}).
\end{aligned}
\]
Let \(\Theta\) be an integrable logarithmic Higgs field on \(\mathcal F\).
It induces a Higgs field \(\Theta_T\) on \(\mathcal V_T\) and a Higgs
field \(\Theta_Y\) on \(\mathcal V_Y\). Then
\eqref{eq:composed-tf-pullback} is an isomorphism of logarithmic Higgs
bundles; equivalently, the following diagram commutes:
\[
\begin{tikzcd}[column sep=5.2em,row sep=2.8em]
 \mathcal V_Y
   \arrow[r,"\overline q","\sim"']
   \arrow[d,"\Theta_Y"']
 & \delta^*\mathcal V_T
   \arrow[d,"\delta^*\Theta_T"] \\
 \mathcal V_Y\otimes\Omega_{\mathcal Y}^1(\log D_{\mathcal Y})
   \arrow[r,"\overline q\otimes\Id"']
 & \delta^*\mathcal V_T\otimes
   \Omega_{\mathcal Y}^1(\log D_{\mathcal Y}).
\end{tikzcd}
\]
Here \(\delta^*\Theta_T\) denotes logarithmic pullback using
\(d_{\log}\delta\).
\end{lem}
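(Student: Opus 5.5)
The plan is to build \(\overline q\) from functoriality of pullback and then test it against torsion. Since \(f^*\mathcal F=\delta^*h^*\mathcal F\) canonically, applying the right-exact functor \(\delta^*\) to \(q_T\) gives a surjection
\[
 \delta^*q_T:\;f^*\mathcal F\twoheadrightarrow\delta^*\mathcal V_T,
\]
whose target is locally free. On the smooth reduced source \(\mathcal Y\), a locally free sheaf has no torsion. Hence every torsion section of \(f^*\mathcal F\) maps to zero, and \(\delta^*q_T\) factors through a surjection \(\overline q:\mathcal V_Y\twoheadrightarrow\delta^*\mathcal V_T\).

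Injectivity of \(\overline q\) comes from comparing kernels with torsion. Over \(\delta^{-1}(\mathcal W)\), the map \(q_T\) is an isomorphism, so \(\delta^*q_T\) is an isomorphism there. The kernel of \(\delta^*q_T\) is therefore supported on the complement of the dense open \(\delta^{-1}(\mathcal W)\), which is a nowhere dense analytic subset. A coherent subsheaf with such support is torsion, so the kernel is contained in \(\Tor(f^*\mathcal F)\). The reverse inclusion was shown above, so the kernel equals the torsion and \(\overline q\) is an isomorphism. All of this is étale-local, so it suffices to argue on atlas charts and descend, exactly as in Proposition~\ref{prop:grassmannian-flattening}.

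For the Higgs statement I first define the fields. The field \(\Theta_T\) is the image of \(h^*\Theta\), composed with \(\Id\otimes d_{\log}h\), in \(h^*\mathcal F\otimes\Omega^1_{\mathcal T}(\log D_{\mathcal T})\). It descends to \(\mathcal V_T\): torsion maps into torsion of \(\mathcal V_T\otimes\Omega^1(\log)\), which is locally free and hence torsion-free, so torsion maps to zero. The field \(\Theta_Y\) is defined in the same way using \(d_{\log}f\).

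Boundary-compatibility gives the chain rule \(d_{\log}f=d_{\log}\delta\circ\delta^*d_{\log}h\). This holds because the logarithmic pullback maps are induced by the ordinary pullback of forms, which is functorial, and the identity can be checked on the dense open complement of the boundaries. Both composites around the square then agree on sections coming from \(f^*\mathcal F\), since they are the same pulled-back expression. Since \(f^*\mathcal F\to\mathcal V_Y\) is surjective, the square commutes.

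Integrability of \(\Theta_Y\) and \(\Theta_T\) follows by the same reasoning: \(\Theta\wedge\Theta=0\) pulls back, and its image in a torsion-free target vanishes. The step that needs the most care is the kernel-is-torsion argument when \(\mathcal Y\) has several components or stabilizers. There I will use that density of \(\delta^{-1}(\mathcal W)\) holds in every component, so a section supported off it is annihilated by a local non-zero-divisor. Everything else is formal.
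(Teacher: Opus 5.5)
Your proposal is correct and follows the paper's proof essentially step for step. It uses the same surjection $\delta^*q_T$ onto a locally free target, identifies its kernel with $\Tor(f^*\mathcal F)$ via density of $\delta^{-1}(\mathcal W)$, descends the pulled-back Higgs operators because torsion must vanish in a torsion-free target, and obtains the commutative square from the chain rule $d_{\log}f=d_{\log}\delta\circ\delta^*d_{\log}h$ together with surjectivity of $f^*\mathcal F\to\mathcal V_Y$. One small correction: the lemma does not assume that $\mathcal T\setminus\mathcal W$ is analytic, so you should instead note that the support of the coherent kernel is itself a closed analytic set with empty interior, which is all your torsion argument needs.
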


\begin{proof}
Pullback is right exact, so pulling \(q_T\) back by \(\delta\) gives the
natural surjection
\[
 q:=\delta^*q_T:
 f^*\mathcal F\twoheadrightarrow\delta^*\mathcal V_T.
\]
It is an isomorphism over \(\delta^{-1}(\mathcal W)\). Its kernel therefore
vanishes on a dense open subset and is torsion.
Conversely, every torsion section maps to zero in the locally free target.
Thus
\[
 \ker q=\Tor(f^*\mathcal F),
\]
so \(q\) induces the isomorphism \(\overline q\) in
\eqref{eq:composed-tf-pullback}.

For the final assertion, let \(\widetilde\Theta_T\) be the logarithmic
Higgs operator on \(h^*\mathcal F\) obtained by pulling \(\Theta\) back
using \(d_{\log}h\). It preserves the torsion subsheaf: if \(bs=0\) for a
local non-zero-divisor \(b\), then
\[
 b\widetilde\Theta_T(s)=\widetilde\Theta_T(bs)=0.
\]
Since the logarithmic cotangent sheaf is locally free, the operator
therefore descends to \(\Theta_T\) on \(\mathcal V_T\).

The logarithmic differential for \(f=h\circ\delta\) is the composite
\[
 f^*\Omega_{\mathcal X}^1(\log D_{\mathcal X})
 \xrightarrow{\ \delta^*(d_{\log}h)\ }
 \delta^*\Omega_{\mathcal T}^1(\log D_{\mathcal T})
 \xrightarrow{\ d_{\log}\delta\ }
 \Omega_{\mathcal Y}^1(\log D_{\mathcal Y}).
\]
The same torsion-invariance argument applied to the direct pullback by
\(f\) gives \(\Theta_Y\) on \(\mathcal V_Y\). Compatibility of the
logarithmic differentials implies that \(q=\delta^*q_T\) intertwines the
two pulled-back Higgs operators. Passing to the torsion-free quotient gives
the displayed commutative diagram. Finally, pullback and passage to a
Higgs-invariant quotient preserve the equation
\(\Theta\wedge\Theta=0\), so all induced Higgs fields remain integrable.
\end{proof}

\begin{prop}
\label{prop:protected-total-boundary}
Let \(\mathcal X\) be a compact smooth reduced complex-analytic
Deligne--Mumford stack, let \(D_{\mathcal X}\) be an SNC divisor,
and let \((\mathcal F,\Theta)\) be a torsion-free coherent
logarithmic Higgs sheaf on \((\mathcal X,D_{\mathcal X})\). Let
\(\mathcal W\subset\mathcal X\) be a dense open subset, assume that
\(\mathcal X\setminus\mathcal W\) is closed analytic,
and assume that \(\mathcal F|_{\mathcal W}\) is locally free.

Then there exist a smooth compact complex-analytic Deligne--Mumford stack
\(\mathcal T\), a representable projective proper modification
\[
 h:\mathcal T\longrightarrow\mathcal X
\]
which is an isomorphism over \(\mathcal W\), an SNC divisor
\(\Delta\) on \(\mathcal T\), and a logarithmic Higgs bundle
\((\mathcal V,\Theta_{\mathcal T})\) on \((\mathcal T,\Delta)\) such
that
\[
 \Supp(\Delta)\supset
 \Supp\bigl(h^{-1}D_{\mathcal X}\bigr)_{\mathrm{red}}
 \cup h^{-1}(\mathcal X\setminus\mathcal W),
\]
and canonically
\[
 \mathcal V
 =h^*\mathcal F/\Tor(h^*\mathcal F).
\]
Finite abelian stabilizers and trivial generic stabilizers are preserved.
\end{prop}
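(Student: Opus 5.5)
The proof combines the three preceding results: first flatten, then resolve the boundary, then identify the torsion-free pullback.

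\emph{Step 1: flattening.} Apply Proposition~\ref{prop:grassmannian-flattening} to \(\mathcal F\). This gives a representable projective modification \(\rho:\mathcal X_{\mathcal F}\to\mathcal X\). It is an isomorphism over the locally free locus \(\mathcal L\supset\mathcal W\), and \(\mathcal Q=\rho^*\mathcal F/\Tor(\rho^*\mathcal F)\) is locally free. The stack \(\mathcal X_{\mathcal F}\) is reduced and compact, being proper over the compact \(\mathcal X\), but it may be singular.

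\emph{Step 2: boundary resolution.} On \(\mathcal X_{\mathcal F}\), take the ordered divisorial boundary \(\mathcal B\) to be the closed analytic set
\[
 \rho^{-1}(D_{\mathcal X})\cup\rho^{-1}(\mathcal X\setminus\mathcal W).
\]
If \(\mathcal X\setminus\mathcal W\) has components of higher codimension, first replace \(\mathcal X_{\mathcal F}\) by the blow-up of the ideal of \(\rho^{-1}(\mathcal X\setminus\mathcal W)\); this makes that set an effective Cartier divisor without changing anything over \(\mathcal W\). Over \(\mathcal W\) the pair is \((\mathcal W,D_{\mathcal X}|_{\mathcal W})\), which is smooth and SNC. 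Proposition~\ref{prop:relative-boundary-resolution} then gives \(g:(\mathcal T,\Delta)\to(\mathcal X_{\mathcal F},\mathcal B)\). Here \(\mathcal T\) is smooth and compact, \(\Delta\) is SNC with \(g^{-1}\Supp\mathcal B\subset\Supp\Delta\), and \(g\) is an isomorphism over the preimage of \(\mathcal W\). Set \(h=\rho\circ g\). It is a composite of representable projective proper modifications, so it is one itself, and it is an isomorphism over \(\mathcal W\). The support condition on \(\Delta\) holds by construction. Stabilizer properties: \(\rho\) is representable, so stabilizers of \(\mathcal X_{\mathcal F}\) inject into those of \(\mathcal X\) and generic triviality passes along the dense isomorphism locus. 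Proposition~\ref{prop:relative-boundary-resolution} then transfers both properties to \(\mathcal T\).

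\emph{Step 3: the bundle and its Higgs field.} Define \(\mathcal V=h^*\mathcal F/\Tor(h^*\mathcal F)\). Apply the first part of Lemma~\ref{lem:total-torsion-free-pullback} with \(\mathcal T\) there equal to \(\mathcal X_{\mathcal F}\), \(\delta=g\), and open set \(\rho^{-1}(\mathcal W)\); its preimage is dense in \(\mathcal T\) since \(g\) is a modification. This gives \(\mathcal V\simeq g^*\mathcal Q\), so \(\mathcal V\) is locally free. This isomorphism needs only \(\mathcal Q\) locally free, so the possible singularity of \(\mathcal X_{\mathcal F}\) is harmless for it.

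For the Higgs field, do not use the intermediate singular stack. Instead apply the torsion-invariance argument of Lemma~\ref{lem:total-torsion-free-pullback} directly to \(h\). Since \(h^{-1}(D_{\mathcal X})\subset\Supp\Delta\), the map \(h:(\mathcal T,\Delta)\to(\mathcal X,D_{\mathcal X})\) is boundary-compatible, so \(d_{\log}h\) exists. The pulled-back operator on \(h^*\mathcal F\) preserves torsion and descends to an integrable logarithmic Higgs field \(\Theta_{\mathcal T}\) on \(\mathcal V\). Canonicity of the identification follows from the canonicity in Proposition~\ref{prop:grassmannian-flattening}.

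The main obstacle is Step 2. We must make \(\mathcal B\) an honest ordered divisorial boundary on the possibly singular flattening, and the auxiliary blow-up that makes \(\rho^{-1}(\mathcal X\setminus\mathcal W)\) Cartier must be representable and preserve the hypotheses of Proposition~\ref{prop:relative-boundary-resolution}. I would handle this by noting that blow-ups of coherent ideals are canonical, hence descend along the étale groupoid as in that proposition's proof.
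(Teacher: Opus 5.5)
Your proposal is correct and follows the paper's proof essentially step for step. The steps are: Grassmannian flattening; a principalizing blow-up of the complement of \(\rho^{-1}(\mathcal W)\), making it an effective Cartier divisor; boundary resolution protecting the preimage of \(\mathcal W\); identification of \(h^*\mathcal F/\Tor(h^*\mathcal F)\) with the pulled-back \(\mathcal Q\) via the dense isomorphism locus; descent of the Higgs field by torsion invariance along \(d_{\log}h\); and transfer of the stabilizer conditions by representability. The differences are cosmetic: the paper blows up unconditionally, noting that the blow-up is reduced and that \(\rho^{-1}(\mathcal W)\) is dense in \(\mathcal X_{\mathcal F}\), and it checks the kernel identification directly for \(h\) instead of invoking Lemma~\ref{lem:total-torsion-free-pullback} with a singular intermediate stack.
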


\begin{proof}
Apply Proposition~\ref{prop:grassmannian-flattening} to \(\mathcal F\).
It gives a reduced analytic Deligne--Mumford stack
\(\mathcal X_{\mathcal F}\), a representable projective proper
modification
\[
 \rho:\mathcal X_{\mathcal F}\longrightarrow\mathcal X,
\]
and a vector bundle
\[
 \mathcal Q
 =\rho^*\mathcal F/\Tor(\rho^*\mathcal F).
\]
The modification is an isomorphism over the locally free locus of
\(\mathcal F\), hence over \(\mathcal W\).

Let \(S\) be the reduced closed analytic complement of
\(\rho^{-1}(\mathcal W)\), and let \(\mathcal I_S\) be its
coherent ideal.
Since \(\mathcal X_{\mathcal F}\) is the reduced closure of the
section defined by the identity quotient and \(\mathcal W\) is dense in \(\mathcal X\),
the open subset \(\rho^{-1}(\mathcal W)\) is dense in
\(\mathcal X_{\mathcal F}\). Thus \(S\) is a proper closed analytic subset.
Form the analytic blow-up
\[
 \beta:\mathcal X_P
 =\operatorname{Bl}_{\mathcal I_S}\mathcal X_{\mathcal F}
 \longrightarrow\mathcal X_{\mathcal F}.
\]
It is representable, projective, proper, and an isomorphism outside
\(S\). Its universal property gives
\[
 \mathcal I_S\mathcal O_{\mathcal X_P}
 =\mathcal O_{\mathcal X_P}(-A)
\]
for an effective Cartier divisor \(A\) whose support is the inverse
image of \(S\). The blow-up is reduced: locally its Rees algebra
is a graded subalgebra of a polynomial algebra over the reduced local ring
of \(\mathcal X_{\mathcal F}\), and therefore contains no nonzero nilpotent
element.

On \(\mathcal X_P\), let \(\mathcal B_P\) be the ordered divisorial
boundary whose support is the union of \(A\) and the reduced divisorial
inverse image of \(D_{\mathcal X}\). Over the inverse image of
\(\mathcal W\), the stack is the original smooth stack and
\(\mathcal B_P\) is the original SNC divisor.
Apply Proposition~\ref{prop:relative-boundary-resolution} with the open
subset \((\rho\circ\beta)^{-1}(\mathcal W)\). It gives a representable
projective proper map
\[
 g:(\mathcal T,\Delta)\longrightarrow(\mathcal X_P,\mathcal B_P)
\]
which is an isomorphism over \(\mathcal W\), where \(\mathcal T\) is
smooth and compact and \(\Delta\) is the SNC complete transform of
\(\mathcal B_P\). Put
\[
 h=\rho\circ\beta\circ g,\qquad
 \mathcal V=g^*\beta^*\mathcal Q.
\]
All three factors are projective and proper and are isomorphisms over
\(\mathcal W\), so the same holds for \(h\). Since the complete transform
contains the inverse images of \(A\) and \(D_{\mathcal X}\),
the asserted containment of supports follows.

There is a natural surjection
\[
 h^*\mathcal F\twoheadrightarrow\mathcal V.
\]
It is an isomorphism over \(h^{-1}(\mathcal W)\), which is dense in
\(\mathcal T\). Its kernel is therefore torsion.
Conversely, every torsion section maps to zero in the locally free target.
Thus the kernel is exactly \(\Tor(h^*\mathcal F)\), proving the asserted identification of \(\mathcal V\).

Since \(\Delta\) contains the reduced inverse image of
\(D_{\mathcal X}\), pullback gives
\[
 h^*\Omega_{\mathcal X}^1(\log D_{\mathcal X})
 \longrightarrow\Omega_{\mathcal T}^1(\log\Delta).
\]
The torsion-invariance calculation in
Lemma~\ref{lem:total-torsion-free-pullback} shows that the pulled-back Higgs
field descends to \(\mathcal V\). Pullback and passage to this quotient
commute with wedge products, so the descended field remains integrable.

Finally, the flattening, the principalizing blow-up, and every boundary
resolution blow-up are representable. Stabilizers upstairs therefore
inject into stabilizers downstairs, so finite abelian stabilizers are
preserved. If the stabilizer of \(\mathcal X\) is trivial on a dense open
subset \(\mathcal U\), then \(\mathcal U\cap\mathcal W\) is dense and its
unchanged inverse image in \(\mathcal T\) has trivial stabilizer. Thus
generic triviality is preserved.
\end{proof}

\subsection{Analytic destackification}\label{sec:destackification}

A smooth stack need not have a smooth coarse space. To apply the inverse
root-stack correspondence after resolving the sheaf, we need a root stack
over a smooth manifold. The following theorem constructs such a stack
without changing a specified open subset where the required local
structure is already present.

\begin{thm}\label{thm:analytic-destackification}
Let \(\mathcal S\) be a compact smooth analytic Deligne--Mumford stack.
Assume that its diagonal is closed with finite fibers, that every
stabilizer is finite abelian, and that the stabilizer is trivial on a dense
open subset. Let \(S\) be its analytic coarse space, and let
\[
 \boldsymbol{\mathcal C}=(\mathcal C^1,\ldots,\mathcal C^m)
\]
be an ordered simple normal crossing divisor on \(\mathcal S\).

Let \(\mathcal W\subset\mathcal S\) be a dense open subset. Assume that
every \(x\in\mathcal W\) has a diagonal quotient chart
\([P_x/G_x]\), centered at a fixed lift of \(x\),
with coordinates \(z_1,\ldots,z_n\), and with boundary
\(z_1\cdots z_s=0\). If
\[
 A_x=\Hom(G_x,\CC^*)
\]
and \(a_j\in A_x\) is the character of \(z_j\), assume
\[
 a_j=0\quad(j>s),\qquad
 A_x=\langle a_1\rangle\oplus\cdots\oplus\langle a_s\rangle,
 \tag{3.2}\label{eq:root-clean}
\]
where trivial cyclic factors are omitted.

Then there is a finite sequence
\[
 (\mathcal Y,\boldsymbol{\mathcal B})
 =(\mathcal S_N,\boldsymbol{\mathcal C}_N)
 \longrightarrow\cdots\longrightarrow
 (\mathcal S_0,\boldsymbol{\mathcal C}_0)
 =(\mathcal S,\boldsymbol{\mathcal C})
\]
with the following properties.

\begin{enumerate}
\item Each morphism is boundary-compatible and is either a blow-up in a
smooth closed substack of positive codimension having normal crossings
with the boundary, or a finite root construction along a boundary
component.

\item Every blow-up center and every rooted boundary component is disjoint from the inverse image of \(\mathcal W\). Thus the composite is an isomorphism over \(\mathcal W\).

\item The analytic coarse space \(Y\) of \(\mathcal Y\) is a smooth
compact complex manifold. After separating the connected components of
each coarse boundary divisor, the resulting ordered boundary
\[
 \boldsymbol B=(B^1,\ldots,B^q)
\]
is simple normal crossing, there are positive integers \(d_1,\ldots,d_q\), and
\[
 \mathcal Y\simeq
 \sqrt[(d_1,\ldots,d_q)]{(Y,\boldsymbol B)}.
 \tag{3.3}\label{eq:terminal-root}
\]

\item The induced coarse morphism
\[
 \delta\colon Y\longrightarrow S
\]
is projective and proper and is an isomorphism over the coarse image of \(\mathcal W\).
\end{enumerate}
\end{thm}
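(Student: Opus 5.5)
The plan is to transport the known destackification theorem of Bergh (and Bergh--Rydh) for tame Deligne--Mumford stacks with abelian stabilizers to the analytic setting. The transport would be étale-local on the coarse space, together with a protection argument for the open set \(\mathcal W\). Bergh's algorithm has two features we need. First, it is a finite sequence of \emph{stacky blow-ups}: ordinary blow-ups in smooth centers having normal crossings with the boundary, and root constructions along boundary components. Second, it is functorial for smooth (in particular étale) strict morphisms of smooth stacks with SNC boundary. It terminates when every point has a chart \([P/G]\) satisfying exactly the ``distinguished'' condition \eqref{eq:root-clean}. At such a point the coarse space is smooth, since \(P/G\) is the quotient of a polydisc by \(\prod\mu_{d_j}\) acting diagonally on \(z_1,\ldots,z_s\). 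By the local computation in the proof of Proposition~\ref{prop:analytic-root-correspondence}, the stack is then locally the root stack of \((P/G, \{z_j^{d_j}=0\})\).

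The first step is the local algebra. Because the stabilizers are finite abelian and the diagonal is finite, Luna's étale slice theorem gives linearizable diagonal quotient charts \([P_x/G_x]\) around every point, and the boundary components become coordinate hyperplanes. The obstruction to \eqref{eq:root-clean} is combinatorial. It is measured by Bergh's invariants: the characters \(a_j\) of the non-boundary coordinates, and the failure of the boundary characters to form a basis of \(A_x\). These invariants are upper semicontinuous and equivariantly defined, so they glue on the stack. The second step runs the algorithm. The invariants on a chart are determined by the germ, so the centers are canonical. Functoriality under the étale projections of \(R=U\times_{\mathcal S}U\) lets the centers descend through the analytic groupoid, exactly as in the proof of Proposition~\ref{prop:relative-boundary-resolution}. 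Compactness gives local finiteness, hence a finite sequence after empty stages are discarded. Finite abelian stabilizers are preserved, and generic triviality is preserved as in that proposition. Hypothesis \eqref{eq:root-clean} on \(\mathcal W\) says that the invariant is already minimal there. Since the algorithm only blows up or roots along the maximal locus of a non-minimal invariant, every center and every rooted component avoids the inverse image of \(\mathcal W\). This gives (1) and (2).

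For (3), at termination each point satisfies \eqref{eq:root-clean}, so \(Y\) is smooth and the coarse images of the boundary components form a normal crossing divisor. To get simple normal crossings, I would separate connected components and check that coarse self-intersections do not occur. The SNC condition on \(\boldsymbol{\mathcal C}_N\) upstairs, together with distinguishedness, gives this. Each component \(B^k\) then carries a locally constant, hence constant, rooting index \(d_k\), namely the order of \(a_j\). A stack that is étale-locally isomorphic to \(\sqrt[(d_k)]{(Y,\boldsymbol B)}\) compatibly with its coarse map is canonically isomorphic to it, by the universal property of root stacks. The tautological \(d_k\)-th roots of \(\mathcal O(B^k)\) exist globally, given by the reduced boundary substacks. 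This gives \eqref{eq:terminal-root}. For (4), coarse spaces are functorial. Blow-ups induce projective proper coarse maps, and root constructions induce isomorphisms on coarse spaces. The composite is the identity over the coarse image of \(\mathcal W\), because it is an isomorphism upstairs there and coarse formation commutes with étale restriction.

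The main obstacle is the second step. Bergh's proof is algebraic and relies on global projectivity and quasi-compactness. I would re-prove the needed functoriality by showing that each center is defined by local invariants of the equivariant chart, namely the stratification by the characters of the non-boundary coordinates. For this I would cite Bergh's invariant-based description rather than his global constructions. Compactness of \(\mathcal S\) then replaces quasi-compactness. A secondary concern is projectivity of \(\delta\) in the analytic category. I would handle it by noting that each blow-up carries a relatively ample exceptional line bundle that descends to a power on the coarse space.
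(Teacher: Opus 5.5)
\textbf{Overall.} Your route is the paper's route. You run Bergh's algorithm analytically, with centers read off from stabilizer characters in diagonal quotient charts, and use compactness for finiteness. You protect $\mathcal W$ through minimality of the invariants there, identify the terminal charts with root stacks via the invariant ring and the universal property, and descend a power of a relatively ample bundle for projectivity. The step whose justification fails as written is (2).

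\textbf{The gap in (2).} It is not true that the algorithm only blows up or roots along the maximal locus of an invariant that is minimal on $\mathcal W$. That description fits the centers selected by Algorithm~C and the \emph{first} center of each iteration of Algorithm~E. It does not fit the later toric blow-ups in boundary strata, nor the root constructions inside an iteration of~E, which are taken along whole boundary components. So minimality of the invariant on $\mathcal W$ does not by itself keep those operations away from $\mathcal W$. This matters: in Theorem~\ref{thm:fixed-terminal-data}, $\mathcal W$ contains generic points of the root boundary. Rooting such a component, or blowing up a stratum through $\mathcal W$, would destroy the identification over $X\setminus Z$. The missing ingredient, which the paper uses, is this: in each iteration of~E the first exceptional divisor is marked as distinguished, and every subsequent operation of that iteration is supported over that iteration's first center. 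That center avoids $\mathcal W$ by \eqref{eq:root-clean}, and induction over the iterations then gives (2).

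\textbf{Further corrections.} First, descent ``exactly as in Proposition~\ref{prop:relative-boundary-resolution}'' through $R=U\times_{\mathcal S}U$ does not work if $U$ is an étale atlas by a manifold. Stabilizers are trivial on $U$, so the invariants vanish there and the algorithm on $U$ is empty. Instead, define the centers from the intrinsic stabilizer representation on the cotangent space with the boundary conormal lines marked. Compute them in diagonal quotient charts and glue the coordinate ideals on overlaps; this is what your last paragraph proposes and what the paper does. Those charts should come from analytic finite quotient charts plus equivariant averaging of the boundary equations, not from Luna's algebraic slice theorem. Second, at termination, vanishing of Bergh's indices gives only $a_j=0$ for $j>s$ and independence of the boundary characters. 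That these characters generate all of $A_x$ uses effectiveness of the action, i.e.\ trivial generic stabilizer. This is needed both for \eqref{eq:root-clean} and for the invariant ring to be $\CC\{z_1^{d_1},\ldots,z_s^{d_s},z_{s+1},\ldots,z_n\}$. You preserve that hypothesis but should invoke it explicitly in (3).
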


\begin{proof}
We use the destackification algorithm of
\cite[Theorem~1.2 and Algorithms~A--E]{Bergh17}. Its rules for choosing
centers and its termination arguments are expressed in terms of finite
stabilizer representations and coordinate blow-ups. The aim is to arrange
that the stabilizer acts only in independent boundary directions, as in
\eqref{eq:root-clean}. We explain why the algorithm defines analytic
modifications and how the resulting stack has the required coarse space.

\smallskip
\noindent\emph{Local coordinates.}
Finite quotient charts are supplied by
\cite[Theorem~3.1]{Doan24}. We first choose coordinates compatible with
both the group action and the boundary.

Fix \(x\in\mathcal S\), a quotient chart \([P_x/G_x]\) about \(x\),
and a lift \(p\in P_x\) fixed by \(G_x\). The boundary branches through
\(p\) are \(G_x\)-invariant. Let
\(\ell_1,\ldots,\ell_s\in T_p^*P_x\) be their nonzero conormal
covectors. They are eigenvectors, and abelianness allows us to extend
them to an eigenbasis
\(\ell_1,\ldots,\ell_n\).

For \(j\le s\), choose a defining germ \(f_j\) for the corresponding
branch with \(df_j(p)=\ell_j\). If \(a_j\) is the character of
\(\ell_j\), put
\[
 f_j^{\mathrm{eig}}
 =\frac1{|G_x|}\sum_{g\in G_x}a_j(g)^{-1}g^*f_j.
\]
Every summand vanishes on the same branch and has differential \(\ell_j\)
at \(p\). Thus \(f_j^{\mathrm{eig}}\) is still a defining germ for that
branch. Applying the same projectors to germs with differentials
\(\ell_j\), \(j>s\), gives a coordinate map with invertible differential.
The holomorphic inverse-function theorem gives diagonal equivariant
coordinates after shrinking to an invariant neighborhood. In these
coordinates the boundary branches are coordinate hyperplanes.

\smallskip
\noindent\emph{Centers and gluing.}
The algorithm uses the stabilizer representation on the cotangent space,
with the conormal lines of the ordered boundary components marked. This
representation is intrinsic, although its expression by characters
\(a_1,\ldots,a_n\) uses the chosen coordinates. We use the conormal
invariants of \cite[Sections~6--8]{Bergh17}, with their definitions and
ordering as in that reference.

In a diagonal chart, the stabilizer at a point is the subgroup on which
all characters of its nonzero coordinates vanish. Thus the character data
on each coordinate stratum are the same as in the corresponding
algebraic diagonal representation. Bergh's coordinate calculations for
upper semicontinuity and the smoothness of the chosen maximal loci
therefore apply here. Locally, each center has the form
\[
 Z=\{z_j=0:j\in J\},
 \tag{3.4}\label{eq:maximal-center}
\]
where \(J\) is the set of coordinate directions selected by the
invariant. It is a smooth analytic submanifold with normal crossings
with the boundary. For the aggregate invariant in Algorithm~E, its
maximal locus is a union of connected components of the smooth maximal
locus of the independency index; this is the property of the invariants
established in \cite[Section~7]{Bergh17}.

On overlapping quotient charts the representations, marked boundary
lines, and selected loci agree. Their coordinate ideals define the same
reduced analytic submanifold on a common étale refinement, so these
ideals glue. Consequently each center is a globally defined smooth
closed substack. Intersections of the ordered boundary components, used
as centers in the toric part of the algorithm, are likewise globally
defined.

\smallskip
\noindent\emph{The modification sequence.}
The analytic blow-up of \eqref{eq:maximal-center} has charts with
coordinates \(z_p\), \(z_j/z_p\) for \(j\in J\setminus\{p\}\), and
the unchanged coordinates, where \(p\in J\). Its character rule is
\[
 a_j\longmapsto a_j-a_p
 \quad\text{for }j\in J\setminus\{p\},
 \tag{3.5}\label{eq:character-blowup}
\]
with \(a_p\) corresponding to the exceptional divisor. A root of order
\(c\) along \(z=0\) is given by \(t^c=z\), with the corresponding
extension of the character group. These are the same coordinate rules
as in the algebraic algorithm, and they commute with restriction and
changes of quotient chart.

We can therefore perform Algorithms~C and~E on the analytic stack,
including the toric procedures and the procedures along divisors used
in~E. After
each blow-up we take the ordered complete transform of the boundary;
after each root construction we take its reduced inverse image. The
coordinate descriptions show that all stacks remain smooth and all
boundaries remain SNC, and that every transition is boundary-compatible.

For finiteness, we use the termination proofs of these algorithms in
\cite[Sections~4 and~8]{Bergh17}. Their decreasing invariants depend on
\eqref{eq:character-blowup} and the root transformations: Algorithm~C reduces the
divisorial index, the inner algorithms terminate, and each completed
iteration of~E reduces its maximal lexicographic invariant. At each
analytic stage compactness gives finitely many quotient charts, hence
finitely many values of the invariants and finitely many components of
the centers and boundary. The same termination arguments thus give a
finite global analytic sequence.

On \(\mathcal W\), condition \eqref{eq:root-clean} says that the group
acts only in the boundary directions and that their characters are
independent. The centers selected by Algorithms~C and~E therefore avoid
this open subset. In each iteration of~E, the first exceptional divisor
is marked as distinguished, and the subsequent operations are supported
over that iteration's first center, as in the construction of
\cite[Algorithm~E]{Bergh17}. Induction shows that every blow-up and root
construction is an isomorphism over \(\mathcal W\).

\smallskip
\noindent\emph{The coarse space.}
An analytic blow-up is representable and projective, whereas a root
construction induces the identity on coarse spaces. To check
projectivity for a blow-up on coarse spaces, choose a relatively ample
line bundle \(L\) on the stack. Compactness and finite inertia give a
common multiple \(M\) of the stabilizer orders. Every stabilizer acts
trivially on \(L^M\), so invariant sections on finite quotient charts
descend it to a line bundle on the coarse space. These descents agree on
overlaps, and relative ampleness can be checked after finite quotient
pullback. Thus the induced coarse morphism is projective; composition
gives a projective proper map \(Y\to S\).

At the final stage the divisorial and independency indices vanish.
Thus every character is generated by the boundary characters and the
cyclic groups generated by the individual coordinate characters are
independent. It follows that \(a_j=0\) for \(j>s\). The stabilizer
action is effective because the generic stabilizer is trivial; hence
the remaining characters generate its entire character group. Therefore
\[
 A_x=\langle a_1\rangle\oplus\cdots\oplus\langle a_s\rangle.
\]
If \(d_i\) is the order of \(a_i\), then
\[
 \CC\{z_1,\ldots,z_n\}^{G_x}
 =
 \CC\{z_1^{d_1},\ldots,z_s^{d_s},z_{s+1},\ldots,z_n\}.
\]
This identifies the quotient chart with a root stack over a smooth
polydisc with SNC boundary.

The ramification order is locally constant along each smooth coarse
boundary divisor. Separating its connected components makes the orders
\(d_i\) constant. If \(q:\mathcal Y\to Y\) is the coarse projection,
the divisor identity
\[
 d_i\mathcal B^i=q^*B^i
\]
provides the root line bundle and tautological section globally. The
universal property gives a morphism to the root stack in
\eqref{eq:terminal-root}. By the local calculation this is an
isomorphism. Finally, \(Y\) is compact because \(\mathcal Y\) is
compact, and \(Y\to S\) is an isomorphism over the image of
\(\mathcal W\), as proved above.
\end{proof}

\subsection{Resolving the logarithmic flags}
\label{subsec:resolving-logarithmic-flags}

We now apply the preceding constructions to the refined parabolic flags.
The result is one smooth modification carrying a logarithmic Higgs bundle
and locally split flags.

\begin{thm}
\label{thm:fixed-terminal-data}
Let \(X\) be connected and compact Kähler, let \(D\) be a finite SNC
divisor, and let \((E_*,\theta)\) be a regular parabolic
Higgs sheaf. There exist a closed analytic subset \(Z\subset D\) of
codimension at least three, a connected smooth compact Kähler manifold
\(Y\), and a projective proper modification
\[
 \pi:Y\longrightarrow X
\]
which is biholomorphic over \(X\setminus Z\), with the following
properties. The reduced union \(B\) of the strict transform of \(D\)
and \(\pi^{-1}(Z)\) is an SNC divisor. On \((Y,B)\) there is a
logarithmic Higgs bundle \((V,\theta_Y)\) with a finite
\(\theta_Y\)-invariant flag along each component of \(B\). These flags
split simultaneously locally, and their nonzero graded quotients are
locally free with semisimple induced residues.

On \(X\setminus Z\), the sheaf \(E\) is locally free and all original
flags split simultaneously. Under the identification with
\(\pi^{-1}(X\setminus Z)\), the Higgs bundle \((V,\theta_Y)\) agrees
with \((E,\theta)\), and its flags agree with the refined flags of
Lemma~\ref{lem:residue-refinement}.
\end{thm}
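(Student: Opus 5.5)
We chain the constructions of this section in the order suggested by the outline. The argument runs refine, encode on a root stack, resolve, destackify, then descend.

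First, apply Lemma~\ref{lem:residue-refinement} to every graded block to obtain the refined, graded-semisimple regular parabolic Higgs sheaf. Let \(Z\) be the set of Lemma~\ref{lem:simultaneous-splitting} computed for this refined sheaf. This means adding the rank-jump loci of the inserted saturated steps, which have codimension at least three for the same reason: torsion-free quotients on the smooth \(D_i\) fail to be locally free only in codimension two inside \(D_i\). On \(X\setminus Z\) the sheaf is locally free and all refined flags are subbundle flags that split simultaneously.

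Next, choose auxiliary rational weights, strictly separated, on the refined jumps, with common denominators \(\mathbf r\). The refined filtration then becomes a coherent parabolic system with denominators \(\mathbf r\) whose terms are reflexive, hence torsion-free. Proposition~\ref{prop:analytic-root-correspondence} produces a torsion-free logarithmic Higgs sheaf \((\mathcal F,\Theta)\) on \(\mathcal R=\sqrt[\mathbf r]{(X,D)}\). Over \(\mathcal W=p^{-1}(X\setminus Z)\) the system is locally abelian, so \(\mathcal F|_{\mathcal W}\) is locally free.

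Apply Proposition~\ref{prop:protected-total-boundary} to \((\mathcal F,\Theta)\) and \(\mathcal W\). This gives \(h:\mathcal T\to\mathcal R\), an isomorphism over \(\mathcal W\), together with an SNC divisor \(\Delta\) and a logarithmic Higgs bundle \(\mathcal V\). Stabilizers stay finite abelian and are generically trivial. On \(\mathcal W\) the root chart has diagonal action with independent boundary characters, so condition \eqref{eq:root-clean} holds there. Theorem~\ref{thm:analytic-destackification} then yields \(\mathcal Y\simeq\sqrt[(d_j)]{(Y,B)}\) with \(Y\) smooth compact, and a projective map \(Y\to X\) that is an isomorphism over \(X\setminus Z\).

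Pull \(\mathcal V\) back to \(\mathcal Y\); Lemma~\ref{lem:total-torsion-free-pullback} keeps it a logarithmic Higgs bundle. Applying the inverse of Proposition~\ref{prop:analytic-root-correspondence} on \((Y,B)\) gives a locally abelian parabolic Higgs bundle. We take \(V\) to be its degree-zero term and take the finite flags along each \(B^j\) to be the images of the intermediate terms. Local abelianness gives simultaneous local splitting and locally free graded quotients. Over \(X\setminus Z\) everything is unchanged, so \(V=E\) there and the flags are the refined ones. Kählerness of \(Y\) follows because \(\pi\) is projective over the Kähler \(X\). Connectedness holds because \(\pi\) is a modification. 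The support condition on \(B\) comes from the containment of supports in Proposition~\ref{prop:protected-total-boundary}, carried through destackification.

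The main obstacle is semisimplicity of the graded residues over the new components, that is, over \(\pi^{-1}(Z)\). Over the strict transform, each graded quotient agrees generically with a refined graded piece, where the residue is already semisimple; semisimplicity then propagates because residue eigenvalues are constant on connected components. On exceptional components, the residue of a pulled-back field is \(\sum_i m_i\operatorname{Res}_{D_i}\), where \(m_i\) are the multiplicities, and it need not be semisimple. Two fixes are available. The first is to refine once more along exceptional components by saturated generalized-kernel flags as in Lemma~\ref{lem:residue-refinement}. Those flags are only torsion-free, so a second flattening and destackification pass would be needed, and the difficulty is showing that this iteration terminates. The second, which I would try first, is to note that the residues along the \(D_i\) commute and are semisimple on each graded piece of the multi-indexed refined filtration. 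Taking the refinement jointly over double points then makes the pulled-back sums semisimple on graded pieces.
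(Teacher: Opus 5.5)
Your pipeline (refinement, root stack, Proposition~\ref{prop:protected-total-boundary}, destackification, Lemma~\ref{lem:total-torsion-free-pullback}, inverse root correspondence) is the paper's, and your argument along the strict transforms is essentially fine. The genuine gap is the step you flag yourself: you never prove that the induced residues along the components over \(Z\) are semisimple, and neither proposed fix is an argument.

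The first fix (re-refine along the exceptional components, then flatten and destackify again) comes, as you note, with no termination proof. The second is phrased through the downstairs residues \(\operatorname{Res}_{D_i}\theta\) and multi-indexed graded pieces at double points. But the exceptional components lie over \(Z\), which contains triple points and the loci where \(E\) is not locally free or the flags do not split. Moreover, the flags on such a component are the stabilizer-character quotients of the terminal stack, and those downstairs graded pieces do not control them.

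Your diagnosis that the residue ``need not be semisimple'' is also misplaced. It is true for \(\pi^*E/\Tor\), whose exceptional residues are built from the non-semisimple \(\operatorname{Res}_{D_i}\theta\). But \(V\) comes from the root-stack sheaf \(\mathcal F\), on which the filtration has been unfolded.

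The missing idea is that on the root stack, graded semisimplicity becomes semisimplicity on the whole fiber modulo the root coordinate. Let \(x_i\) be the root coordinate, let \(A_i\) be the coefficient of \(dx_i/x_i\) in \(\Theta\), and let \(\Lambda_i\) be the finite set of eigenvalues of \(A_i\) modulo \(x_i\).

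\begin{enumerate}
\item The \(\mu_{r_i}\)-character components of \(\mathcal F/x_i\mathcal F\) correspond to the refined graded pieces. \(A_i\) preserves them and acts on each by \(r_i\) times a refined graded residue, which is semisimple.
\item Hence the square-free polynomial \(P_i(T)=\prod_{\lambda\in\Lambda_i}(T-\lambda)\) satisfies \(P_i(A_i)\mathcal F\subset x_i\mathcal F\). This relation survives pullback by \(f\) and passage to the torsion-free quotient \(\mathcal V\).
\item Every terminal boundary component \(C=\{y_a=0\}\) maps into \(D\). Writing \(f^*x_i=u_i\prod_b y_b^{m_{ib}}\), the residue along \(C\) is \(R_C=\sum_i m_{ia}A_i|_C\), since the holomorphic terms contribute none.
\item Whenever \(m_{ia}>0\), \(f^*x_i\) vanishes on \(C\), so \(P_i(A_i|_C)=0\) and \(A_i|_C\) is diagonalizable on the entire fiber.
\item Integrability gives \([A_i,A_j]=0\), so \(R_C\) is semisimple on the whole fiber of \(\mathcal V|_C\), hence on every character quotient.
\end{enumerate}

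This treats strict transforms and exceptional components uniformly and makes any second refinement unnecessary.
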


\begin{proof}
\emph{Passage to a root stack.}
Apply Lemma~\ref{lem:residue-refinement} to refine the original graded
blocks. Let \(\Sigma\subset D\) be the non-locally-free locus of \(E\),
and let \(Z_{\mathrm{flag}}\) be the exceptional set in
Lemma~\ref{lem:simultaneous-splitting} for the refined flags. Set
\[
 Z=\Sigma\cup Z_{\mathrm{flag}}.
\]
Every component has codimension at least three. On \(X\setminus Z\), the
ambient sheaf is locally free and the refined, hence also the original,
flags split simultaneously.

Choose strictly increasing rational weights on each refined flag and
clear their denominators. These weights serve only to apply the
root-stack correspondence. The associated divisor lattices and their
reflexive intersections form a coherent rational parabolic system.
By Proposition~\ref{prop:analytic-root-correspondence}, this
system corresponds to a torsion-free logarithmic Higgs sheaf
\[
 (\mathcal F,\Theta)
 \quad\text{on}\quad
 \mathcal R=\sqrt[\mathbf r]{(X,D)}.
\]
If \(p:\mathcal R\to X\) is the root projection, the simultaneous split
formula makes \(\mathcal F\) locally free on
\[
 \mathcal W=p^{-1}(X\setminus Z).
\]

\smallskip
\noindent\emph{Resolution and destackification.}
Apply Proposition~\ref{prop:protected-total-boundary} to
\[
 (\mathcal R,D_{\mathcal R},\mathcal F,\Theta,\mathcal W),
\]
where \(D_{\mathcal R}\) is the root boundary. It gives a smooth compact
stack \((\mathcal S,\mathcal C)\), a representable projective proper
modification
\[
 h:(\mathcal S,\mathcal C)\longrightarrow
   (\mathcal R,D_{\mathcal R})
\]
which is the identity over \(\mathcal W\), and a logarithmic Higgs bundle
\[
 \mathcal V_{\mathcal S}
 =h^*\mathcal F/\Tor(h^*\mathcal F).
\]
The boundary \(\mathcal C\) contains both the complete inverse image of
the root boundary and the inverse image of
\(\mathcal R\setminus\mathcal W\).

Over \(\mathcal W\), the stack \(\mathcal S\) is the original root
stack, so its boundary characters satisfy \eqref{eq:root-clean}.
Theorem~\ref{thm:analytic-destackification} gives a modification
\(\delta:\mathcal Y\to\mathcal S\), unchanged over \(\mathcal W\),
with
\[
 \mathcal Y\simeq\sqrt[\boldsymbol d]{(Y,B)}
\]
over a smooth compact SNC pair \((Y,B)\). The induced coarse morphism
\(\pi:Y\to X\) is projective, proper, and biholomorphic over
\(X\setminus Z\). Since \(X\) is compact Kähler and \(Y\) is smooth,
projectivity implies that \(Y\) is Kähler
\cite[Lemma~2.16]{Xia19}. The dense open subset \(X\setminus Z\) is
connected, so \(Y\) is connected as well.

Put \(f=h\circ\delta:\mathcal Y\to\mathcal R\). The pullback
\[
 \mathcal V=\delta^*\mathcal V_{\mathcal S}
\]
is a vector bundle and agrees with \(f^*\mathcal F\) over the dense open
\(\mathcal W\). Lemma~\ref{lem:total-torsion-free-pullback} gives
\[
 f^*\mathcal F/\Tor(f^*\mathcal F)
 \simeq\mathcal V,
\]
and identifies the induced integrable logarithmic Higgs fields.

\smallskip
\noindent\emph{Residues and the resulting flags.}
It remains to check semisimplicity, in particular along the new boundary
components. On an initial root divisor, the character components of
\(\mathcal F\) restricted to that divisor correspond, after the
tautological line-bundle twists, to the refined parabolic quotients.
Let \(x_i\) be its root coordinate and \(A_i\) the coefficient of
\(dx_i/x_i\) in \(\Theta\). If \(\Lambda_i\) is the finite set of
residue eigenvalues, set
\[
 P_i(T)=\prod_{\lambda\in\Lambda_i}(T-\lambda).
\]
The refined residues are semisimple, so this square-free polynomial
annihilates every character component modulo \(x_i\). Hence
\[
 P_i(A_i)\mathcal F\subset x_i\mathcal F.
 \tag{3.6}\label{eq:square-free-residue}
\]
This relation persists after pullback and passage to the torsion-free
quotient \(\mathcal V\).

The initial root boundary
maps into \(D\), while every additional boundary component lies over
\(\mathcal R\setminus\mathcal W=p^{-1}(Z)\) and \(Z\subset D\). Hence every
terminal boundary component maps into \(D\). Let \(C\) be one such
component. Near its generic point, each initial root-boundary equation has
the form
\[
 f^*x_i=u_i\prod_a y_a^{m_{ia}},
\]
where \(u_i\) is a unit and \(m_{ia}\ge0\). For the equation
\(y_a=0\) of \(C\), at least one multiplicity \(m_{ia}\) is positive.
Pulling back logarithmic differentials gives
\[
 f^*\frac{dx_i}{x_i}
 =\frac{du_i}{u_i}+\sum_a m_{ia}\frac{dy_a}{y_a}.
\]
The holomorphic part contributes no residue, so
\[
 R_C=\sum_i m_{ia}A_i.
\]
Here the \(A_i\) denote the induced coefficients on \(\mathcal V\),
restricted to \(C\). If \(m_{ia}>0\), then \(f^*x_i\) vanishes on
\(C\), so the pulled-back relation \eqref{eq:square-free-residue} gives
\(P_i(A_i)|_C=0\). Thus every coefficient contributing to \(R_C\) is
semisimple. Integrability makes these coefficients commute, so their
linear combination is semisimple as well. The same polynomial relations
hold along the whole component and therefore give semisimplicity on
each of its stabilizer-character quotients.

Apply the inverse root-stack correspondence to \(\mathcal V\). It gives
a logarithmic Higgs bundle \((V,\theta_Y)\) with simultaneously split
flags, locally free graded quotients, and the residue properties just
proved. Every modification was an isomorphism over \(\mathcal W\).
Restricting the root-stack correspondence to this open subset therefore
recovers \((E,\theta)\) and its refined flags on \(X\setminus Z\).
Along a strict transform, the resulting flag terms are also the
saturated extensions of these identified generic fibers, since their
quotients are locally free.

Finally, the boundary contains the inverse image of \(Z\), and all new
boundary components lie over \(Z\). Thus its coarse support is exactly
the reduced union of the strict transform of \(D\) and \(\pi^{-1}(Z)\).
Discarding the temporary rational weights leaves the required bundle
and fixed flags.
\end{proof}

\subsection{Weights on the fixed flags and their coalescence}
\label{subsec:fixed-flag-weights}

We now assign weights to the flag system of
Theorem~\ref{thm:fixed-terminal-data} to recover the original parabolic
structure and construct its rational perturbations.

Write \(B=\bigcup_{a\in A}B_a\). In the lattice notation of
Section~\ref{sec:preliminaries}, the flag along \(B_a\) is a chain
\[
 V=L_{a,0}\supsetneq L_{a,1}\supsetneq\cdots
 \supsetneq L_{a,m_a}=V(-B_a).
\]
Assign a real number \(w_{a,q}\) to the quotient
\(L_{a,q-1}/L_{a,q}\), with
\[
 0\le w_{a,1}\le\cdots\le w_{a,m_a}<1.
\]
These numbers are the parabolic weights. Explicitly, they define the
decreasing, left-continuous filtration
\[
 F_{a,w}^{t}V=L_{a,k_a(t)},\qquad
 k_a(t)=\#\{q:w_{a,q}<t\},\qquad 0\le t\le1,
\]
extended to all real indices by
\(F_{a,w}^{t+1}V=F_{a,w}^{t}V(-B_a)\).
For \(\mathbf{b}\in[0,1)^A\), the associated parabolic lattice is
\[
 V_w^{\mathbf{b}}=\bigcap_{a\in A}F_{a,w}^{b_a}V,
\]
with integral periodic extension as in
\eqref{eq:periodic-lattice-notation}.

Let \(w^\infty\) be the weight vector on these fixed flags determined
by the original downstairs parabolic structure, as follows.
If \(B_a\) is the strict transform of \(D_i\)
and a refined quotient belongs to the original block
\(\Gr_i^\alpha(E_*)\), set \(w^\infty_{a,q}=\alpha\).
On every exceptional component over \(Z\), set \(w^\infty_{a,q}=0\) for
all \(q\). These weights are nondecreasing in the flag order: all the quotients inserted
inside one original block have the same weight. We call this merging of the refined filtration jumps \emph{coalescence}.
It recovers the original filtration on \(X\setminus Z\).

Later we choose strictly ordered rational weights
\(w^{(\nu)}\to w^\infty\) on these same flags. Strict weights distinguish
every refined quotient, so the resulting parabolic Higgs bundles are
graded semisimple by the residue property of the fixed model. 
Subsection~\ref{subsec:fixed-model-completion} completes the model by
arranging the lattice inclusion along the exceptional divisor.
Section~\ref{sec:fixed-model} then proves stability for sufficiently small
weight and polarization perturbations.

\subsection{Completion of the fixed logarithmic flag model}
\label{subsec:fixed-model-completion}

We make one final adjustment to obtain the inclusion
\(\pi^*E/\Tor\hookrightarrow V\) used in the proof of
Proposition~\ref{prop:downstairs-reflexive-degree-bridge}.
The principalization in
Proposition~\ref{prop:protected-total-boundary} makes the inverse
image of \(Z\) divisorial; denote its reduced support by
\(A_{\mathrm{exc}}\).

\begin{prop}\label{prop:fixed-model-correction}
Retain the fixed logarithmic flag models
\((V_j,\theta_{j,Y})\) of the original sheaf, or of its finitely many
stable summands \((E_{j,*},\theta_j)\), on \((Y,B)\).
There is a common integer \(N\ge0\) such that, with
\(L=\cO_Y(NA_{\mathrm{exc}})\), the canonical identifications away
from the exceptional divisor extend to injections
\[
 \pi^*E_j/\Tor\longrightarrow V_j\otimes L.
\]
Tensoring every flag term by \(L\), endowed with zero parabolic weights
and zero Higgs field, preserves the simultaneous local splittings and
the induced residue endomorphisms on the graded quotients.
The resulting models are unchanged over \(X\setminus Z\).
\end{prop}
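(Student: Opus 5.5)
The plan is to build the injection first as a meromorphic map and then clear its poles. Fix one summand index \(j\) and write \(\mathcal E=\pi^*E_j/\Tor(\pi^*E_j)\). This is a torsion-free coherent sheaf on \(Y\).

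On \(\pi^{-1}(X\setminus Z)=Y\setminus A_{\mathrm{exc}}\), Theorem~\ref{thm:fixed-terminal-data} identifies \(V_j\) with \(E_j\), and there \(\pi\) is biholomorphic. So we have a canonical isomorphism
\[
 \phi:\mathcal E|_{Y\setminus A_{\mathrm{exc}}}\xrightarrow{\ \sim\ }V_j|_{Y\setminus A_{\mathrm{exc}}}.
\]
The first step is to show that \(\phi\) extends meromorphically across \(A_{\mathrm{exc}}\) with poles only there. Both \(\mathcal E\) and \(V_j\) are coherent, and \(V_j\) is locally free. By construction (Proposition~\ref{prop:protected-total-boundary} applied on the root stack, then Lemma~\ref{lem:total-torsion-free-pullback}), \(V_j\) is obtained from the torsion-free pullback of the stacky sheaf \(\mathcal F\) through the inverse root correspondence. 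In particular, \(V_j\) and \(\mathcal E\) are both subsheaves of the meromorphic bundle \(j_*(E_j|_{X\setminus Z})\) near \(A_{\mathrm{exc}}\), after pulling back.

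Concretely, I would argue as follows. The lattice \(E_j=E^{\mathbf 0}\) is one of the terms of the rational parabolic system. The stacky sheaf satisfies \(p^*E_j\to\mathcal F\), since the zero-twist component is \(E_j\) and the adjunction gives the map. Pulling back by \(f\) gives \(f^*p^*E_j\to f^*\mathcal F\to\mathcal V\). Taking the \(\boldsymbol d\)-invariant part on the terminal root stack then gives a map \(\pi^*E_j\to V_j^{\mathbf 0}\). This map is generically the identity, so it factors through \(\mathcal E\).

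If this adjunction argument works, \(N=0\) already suffices for the strict part. However, the inverse root correspondence on \(Y\) uses the new components \(B_a\subset A_{\mathrm{exc}}\) with their own stacky twists, so the invariant component may sit inside a twist of \(V_j\) rather than \(V_j\) itself. As a safe fallback I would use compactness. Cover \(A_{\mathrm{exc}}\) by finitely many charts. In each chart, choose generators of \(\mathcal E\) whose images under \(\phi\) are meromorphic sections of \(V_j\); meromorphy holds because both sheaves are coherent extensions of the same bundle, and by Grauert–Remmert-type extension a section of a coherent sheaf bounded by a power of a defining function is meromorphic. Their pole orders along the components of \(A_{\mathrm{exc}}\) are then bounded by some \(N_j\). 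Taking \(N=\max_j N_j\) gives holomorphic maps \(\mathcal E\to V_j\otimes\cO_Y(NA_{\mathrm{exc}})\).

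Injectivity follows because the kernel is supported on \(A_{\mathrm{exc}}\), hence is torsion, and \(\mathcal E\) is torsion-free. Uniformity of \(N\) across the finitely many summands is immediate.

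The main obstacle is making the meromorphic extension rigorous, namely bounding pole orders of \(\phi(s)\) for local sections \(s\). I would first try the adjunction factorization above, which gives explicit maps. Where stacky twists intervene, these differ from the desired map by multiplication by products of tautological sections \(y_a^{k_a}\) with \(k_a<d_a\). Such discrepancies are absorbed by \(N\ge1\).

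The remaining assertions are formal. \(L\) is a line bundle with trivial Higgs field and zero weights, so tensoring by it sends a local simultaneous splitting \(V_j=\bigoplus\ell_\nu\) to \(\bigoplus\ell_\nu\otimes L\). It multiplies every flag term by the same invertible sheaf, so the graded quotients become \(\Gr\otimes L|_{B_a}\). The Higgs field \(\theta_{j,Y}\otimes1\) has residues \(R\otimes\Id_L\), so semisimplicity and eigenvalues are unchanged. Since \(L\) is trivial over \(Y\setminus A_{\mathrm{exc}}\), canonically via the section \(1\), the models are unchanged over \(X\setminus Z\).
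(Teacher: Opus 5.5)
Your adjunction route is correct, and with the two facts spelled out below it is complete. The worry you raise against it is unfounded; the gap is in the fallback.

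\textbf{Why the adjunction route works.} Since \(\pi\) is the morphism induced on coarse spaces, \(p\circ f=\pi\circ q\) for the coarse map \(q:\mathcal Y\to Y\). Hence \(f^*p^*E_j\simeq q^*\pi^*E_j\), and the composite \(q^*\pi^*E_j\to f^*\mathcal F\to\mathcal V\) is adjoint to a map \(\pi^*E_j\to q_*\mathcal V\). By the definition of the correspondence in Proposition~\ref{prop:analytic-root-correspondence}, \(q_*\mathcal V\) is the zero lattice \(V^{\mathbf 0}\), i.e.\ \(V_j\) itself. The tautological twists along the new components enter only the lattices with \(\mathbf k\ne\mathbf 0\). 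Over \(\mathcal W\) every modification is the identity and \(p_*\mathcal F=E_j\), so the triangle identity shows the map is the canonical identification there. Your torsion argument then yields \(\pi^*E_j/\Tor\hookrightarrow V_j\) with \(N=0\); there is no discrepancy by tautological sections to absorb.

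\textbf{Comparison with the paper.} The paper never looks inside the construction. It sets \(\mathcal F_j=\pi_*V_j\), which is coherent, torsion-free, and agrees with \(E_j\) off \(Z\). It therefore embeds in the reflexive \(E_j\) with cokernel supported on \(Z\), so \(\mathcal I_Z^mE_j\subset\pi_*V_j\). Choosing \(N\) with \(\cO_Y(-NA_{\mathrm{exc}})\subset\mathcal I_Z^m\cO_Y\) (Nullstellensatz plus compactness) and composing with the evaluation \(\pi^*\pi_*V_j\to V_j\) gives the twisted injection. Your route gives the sharper \(N=0\), but it depends on \(V_j\) being literally the invariant pushforward on the terminal root stack. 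The paper's route uses only properness of \(\pi\), reflexivity of \(E_j\), local freeness of \(V_j\), and the identification off \(Z\).

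\textbf{The gap in the fallback.} It does not prove the pole bound. That \(\mathcal E\) and \(V_j\) are coherent extensions of the same bundle does not make \(\phi\) meromorphic. With \(\iota:\Delta^*\hookrightarrow\Delta\), the subsheaves \(\cO_\Delta\) and \(e^{1/z}\cO_\Delta\) of \(\iota_*\cO_{\Delta^*}\) are coherent extensions of the trivial bundle on the punctured disc whose identification is essentially singular. Likewise, both being subsheaves of the pushforward from \(Y\setminus A_{\mathrm{exc}}\) gives no control, since that pushforward is not coherent. The criterion ``bounded by a power of a defining function'' presupposes a growth estimate you never establish. The missing ingredient is exactly the downstairs inclusion \(\mathcal I_Z^mE_j\subset\pi_*V_j\); with it, the pole bound follows as in the paper. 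Your formal assertions about tensoring by \(L\) are fine.
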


\begin{proof}
Put \(\mathcal F_j=\pi_*V_j\). Properness makes this sheaf
coherent, and its identification with \(E_j\) outside \(Z\), together
with reflexivity of \(E_j\), gives
\(\mathcal F_j\subset\mathcal F_j^{**}=E_j\).
The quotients are supported on \(Z\), so one power
\(\mathcal I_Z^m\) satisfies
\(\mathcal I_Z^mE_j\subset\mathcal F_j\) for every \(j\).
The zero set of \(\mathcal I_Z\cO_Y\) is \(A_{\mathrm{exc}}\);
coherence and compactness give an integer \(N\) such that
\[
 \cO_Y(-NA_{\mathrm{exc}})\subset\mathcal I_Z^m\cO_Y.
\]
Pulling back and composing with the evaluation map
\(\pi^*\pi_*V_j\to V_j\), then quotienting by torsion, yields
\[
 (\pi^*E_j/\Tor)\otimes\cO_Y(-NA_{\mathrm{exc}})
 \longrightarrow V_j.
\]
It is injective because it is the canonical identification off the
exceptional divisor and its source is torsion-free.
Tensoring by \(L\) gives the required injection.

Tensoring all flag terms by the same line bundle preserves their
inclusions, local splittings, and graded residue endomorphisms.
Since \(L\) is canonically trivial away from the exceptional divisor,
the data there are unchanged.
\end{proof}

From now on, \(V_j\) and its flags denote these twisted objects.
This is the fixed logarithmic flag model used below.
For a single summand we omit \(j\); \(V_{\nu,*}\) and
\(V_{\infty,*}\) denote the parabolic structures at weights
\(w^{(\nu)}\) and \(w^\infty\), respectively.

\section{Stable rational approximations and Hermitian--Einstein metrics}\label{sec:fixed-model}

We use the fixed logarithmic flag model completed in
Section~\ref{sec:resolution-fixed-model}. We first establish stability
under small weight and Kähler perturbations and choose stable rational
stages. We then compare parabolic characteristic classes, construct
the metrics \(K_\nu\), \(H_{0,\nu}\), and the Hermitian--Einstein
solutions \(h_\nu\). For a rank-\(r\) summand, the Chern--Weil
identity we prove asserts that the parabolic Chern number
\[
 \left\langle
 \parc_2(V_{\nu,*})-\frac{r-1}{2r}\parc_1(V_{\nu,*})^2,
 \frac{[\omega_\nu]^{n-2}}{(n-2)!}
 \right\rangle
\]
equals the normalized quadratic integral of the Hitchin--Simpson
curvature of \(K_\nu\) in
\eqref{eq:literal-canonical-quadratic}. We then use this equality
to derive curvature energy bounds for \(h_\nu\).

\begin{prop}
\label{prop:uniform-fixed-model-stability}
Retain a fixed logarithmic flag model
\[
 \pi:Y\longrightarrow X,\qquad
 \omega_0=\pi^*\omega,\qquad
 \omega_\delta=\omega_0+\delta\vartheta,
\]
where \(\vartheta\) is a fixed Kähler form on \(Y\) and
\(0\le\delta\le1\). For \(\delta>0\), the form \(\omega_\delta\) is
Kähler and defines the polarization \([\omega_\delta]\).
Let \((V,\theta_Y)\) be one summand of the fixed logarithmic flag
model, of rank \(r\), whose parabolic structure \(V_{\infty,*}\) at
\(w^\infty\) agrees with an
\(\omega\)-stable downstairs summand \(E_*\) over \(X\setminus Z\).
For an admissible weight vector \(w=(w_{a,q})\) as in
Subsection~\ref{subsec:fixed-flag-weights}, denote the corresponding
upstairs parabolic Higgs bundle by \(V_{*,w}\); in particular,
\(V_{*,w^\infty}=V_{\infty,*}\).
For every saturated Higgs subsheaf \(0\ne F\subsetneq V\), transport
\(F\) over the biholomorphic locus and let \(S\subset E\) be its
saturated reflexive extension. Saturated intersections with the ambient
flags, equipped with the inherited weights, define the induced parabolic
Higgs subsheaves \(F_{*,w}\subset V_{*,w}\) and \(S_*\subset E_*\).
We use the parabolic slope of Definition~\ref{defn:parabolic-stability}.

Then \(S_*\) is nonzero and proper, and the coalesced parabolic subsheaf
\(F_{*,w^\infty}\) agrees with \(S_*\) over \(X\setminus Z\) under the
biholomorphic identification.
If \(r\ge2\), there are constants
\[
 \kappa>0,\qquad C\ge0,\qquad D\ge0
\]
depending only on the fixed logarithmic flag model such that, for every admissible weight
vector \(w\) on the fixed upstairs flags,
\[
 \mu_{\omega_\delta}(F_{*,w})-\mu_{\omega_\delta}(V_{*,w})
 \le-\kappa+\delta C+2e(w)D,
 \tag{4.1}\label{eq:uniform-stability-window}
\]
where
\[
 e(w)=\max_{a,q}|w_{a,q}-w^\infty_{a,q}|.
\]
Consequently \(V_{*,w}\) is \(\omega_\delta\)-stable whenever
\(\delta>0\) and \(e(w)\) are sufficiently small.
\end{prop}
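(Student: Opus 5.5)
The plan is to split \eqref{eq:uniform-stability-window} into three contributions: the limiting slope at \((\delta,w)=(0,w^\infty)\), the correction from the weights, and the correction from the polarization.

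\emph{Qualitative assertions.} Since \(\pi\) is biholomorphic over the dense connected set \(X\setminus Z\), the transported sheaf \(F\) has the same rank there as on \(Y\). Hence its saturated reflexive extension \(S\subset E\) satisfies \(0<\rk S=\rk F<r\), so \(S_*\) is nonzero and proper. Over \(X\setminus Z\), Theorem~\ref{thm:fixed-terminal-data} identifies \(V\) with \(E\) and the fixed flags with the refined flags. Coalescence at \(w^\infty\) recovers the original filtrations there, as explained in Subsection~\ref{subsec:fixed-flag-weights}. Saturated intersection is local, so \(F_{*,w^\infty}\) and \(S_*\) agree over \(X\setminus Z\).

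\emph{Slope identity at the limit.} Using the formula for \(\parc_1\) in Definition~\ref{defn:parabolic-chern}, I will write \(\pardeg_{\omega_\delta}(F_{*,w})\) as
\[
 c_1(F)\cdot[\omega_\delta]^{n-1}/(n-1)!
 +\sum_{a,q}w_{a,q}\,\rk\bigl(\Gr_{a,q}F\bigr)\,[B_a]\cdot[\omega_\delta]^{n-1}/(n-1)!,
\]
which is affine in \(w\) and polynomial in \(\delta\). At \(\delta=0\) every exceptional component \(B_a\) maps into \(Z\), which has codimension at least three, so \([B_a]\cdot\pi^*[\omega]^{n-1}=0\). For the same reason, any difference between \(c_1(F)\) and \(\pi^*c_1(S)\) is a combination of exceptional divisors and pairs to zero with \(\pi^*[\omega]^{n-1}\). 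Along strict transforms the graded ranks of \(F_{*,w^\infty}\) are those of \(S_*\). By the projection formula this gives
\[
 \mu_{\omega_0}(F_{*,w^\infty})=\mu_\omega(S_*),\qquad
 \mu_{\omega_0}(V_{*,w^\infty})=\mu_\omega(E_*).
\]

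\emph{The two error terms.} Both \(F_{*,w}\) and \(V_{*,w}\) have weight contributions with graded ranks at most \(r\). Changing \(w\) to \(w^\infty\) therefore moves each slope by at most \(e(w)\sum_a r\,[B_a]\cdot[\omega_\delta]^{n-1}/(n-1)!\), and this quantity is uniformly bounded for \(\delta\le1\). This gives the term \(2e(w)D\). For the \(\delta\)-dependence, I expand \([\omega_\delta]^{n-1}\) and need a uniform upper bound for the mixed terms \(c_1(F)\cdot[\omega_0]^k[\vartheta]^{n-1-k}\) with \(k<n-1\). The inclusion \(\det F\hookrightarrow\Lambda^{\rk F}V\) gives
\[
 c_1(F)\cdot\alpha\le c_1\bigl(\Lambda^{\rk F}V\bigr)\cdot\alpha
\]
for every product \(\alpha\) of semipositive classes, because a nonzero section of \(\Lambda^{\rk F}V\otimes(\det F)^{-1}\) has an effective zero divisor. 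This bound is independent of \(F\), and together with the fixed \(\mu_{\omega_\delta}(V)\) it yields \(\delta C\).

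\emph{Main obstacle: a uniform gap \(\kappa\).} Stability of \(E_*\) gives \(\mu_\omega(S_*)<\mu_\omega(E_*)\) for each \(S\), but \(\kappa\) must not depend on \(S\). The weight contributions range over a finite set, since ranks and weights are finite. So it suffices that the values \(c_1(S)\cdot[\omega]^{n-1}\), for saturated \(S\subset E\) with degree above a fixed bound, form a finite set. I would try to prove this by viewing \(\det S\subset\Lambda^kE\) as a point of a relative Douady/Barlet space. Bounded \(\omega\)-degree gives bounded volume of the corresponding cycles, and Bishop compactness then leaves only finitely many components, hence finitely many Chern classes. Consequently the supremum of \(\mu_\omega(S_*)-\mu_\omega(E_*)\) is attained and is negative; call it \(-\kappa\).

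Combining the three pieces gives \eqref{eq:uniform-stability-window}. Stability of \(V_{*,w}\) for small \(\delta>0\) and small \(e(w)\) then follows immediately.
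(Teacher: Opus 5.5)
\textbf{The mixed-term bound rests on a false inequality.} Your decomposition into a uniform gap $\kappa$ at $(\delta,w)=(0,w^\infty)$, a weight term $2e(w)D$ and a polarization term $\delta C$ is the paper's. The qualitative part, the limit identity (exceptional classes pair to zero with $\pi^*[\omega]^{n-1}$) and the weight estimate are fine.

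The step that fails is the $F$-independent bound for the mixed terms. The inequality $c_1(F)\cdot\alpha\le c_1(\Lambda^{\rk F}V)\cdot\alpha$ is false. The ``effective zero divisor'' argument works only when both sides are line bundles. A nonzero section of the higher-rank bundle $\Lambda^{\rk F}V\otimes(\det F)^{-1}$ has no zero divisor representing that class difference. For example, on $\mathbb{P}^1\times\mathbb{P}^1$ take $V=p_1^*(\cO(1)\oplus\cO(-5))$, $F=p_1^*\cO(1)$ (so $\Lambda^{\rk F}V=V$) and $\alpha=p_2^*c_1(\cO(1))$. Then $c_1(F)\cdot\alpha=1$ but $c_1(V)\cdot\alpha=-4$. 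So this inequality cannot serve as your lemma, and an $F$-independent estimate needs a different argument.

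\textbf{How to repair it.} What is true, and suffices, is a bound $\langle c_1(F),\eta_k\rangle\le C_k$ independent of $F$, for $\eta_k=\omega_0^{n-1-k}\wedge\vartheta^k$. It comes from curvature, not divisors. Fix smooth metrics on $W=\Lambda^{s}V$, $s=\rk F$, and on $\det F$, and let $\sigma$ be the induced nonzero section of $W\otimes(\det F)^{-1}$. As currents,
\[
 \ii\Theta_{\det F}\le\ii\partial\bar\partial\log|\sigma|^2
 +\frac{\langle\ii\Theta_W\sigma,\sigma\rangle}{|\sigma|^2}
 \le\ii\partial\bar\partial\log|\sigma|^2+C_W\vartheta .
\]
Wedge this with the closed strongly semipositive form $\eta_k$ and integrate. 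The $\partial\bar\partial$ term drops out because $\log|\sigma|^2\in L^1$, leaving $2\pi\langle c_1(F),\eta_k\rangle\le C_W\langle[\vartheta],\eta_k\rangle$. The paper does the equivalent Gauss--Codazzi computation for $F\subset V$, with capacity cutoffs. With this replacing your inequality, the $\delta C$ term goes through.

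\textbf{The gap $\kappa$.} Here you also depart from the paper, which cites Toma's results: boundedness of pure quotients of bounded degree, boundedness of their kernels, and finiteness of Chern classes in bounded families. Your Barlet--Bishop route is plausible, but the claim that bounded $\omega$-degree gives bounded volume is the real content and is not automatic. For $\Omega=\xi+Cp^*\omega$, the volume $\int_\Gamma\Omega^n$ involves $\int_\Gamma\xi^j\wedge p^*\omega^{n-j}$ for every $j$, while the degree controls only $j\le1$. You would need, for instance, the Khovanskii--Teissier bound $\int\Omega^n\le\bigl(\int\Omega\wedge p^*\omega^{n-1}\bigr)^n/\bigl(\int_X\omega^n\bigr)^{n-1}$ on a resolution of $\Gamma$. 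You also need some care because $\Lambda^kE$ is only reflexive.

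Finally, before invoking stability of $E_*$, note that $S$ is $\theta$-invariant. This holds because $E/S$ is torsion-free and $\Omega^1_X(\log D)$ is locally free.
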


\begin{proof}
A nonzero proper saturated subsheaf of a vector bundle has rank strictly
smaller than that of the ambient bundle.
Transport over the common open therefore gives a subsheaf of the same
rank, and saturated reflexive extension inside \(E\) is nonzero and
proper. Higgs invariance extends because the quotient is torsion-free and
\(\Omega_X^1(\log D)\) is locally free. Saturated intersection with a
one-divisor flag has torsion-free successive quotients after saturation.
The construction of the fixed logarithmic flag model identifies these
flags away from the exceptional image.

We first prove a uniform downstairs gap. Among saturated invariant
subsheaves \(S\subset E\), those satisfying
\(\mu_\omega(S_*)\ge\mu_\omega(E_*)-1\) form a bounded family.
After the finitely many possible flag ranks are separated, the inequality
gives an upper bound for the degrees of the pure quotients \(E/S\).
Lemma~4.3(3) of \cite{Tom11} gives boundedness of this family of quotients
of \(E\). Proposition~3.3(1) of the same reference gives boundedness
of the kernels \(S\), and its Corollary~5.2 gives finiteness of their Chern
classes.
Only finitely many first Chern classes and finitely many arrays of generic
flag ranks occur in this family. Hence only finitely many parabolic slopes
occur there.
Stability makes each of them strictly smaller than the ambient slope.
Subsheaves outside this family already have gap at least one. Thus there
is one \(\kappa>0\) such that
\[
 \mu_\omega(S_*)-\mu_\omega(E_*)\le-\kappa
\]
for every proper saturated invariant \(S\).

Let \(s=\rk F\). If \(M_{a,q}\) and \(m_{a,q}(F)\) are the generic ranks
of the ambient and induced flag quotients, put
\[
 A_F^\infty=
 \frac{c_1(F)}s-\frac{c_1(V)}r
 +\sum_{a,q}w^\infty_{a,q}
 \left(\frac{m_{a,q}(F)}s-\frac{M_{a,q}}r\right)[B_a].
\]
The degree-one comparison in the proof of
Proposition~\ref{prop:parabolic-class-bridge} applies to the ambient pair
and to \((F_{*,w^\infty},S_*)\), since the induced filtrations also agree on
\(X\setminus Z\). Together with the projection formula, it gives
\[
 \left\langle A_F^\infty,
 \frac{\omega_0^{n-1}}{(n-1)!}\right\rangle
 =
 \mu_\omega(S_*)-\mu_\omega(E_*)
 \le-\kappa.
\]

For \(1\le k\le n-1\), set
\[
 \eta_k=\omega_0^{n-1-k}\wedge\vartheta^k.
\]
There is a constant \(C_k^{\mathrm{ord}}\), independent of \(F\), such that
\[
 \left\langle
 \frac{c_1(F)}s-\frac{c_1(V)}r,\eta_k
 \right\rangle\le C_k^{\mathrm{ord}}.
 \tag{4.2}\label{eq:mixed-c1-bound}
\]
Indeed, on the subbundle locus let \(p_F\) be the orthogonal projection
for a fixed smooth metric on \(V\). The determinant Gauss--Codazzi formula
writes the first term as the contraction of the fixed ambient curvature
with \(p_F\), minus the squared norm of the second fundamental form.
Wedging with the strongly semipositive form \(\eta_k\) keeps the second
term nonpositive, while the first is bounded by a fixed multiple of
\(\rk F\). Capacity cutoffs around the codimension-two non-subbundle locus
remove the boundary term and give \eqref{eq:mixed-c1-bound}.

For each \(a\), both arrays
\[
 \left(\frac{m_{a,q}(F)}s\right)_q,
 \qquad
 \left(\frac{M_{a,q}}r\right)_q
\]
are probability vectors. Hence
\[
 \sum_q\left|
 \frac{m_{a,q}(F)}s-\frac{M_{a,q}}r
 \right|\le2.
\]
Since \(0\le w^\infty_{a,q}<1\) and
\([B_a]\eta_k\ge0\), the preceding inequality and
\eqref{eq:mixed-c1-bound} give a constant \(C_k^{\mathrm{par}}\)
such that
\[
 \langle A_F^\infty,\eta_k\rangle\le C_k^{\mathrm{par}}
\]
for every \(F\).

Expanding \(\omega_\delta^{n-1}\) and using
\(\delta^k\le\delta\) for \(0\le\delta\le1\), we obtain
\[
 \mu_{\omega_\delta}(F_{*,w^\infty})-\mu_{\omega_\delta}(V_{*,w^\infty})
 \le-\kappa+\delta C,
\]
where
\[
 C=\sum_{k=1}^{n-1}
 \binom{n-1}{k}
 \frac{\max\{C_k^{\mathrm{par}},0\}}{(n-1)!}.
\]
Finally put
\[
 d_a(\delta)=
 \left\langle[B_a],
 \frac{\omega_\delta^{n-1}}{(n-1)!}\right\rangle,
 \qquad
 D=\max_{0\le\delta\le1}\sum_a d_a(\delta).
\]
These quantities are nonnegative and \(D<\infty\). Changing the weights
alters the slope difference by at most
\[
 e(w)\sum_{a,q}
 \left|\frac{m_{a,q}(F)}s-\frac{M_{a,q}}r\right|d_a(\delta)
 \le2e(w)D.
\]
Together with the preceding polarization estimate, this proves
\eqref{eq:uniform-stability-window}. For example,
\[
 \delta<\frac{\kappa}{4(C+1)},\qquad
 e(w)<\frac{\kappa}{8(D+1)}
\]
make every proper slope difference negative.
\end{proof}

\begin{lem}
\label{lem:actual-kahler-smoothing}
Normalize a Kähler form \(\vartheta\) on \(Y\) by
\(\int_Y\vartheta^n=1\). For each component \(B_a\), choose a smooth
divisor metric and let \(s_a\le1\) be the squared norm of its canonical
section. Choose \(c_a\ge0\) so that
\[
 \ii\partial\bar\partial\log s_a\ge-c_a\vartheta
 \quad\text{on }Y\setminus B,
 \qquad C_B=\sum_ac_a.
\]
For \(0<\epsilon,\eta\le1\) and \(0<A\le1\), put
\[
 \Phi_{\epsilon,\eta}=\sum_a(s_a+\eta^2)^\epsilon.
\]
If
\[
 \delta>C_BA\epsilon,
\]
then
\[
 \omega_{\epsilon,A,\delta,\eta}
 =\pi^*\omega+\delta\vartheta
 +A\ii\partial\bar\partial\Phi_{\epsilon,\eta}
\]
is a smooth Kähler form and
\[
 [\omega_{\epsilon,A,\delta,\eta}]
 =\pi^*[\omega]+\delta[\vartheta].
\]
\end{lem}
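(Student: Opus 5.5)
The plan is to compute \(\ii\partial\bar\partial\) of each summand \((s_a+\eta^2)^\epsilon\) explicitly on \(Y\setminus B\). We then show that the only possibly negative contribution is bounded below by \(-c_a\epsilon\vartheta\). This negativity is absorbed by \(\delta\vartheta\), while \(\pi^*\omega\ge0\) supplies nothing negative. The cohomology statement is immediate, because \(\Phi_{\epsilon,\eta}\) is a smooth global function on \(Y\): indeed \(s_a+\eta^2\ge\eta^2>0\), so \(t\mapsto t^\epsilon\) is smooth on the range. Hence \(A\ii\partial\bar\partial\Phi_{\epsilon,\eta}\) is \(d\)-exact, smooth on all of \(Y\), and \([\omega_{\epsilon,A,\delta,\eta}]=\pi^*[\omega]+\delta[\vartheta]\).

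For positivity, write \(u=s_a+\eta^2\) and \(\psi=u^\epsilon\). Then
\[
 \ii\partial\bar\partial\psi
 =\epsilon u^{\epsilon-1}\ii\partial\bar\partial s_a
 +\epsilon(\epsilon-1)u^{\epsilon-2}\ii\partial s_a\wedge\bar\partial s_a .
\]
On \(Y\setminus B\) use \(\ii\partial\bar\partial s_a=s_a\,\ii\partial\bar\partial\log s_a+s_a^{-1}\ii\partial s_a\wedge\bar\partial s_a\). This gives
\[
 \ii\partial\bar\partial\psi
 =\epsilon u^{\epsilon-1}s_a\,\ii\partial\bar\partial\log s_a
 +\epsilon u^{\epsilon-2}\Bigl(\frac{u}{s_a}+\epsilon-1\Bigr)\ii\partial s_a\wedge\bar\partial s_a .
\]
Since \(u\ge s_a\), the bracket is at least \(\epsilon>0\), so the gradient term is semipositive. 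For the first term, \(\ii\partial\bar\partial\log s_a\ge-c_a\vartheta\) together with \(u^{\epsilon-1}s_a\le u^{\epsilon}\le(1+\eta^2)^\epsilon\le 2^\epsilon\) gives a lower bound \(-\epsilon\,2^{\epsilon}c_a\vartheta\). Here we need the bound \(u\le 1+\eta^2\le2\), which follows from \(s_a\le1\) and \(\eta\le1\).

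This raises the one delicate point, which is the constant: the naive factor \(2^\epsilon\) spoils the clean hypothesis \(\delta>C_BA\epsilon\). I would try first to sharpen the bound using \(u^{\epsilon-1}s_a=u^\epsilon\cdot(s_a/u)\le u^\epsilon\). If that still leaves the factor \(2^\epsilon\), I would instead exploit that the positive gradient term has slack \(u/s_a-1=\eta^2/s_a\). The fallback is to note that the hypothesis is meant with the stated constants, possibly after enlarging \(c_a\) by the harmless factor \(2\). That enlargement can be absorbed in the choice of \(c_a\), since the lemma only requires \emph{some} \(c_a\) with the stated property, and replacing \(c_a\) by \(2c_a\) preserves it. I expect this bookkeeping, rather than any conceptual step, to be the only obstacle.

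Across \(B\), the form \(\ii\partial\bar\partial\psi\) is smooth. The inequality \(\ii\partial\bar\partial\psi\ge-\epsilon c'_a\vartheta\) holds on the dense set \(Y\setminus B\), and hence everywhere by continuity. Summing over \(a\) gives \(A\ii\partial\bar\partial\Phi_{\epsilon,\eta}\ge-A\epsilon C_B\vartheta\), and therefore
\[
\omega_{\epsilon,A,\delta,\eta}\ge\pi^*\omega+(\delta-C_BA\epsilon)\vartheta .
\]
Since \(\pi^*\omega\ge0\) and \(\delta-C_BA\epsilon>0\), the right side is positive definite, so \(\omega_{\epsilon,A,\delta,\eta}\) is a smooth closed positive \((1,1)\)-form, that is, a Kähler form.
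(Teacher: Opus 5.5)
Your decomposition is the same as the paper's. The identity \(\ii\partial\bar\partial s_a=s_a\,\ii\partial\bar\partial\log s_a+s_a^{-1}\ii\partial s_a\wedge\bar\partial s_a\) splits \(\ii\partial\bar\partial(s_a+\eta^2)^\epsilon\) into \(\epsilon u^{\epsilon-1}s_a\,\ii\partial\bar\partial\log s_a\) plus a semipositive gradient term with coefficient \(\epsilon u^{\epsilon-2}(\epsilon+\eta^2/s_a)\). Your handling of smoothness, exactness, and the extension across \(B\) by continuity is fine.

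However, the point you flagged is a real gap as written. With \(u^{\epsilon-1}s_a\le u^\epsilon\le2^\epsilon\) you only obtain \(\omega_{\epsilon,A,\delta,\eta}\ge\pi^*\omega+(\delta-2^\epsilon C_BA\epsilon)\vartheta\). That needs \(\delta>2^\epsilon C_BA\epsilon\), which is strictly more than the hypothesis. Neither fallback repairs this:
\begin{itemize}
\item The gradient term cannot absorb the loss. The form \(\ii\partial s_a\wedge\bar\partial s_a\) has rank at most one, so it vanishes on every tangent vector annihilated by \(\partial s_a\) and cannot compensate a negative multiple of \(\vartheta\) there.
\item Replacing \(c_a\) by \(2c_a\) does not prove the stated lemma. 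The lemma is asserted for \emph{every} admissible choice of the \(c_a\), with the hypothesis \(\delta>C_BA\epsilon\) phrased in terms of that same \(C_B\). Doubling the \(c_a\) proves the conclusion only under \(\delta>2C_BA\epsilon\) for the original constants.
\end{itemize}

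The missing step is elementary. Since \(u=s_a+\eta^2\ge s_a>0\) and \(\epsilon-1\le0\), one has \(u^{\epsilon-1}\le s_a^{\epsilon-1}\). Using \(s_a\le1\), this gives
\[
 0\le s_a\,u^{\epsilon-1}\le s_a^{\epsilon}\le1.
\]
Equivalently, in your own form, \(u^\epsilon(s_a/u)\le u^\epsilon(s_a/u)^\epsilon=s_a^\epsilon\). This is exactly the paper's bound \(0\le xu^{\epsilon-1}\le1\). It gives \(A\ii\partial\bar\partial(s_a+\eta^2)^\epsilon\ge-A\epsilon c_a\vartheta\) on \(Y\setminus B\) with the original \(c_a\). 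After summing over \(a\), the rest of your argument yields \(\omega_{\epsilon,A,\delta,\eta}\ge\pi^*\omega+(\delta-C_BA\epsilon)\vartheta>0\) under the stated hypothesis.
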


\begin{proof}
For \(x=s_a>0\), put \(u=x+\eta^2\). We decompose the contribution
\(A\ii\partial\bar\partial u^\epsilon\) into a term involving
\(\ii\partial\bar\partial\log x\) and a semipositive term.
Using
\[
 \ii\partial\bar\partial x
 =x\,\ii\partial\bar\partial\log x
 +\frac{\ii\partial x\wedge\bar\partial x}{x},
\]
and writing
\[
 \mathfrak b(x):=
 A\epsilon(x+\eta^2)^{\epsilon-2}(\eta^2+\epsilon x)>0,
 \tag{4.3}\label{eq:amplitude-radial-coefficient}
\]
direct differentiation gives
\[
 \begin{aligned}
 A\ii\partial\bar\partial u^\epsilon
 &=A\epsilon u^{\epsilon-1}x\,
   \ii\partial\bar\partial\log x\\
 &\quad+\mathfrak b(x)
 \frac{\ii\partial x\wedge\bar\partial x}{x}.
 \end{aligned}
 \tag{4.4}\label{eq:amplitude-differentiation}
\]
The second term is semipositive because \(\mathfrak b(x)>0\). Since
\(0\le xu^{\epsilon-1}\le1\), the first line is bounded below by
\(-A\epsilon c_a\vartheta\). Summing over \(a\) yields
\[
 A\ii\partial\bar\partial\Phi_{\epsilon,\eta}
 \ge-C_BA\epsilon\vartheta.
\]
Thus
\[
 \omega_{\epsilon,A,\delta,\eta}
 \ge\pi^*\omega+(\delta-C_BA\epsilon)\vartheta>0.
\]
Smoothness follows from \(\eta>0\). The term \(A\ii\partial\bar\partial\Phi_{\epsilon,\eta}\) is globally
exact, which proves the asserted cohomology-class identity.
\end{proof}

\begin{cor}\label{cor:stable-rational-approximations}
Retain the fixed logarithmic flag models of
Proposition~\ref{prop:fixed-model-correction}. For each summand \(j\),
divisor component \(a\), and quotient label \(q\), let
\[
 w^\infty_{j,a,q}\in[0,1)
\]
be the normalized weight determined by the original parabolic
structure in Subsection~\ref{subsec:fixed-flag-weights}. These prescribed
weights are nondecreasing in
the flag order, so equal weights are allowed:
\[
 w^\infty_{j,a,q}\le w^\infty_{j,a,q+1}
 \qquad\text{for every pair of successive quotients.}
\]

For every positive integer \(\nu\), there are rational normalized weights
\[
 w^{(\nu)}_{j,a,q}\in\QQ\cap[0,1)
\]
and a positive integer \(M_\nu\) such that:

\begin{enumerate}
\item the weights along each fixed flag are strictly ordered in the flag
order, and hence distinguish all its nonzero successive quotients;
\item \(M_\nu w^{(\nu)}_{j,a,q}\) is integral for every \(j,a,q\);
\item
\[
 e_\nu=\max_{j,a,q}|w^{(\nu)}_{j,a,q}-w^\infty_{j,a,q}|
 \longrightarrow0.
\]
\end{enumerate}

Write \(V_{j,\nu,*}\) for the parabolic Higgs bundle defined by
\(w^{(\nu)}\) on the \(j\)-th fixed model. These bundles are locally
abelian and graded semisimple.

Choose the divisor norms and \(C_B\) from
Lemma~\ref{lem:actual-kahler-smoothing}. Fix a real number \(p>n\), an
integer
\[
 N>\max\{1,p-1\},
\]
and a constant \(C_\delta>C_B\). After increasing a fixed integer
\(\nu_0\), put
\[
 n_\nu=\nu+\nu_0,\qquad
 \epsilon_\nu=n_\nu^{-1},\qquad
 A_\nu=\epsilon_\nu^N,
\]
\[
 \mathfrak m_\nu=A_\nu\epsilon_\nu=n_\nu^{-(N+1)},\qquad
 \eta_\nu^2=\mathfrak m_\nu,\qquad
 \delta_\nu=C_\delta\mathfrak m_\nu,
\]
and define
\[
 \omega_\nu
 =\pi^*\omega_X+\delta_\nu\vartheta
 +A_\nu\ii\partial\bar\partial
   \sum_a(s_a+\eta_\nu^2)^{\epsilon_\nu}.
 \tag{4.5}\label{eq:kahler-schedule-form}
\]
Then \(\omega_\nu\) is a smooth Kähler form,
\[
 [\omega_\nu]=\pi^*[\omega_X]+\delta_\nu[\vartheta],
\]
and \(\omega_\nu\to\pi^*\omega_X\) smoothly on every compact subset of
\(Y\setminus B\). For all sufficiently large \(\nu\), every
\(V_{j,\nu,*}\) is \(\omega_\nu\)-stable.
\end{cor}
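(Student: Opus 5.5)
The plan has three parts: build the rational weights, verify the Kähler forms with Lemma~\ref{lem:actual-kahler-smoothing}, and then feed both into Proposition~\ref{prop:uniform-fixed-model-stability}.

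\emph{The weights.} Fix \(j,a\). The prescribed values \(w^\infty_{j,a,1}\le\cdots\le w^\infty_{j,a,m}<1\) lie in \([0,1)\), so there is a positive gap \(g=1-\max_q w^\infty_{j,a,q}\). For a small parameter \(\tau_\nu\to0\), first choose rationals \(\tilde w_{q}\in\QQ\cap[0,1)\) with \(|\tilde w_q-w^\infty_{j,a,q}|<\tau_\nu/4\) and \(\tilde w_q\le\tilde w_{q+1}\); this can be done by taking \(\tilde w_q\) to be a rounding down to a fine dyadic grid, which is monotone. Then set
\[
 w^{(\nu)}_{j,a,q}=\tilde w_q+q\,\tau_\nu/(4m).
\]
These weights are strictly increasing and rational. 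They stay below \(1\) once \(\tau_\nu<g\), and they lie within \(\tau_\nu/2\) of \(w^\infty\). Because the data set (all \(j,a,q\)) is finite, a common denominator \(M_\nu\) exists. Choosing \(\tau_\nu=1/\nu\) gives properties (1)--(3).

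\emph{Locally abelian and graded semisimple.} The fixed flags split simultaneously locally by Theorem~\ref{thm:fixed-terminal-data}. With any weights, such a split flag decomposes locally into parabolic line bundles, so each \(V_{j,\nu,*}\) is locally abelian. Strict ordering means every nonzero graded piece of \(V_{j,\nu,*}\) is a single refined quotient. On these quotients the induced residue is semisimple, again by Theorem~\ref{thm:fixed-terminal-data} (this property survives the twist by Proposition~\ref{prop:fixed-model-correction}). Hence the bundles are graded semisimple.

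\emph{Kähler forms.} Apply Lemma~\ref{lem:actual-kahler-smoothing} with \(\epsilon=\epsilon_\nu\), \(A=A_\nu\), \(\eta=\eta_\nu\), \(\delta=\delta_\nu\). Taking \(\nu_0\ge1\) puts all parameters in \((0,1]\). The hypothesis \(\delta>C_BA\epsilon\) reads \(C_\delta\mathfrak m_\nu>C_B\mathfrak m_\nu\), which holds since \(C_\delta>C_B\). After enlarging \(\nu_0\) we also get \(\delta_\nu\le1\). The lemma then gives that \(\omega_\nu\) is smooth and Kähler with the stated class.

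\emph{Smooth convergence off \(B\).} Fix a compact set \(K\subset Y\setminus B\). On \(K\) we have \(s_a\ge c>0\), and every derivative of \(s_a\) is bounded there. Compute
\[
 \partial\bar\partial(s_a+\eta^2)^{\epsilon}
 =\epsilon u^{\epsilon-1}\partial\bar\partial s_a
 +\epsilon(\epsilon-1)u^{\epsilon-2}\partial s_a\wedge\bar\partial s_a,
 \qquad u=s_a+\eta^2 .
\]
On \(K\) each \(C^k\) norm of this expression is \(O(\epsilon)\), uniformly in \(\eta\le1\). Multiplying by \(A_\nu\to0\), and noting \(\delta_\nu\to0\), shows \(\omega_\nu\to\pi^*\omega_X\) in \(C^\infty(K)\).

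\emph{Stability.} The substantive input here is Proposition~\ref{prop:uniform-fixed-model-stability}, applied to each summand \(j\) of rank \(r_j\ge2\). Rank-one summands are automatically stable, since they have no proper saturated subsheaves of intermediate rank. Parabolic slopes depend only on the cohomology class of the polarization. Since \([\omega_\nu]=[\omega_{\delta_\nu}]\), the estimate \eqref{eq:uniform-stability-window} gives, for every proper saturated Higgs subsheaf \(F\),
\[
 \mu_{\omega_\nu}(F_{*,w^{(\nu)}})-\mu_{\omega_\nu}(V_{j,\nu,*})\le-\kappa_j+\delta_\nu C_j+2e_\nu D_j .
\]
Both \(\delta_\nu\) and \(e_\nu\) tend to zero, and the constants \(\kappa_j,C_j,D_j\) are independent of \(\nu\) and of \(F\). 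So the right side is negative for all large \(\nu\), simultaneously for the finitely many \(j\).

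I expect no real obstacle. The one point to check is that the admissibility assumed in Proposition~\ref{prop:uniform-fixed-model-stability} (nondecreasing weights in \([0,1)\)) covers the strictly increasing \(w^{(\nu)}\), which it does.
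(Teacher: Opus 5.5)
Your proposal is correct and follows the paper's proof essentially step for step. The ingredients match: Lemma~\ref{lem:actual-kahler-smoothing} with $\delta_\nu-C_BA_\nu\epsilon_\nu=(C_\delta-C_B)\mathfrak m_\nu>0$, the factor $A_\nu\epsilon_\nu=\mathfrak m_\nu$ in every positive-order derivative for compact convergence off $B$, and the window \eqref{eq:uniform-stability-window} of Proposition~\ref{prop:uniform-fixed-model-stability} for stability; the only difference is that you build the strict rational weights by monotone rounding plus a strictly increasing shift, where the paper uses density of rationals in the open strict chamber near the path $x(t)$. One small repair: since the weights are required at every $\nu$, replace $\tau_\nu=1/\nu$ by a rational $\tau_\nu<\min\{1/\nu,g\}$, with $g$ the smallest gap $1-\max_q w^\infty_{j,a,q}$ over all flags, so that the shifted weights also stay below $1$ at the early stages.
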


\begin{proof}
Fix one flag with \(N\) nonzero successive quotients, and write its
prescribed weights from \(w^\infty\) as
\[
 x^\infty=(x_1^\infty,\ldots,x_N^\infty),\qquad
 0\le x_1^\infty\le\cdots\le x_N^\infty<1.
\]
For \(0<t<1\), define
\[
 x_q(t)=(1-t)x_q^\infty+t\frac{q}{N+1}.
\]
For \(q<N\),
\[
 x_{q+1}(t)-x_q(t)
 =(1-t)(x_{q+1}^\infty-x_q^\infty)+\frac{t}{N+1}>0.
\]
Also \(x_1(t)>0\) and \(x_N(t)<1\). Hence \(x(t)\) lies in the open
strict chamber and converges to \(x^\infty\) as \(t\downarrow0\).

Rational vectors are dense in this chamber. For each \(\nu\), choose a rational vector within \(1/\nu\) of \(x^\infty\). Perform the choice for every fixed flag. There are finitely many weights at each stage, so the least common multiple of their reduced denominators gives \(M_\nu\). This proves the three weight assertions. Local abelianness and
semisimplicity of the quotient residues are unaffected because only the
numerical labels have changed.

The parameters satisfy
\[
 \delta_\nu-C_BA_\nu\epsilon_\nu
 =(C_\delta-C_B)\mathfrak m_\nu>0.
\]
Lemma~\ref{lem:actual-kahler-smoothing} therefore proves positivity and
the cohomology identity. If \(\mathcal C\Subset Y\setminus B\), every \(s_a\) has
a positive lower bound on \(\mathcal C\). Every derivative of positive order
of
\[
 A_\nu(s_a+\mathfrak m_\nu)^{\epsilon_\nu}
\]
contains the factor \(A_\nu\epsilon_\nu=\mathfrak m_\nu\), while the remaining
factors and their derivatives are uniformly bounded on \(\mathcal C\). Hence the
perturbation term converges to zero in every \(C^q(\mathcal C)\), proving the
claimed smooth convergence on compact sets. With these parameters,
the coefficient \(\mathfrak b(x)\) in
\eqref{eq:amplitude-radial-coefficient} is
\[
 \mathfrak b_\nu(x)
 =\mathfrak m_\nu(x+\mathfrak m_\nu)^{\epsilon_\nu-2}
  (\mathfrak m_\nu+\epsilon_\nu x).
\]
For a summand of rank at least two,
Proposition~\ref{prop:uniform-fixed-model-stability} gives constants
\(\kappa_j>0\) and \(C_j,D_j\ge0\), independent of the nonzero proper
saturated Higgs subsheaf \(F\subset V_j\). If \(F_{\nu,*}\) is its
induced parabolic structure at stage \(\nu\), then
\[
 \mu_{\omega_\nu}(F_{\nu,*})-\mu_{\omega_\nu}(V_{j,\nu,*})
 \le-\kappa_j+\delta_\nu C_j+2e_\nu D_j.
\]
There are finitely many summands. Taking the minimum of the positive
\(\kappa_j\)'s and the maxima of \(C_j,D_j\) gives a common inequality.
Its right side is negative for all sufficiently large \(\nu\), because
\(\delta_\nu,e_\nu\to0\).
\end{proof}

We next compare the parabolic characteristic classes of these stages
with those of the original sheaf.

\subsection{Comparison of parabolic Chern classes}

\begin{prop}
\label{prop:parabolic-class-bridge}
Let \((E_*,\theta)\) be the original rank-\(r\) parabolic Higgs sheaf,
and retain its fixed logarithmic flag model
\(\pi:(Y,B)\to(X,D)\) from Section~\ref{sec:resolution-fixed-model}.
With the weight assignment of Subsection~\ref{subsec:fixed-flag-weights},
let \(V_{\infty,*}\) be the upstairs parabolic structure at the
prescribed weights \(w^\infty\), and write
\(V_{\nu,*}\) for the structures at strictly ordered rational weights
\(w^{(\nu)}\to w^\infty\). Then
\[
 \begin{aligned}
 \pi_*\parc_k(V_{\infty,*})&=\parc_k(E_*)\qquad(k=1,2),\\
 \pi_*\bigl(\parc_1(V_{\infty,*})^2\bigr)&=\parc_1(E_*)^2.
 \end{aligned}
 \tag{4.6}\label{eq:low-degree-bridge}
\]
Consequently,
\[
 \begin{aligned}
 &\pi_*\left(\parc_2(V_{\infty,*})-\frac{r-1}{2r}\parc_1(V_{\infty,*})^2\right)\\
 &\qquad=\parc_2(E_*)-\frac{r-1}{2r}\parc_1(E_*)^2.
 \end{aligned}
\]

For the rational stages,
\[
 \begin{aligned}
 \pi_*\parc_1(V_{\nu,*})&\longrightarrow \parc_1(E_*),\\
 \pi_*\parc_2(V_{\nu,*})&\longrightarrow \parc_2(E_*),\\
 \pi_*\bigl(\parc_1(V_{\nu,*})^2\bigr)
 &\longrightarrow \parc_1(E_*)^2.
 \end{aligned}
\]
\end{prop}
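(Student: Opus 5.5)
The plan is to compare classes on \(X\setminus Z\), where \(\pi\) is biholomorphic, and then to control what can be supported on \(Z\). Since \(\codim_X Z\ge3\), we have \(H^{2k}_Z(X,\RR)=0\) for \(2k<6\), so \(H^{2k}(X,\RR)\to H^{2k}(X\setminus Z,\RR)\) is injective for \(k\le2\). Thus it suffices to identify restrictions to \(X\setminus Z\). The difficulty is that \(\pi_*\) of a class on \(Y\) does not simply restrict to the corresponding class on \(X\setminus Z\) unless we know how it is computed. I would therefore work with the defining formula \eqref{eq:weighted-parabolic-chern} directly.

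\emph{Step 1: lattices.} For \(\mathbf t\in[0,1]^{A}\), write \(V_\infty^{-\mathbf t}\) for the lattices at weights \(w^\infty\). Use Theorem~\ref{thm:fixed-terminal-data}, Proposition~\ref{prop:fixed-model-correction} and the coalescence rule of Subsection~\ref{subsec:fixed-flag-weights}. Because the exceptional weights are zero and the twist \(L\) carries zero weight, the restriction of \(V_\infty^{-\mathbf t}\) to \(\pi^{-1}(X\setminus Z)\) equals \(E^{-\mathbf t'}|_{X\setminus Z}\). Here \(\mathbf t'\) is the restriction of \(\mathbf t\) to the strict-transform components. Integrating out the exceptional variables, the weighted average \eqref{eq:weighted-parabolic-chern} on \(Y\), restricted to \(\pi^{-1}(X\setminus Z)\), agrees with the downstairs average. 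The factors \(e^{-t_a[B_a]}\) for exceptional \(a\) restrict to \(1\) there.

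\emph{Step 2: push forward.} Express \(\parch_k(V_{\infty,*})\) by the degree-one closed formula of Definition~\ref{defn:parabolic-chern} and its degree-two analogue. Both are polynomials in \(c_1(V)\), \(\operatorname{ch}_2(V)\), the \([B_a]\), and the ranks and first Chern classes of the graded flag quotients on \(B_a\), with coefficients polynomial in the weights. Then apply \(\pi_*\) term by term. Terms supported on exceptional \(B_a\), or on products involving them, push forward to classes supported on \(Z\). Since \(\pi_*\) lowers nothing in codimension, these are classes of codimension \(\le 2\) supported on a set of codimension \(\ge3\), hence zero. We have \(\pi_*[\widetilde D_i]=[D_i]\), and \(\pi_*\) of the bundle classes equals the reflexive classes \(c_1(E)\), \(\operatorname{ch}_2(E)\) and those of the graded pieces. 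This holds because both are determined off codimension three, and \(c_1,\operatorname{ch}_2\) of reflexive sheaves are determined outside codimension three; this uses the injectivity above together with \(\pi_*\pi^*=\Id\).

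\emph{Step 3: squares.} For \(\pi_*(\parc_1(V_{\infty,*})^2)\), write \(\parc_1(V_{\infty,*})=\pi^*\parc_1(E_*)+\varepsilon\), where \(\varepsilon\) is a class restricting to zero on \(\pi^{-1}(X\setminus Z)\). Then \(\pi_*(\pi^*\alpha\cdot\pi^*\alpha)=\alpha^2\) and \(\pi_*(\pi^*\alpha\cdot\varepsilon)=\alpha\cdot\pi_*\varepsilon=0\). The term \(\pi_*(\varepsilon^2)\) is a degree-four class supported on \(Z\), hence zero. This argument is the reason the codimension-three bound is needed. The discriminant identity then follows by linearity.

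\emph{Step 4: rational stages.} Only finitely many lattices occur, and the ranks and classes of the flag quotients are fixed. So \(\parc_k(V_{*,w})\) is a polynomial in \(w\), and continuity as \(w^{(\nu)}\to w^\infty\) gives the convergence statements.

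The main obstacle is making Step 2 rigorous for \(\operatorname{ch}_2\) of non-locally-free reflexive sheaves. This amounts to defining and comparing their classes through the restriction to \(X\setminus Z\) using the vanishing \(H^4_Z=0\). I would first try to reduce everything to that local-cohomology vanishing.
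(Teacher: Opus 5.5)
Your proposal is correct and is essentially the paper's argument. You identify the weighted classes over \(X\setminus Z\), where \(\pi\) is biholomorphic and the exceptional factors are trivial, and you kill the discrepancy in degrees two and four because classes supported on the codimension-three set \(Z\) vanish there; the rational-stage limits then follow from polynomial dependence on the weights.

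Your Steps~2--3 are more than you need. Proper pushforward commutes with restriction to the open set over which \(\pi\) is an isomorphism, so Step~1 plus injectivity of \(H^{2k}(X,\RR)\to H^{2k}(X\setminus Z,\RR)\) for \(k\le2\) already gives all three identities, squares included. The closed-formula route of Step~2 would also need the bigraded ranks at the crossings \(B_a\cap B_b\), not just the ranks of the graded quotients on each \(B_a\).
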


\begin{proof}
Apply the weighted formula \eqref{eq:weighted-parabolic-chern} to the
reflexive parabolic lattices. For fixed finite chains, its components are
polynomials in the weights. Their coefficients depend on the ordinary
Chern characters of the lattices and the divisor classes; see also the
explicit SNC formulas in \cite[Section~4.1]{Taher09}. The definition uses
real weights directly, so choosing a common denominator for rational
weights does not change it.

By construction, \(\pi\) is biholomorphic over \(X\setminus Z\), where
\(\codim_X Z\ge3\). On this common open, the ambient sheaves and
their filtrations along each divisor agree. An exceptional weight
coordinate contributes neither a divisor class nor a different lattice
after restriction; its integration over the unit interval contributes the
same factor one to numerator and denominator. Hence the degree-two and
degree-four parabolic Chern-character components agree on the common open.

At a double crossing outside \(Z\), choose a common split frame and let
\(r_{i,k}^{a,b}\) be the number of its basis vectors with jumps \((a,b)\).
Then
\[
 \sum_b r_{i,k}^{a,b}=\rk\Gr_i^aE,\qquad
 \sum_a r_{i,k}^{a,b}=\rk\Gr_k^bE.
\]
Thus coalescing inserted steps recombines every crossing contribution into
the original coefficient; no extra term is introduced by the order in
which the two associated gradeds are formed.

The difference between the corresponding degree-two classes is
supported over \(Z\), and the differences between the degree-four classes
and between the first-class squares are supported there as well. Proper
pushforward preserves these support conditions. Cohomological purity in a
smooth manifold says that a class supported on a complex analytic set of
codimension at least three has no component in real degree two or four.
The pushed differences therefore vanish, which gives
\eqref{eq:low-degree-bridge}. Taking the stated linear combination proves the discriminant identity.

For the family \(V_{\nu,*}\), the weighted formula is polynomial on
each ordering chamber and extends continuously when adjacent weights
coalesce. The preceding rank identities show that the limiting
crossing coefficients are the coefficients of the limiting filtration.
Therefore, in \(H^{2k}(Y,\RR)\),
\[
 \parc_k(V_{\nu,*})\longrightarrow\parc_k(V_{\infty,*})
 \qquad(k=1,2).
\]
Cup product is continuous in the finite-dimensional cohomology ring, so
\[
 \parc_1(V_{\nu,*})^2\to \parc_1(V_{\infty,*})^2.
\]
Proper pushforward is linear and continuous. Applying it and then using
\eqref{eq:low-degree-bridge} proves the asserted convergence in the
stated cohomology spaces.
\end{proof}

\subsection{Construction of an adapted Hermitian metric}
\label{subsec:adapted-references}

On the fixed model \((Y,B)\), fix a rational
stage \(\nu\) from Corollary~\ref{cor:stable-rational-approximations} and one
summand \((V_{\nu,*},\theta_Y)\), of rank \(r\). Put \(U=Y\setminus B\).
We construct a Hermitian metric \(K_\nu\) whose growth realizes the
parabolic lattices. Its explicit local form will also be used in the
proof of absolute convergence of the curvature integral in
\eqref{eq:literal-canonical-quadratic}.

Choose a finite SNC atlas \(\{U_\lambda\}\) on which the fixed flags
split simultaneously.
On a chart \(U_\lambda\), write \(B\) as \(\prod_a z_a=0\) and choose
a compatible frame \(e_1,\ldots,e_r\). Let \(w_{a,i}^{(\nu)}\) be the
weight of the flag quotient to which \(e_i\) belongs along \(B_a\).
Thus \(w_{a,i}^{(\nu)}=w_{a,q}^{(\nu)}\) when the frame vector
\(e_i\) belongs to the \(q\)-th quotient.

We also require compatibility with the residue eigenspaces. This is
distinct from compatibility with the parabolic weights: one graded
bundle \(Q_{a,\alpha}=\Gr_a^\alpha(V_{\nu,*})\), indexed by a weight
\(\alpha\), may have several residue eigenvalues. The graded bundles
on the fixed model are locally free. If \(R_{a,\alpha}\) is the induced
residue, graded semisimplicity gives a holomorphic decomposition
\[
 Q_{a,\alpha}=\bigoplus_{\xi\in\Sigma_{a,\alpha}}Q_{a,\alpha,\xi},
 \qquad
 R_{a,\alpha}|_{Q_{a,\alpha,\xi}}=\xi\Id.
\]
At an SNC intersection, integrability of the Higgs field makes the
induced residues commute. Their spectral projectors therefore commute
and preserve the induced flags, so the eigenspace decompositions admit
a simultaneous refinement compatible with the flag splitting.
Choose the local frames to respect these decompositions on the graded
bundles. We refer to these as \emph{joint-spectral frames}.

Declare the frame vectors orthogonal and set
\[
 K_{\nu,\lambda}(e_i,e_j)
 =\delta_{ij}\prod_a|z_a|^{2w_{a,i}^{(\nu)}}.
 \tag{4.7}\label{eq:local-stage-reference}
\]
This is the local \emph{split monomial metric}: it is diagonal in the
chosen frame, and its powers of the boundary coordinates record the
parabolic weights. Extend the atlas across the interior of \(U\), using
smooth positive metrics there. Choose a smooth partition of unity
\(\{\rho_\lambda\}\) subordinate to this atlas and put
\[
 \widetilde K_\nu=\sum_\lambda\rho_\lambda K_{\nu,\lambda}.
 \tag{4.8}\label{eq:partitioned-stage-reference}
\]
The sum is taken as Hermitian forms on the same bundle. Interior terms
have compact support away from \(B\). This defines a smooth positive
Hermitian metric \(\widetilde K_\nu\) on \(V|_U\).

The transition maps preserve the parabolic lattices. On each smaller
chart, with \(\nu\) fixed, the local metrics are therefore uniformly
comparable with the same split model. Their positive weighted sum has
the same property.
Comparing a section with that model gives both directions of the growth
condition in Definition~\ref{defn:growth}, at every real multi-index and
every SNC intersection. Thus \(\widetilde K_\nu\) is adapted to
\(V_{\nu,*}\). If the frames, interior metrics, and partition are
fixed as the weights vary, \(w^{(\nu)}\to w^\infty\) also gives smooth
convergence of \(\widetilde K_\nu\) on compact subsets of \(U\).

The common denominator \(M_\nu\) also has a metric interpretation.
Let \(p_\lambda\) be the local root cover \(z_a=t_a^{M_\nu}\), and put
\(m_{a,i}=M_\nu w_{a,i}^{(\nu)}\in\mathbb Z\). On the divisor complement,
set
\[
 \widehat e_i=\left(\prod_a t_a^{-m_{a,i}}\right)p_\lambda^*e_i.
\]
Under Proposition~\ref{prop:analytic-root-correspondence}, these are
holomorphic local generators of the root-stack bundle corresponding to
\(V_{\nu,*}\); the displayed factors may have poles relative to the
ordinary pullback bundle \(p_\lambda^*V\). Each generator transforms by a
character of the finite root group, hence the term \emph{character frame}.
The monomial factors cancel the metric powers in
\eqref{eq:local-stage-reference}, giving
\[
 (p_\lambda^*K_{\nu,\lambda})(\widehat e_i,\widehat e_j)=\delta_{ij}.
\]
The transition matrices between these frames extend holomorphically and
invertibly across the root divisor. Thus the metric
\(\widetilde K_\nu\) also extends there as a smooth positive Hermitian
matrix.
This construction applies on SNC charts, including multiple intersections,
and the same \(M_\nu\) may be used on every boundary component.

We finally normalize the determinant. Let
\(\gamma_{\nu,a}=\sum_iw_{a,i}^{(\nu)}\), with the sum counted with
multiplicity. Choose smooth metrics \(k_{\det V}\) and \(k_a\) on
\(\det V\) and \(\cO_Y(B_a)\), respectively, and let \(\sigma_a\) be
the canonical divisor section. The metric
\[
 q_\nu^0=k_{\det V}\prod_a|\sigma_a|_{k_a}^{2\gamma_{\nu,a}}
\]
has the prescribed determinant exponents. Its curvature integral is the
parabolic degree. Solving the scalar Poisson equation on \(Y\)
multiplies \(q_\nu^0\) by a smooth positive factor and gives a metric
\(q_\nu\) satisfying
\[
 \ii\Lambda_{\omega_\nu}F_{q_\nu}=r c_\nu,\qquad
 c_\nu=\frac{2\pi\pardeg_{\omega_\nu}(V_{\nu,*})}
 {r\Vol_{\omega_\nu}(Y)}.
\]
Define
\[
 K_\nu=\left(\frac{q_\nu}{\det\widetilde K_\nu}\right)^{1/r}
          \widetilde K_\nu.
 \tag{4.9}\label{eq:normalized-stage-reference}
\]
The scalar ratio is smooth and positive on root charts. Consequently
\(K_\nu\) remains adapted and smooth on the root stack, and
\(\det K_\nu=q_\nu\). More precisely, choose fixed smaller charts
\(U_\lambda'\Subset U_\lambda\) covering \(B\). For every \(\nu\), there
is a constant \(C_\nu\ge1\) such that, for every \(\lambda\),
\[
 C_\nu^{-1}K_{\nu,\lambda}\le K_\nu\le C_\nu K_{\nu,\lambda}
 \qquad\text{on }U_\lambda'\setminus B.
\]
For the following arguments, we suppress \(\nu\) and write
\(K=K_\nu\) and \(q=q_\nu\).

\subsection{A metric with compactly supported error}
\label{subsec:compact-source-reference}

For a Hermitian metric \(H\) on \(V|_U\), its \emph{error in the
Hermitian--Einstein equation} is the endomorphism
\[
 \ii\Lambda_\omega\bigl(F_H+[\theta,\theta^{\dagger H}]\bigr)
 -c\Id_V,
 \qquad
 c=\frac{2\pi\pardeg_\omega(V_*)}{r\Vol_\omega(Y)}.
\]
We construct a metric \(H_0\) with the same determinant as \(K\),
comparable with \(K\), and solving the Hermitian--Einstein equation
near \(B\). Its error is then bounded and compactly supported in
\(U\), while the comparison preserves the parabolic growth.
The bounded error is used in the Moser estimate for the Dirichlet
solutions in Theorem~\ref{thm:fixed-stage-he-metrics}.
We first record the cutoff functions used in the construction and in
the subsequent integral identities.

\begin{lem}\label{lem:product-snc-cutoffs}
Let \(B\) be an SNC divisor on a compact Kähler manifold \(Y\), and put
\(U=Y\setminus B\). There are functions
\(\chi_\ell\in C_c^\infty(U)\), \(0\le\chi_\ell\le1\), such that
\(\chi_\ell\to1\) on compact subsets and
\[
 \|d\chi_\ell\|_{L^2(U)}\longrightarrow0.
\]
\end{lem}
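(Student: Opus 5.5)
The plan is to build the cutoffs as products of one–variable logarithmic cutoffs, one for each component \(B_a\), and then use that a divisor has real codimension two, so its \(L^2\)-capacity is zero. Fix smooth Hermitian metrics on \(\cO_Y(B_a)\), normalized so that the canonical sections satisfy \(|\sigma_a|^2\le e^{-1}\) on \(Y\). Put \(L_a=-\log|\sigma_a|^2\ge1\). Choose a smooth \(\rho:\RR\to[0,1]\) with \(\rho=1\) on \((-\infty,1]\), \(\rho=0\) on \([2,\infty)\), and \(|\rho'|\le2\). For \(\ell\ge1\) define
\[
 \chi_{\ell,a}=\rho\!\left(\frac{\log L_a}{\log\ell}\right),\qquad
 \chi_\ell=\prod_a\chi_{\ell,a}.
\]
Each factor equals one where \(L_a\le\ell\) and vanishes where \(L_a\ge\ell^2\). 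Hence \(\chi_\ell\) is smooth on \(Y\), vanishes in a neighborhood of \(B\), and so lies in \(C_c^\infty(U)\). It takes values in \([0,1]\). On a compact \(K\Subset U\) every \(L_a\) is bounded, so \(\chi_\ell=1\) on \(K\) once \(\ell\) is large.

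For the gradient, the Leibniz rule and \(0\le\chi_{\ell,b}\le1\) give \(|d\chi_\ell|\le\sum_a|d\chi_{\ell,a}|\). It therefore suffices to show \(\|d\chi_{\ell,a}\|_{L^2}\to0\) for each \(a\). We have
\[
 |d\chi_{\ell,a}|\le\frac{2}{\log\ell}\,\frac{|dL_a|}{L_a}\,\mathbf 1_{\{\ell\le L_a\le\ell^2\}}.
\]
Cover \(B_a\) by finitely many SNC charts. In each chart write \(|\sigma_a|^2=e^{\phi}|z_1|^2\) with \(\phi\) smooth. Then \(|dL_a|\le C/|z_1|+C\) with respect to any fixed smooth background metric. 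Since \(\omega\)-volumes are comparable to Euclidean volume on the chart, Fubini in the \(z_1\)-variable reduces the estimate to a planar integral. On the region \(\ell\le L_a\le\ell^2\), the quantity \(L_a\) is comparable to \(t=-\log|z_1|^2\) up to an additive constant, and \(t\approx\ell\) there. Using polar coordinates \(dA=\pi\,d(|z_1|^2)\) and \(|z_1|^{-2}\,d(|z_1|^2)=dt\), I expect
\[
 \int_{\{\ell\le L_a\le\ell^2\}}\frac{|dL_a|^2}{L_a^2}\,dV
 \le C\int_{\ell-C}^{\ell^2+C}\frac{dt}{t^2}+C\ell^{-2}\le\frac{C'}{\ell}.
\]
Here the bounded part \(C\) of \(|dL_a|\) only contributes an integral over a set of volume \(O(e^{-\ell})\). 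This gives \(\|d\chi_{\ell,a}\|_{L^2}^2\le C'/(\ell\log^2\ell)\to0\).

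Away from \(B_a\) the factor \(\chi_{\ell,a}\) is identically one for large \(\ell\), so only the charts near \(B_a\) matter. In those charts the other components enter only through the other factors, which are bounded by one, so no interaction terms appear at crossings.

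The main care is in that chart computation: the comparability \(L_a\approx-\log|z_1|^2\) must be uniform, and the bound \(|d\log|\sigma_a|^2|\lesssim|z_1|^{-1}\) must hold uniformly near multiple intersections. Both follow from the smooth positivity of \(e^{\phi}\) on the finitely many relatively compact charts. A simpler choice such as \(\rho(L_a/\ell)\) would in fact also work, but the doubly logarithmic choice gives visible room.
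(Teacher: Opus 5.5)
Your argument is correct and is essentially the paper's proof. The paper also takes \(\chi_\ell=\prod_a\rho(\log T_a/\ell)\) with \(T_a=1-\log s_a\), bounds \(|d\log T_a|\) by \(C/(|z_a|T_a)+C\) in SNC charts, and integrates radially over the transition region \(e^\ell\le T_a\le e^{2\ell}\); your \(\rho(\log L_a/\log\ell)\) is the same log--log cutoff reparametrized by \(\ell\mapsto\log\ell\), and you should take \(\ell\ge2\) rather than \(\ell\ge1\) so that \(\log\ell>0\).
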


\begin{proof}
Choose global divisor norms \(s_a\le1\) and put
\(T_a=1-\log s_a\). Take a fixed smooth function
\(\rho\) which is one on \((-\infty,1]\), zero on \([2,\infty)\), and
satisfies \(|\rho'|\le2\). Define
\[
 \chi_\ell=\prod_a
 \rho\left(\frac{\log T_a}{\ell}\right),
\]
using the global functions \(T_a\). On a divisor chart,
\[
 |d\log T_a|^2\le
 \frac{C\,|dz_a|^2}{|z_a|^2T_a^2}+C.
\]
The derivative of the \(a\)-th factor is supported where
\(e^\ell\le T_a\le e^{2\ell}\). Radial integration gives
\[
 \int
 \frac{|dz_a|^2}{\ell^2|z_a|^2T_a^2}
 \le\frac{C}{\ell^2}\int_{e^\ell}^{e^{2\ell}}\frac{dT}{T^2}
 \longrightarrow0.
\]
Finiteness of the atlas and the product rule prove the claimed decay of
the cutoff energy.
\end{proof}

For the energy estimates below, we use the following notation for the
Higgs--Dolbeault operator acting on an endomorphism \(s\):
\[
 \bar\partial_\theta s:=D''s
 =\bar\partial_{\End V}s+[\theta,s],
 \qquad [\theta,s]=\theta s-s\theta.
\]
Here \(D''=\bar\partial_V+\theta\) induces the displayed action on
\(\End V\). The two summands have types \((0,1)\) and \((1,0)\), so
\(|\bar\partial_\theta s|^2
=|\bar\partial_{\End V}s|^2+|[\theta,s]|^2\).
Thus \(\bar\partial_\theta s\in L^2\) requires both the ordinary
Dolbeault derivative and the Higgs commutator to belong to \(L^2\).

\begin{prop}\label{prop:compact-source-reference}
Fix a rational stage on a connected compact Kähler manifold
\((Y,\omega)\) with a finite SNC divisor \(B\), and put
\(U=Y\setminus B\). Assume that the stage is locally abelian and graded
semisimple. Let \((V_*,\theta)\) be the corresponding rank-\(r\)
logarithmic parabolic Higgs bundle, and let \(K\) be the Hermitian metric
defined by \eqref{eq:normalized-stage-reference} in
Subsection~\ref{subsec:adapted-references}, with the stage index omitted. Set
\[
 q=\det K,\qquad
 c=\frac{2\pi\pardeg_\omega(V_*)}
 {r\Vol_\omega(Y)}.
\]
There is a smooth Hermitian metric \(H_0\) on \(V|_U\) with the
following properties.

\begin{enumerate}
\item 
The determinant is unchanged:
\[
 \det H_0=q.
\]
There is an open neighborhood \(W_0\) of \(B\) on which \(H_0\)
solves the central Hermitian--Einstein equation:
\[
 \ii\Lambda_\omega
 \bigl(F_{H_0}+[\theta,\theta^{\dagger H_0}]\bigr)
 =c\Id_V
 \quad\text{on }W_0\setminus B.
\]
Consequently, its Hermitian--Einstein error
\[
 \Gamma_0=
 \ii\Lambda_\omega
 \bigl(F_{H_0}+[\theta,\theta^{\dagger H_0}]\bigr)-c\Id_V
\]
is smooth, bounded, and supported in a compact subset of \(U\).

\item 
For some \(C\ge1\),
\[
 C^{-1}K\le H_0\le CK\quad\text{on }U.
\]
In particular, \(H_0\) is adapted to \(V_*\).

\item 
The relative logarithm \(a=\log(K^{-1}H_0)\) satisfies
\[
 a\in L^\infty(U),\qquad
 D''a,\ D'_Ka\in L^2(U).
 \tag{4.10}\label{eq:compact-source-relative-energy}
\]
Here \(D'_K=\partial_K+\theta^{\dagger K}\) also acts on endomorphisms, and
the norms are computed using \(K\) and \(\omega\).

\end{enumerate}

All constants may depend on the fixed rational stage.
\end{prop}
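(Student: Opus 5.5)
Near \(B\) I would build \(H_0\) as a correction of \(K\) that solves the Hermitian--Einstein equation exactly there, and then glue it to \(K\) in the interior. The glued metric agrees with the local solution on a neighborhood \(W_0\) of \(B\), so its error is automatically compactly supported in \(U\).

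\emph{Step 1: the error of \(K\) on the root stack.} Pass to local root covers \(z_a=t_a^{M}\) as in Subsection~\ref{subsec:adapted-references}. In the character frames \(\widehat e_i\), the metric \(p_\lambda^*K\) is smooth and positive across the root divisor. Because the joint-spectral frames diagonalize the graded residues and the stage is graded semisimple, the Higgs field in these frames is logarithmic with semisimple residue blocks. On the root cover this makes \([\theta,\theta^{\dagger K}]\) bounded for \(\omega\), and in fact for any reasonable pulled-back metric. The curvature \(F_K\) is smooth on the root cover, so \(\Lambda_\omega F_K\) is bounded, possibly with mild orbifold-type singularities, since \(\omega\) is smooth downstairs.

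Hence the error \(\Gamma_K=\ii\Lambda_\omega(F_K+[\theta,\theta^{\dagger K}])-c\Id\) is bounded on \(U\). Its trace vanishes identically, because \(\det K=q\) and \(q\) was chosen to solve the scalar equation.

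\emph{Step 2: solving near \(B\).} Choose a tubular neighborhood \(W\) of \(B\) with smooth boundary. On \(W\setminus B\) I would solve the Dirichlet problem
\[
\ii\Lambda_\omega\bigl(F_{H}+[\theta,\theta^{\dagger H}]\bigr)=c\Id,\qquad H|_{\partial W}=K,\qquad \det H=q.
\]
The intended method is Simpson's heat flow or continuity method with \(K\) as background metric. Simpson's analytic conditions hold: \(\omega\) is complete enough after passing to a Poincaré-type exhaustion, or one can argue directly with the bounded error \(\Gamma_K\). The \(C^0\) estimate for \(s=\log(K^{-1}H)\) comes from the subsolution inequality
\[
\Delta\log\Tr e^{s}\ge -2|\Gamma_K|,
\]
combined with a scalar barrier. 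For the barrier I would use \(\sum_a s_a^{\epsilon}\)-type functions as in Lemma~\ref{lem:actual-kahler-smoothing}, or simply the solution of \(\Delta u=-2\sup|\Gamma_K|\) on \(W\) with zero boundary values. Either choice gives \(|s|\le C\) with no stability input, since the domain has boundary.

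This yields (2) on \(W\). The determinant is preserved because the trace part of the equation is already solved by \(q\). Setting \(H_0=K e^{\chi s}\), with a cutoff \(\chi\) that equals \(1\) on a smaller \(W_0\) and \(0\) near \(\partial W\), gives (1) and (2). The error of \(H_0\) is supported in \(\overline{W\setminus W_0}\cup(Y\setminus W)\), which is compact in \(U\), and it is bounded there by smoothness.

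\emph{Step 3: the energy statement (3).} The bound \(a\in L^\infty\) is Step 2. For \(D''a\in L^2\), I would integrate the identity
\[
\Delta\Tr a \;\text{ and }\; \langle \Gamma_{H_0}-\Gamma_K,a\rangle=\ii\Lambda\bigl(\bar\partial\partial\text{-terms}\bigr)+|\Psi(a)^{1/2}D''a|^2
\]
(Simpson's Lemma~3.1 form) against the cutoffs \(\chi_\ell\) of Lemma~\ref{lem:product-snc-cutoffs}. Both \(\Gamma\)'s are bounded, the volume is finite and \(a\) is bounded, and the boundary terms are controlled by \(\|d\chi_\ell\|_{L^2}\|D''a\|_{L^2}\) together with the absorption that \(\Psi\) is bounded below on bounded \(a\). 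Letting \(\ell\to\infty\) gives \(D''a\in L^2\). Then \(D'_Ka\in L^2\) follows by taking adjoints, since \(a\) is \(K\)-selfadjoint.

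\emph{Main obstacle.} The step I expect to be hardest is the boundedness of \(\Gamma_K\) near the crossings. It needs the cross terms in \([\theta,\theta^{\dagger K}]\) between different weight blocks to decay. Off-diagonal Higgs entries between blocks of different weights carry factors \(|z|^{w_i-w_j}\), and nilpotent parts of the residues would produce the problematic \(|z|^{-2}\) terms. Graded semisimplicity, together with the strictly ordered weights, is exactly what forces the residual off-diagonal coefficients to be divisible by \(z_a\) on the root cover, yielding a bound of order \(|t|^{\text{positive}}\). Verifying this carefully at multiple intersections is where I would spend the effort.
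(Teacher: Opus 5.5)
\textbf{There is a genuine gap: Step~1 claims the error of \(K\) is bounded, which is false, and your \(C^0\) estimate rests on that claim.}

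Your architecture matches the paper's: solve the central equation on a collar of \(B\) with boundary value \(K\), glue back to \(K\) with a cutoff, and obtain \(D''a\in L^2\) from the Donaldson identity. The problem is the analytic input. The error \(\Gamma_K\) is \emph{not} bounded on \(U\), and this already fails at smooth points of \(B\), not only at crossings.

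\emph{Curvature term.} Under \(dt_a=\frac1m t_a^{1-m}dz_a\), a smooth coefficient of \(dt_a\wedge d\bar t_a\) on the root cover becomes a coefficient of size \(r_a^{-2+2/m}\) of \(dz_a\wedge d\bar z_a\). For the partition-of-unity metric \(\widetilde K\), the coefficient \(\widehat F_{t\bar t}\) is in general nonzero at \(t_a=0\). For instance, averaging two split frames related by \(e_2'=e_2+fe_1\) with \(m(w_1-w_2)=1\) already produces this. Hence \(|F_K|_{K,\omega}\sim r_a^{-2+2/m}\).

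\emph{Higgs term.} Graded semisimplicity and joint-spectral frames give only this: the residue eigenspaces are \(\widehat K\)-orthogonal. That orthogonality survives the partition-of-unity sum and the determinant rescaling, so \(R_a\) is normal for \(\widehat K|_{t_a=0}\). With integrability, \([A_a,A_b^\dagger]\) and \([A_a,B_\mu^\dagger]\) then vanish at \(t_a=0\) and are \(O(|t_a|)\). Dividing by \(z_a\bar z_b\) leaves \(|(C_K)_{a\bar a}|\lesssim r_a^{-2+1/m}\). Even for a smooth orbifold metric upstairs this term is in general \(O(|t_a|^{-1})\).

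The correct estimate is \(|\Gamma_K|_K\le A\bigl(1+\sum_a r_a^{-2+\epsilon_0}\bigr)\), which is integrable but not bounded. So the ``main obstacle'' you name cannot be overcome as stated. The actual role of graded semisimplicity is to improve the non-integrable rate \(r_a^{-2}\) to the integrable rate \(r_a^{-2+\epsilon_0}\).

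\textbf{Consequences for Steps~2 and~3.}

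\emph{The constant-source barrier is unavailable.} The solution of \(\Delta u=-2\sup|\Gamma_K|\) does not exist. The collar problem is solved by exhausting \(W\setminus B\) by domains \(W_\ell\) whose inner boundaries approach \(B\), with \(K\) prescribed on every boundary component; completeness of \(\omega\) plays no role. The uniform \(C^0\) bound on the \(W_\ell\) then needs a \emph{bounded} supersolution for the singular source.

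\emph{The paper's barrier.} It proves the Morrey-type bound \(\int_{B_s(x)}A\bigl(1+\sum_a r_a^{-2+\epsilon_0}\bigr)\,dV\le Ms^{2n-2+\epsilon_0}\). Summing over dyadic annuli against the Green kernel then shows that the Dirichlet Green potential of this dominating function is bounded.

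\emph{Your \(s_a^{\epsilon}\) barrier also works, once the rate is known.} One has \(\mathcal L_D(-s_a^{\epsilon})\ge c\epsilon^2 r_a^{-2+2\epsilon}-C\). Take \(2\epsilon\le\epsilon_0\), and let \(\psi\ge0\) solve \(\mathcal L_D\psi=1\) on the collar with zero boundary values. Then a combination \(C_1-C\sum_a s_a^{\epsilon}+C'\psi\) dominates \(2|\Gamma_K|_K\), is nonnegative on all boundary components, and is bounded.

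\emph{Step~3.} It survives if you replace boundedness by \(\Gamma_K\in L^1\), which again comes from the rate estimate. You must also weight by \(\chi_\ell^2\) and absorb the cross term. As written, bounding the cutoff error by \(\|d\chi_\ell\|_{L^2}\|D''a\|_{L^2}\) presupposes the finiteness you are trying to prove. The paper avoids this: it takes the domain-independent energy bound from the Donaldson identity on the \(W_\ell\), where \(s_\ell=0\) on the boundary, and passes it to the limit by monotone convergence.
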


\begin{proof}
We first construct a metric comparable with \(K\) that solves the
central equation near \(B\), then extend it over \(U\) by interpolation
away from \(B\). The estimates used in this construction will also give
the asserted finite relative energy.

The normalization in Subsection~\ref{subsec:adapted-references} gives
\(K\), adapted to \(V_*\), with
\[
 \ii\Lambda_\omega F_{\det K}=rc.
\]
Integrating this determinant equation fixes the value of \(c\) given in
the statement.

\smallskip
\noindent\textit{Step 1: a bounded scalar potential for the error of \(K\).}
Write
\[
 \Phi_K=
 \ii\Lambda_\omega
 \bigl(F_K+[\theta,\theta^{\dagger K}]\bigr)-c\Id_V.
\]
We first estimate its growth near \(B\). On a smaller chart in the finite
atlas from Subsection~\ref{subsec:adapted-references}, choose coordinates
\((z_1,\ldots,z_k,w_1,\ldots,w_{n-k})\) such that
\[
 B=\{z_1\cdots z_k=0\},\qquad r_a:=|z_a|\le1.
\]
Let \(m\) be a common denominator of the weights. Pass to the root chart
\[
 p:(t_1,\ldots,t_k,w)\longmapsto(t_1^m,\ldots,t_k^m,w),
 \qquad z_a=t_a^m.
\]
In the character frame constructed in
Subsection~\ref{subsec:adapted-references}, the metric
\(\widehat K=p^*K\) extends smoothly and positively across every
\(t_a=0\). The lifted Higgs field is logarithmic, so we can write
\[
 \widehat\theta=p^*\theta
 =\sum_{a=1}^k A_a\frac{dt_a}{t_a}
   +\sum_{\mu=1}^{n-k}B_\mu\,dw_\mu,
\]
with holomorphic matrix coefficients \(A_a,B_\mu\) on the root chart.
Below, \(\dagger\) denotes the adjoint with respect to \(\widehat K\).

We next explain why the residue \(R_a=A_a|_{t_a=0}\) is normal with
respect to the restriction of \(\widehat K\). Let \(\widehat V\) denote
the bundle extended in the character frame, and put
\(\widehat B_a=\{t_a=0\}\). There are two decompositions to distinguish.
First, the stabilizer \(\mu_m\) in the \(a\)-th root direction gives
\[
 \widehat V|_{\widehat B_a}=\bigoplus_\alpha\widehat Q_{a,\alpha}.
\]
Under the root correspondence, these character summands correspond to
the parabolic graded bundles along \(B_a\); thus \(\alpha\) indexes
parabolic weights, not residue eigenvalues. The residue preserves each
summand and corresponds there to \(m\) times the graded residue, since
\(p^*(dz_a/z_a)=m\,dt_a/t_a\). Second, graded semisimplicity gives
\[
 \widehat Q_{a,\alpha}=\bigoplus_\xi E_{a,\alpha,\xi},
 \qquad R_a|_{E_{a,\alpha,\xi}}=\xi\Id.
\]
These are holomorphic subbundles, by the spectral-projector argument in
Subsection~\ref{subsec:adapted-references}. 

The joint-spectral frames respect the residue eigenspaces within each graded bundle,
and the lifted local split metrics make their character frames
orthonormal, as shown in
Subsection~\ref{subsec:adapted-references}. Thus, if \(u,v\) belong to
distinct eigenspaces of \(R_a\), every local metric
\(\widehat K_\lambda\) entering the construction satisfies
\(\widehat K_\lambda(u,v)=0\).

On overlaps, these are the same intrinsic eigenspaces of the residue,
even though the chosen bases may differ. Expressing the local metrics
on a common root chart, their partition-of-unity sum therefore satisfies
\[
 \widehat{\widetilde K}(u,v)
 =\sum_\lambda(p^*\rho_\lambda)\widehat K_\lambda(u,v)=0.
\]
The final determinant normalization has the form
\(\widehat K=f\widehat{\widetilde K}\) with \(f>0\), so it also
preserves this orthogonality. On an eigenspace with eigenvalue \(\xi\),
we consequently have
\(R_a^\dagger=\bar\xi\Id\). Hence
\[
 [R_a,R_a^\dagger]=0.
\]

Integrability gives \([A_a,A_b]=[A_a,B_\mu]=0\). On \(t_a=0\), each
of these coefficients therefore preserves the eigenspaces of \(R_a\).
Since those eigenspaces are orthogonal, its adjoint preserves them as
well. It follows that
\[
 [A_a,A_b^\dagger]|_{t_a=0}=0,\qquad
 [A_a,B_\mu^\dagger]|_{t_a=0}=0.
\]
These commutators are smooth on the root chart. Taylor's estimate in
the two real coordinates of \(t_a\), uniformly on a smaller chart,
therefore yields
\[
 \begin{aligned}
 |[A_a,A_b^\dagger]|_{\widehat K}&\le C_\lambda |t_a|,\\
 |[A_a,B_\mu^\dagger]|_{\widehat K}&\le C_\lambda |t_a|.
 \end{aligned}
\]
The first estimate also applies when \(a=b\). Thus the coefficient that
would otherwise produce a \(|z_a|^{-2}\) singularity has a positive
order of vanishing.

We now express the estimates in the original coordinates. Put
\(\delta=1/m>0\). Since
\[
 dt_a=\frac1m t_a^{1-m}dz_a,\qquad
 \frac{dt_a}{t_a}=\frac1m\frac{dz_a}{z_a},\qquad
 |t_a|=r_a^\delta,
\]
smoothness of \(F_{\widehat K}\) gives
\[
 |F_K|_{K,\omega}
 \le C_\lambda\left(1+\sum_a r_a^{-2+2\delta}\right).
\]
Smoothness of \(\omega\) makes its norm
uniformly comparable with the coordinate norm on the smaller chart.

For the Higgs term, let \(C_K=[\theta,\theta^{\dagger K}]\), and use
subscripts to denote its coefficients in the ordinary coordinate
forms \(dz_a,dw_\mu\) and their conjugates. For example, on the root
chart the coefficient \((C_K)_{a\bar b}\) is represented by
\([A_a,A_b^\dagger]/(m^2z_a\bar z_b)\). The preceding vanishing
estimates give
\[
 \begin{aligned}
 |(C_K)_{a\bar b}|_K
   &\le C_\lambda r_a^{-1+\delta}r_b^{-1},\\
 |(C_K)_{a\bar\mu}|_K+|(C_K)_{\mu\bar a}|_K
   &\le C_\lambda r_a^{-1+\delta},\\
 |(C_K)_{\mu\bar\nu}|_K&\le C_\lambda.
 \end{aligned}
\]
In particular, the first line with \(a=b\) is bounded by
\(C_\lambda r_a^{-2+\delta}\).

Choose \(0<\epsilon_0\le\min\{\delta,1/2\}\). All the preceding
bounds can be combined into a single sum. Indeed, for distinct
\(a,b\), weighted Young's inequality gives
\[
 r_a^{-1+\epsilon_0}r_b^{-1}
 \le \frac{1-\epsilon_0}{2-\epsilon_0}r_a^{-2+\epsilon_0}
     +\frac1{2-\epsilon_0}r_b^{-2+\epsilon_0},
\]
and every remaining singular factor is bounded by a constant times
one of the \(r_a^{-2+\epsilon_0}\). After contraction with \(\omega\)
and enlargement of the constant to include \(c\Id_V\), we obtain
\[
 |\Phi_K|_K
 \le A_\lambda\left(1+\sum_a r_a^{-2+\epsilon_0}\right).
\]
The same \(\epsilon_0\) works on the finite atlas, since the root order
\(m\) is fixed.

Choose a compact divisor collar \(W\), with smooth nonempty outer
boundary on each component, contained in this atlas. Let
\(\{\zeta_\lambda\}\) be a smooth partition subordinate to the divisor
charts, augmented by a nonsingular chart when necessary, and define
\[
 F_{\mathrm{dom}}
 =\sum_\lambda\zeta_\lambda A_\lambda
 \left(1+\sum_a r_a^{-2+\epsilon_0}\right).
\]
On the nonsingular chart its coefficient is enlarged once to dominate
\(1+|\Phi_K|_K\). Thus
\[
 F_{\mathrm{dom}}\ge|\Phi_K|_K.
\]

We next bound the mass of \(F_{\mathrm{dom}}\) on small balls; this
will ensure that its Dirichlet Green potential is bounded. Fix a smooth
Riemannian metric \(g\) on \(Y\).
In one normal complex coordinate, the radial integral is
\[
 \int_0^s r^{-2+\epsilon_0}r\,dr
 =\frac{s^{\epsilon_0}}{\epsilon_0}.
\]
The same bound, up to a fixed factor, holds for a disk with any center:
\[
 \int_{|z-z_0|<s}|z|^{-2+\epsilon_0}\,dA(z)
 \le C s^{\epsilon_0}.
\]
To see this, if \(|z_0|\le2s\), enclose the disk in \(|z|<3s\); if
\(|z_0|>2s\), use \(|z|\ge |z_0|/2>s\) throughout the disk. The other
\(2n-2\) real coordinates contribute \(O(s^{2n-2})\). Coordinate
comparisons, smooth volume densities, and the finite partition are
uniformly bounded. Consequently,
there is a finite constant \(M\) such that, for all \(x\in W\) and
all sufficiently small \(s>0\),
\[
 \int_{W\cap B_s(x)}F_{\mathrm{dom}}\,dV_g
 \le M s^{2n-2+\epsilon_0}.
\]
Here \(B_s(x)\) is the \(g\)-ball of radius \(s\). In particular,
\(F_{\mathrm{dom}}\in L^1(W)\); enlarge \(M\), if necessary, so that
it also bounds its total mass.

Choose \(g\) to be a fixed constant rescaling of the Riemannian metric
underlying \(\omega\), so that
\[
 \mathcal L_D=d^*d=-\mathcal L_{\mathrm{sc}},\qquad
 \mathcal L_{\mathrm{sc}}
 =-2\ii\Lambda_\omega\bar\partial\partial.
\]
For \(N\ge1\), let \(F_N=\min(F_{\mathrm{dom}},N)\), interpreted as a
bounded weak source, and let \(W_N\) solve
\[
 \mathcal L_DW_N=F_N,\qquad W_N|_{\partial W}=0.
\]
Approximation by smooth increasing sources gives the same solution.
The Dirichlet Green kernel is nonnegative. Near its pole it is bounded
above by \(C\,d(x,y)^{2-2n}\), while it is bounded away from the pole.
Fix a sufficiently small radius \(s_0>0\) and divide the ball about the
pole into annuli with successive radii \(2^{-k}s_0\), \(k\ge0\).
The preceding ball-mass estimate gives
\[
 \begin{aligned}
 \int G(x,y)F_N(y)\,dV_g(y)
 &\le C M\sum_{k\ge0}
 (2^{-k}s_0)^{2-2n}
 (2^{-k}s_0)^{2n-2+\epsilon_0}+C\\
 &\le C_{\mathrm{Mor}}M.
 \end{aligned}
\]
Here \(C_{\mathrm{Mor}}\) depends on the fixed collar and background
metric, and is chosen for all the finitely many collar components.
Hence
\[
 0\le W_N\le C_{\mathrm{pot}}=C_{\mathrm{Mor}}M.
\]
The sources and solutions increase with \(N\). Their bounded limit
\(\overline W\) satisfies
\[
 \mathcal L_D\overline W=F_{\mathrm{dom}},
 \qquad
 0\le\overline W\le C_{\mathrm{pot}}
\]
distributionally. Since the source is smooth on \(W\setminus B\),
interior and boundary elliptic regularity make \(\overline W\) smooth
there.

\smallskip
\noindent\textit{Step 2: a central metric near the divisor.}
We use this bounded potential to control the Dirichlet solutions as their
inner boundaries approach \(B\). Exhaust \(W\setminus B\) by compact
smooth domains \(W_\ell\)
whose outer boundary is fixed and whose inner boundaries approach \(B\).
On every boundary component prescribe the metric \(K\), and solve the
determinant-fixed central Dirichlet problem
\[
 \det H_\ell=q,\qquad
 \ii\Lambda_\omega
 \bigl(F_{H_\ell}+[\theta,\theta^{\dagger H_\ell}]\bigr)
 =c\Id_V.
\]
Existence and uniqueness follow from
\cite[Propositions~2.1, 3.4 and Theorem~5.1]{ZZZ18}. Put
\[
 u_\ell=\log\frac{\Tr(K^{-1}H_\ell)}r,\qquad
 v_\ell=\log\frac{\Tr(H_\ell^{-1}K)}r.
\]
Both vanish on every outer and moving inner boundary because
\(H_\ell=K\) there. To estimate them in the interior, consider the
identity map in the two directions
\[
 I_\ell:(V,K,\theta)\longrightarrow(V,H_\ell,\theta),
 \qquad
 J_\ell:(V,H_\ell,\theta)\longrightarrow(V,K,\theta).
\]
Their squared norms in the respective Hom bundles are
\[
 |I_\ell|_{K,H_\ell}^{2}=\Tr(K^{-1}H_\ell),
 \qquad
 |J_\ell|_{H_\ell,K}^{2}=\Tr(H_\ell^{-1}K).
\]
Here the Hom norm is the Hilbert--Schmidt norm determined by the source
and target metrics. Both maps are holomorphic and commute with
\(\theta\); equivalently, they are holomorphic sections annihilated by
the induced Higgs field \(s\mapsto\theta s-s\theta\).

To apply the Bochner formula to these two sections, for a
metric \(G\), write
\[
 \Phi_G=\ii\Lambda_\omega
 \bigl(F_G+[\theta,\theta^{\dagger G}]\bigr)-c\Id_V,
\]
using the same constant \(c\) for both metrics. On
\(\Hom((V,G),(V,H))\), the contracted Higgs curvature acts on a section
\(s\) by
\[
 (c\Id_V+\Phi_H)s-s(c\Id_V+\Phi_G)
 =\Phi_Hs-s\Phi_G.
\]
Thus the common central term cancels. For a nowhere-zero holomorphic
Higgs morphism \(s\), the Chern Bochner formula and Cauchy--Schwarz give,
with our convention \(\mathcal L_D=2\ii\Lambda_\omega\bar\partial\partial\),
\[
 \mathcal L_D\log|s|_{G,H}^{2}
 \le
 2\frac{\operatorname{Re}\langle\Phi_Hs-s\Phi_G,s\rangle_{G,H}}
 {|s|_{G,H}^{2}}
 \le 2\bigl(|\Phi_H|_H+|\Phi_G|_G\bigr).
\]
Indeed, the derivative terms in the formula for \(\log|s|^2\) have
nonpositive sum. Since the induced Higgs field annihilates \(s\), its
commutator contributes a nonnegative squared norm to the contracted
curvature pairing, so replacing the Chern curvature by the Higgs
curvature preserves this upper bound. The last inequality uses the
endomorphism operator norms, which are bounded by the displayed
Hilbert--Schmidt norms; it does not require a prior comparison of
\(G\) and \(H\).

Apply this estimate first to \(I_\ell\) and then to \(J_\ell\).
The central equation says \(\Phi_{H_\ell}=0\), and the constant
normalization by \(r\) does not affect the Laplacian. Hence
\[
 \mathcal L_Du_\ell\le2|\Phi_K|_K\le2F_{\mathrm{dom}},\qquad
 \mathcal L_Dv_\ell\le2|\Phi_K|_K\le2F_{\mathrm{dom}}.
\]
The functions \(u_\ell-2\overline W\) and
\(v_\ell-2\overline W\) have nonpositive \(\mathcal L_D\) and are
nonpositive on every boundary component.
The maximum principle and the bound
\(0\le\overline W\le C_{\mathrm{pot}}\) therefore give
\[
 u_\ell,v_\ell\le2C_{\mathrm{pot}}
 \quad\text{on }W_\ell,
\]
uniformly in the moving inner boundary. The first trace bound controls
every eigenvalue of \(K^{-1}H_\ell\) from above, while the second
controls its reciprocal from above. Consequently,
\[
 C_{\mathrm{cmp}}^{-1}K\le H_\ell\le C_{\mathrm{cmp}}K,
 \qquad
 C_{\mathrm{cmp}}=2r\exp(2C_{\mathrm{pot}}).
\]

We next obtain a derivative estimate whose bound is independent of
\(\ell\), including as the inner boundary approaches \(B\). Put
\[
 s_\ell=\log(K^{-1}H_\ell),\qquad
 M=\log C_{\mathrm{cmp}}.
\]
The endomorphism \(s_\ell\) is \(K\)-self-adjoint, is trace free,
and vanishes on \(\partial W_\ell\). Its eigenvalues lie in
\([-M,M]\), so \(|s_\ell|_K\le\sqrt r\,M\).

We write out the Donaldson identity used here. For a self-adjoint
endomorphism \(s\), let \(\Psi_{\exp}(s)\) act on an
endomorphism-valued form by multiplying its component from the
\(x\)-eigenspace of \(s\) to the \(y\)-eigenspace by
\[
 \Psi_{\exp}(x,y)=\int_0^1 e^{t(y-x)}\,dt
 =\begin{cases}
 \dfrac{e^{y-x}-1}{y-x},&x\ne y,\\[4pt]
 1,&x=y.
 \end{cases}
\]
The boundary-value form of the identity
\cite[Proposition~2.6]{ZZZ18}, applied to \(K\) and \(H_\ell\), is
\[
 \int_{W_\ell}
 \bigl\langle\Psi_{\exp}(s_\ell)D''s_\ell,D''s_\ell
 \bigr\rangle_{K,\omega}\,dV_\omega
 =-\int_{W_\ell}\Tr(\Phi_Ks_\ell)\,dV_\omega.
\]
To see the origin of this formula, consider the path
\(K_t=K\exp(ts_\ell)\). Differentiating the contracted curvature
and integrating by parts gives
\[
 \frac{d}{dt}\int_{W_\ell}\Tr(\Phi_{K_t}s_\ell)\,dV_\omega
 =\int_{W_\ell}|D''s_\ell|_{K_t,\omega}^2\,dV_\omega.
\]
There is no boundary contribution because \(s_\ell=0\) on
\(\partial W_\ell\). On a component from an eigenvalue \(x\) to an eigenvalue
\(y\), the squared Hom norm for \(K_t\) is \(e^{t(y-x)}\) times
the one for \(K\). Integrating in \(t\) therefore gives the displayed
kernel, and the endpoint term vanishes because \(\Phi_{H_\ell}=0\).

For \(x,y\in[-M,M]\), the integral defining the kernel gives
\(\Psi_{\exp}(x,y)\ge e^{-2M}\). Consequently,
\[
 \begin{aligned}
 e^{-2M}\int_{W_\ell}|D''s_\ell|_{K,\omega}^2\,dV_\omega
 &\le -\int_{W_\ell}\Tr(\Phi_Ks_\ell)\,dV_\omega\\
 &\le \sqrt r\,M\int_W F_{\mathrm{dom}}\,dV_\omega.
 \end{aligned}
\]
Thus, with the finite constant
\(C_{\mathrm{en}}=e^{2M}\sqrt r\,M
\int_W F_{\mathrm{dom}}\,dV_\omega\),
\[
 \int_{W_\ell}|D''s_\ell|_{K,\omega}^2\,dV_\omega
 \le C_{\mathrm{en}}
 \qquad\text{for every }\ell.
\]
This bounds both \(\bar\partial_{\End V}s_\ell\) and
\([\theta,s_\ell]\) in \(L^2\), since their form types are
orthogonal.

Interior estimates and a diagonal subsequence give a limit
\(H_{\mathrm{col}}\), with convergence in \(C^\infty\) on compact
subsets of \(W'\setminus B\), where \(W'\Subset W\) is a smaller
open neighborhood of \(B\). It satisfies
\[
 \det H_{\mathrm{col}}=q,\qquad
 \ii\Lambda_\omega
 \bigl(F_{H_{\mathrm{col}}}
 +[\theta,\theta^{\dagger H_{\mathrm{col}}}]\bigr)
 =c\Id_V,
\]
and inherits the comparison
\[
 C_{\mathrm{cmp}}^{-1}K\le H_{\mathrm{col}}
 \le C_{\mathrm{cmp}}K.
\]
Its relative logarithm \(s=\log(K^{-1}H_{\mathrm{col}})\) is
\(K\)-self-adjoint and trace free, with eigenvalues in \([-M,M]\).
The comparison with the adapted \(K\) proves adaptedness
of \(H_{\mathrm{col}}\) at every real index.

We now pass the uniform derivative bound to this limit. Choose
relatively compact domains \(\Omega_j\) increasing to
\(W'\setminus B\). For each fixed \(j\), we have
\(\Omega_j\subset W_\ell\) for all sufficiently large \(\ell\),
and the local smooth convergence gives
\[
 \int_{\Omega_j}|D''s|_{K,\omega}^2\,dV_\omega
 =\lim_{\ell\to\infty}
 \int_{\Omega_j}|D''s_\ell|_{K,\omega}^2\,dV_\omega
 \le C_{\mathrm{en}}.
\]
The bound is independent of \(j\). Letting \(j\to\infty\) and
using monotone convergence therefore yields
\[
 \int_{W'\setminus B}|D''s|_{K,\omega}^2\,dV_\omega
 \le C_{\mathrm{en}}.
\]
The other derivative has the same norm because \(s\) is
\(K\)-self-adjoint. Indeed, compatibility of the Chern connection
with \(K\) and the adjoint rule for commutators give
\[
 (\bar\partial_{\End V}s)^{\dagger K}=\partial_Ks,
 \qquad
 [\theta,s]^{\dagger K}=-[\theta^{\dagger K},s].
\]
Since the two form types are orthogonal and taking adjoints preserves
norms, it follows that
\[
 |D'_Ks|_{K,\omega}^2
 =|\partial_Ks|_{K,\omega}^2
  +|[\theta^{\dagger K},s]|_{K,\omega}^2
 =|D''s|_{K,\omega}^2.
\]
We have therefore proved
\[
 D''s,\quad D'_Ks\in L^2(W'\setminus B,K,\omega).
\]

\smallskip
\noindent\textit{Step 3: construction of the global metric \(H_0\).}
Choose nested collars
\[
 W_1\Subset W_2\Subset W
\]
on which \(H_{\mathrm{col}}\) is defined. From any smooth positive
interior metric \(R\), form
\[
 G=\left(\frac q{\det R}\right)^{1/r}R.
\]
Then \(\det G=q\). On the compact annulus
\(W_2\setminus\overline W_1\), let
\[
 S=\log(H_{\mathrm{col}}^{-1}G).
\]
It is \(H_{\mathrm{col}}\)-self-adjoint and \(\Tr S=0\). For a cutoff
\(\chi\) which is zero near \(W_1\) and one near the exterior boundary
of \(W_2\), set
\[
 H_0=H_{\mathrm{col}}\exp(\chi S)
\]
on the annulus, and glue it to \(H_{\mathrm{col}}\) and \(G\) on the
two sides. This metric is smooth, positive, determinant preserving, and
adapted. It agrees with \(H_{\mathrm{col}}\) on a fixed
divisor collar, which we take as \(W_0\), so its Hermitian--Einstein error
vanishes there. On the compact complement of this collar, the metric
and its error are smooth. Thus \(\Gamma_0\) is bounded and compactly
supported in \(U\). The comparison with \(K\) and the relative
\(L^2\) derivative bounds for \(H_{\mathrm{col}}\), together with
smoothness on the compact complement, give the asserted global
comparison and finite relative energy for \(H_0\).

\end{proof}

\subsection{Hermitian--Einstein metrics at a rational stage}
\label{subsec:he-stages}

Fix a stable rational stage. We construct a Hermitian--Einstein metric
on \(U\) by solving Dirichlet problems on compact domains exhausting
\(U\), with boundary values given by the metric \(H_{0,\nu}\) from
Proposition~\ref{prop:compact-source-reference}. Its error in the
Hermitian--Einstein equation is bounded and compactly supported;
together with stability, this allows us to obtain estimates independent
of the exhaustion domain.

\begin{thm}
\label{thm:fixed-stage-he-metrics}
Fix a sufficiently late rational stage from
Corollary~\ref{cor:stable-rational-approximations}, and let
\((V_*,\theta)\) be one stable rank-\(r\) summand on \((Y,B)\). We use the smooth
Kähler form
\[
 \omega_\nu
 =\pi^*\omega_X+\delta_\nu\vartheta
 +A_\nu\ii\partial\bar\partial
   \sum_a(s_a+\eta_\nu^2)^{\epsilon_\nu}
\]
defined in \eqref{eq:kahler-schedule-form}. Put \(U=Y\setminus B\),
and let \(K_\nu\) and \(q_\nu=\det K_\nu\) be the metrics constructed in
Subsection~\ref{subsec:adapted-references}. Set
\[
 c_\nu=
 \frac{2\pi\pardeg_{\omega_\nu}(V_*)}
 {r\Vol_{\omega_\nu}(Y)}.
\]
There exists a smooth Hermitian metric \(h_\nu\) on \(V|_U\) such that
\[
 \det h_\nu=q_\nu,\qquad
 \ii\Lambda_{\omega_\nu}
 \bigl(F_{h_\nu}+[\theta,\theta^{\dagger h_\nu}]\bigr)
 =c_\nu\Id_V.
\]
Moreover, for some \(C_\nu\ge1\),
\[
 C_\nu^{-1}K_\nu\le h_\nu\le C_\nu K_\nu
 \quad\text{on }U,
\]
and
\[
 \bar\partial_\theta\log(K_\nu^{-1}h_\nu)
 \in L^2(U,K_\nu,\omega_\nu).
\]
In particular, \(h_\nu\) is adapted to \(V_*\):
\[
 P^{\mathbf{c}}(h_\nu)=V^{\mathbf{c}}
 \qquad\text{for every real local multi-index }\mathbf{c}.
\]
\end{thm}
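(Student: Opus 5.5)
The plan is Simpson's exhaustion method, with \(H_0=H_{0,\nu}\) from Proposition~\ref{prop:compact-source-reference} (built for \(\omega=\omega_\nu\)) as both initial metric and boundary data. Throughout, \(\nu\) is fixed and all constants may depend on \(\nu\). Since \(\omega_\nu\) is a smooth Kähler form on compact \(Y\), the Donaldson functional and Moser iteration live in the compact, bounded-geometry setting.

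\emph{Dirichlet solutions.} Exhaust \(U\) by smooth compact domains \(\Omega_\ell\uparrow U\), each containing the compact support of \(\Gamma_0\). By \cite{ZZZ18} there is a unique solution \(h_\ell\) of the determinant-fixed Dirichlet problem
\[
 \det h_\ell=q_\nu,\qquad \Phi_{h_\ell}=0,\qquad h_\ell|_{\partial\Omega_\ell}=H_0 .
\]
Set \(s_\ell=\log(H_0^{-1}h_\ell)\). As in Step~2 of Proposition~\ref{prop:compact-source-reference}, the Bochner inequality gives
\[
 \mathcal L_D\log\Tr e^{\pm s_\ell}\le 2|\Gamma_0|.
\]
Because \(\Gamma_0\) is bounded with compact support, the Green potential argument yields the Simpson-type bound
\[
 \sup_{\Omega_\ell}|s_\ell|\le C_1+C_2\int_{\Omega_\ell}|s_\ell|\,dV_{\omega_\nu},
\]
with \(C_1,C_2\) independent of \(\ell\). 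Extending \(s_\ell\) by zero to \(U\), we still have \(\int|D''s_\ell|^2\) bounded by \(e^{2\sup|s_\ell|}\|\Gamma_0\|_{L^1}\sup|s_\ell|\), by the Donaldson identity of Step~2.

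\emph{Main obstacle: the uniform \(L^1\) bound.} Suppose \(\|s_\ell\|_{L^1}\to\infty\) along a subsequence, and normalize \(u_\ell=s_\ell/\|s_\ell\|_{L^1}\). The Donaldson identity, together with Simpson's inequality for the kernel \(\Psi\), gives
\[
 \int\langle\Psi(\|s_\ell\|u_\ell)D''u_\ell,D''u_\ell\rangle\le-\int\Tr(\Gamma_0u_\ell).
\]
By Simpson's standard argument, \(u_\ell\) converges weakly in \(L^2_1\) on compacts to a nonzero, trace-free, self-adjoint \(u_\infty\) in \(L^\infty\cap L^1\). For every smooth \(\Phi\) with \(\Phi(x,y)\le1/(x-y)\) when \(x>y\), it satisfies
\[
 \int\Tr(\Gamma_0u_\infty)+\int\langle\Phi(u_\infty)D''u_\infty,D''u_\infty\rangle\le0.
\]
Hence the eigenvalues of \(u_\infty\) are constant, and the spectral projectors \(\pi_\gamma\) are weakly holomorphic, \(\theta\)-invariant \(L^2_1\) subbundles on \(U\). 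By Uhlenbeck--Yau regularity (in the Higgs version, which needs \(\pi_\gamma\in L^2_1\) only locally on \(U\)) they define saturated Higgs subsheaves on \(U\).

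The delicate point is extending these subsheaves across \(B\) and computing their parabolic degree. \(H_0\) is comparable to \(K_\nu\) and has finite relative energy. On the root stack, \(K_\nu\) is smooth and \(\omega_\nu\) is smooth, so \(\pi_\gamma\) has finite \(L^2\) second fundamental form with respect to a metric that is smooth upstairs. I would therefore extend the subsheaves as coherent subsheaves of the root-stack bundle, via the \(L^2\) extension of Uhlenbeck--Yau/Simpson on the finite cover, using the compact-support cutoffs of Lemma~\ref{lem:product-snc-cutoffs} to justify integration by parts. The Chern--Weil formula with \(H_0\), whose error is compactly supported, then gives
\[
 \pardeg_{\omega_\nu}(F_{\gamma,*})=\int\Tr\bigl(\pi_\gamma\,\ii\Lambda F_{H_0}\bigr)-\|D''\pi_\gamma\|^2 ,
\]
with the \(c_\nu\) and \(\Gamma_0\) terms entering through \(\int\Tr(\Gamma_0\pi_\gamma)\). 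Summing against the eigenvalue gaps as in Simpson yields
\[
 \sum\gamma\,\text{-weighted slope differences}\ge0,
\]
which contradicts the \(\omega_\nu\)-stability of \(V_*\) from Corollary~\ref{cor:stable-rational-approximations}.

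\emph{Conclusion.} With \(\sup|s_\ell|\) bounded, local \(C^\infty\) estimates and a diagonal subsequence give \(h_\nu=\lim h_\ell\). This limit satisfies \(\det h_\nu=q_\nu\) and the Hermitian--Einstein equation, and obeys \(C^{-1}H_0\le h_\nu\le CH_0\). Combined with Proposition~\ref{prop:compact-source-reference}(2), this gives comparability with \(K_\nu\), and adaptedness follows from adaptedness of \(K_\nu\). Finally, the uniform bound on \(\int|D''s_\ell|^2\) passes to the limit by monotone convergence over compact sets, as in Step~2. It then combines with \(D''\log(K_\nu^{-1}H_0)\in L^2\) from \eqref{eq:compact-source-relative-energy}. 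Since the relative logarithms are bounded and the metrics uniformly comparable, writing \(K^{-1}h=(K^{-1}H_0)(H_0^{-1}h)\) and using the smooth dependence of \(\log\) on bounded positive matrices gives \(\bar\partial_\theta\log(K_\nu^{-1}h_\nu)\in L^2\).
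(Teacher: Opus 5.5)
Your architecture is the paper's. You use \(H_{0,\nu}\) as initial metric and Dirichlet data on an exhaustion, prove a sup bound linear in an integral norm of \(s_\ell\), normalize and pass to the Donaldson-kernel limit, take spectral projections, contradict \(\omega_\nu\)-stability, and finally transfer comparability and the \(L^2\) derivative bound. Your \(L^1\)/Green-potential sup bound is fine on the smooth compact background \((Y,\omega_\nu)\), in place of the paper's \(L^2\)/Moser bound. There are two slips: after dividing by \(L=\|s_\ell\|\) the kernel is \(L\,\Psi(L\,\cdot,L\,\cdot)\), not \(\Psi(L\,\cdot)\); and the Chern--Weil expression should pair \(\ii\Lambda_{\omega_\nu}\Psi_{H_0}\) with \(\|D''\pi_\gamma\|^2\), divided by \(2\pi\).

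The genuine gap is the step you call delicate: extending the images of the \(\pi_\gamma\) across \(B\) and proving \(\deg_{\an,H_0}(F_\gamma)=\pardeg_{\omega_\nu}(F_{\gamma,*})\). Your root-cover argument does not work as stated. Although \(\omega_\nu\) is smooth on \(Y\), its pullback under \(z_a=t_a^{m}\) degenerates along \(t_a=0\), since \(dz_a=m\,t_a^{m-1}dt_a\). Compared with a smooth K\"ahler metric on the cover chart, the finite \(\omega_\nu\)-energy of \(\bar\partial\pi_\gamma\) controls the normal component in \(L^2\), because that part is invariant under the substitution. The tangential components, however, are controlled only with the weight \(|t_a|^{2m-2}\). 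So you do not get \(\pi_\gamma\in W^{1,2}\) across the ramification divisor, and Uhlenbeck--Yau regularity cannot simply be invoked on the cover. Moreover, \(\pi_\gamma\) is \(H_0\)-orthogonal, and the lift of \(H_0\) is not known to be smooth upstairs. It is only comparable to \(K_\nu\), with \(\bar\partial\log(K_\nu^{-1}H_0)\in L^2\). You must first pass to the projection orthogonal for a model metric, as in \eqref{eq:projection-transfer-bound}.

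The paper avoids both problems through Proposition~\ref{prop:power-log-interface}, applied with \(h=H_{0,\nu}\). The extension is done on \(Y\), where \(\omega_\nu\) is smooth, and uses only the normal energy. On almost every normal disk the Grassmannian map has finite Fubini--Study energy, by circular averages and convexity of the Pl\"ucker potential. Sacks--Uhlenbeck then removes the puncture, and Shiffman's theorem and Hartogs extension complete the extension. For the degree, the cutoffs of Lemma~\ref{lem:product-snc-cutoffs} would help only if \(\bar\partial\log(\ell_{H_0}/q_F)\in L^2\), which is not known. The paper instead uses a positive-current argument to identify the divisorial part of \(\log\ell_d\) with the induced weights \(\gamma_{F,a}\). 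It bounds \(v=\log(\ell_{H_0}/\ell_d)\) only by \(C_\epsilon+\epsilon R_B\), and removes the boundary terms with second-order cutoffs satisfying \(\|d\chi_T\|_{L^1}+\|\mathcal L\chi_T\|_{L^1}=O(T^{-1})\), so the errors are \(O(C_\epsilon/T+\epsilon)\). The theorem's proof then checks the hypotheses for \(H_{0,\nu}\): the determinant formula, comparability with \(g_a=\diag(|z|^{2w_{a,i}})\), and \(\bar\partial\log(H_{0,\nu}^{-1}g_a)\in L^2\) via the root-chart bound \(|\bar\partial b_a|\le C(1+|z|^{-1+1/M_\nu})\).
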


\begin{proof}
Let \(H_{0,\nu}\) be the metric supplied by
Proposition~\ref{prop:compact-source-reference} for \(K_\nu\). It satisfies
\[
 \det H_{0,\nu}=q_\nu,\qquad
 C_\nu^{-1}K_\nu\le H_{0,\nu}\le C_\nu K_\nu.
\]
Its error
\[
 \Gamma_{0,\nu}:=
 \ii\Lambda_{\omega_\nu}
 \bigl(F_{H_{0,\nu}}+[\theta,\theta^{\dagger H_{0,\nu}}]\bigr)
 -c_\nu\Id_V
\]
is smooth, bounded, and compactly supported in \(U\). The relative energy
estimate
\eqref{eq:compact-source-relative-energy} gives
\[
 a_\nu=\log(K_\nu^{-1}H_{0,\nu})\in L^\infty,\qquad
 D''a_\nu,\ D'_{K_\nu}a_\nu\in L^2.
\]
Choose connected domains \(\Omega_{\nu,k}\Subset U\) with smooth
boundary, increasing to \(U\).
The Dirichlet existence and uniqueness theorem
\cite[Propositions~2.1, 3.4 and Theorem~5.1]{ZZZ18} gives
Hermitian metrics \(H_{\nu,k}\) satisfying
\[
 H_{\nu,k}=H_{0,\nu}\text{ on }\partial\Omega_{\nu,k},\qquad
 \det H_{\nu,k}=q_\nu,
\]
\[
 \ii\Lambda_{\omega_\nu}
 \bigl(F_{H_{\nu,k}}+[\theta,\theta^{\dagger H_{\nu,k}}]\bigr)
 =c_\nu\Id_V.
\]
Put
\[
 s_{\nu,k}=\log(H_{0,\nu}^{-1}H_{\nu,k}).
\]
It is \(H_{0,\nu}\)-self-adjoint, trace free, and zero on the boundary.
The stationary Donaldson identity is
\[
 \int_{\Omega_{\nu,k}}\Tr(\Gamma_{0,\nu}s_{\nu,k})\,dV_{\omega_\nu}
 +\int_{\Omega_{\nu,k}}
 \left\langle
 \Psi_{\exp}(s_{\nu,k})\bar\partial_\theta s_{\nu,k},
 \bar\partial_\theta s_{\nu,k}
 \right\rangle dV_{\omega_\nu}=0.
\]

We first prove that the \(L^2\) norms
\[
 \ell_{\nu,k}
 =\|s_{\nu,k}\|_{L^2(\Omega_{\nu,k},H_{0,\nu},\omega_\nu)}
\]
are bounded independently of \(k\).
Apply the Bochner inequalities for the identity map in both directions,
as in the proof of Proposition~\ref{prop:compact-source-reference}, to
\(H_{0,\nu}\) and \(H_{\nu,k}\). The resulting scalar inequalities
control the largest and smallest eigenvalues of \(s_{\nu,k}\), with
source bounded by \(\Gamma_{0,\nu}\). Extending \(|s_{\nu,k}|_{H_{0,\nu}}\) by zero and
applying the Sobolev inequality and Moser iteration for the fixed smooth
compact background gives constants \(A_{0,\nu},A_{1,\nu}\), independent
of \(k\), such that
\[
 \sup_{\Omega_{\nu,k}}|s_{\nu,k}|_{H_{0,\nu}}
 \le A_{0,\nu}+A_{1,\nu}\ell_{\nu,k}.
\]
Indeed, the zero boundary trace removes the boundary term in the weak
scalar inequalities, the \(L^\infty\) norm of \(\Gamma_{0,\nu}\) controls
their inhomogeneous terms, and Moser iteration gives the displayed
bound in terms of \(\ell_{\nu,k}\).

Suppose that a subsequence satisfies \(\ell_{\nu,k}\to\infty\), and set
\[
 u_{\nu,k}=\ell_{\nu,k}^{-1}s_{\nu,k}
\]
on \(\Omega_{\nu,k}\), extended by zero to \(U\). After discarding
finitely many terms, the preceding supremum estimate gives
\[
 \|u_{\nu,k}\|_{L^\infty(U,H_{0,\nu})}\le C_{\nu}^{\mathrm{sp}},
 \qquad
 \|u_{\nu,k}\|_{L^2(U,H_{0,\nu},\omega_\nu)}=1.
\]
Divide the stationary Donaldson identity by \(\ell_{\nu,k}\). In source
and target eigenspaces of \(u_{\nu,k}\), the positive kernel becomes
\[
 \mathcal K_{\ell_{\nu,k}}(x,y)
 =\ell_{\nu,k}\Psi_{\exp}(\ell_{\nu,k}x,\ell_{\nu,k}y)
 =\int_0^{\ell_{\nu,k}}e^{t(y-x)}\,dt.
\]
On the fixed square
\([ -C_{\nu}^{\mathrm{sp}},C_{\nu}^{\mathrm{sp}}]^2\), this kernel
dominates a positive constant. The first integral in the divided identity
is bounded in absolute value by
\[
 rC_{\nu}^{\mathrm{sp}}
 \|\Gamma_{0,\nu}\|_{L^1(U,\omega_\nu)}.
\]
Consequently the divided Donaldson identity gives
\[
 \sup_k
 \|\bar\partial_\theta u_{\nu,k}\|_{L^2(U,H_{0,\nu},\omega_\nu)}
 <\infty.
\]
Since \(u_{\nu,k}\) has zero Sobolev trace on
\(\partial\Omega_{\nu,k}\), its extension by zero creates no boundary
distribution.

Let
\[
 \mathcal C_1\Subset \mathcal C_2\Subset\cdots\Subset U
\]
be a smooth compact exhaustion. On each \(\mathcal C_a\), the metrics, the Higgs
field, and all background coefficients are smooth and bounded. The
identity
\[
 \bar\partial u_{\nu,k}
 =\bar\partial_\theta u_{\nu,k}-[\theta,u_{\nu,k}]
\]
together with the uniform \(L^\infty\) bound on \(u_{\nu,k}\) and the
\(L^2\) bound on \(\bar\partial_\theta u_{\nu,k}\) controls the
Dolbeault derivative.
Self-adjointness also controls the conjugate Chern derivative. Thus the
sequence is bounded in \(W^{1,2}(\mathcal C_a)\). After a diagonal extraction,
\(u_{\nu,k}\) converges strongly in \(L^2\), almost everywhere, and weakly
in \(W^{1,2}\) on every \(\mathcal C_a\). Its limit \(u_{\nu,\infty}\) is bounded,
\(H_{0,\nu}\)-self-adjoint, and trace free.

Since \(U\) has finite \(\omega_\nu\)-volume,
\(\Vol_{\omega_\nu}(U\setminus \mathcal C_a)\to0\). The uniform supremum bound gives
\[
 \sup_k\int_{U\setminus \mathcal C_a}|u_{\nu,k}|_{H_{0,\nu}}^2\,dV_{\omega_\nu}
 \le (C_{\nu}^{\mathrm{sp}})^2
     \Vol_{\omega_\nu}(U\setminus \mathcal C_a)
 \longrightarrow0.
\]
Fatou's lemma gives the same tail estimate for \(u_{\nu,\infty}\).
First choose \(a\) so that both tails are small, and then use strong
\(L^2\) convergence on \(\mathcal C_a\). This proves
\[
 u_{\nu,k}\longrightarrow u_{\nu,\infty}
 \quad\text{strongly in }L^2(U),\qquad
 \|u_{\nu,\infty}\|_{L^2(U)}=1.
\]
For every positive smooth function
\(\zeta\) on the fixed spectral square satisfying
\[
 \zeta(x,y)<\frac1{x-y}\qquad(y<x),
\]
the kernels \(\mathcal K_{\ell_{\nu,k}}\) dominate \(\zeta\) for all
large \(k\). Weak lower semicontinuity on \(\mathcal C_a\), followed by monotone
exhaustion and dominated convergence in the source term, gives
\[
 \int_U\Tr(\Gamma_{0,\nu} u_{\nu,\infty})\,dV_{\omega_\nu}
 +\int_U
 \left\langle
 \zeta(u_{\nu,\infty})\bar\partial_\theta u_{\nu,\infty},
 \bar\partial_\theta u_{\nu,\infty}
 \right\rangle dV_{\omega_\nu}
 \le0.
\]
Letting \(\zeta(x,y)\) increase without bound where \(y\ge x\)
forces the corresponding components of
\(\bar\partial_\theta u_{\nu,\infty}\) to vanish. Here \(x\) and \(y\)
are the source and target eigenvalues, respectively. Cyclicity of trace then
gives
\[
 \bar\partial\Tr(u_{\nu,\infty}^m)=0,
 \qquad 1\le m\le r.
\]
These traces are real, locally \(W^{1,2}\), and hence constant on the
connected manifold \(U\). Newton identities show that the characteristic
polynomial is constant almost everywhere. Since the endomorphism is
trace free and has norm one, it has distinct real eigenvalues
\[
 \lambda_1<\cdots<\lambda_t,\qquad t\ge2,
\]
with constant multiplicities. Write
\(u=u_{\nu,\infty}\) and let \(\Pi_i\) project onto its
\(\lambda_i\)-eigenspace. Each \(\Pi_i\) is a polynomial in \(u\),
\[
 \Pi_i=\prod_{j\ne i}
 \frac{u-\lambda_j\Id}{\lambda_i-\lambda_j},
\]
so it has the same local Sobolev regularity. For \(1\le\alpha<t\),
put \(p_\alpha=\sum_{i\le\alpha}\Pi_i\). Then
\[
 u_{\nu,\infty}
 =\lambda_t\Id-\sum_{\alpha=1}^{t-1}
  (\lambda_{\alpha+1}-\lambda_\alpha)p_\alpha.
\]
Differentiating \(up_\alpha=p_\alpha u\) gives, for \(i\ne j\),
\[
 \Pi_j(D''p_\alpha)\Pi_i
 =\frac{\mathbf1_{j\le\alpha}-\mathbf1_{i\le\alpha}}
        {\lambda_j-\lambda_i}\,
   \Pi_j(D''u)\Pi_i.
\]
The diagonal blocks vanish by \(p_\alpha^2=p_\alpha\).
We already know that \(\Pi_j(D''u)\Pi_i=0\) when \(j\ge i\).
Consequently \(D''p_\alpha\) has no component from
\(\im p_\alpha\) to its orthogonal complement. The \((0,1)\) and
\((1,0)\) components give weak holomorphicity and Higgs invariance,
respectively. The fixed nonzero spectral gaps and \(D''u\in L^2\)
also give finite energy:
\[
 (\Id-p_\alpha)\bar\partial p_\alpha=0,\qquad
 (\Id-p_\alpha)\theta p_\alpha=0,\qquad
 \bar\partial_\theta p_\alpha\in L^2(U).
\]
For a nonzero block \(\Pi_j(D''u)\Pi_i\), we have \(j<i\).
This block contributes to \(D''p_\alpha\) precisely for
\(j\le\alpha<i\). Its coefficient in the sum of projection energies is
\[
 \sum_{\alpha=j}^{i-1}
 \frac{\lambda_{\alpha+1}-\lambda_\alpha}
      {(\lambda_i-\lambda_j)^2}
 =\frac1{\lambda_i-\lambda_j}.
\]
This equals the limiting kernel coefficient \(1/(x-y)\), with
\(x=\lambda_i\) and \(y=\lambda_j\). Summing the orthogonal blocks
in the integral inequality for \(u_{\nu,\infty}\) therefore gives
\[
 \int_U\Tr(\Gamma_{0,\nu} u_{\nu,\infty})\,dV_{\omega_\nu}
 +\sum_{\alpha=1}^{t-1}
  (\lambda_{\alpha+1}-\lambda_\alpha)
  \int_U|\bar\partial_\theta p_\alpha|^2\,dV_{\omega_\nu}
 \le0.
\]

To apply parabolic stability, we must extend the generic images of
\(p_\alpha\) across \(B\) to coherent subsheaves of \(V\) on \(Y\).
We use the extension and degree conclusions of
Proposition~\ref{prop:power-log-interface} below, with \(h=H_{0,\nu}\),
and verify its metric hypotheses as follows.
Proposition~\ref{prop:compact-source-reference} gives adaptedness,
while \(\det H_{0,\nu}=q_\nu\) satisfies its determinant condition:
by Subsection~\ref{subsec:adapted-references}, the local coefficient of
\(q_\nu\) is a smooth strictly positive factor times
\(\prod_a|z_a|^{2\sum_iw_{a,i}^{(\nu)}}\), including at SNC crossings.
The contracted Hitchin--Simpson curvature is
\(\Gamma_{0,\nu}+c_\nu\Id_V\), which is integrable since
\(\Gamma_{0,\nu}\) is bounded and \(U\) has finite volume.

The one-divisor charts cover \(B\) away from its SNC crossings,
which form a closed analytic set of codimension at least two.
On a chart meeting only \(B_a=(z=0)\), take the local split
metric from \eqref{eq:local-stage-reference}:
\[
 g_a=\diag_i\bigl(|z|^{2w_{a,i}^{(\nu)}}\bigr).
\]
This is the model in Proposition~\ref{prop:power-log-interface} with
\(\beta_{a,i}=w_{a,i}^{(\nu)}\) and \(\kappa_{a,i}=0\), so no logarithmic
factor is needed. The local comparison of \(K_\nu\) with \(g_a\) from
Subsection~\ref{subsec:adapted-references}, followed by the comparison of
\(H_{0,\nu}\) with \(K_\nu\), gives
\(C^{-1}g_a\le H_{0,\nu}\le Cg_a\).
To check the relative derivative condition, put
\(b_a=\log(K_\nu^{-1}g_a)\). Let \(M_\nu\) be the common denominator
of the stage weights. Under the root map \(z=t^{M_\nu}\), both
\(K_\nu\) and \(g_a\) extend smoothly and positively in the same character
frame, so \(b_a\) is smooth there. Since
\(d\bar t=M_\nu^{-1}\bar t^{1-M_\nu}d\bar z\), on a smaller chart
\[
 |\bar\partial b_a|_{K_\nu,\omega_\nu}
 \le C\bigl(1+|z|^{-1+1/M_\nu}\bigr).
\]
The square of this bound is integrable against the normal area measure
\(|z|\,d|z|\,d\arg z\); hence \(\bar\partial b_a\in L^2\).
We also have \(\bar\partial a_\nu\in L^2\) by
\eqref{eq:compact-source-relative-energy}. The identity
\(H_{0,\nu}^{-1}g_a=e^{-a_\nu}e^{b_a}\), uniform comparison of the
three metrics, and differentiation of the matrix exponential and logarithm
give
\[
 \bigl|\bar\partial\log(H_{0,\nu}^{-1}g_a)\bigr|
 \le C\bigl(|\bar\partial a_\nu|+|\bar\partial b_a|\bigr).
\]
Here the norms use \(\omega_\nu\) and \(K_\nu\); comparison gives the
same conclusion with \(H_{0,\nu}\). Thus the relative derivative required
in Proposition~\ref{prop:power-log-interface} belongs to \(L^2\).

The projections \(p_\alpha\) are locally \(W^{1,2}\), have ranks strictly
between zero and \(r\), and satisfy the weak holomorphicity, Higgs
invariance, and finite-energy conditions proved above. The proposition
therefore extends their images to proper saturated reflexive invariant
parabolic subsheaves \(F_{\alpha,*}\subset V_*\) on \(Y\) and gives
\[
 \deg_{\an,H_{0,\nu}}(F_\alpha)
 =\pardeg_{\omega_\nu}(F_{\alpha,*}).
\]
Returning to the integral inequality for \(u_{\nu,\infty}\) and the
spectral projections \(p_\alpha\), substitution of these degree identities yields
\[
 2\pi\sum_{\alpha=1}^{t-1}
 (\lambda_{\alpha+1}-\lambda_\alpha)\rk(F_\alpha)
 \bigl[\mu(V_*)-\mu(F_{\alpha,*})\bigr]\le0.
\]
Every summand is positive by stability, a contradiction. Hence
\(\ell_{\nu,k}\) is uniformly bounded.

The supremum estimate for \(s_{\nu,k}\) now gives
\[
 \sup_{\Omega_{\nu,k}}|s_{\nu,k}|_{H_{0,\nu}}\le C_\nu.
\]

The interior estimate
\cite[Proposition~3.5]{ZZZ18}, followed by elliptic bootstrapping, now
gives a subsequential limit \(h_\nu\), with convergence smooth on compact
subsets. The determinant constraint and Hermitian--Einstein equation pass
to the limit, giving \(\det h_\nu=q_\nu\) and the central equation for
\(h_\nu\). By the uniform bound on \(s_{\nu,k}\) and the comparison
between \(H_{0,\nu}\) and \(K_\nu\), after enlarging \(C_\nu\),
\[
 C_\nu^{-1}K_\nu\le h_\nu\le C_\nu K_\nu\quad\text{on }U.
\]
Since \(K_\nu\) is adapted, so is \(h_\nu\).
Since \(\Gamma_{0,\nu}\) is compactly
supported and the logarithms are uniformly bounded, the Donaldson
identity also gives a domain-independent \(L^2\) bound for their
Higgs--Dolbeault derivatives. Weak lower semicontinuity gives
\(\bar\partial_\theta\log(H_{0,\nu}^{-1}h_\nu)\in L^2(U)\).
Combining this with the \(L^2\) estimate for \(D''a_\nu\) and
boundedness of the relative logarithms gives
\[
 \bar\partial_\theta\log(K_\nu^{-1}h_\nu)\in L^2(U).\qedhere
\]
\end{proof}

\subsection{Chern--Weil identities and curvature energy estimates}
\label{subsec:fixed-stage-chern-weil}

Fix one rational stage from Theorem~\ref{thm:fixed-stage-he-metrics}
and suppress the index \(\nu\). Let \(K\) be the Hermitian metric
constructed in Subsection~\ref{subsec:adapted-references} and normalized
by \eqref{eq:normalized-stage-reference}, and let \(h\) be the
Hermitian--Einstein metric from Theorem~\ref{thm:fixed-stage-he-metrics}.
We first prove the Chern--Weil identity
\eqref{eq:literal-canonical-quadratic} in
Proposition~\ref{prop:literal-reference-quadratic}: the parabolic Chern
number entering the Bogomolov--Gieseker inequality equals a normalized
quadratic integral of the Hitchin--Simpson curvature of \(K\).
We then use auxiliary
Dirichlet solutions with boundary value \(K\) to derive an upper bound
for the curvature energy of \(h\),
\[
 \mathcal E(h)=\int_U\left(
 |F_h+[\theta,\theta^{\dagger h}]|_{h,\omega}^2
 +2|\partial_h\theta|_{h,\omega}^2
 \right)dV_\omega.
\]
The exhaustion limits of these solutions are identified with \(h\) using
Theorem~\ref{thm:central-he-same-lattice}.
Theorem~\ref{thm:fixed-stage-energy-ledger} bounds this energy in terms
of parabolic Chern numbers. The corresponding estimate for the trace-free
curvature proves the Bogomolov--Gieseker inequality for \(V_*\). The bound
for \(\mathcal E(h)\) is also used in Corollary~\ref{cor:factor-energy} to prove
energy decay along the rational stages under the stated Chern-number
vanishing assumptions.

\begin{prop}
\label{prop:literal-reference-quadratic}
Let \(K\) be the metric defined by
\eqref{eq:normalized-stage-reference}. Put
\[
 \mathcal F_K
 =(\bar\partial+\theta+\partial_K+\theta^{\dagger K})^2,
 \qquad
 Q_K=\Tr(\mathcal F_K\wedge\mathcal F_K)
 -\frac1r\Tr(\mathcal F_K)\wedge\Tr(\mathcal F_K),
\]
and \(\Omega=\omega^{n-2}/(n-2)!\). Then
\(\mathcal F_K\in L^1(U,\omega,K)\), the form \(Q_K\wedge\Omega\)
is absolutely integrable, and
\[
 \int_UQ_K\wedge\Omega
 =8\pi^2
 \left\langle
 \parc_2(V_*)-\frac{r-1}{2r}\parc_1(V_*)^2,
 \frac{[\omega]^{n-2}}{(n-2)!}
 \right\rangle.
 \tag{4.11}\label{eq:literal-canonical-quadratic}
\]
\end{prop}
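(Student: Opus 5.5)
We will pass to the root stack, where the statement becomes ordinary Chern--Weil theory for a smooth metric on an orbifold bundle. Let \(p:\mathcal R=\sqrt[M]{(Y,B)}\to Y\) be the root stack with the common denominator \(M=M_\nu\) of the stage weights, and let \(\mathcal V\) be the vector bundle corresponding to \(V_*\) under Proposition~\ref{prop:analytic-root-correspondence}. By Subsection~\ref{subsec:adapted-references}, \(p^*K\) extends in the character frames to a smooth positive Hermitian metric \(\widehat K\) on \(\mathcal V\). The lifted Higgs field \(\widehat\theta\) is logarithmic along \(D_{\mathcal R}\), so the Hitchin--Simpson curvature
\[
 \mathcal F_{\widehat K}=F_{\widehat K}+[\widehat\theta,\widehat\theta^{\dagger}]+\partial_{\widehat K}\widehat\theta+\bar\partial\widehat\theta^{\dagger}
\]
has at most the singularities coming from \(dt_a/t_a\).

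\emph{Integrability.} The first step is the pointwise bounds from Step~1 of the proof of Proposition~\ref{prop:compact-source-reference}, extended from the contraction to the full curvature form. The normality of the residues in joint-spectral frames gives
\[
 [A_a,A_b^\dagger]=O(|t_a|),\qquad [A_a,B_\mu^\dagger]=O(|t_a|).
\]
The same argument also covers the mixed terms \(\partial_{\widehat K}\widehat\theta\). Its \(dt_a/t_a\)-components are \(\partial_{\widehat K}A_a\wedge dt_a/t_a\), and \(\partial_{\widehat K}A_a\) vanishes along \(t_a=0\) to first order in the relevant directions: \(A_a|_{t_a=0}\) is parallel for the restricted metric on the orthogonal eigenspace decomposition, since its eigenvalues are constant. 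Hence every coefficient of \(\mathcal F_K\) in the coordinates \(dz_a,dw_\mu\) is \(O\bigl(\prod_a r_a^{-2+\epsilon_0}\bigr)\) or better, with products over distinct indices. From this we get \(\mathcal F_K\in L^1\). For the quadratic form \(Q_K\wedge\Omega\), each monomial \(dz_a\wedge d\bar z_a\) can appear at most once in a top-degree product. So the worst singularity is \(\prod_a r_a^{-2+\epsilon}\) for distinct \(a\), which is integrable in polar coordinates. We will check this monomial bookkeeping carefully at crossings.

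\emph{The identity.} On \(\mathcal R\), \(Q_K\) is the pullback of a form which, on the orbifold, differs from the Chern--Weil form of \(\widehat K\) for the logarithmic Hitchin--Simpson connection. We will compare it with the genuine smooth Chern connection \(F_{\widehat K}\). The Higgs contributions to \(\Tr(\mathcal F\wedge\mathcal F)\) are
\[
 \Tr\bigl((\partial\theta+\bar\partial\theta^\dagger+[\theta,\theta^\dagger])^2\bigr)+2\Tr\bigl(F\wedge[\theta,\theta^\dagger]\bigr)+\cdots,
\]
and these are exact. Indeed, \(\mathcal F_t\) for the family \(D_t=D_{\widehat K}+t(\theta+\theta^\dagger)\) gives the transgression
\[
 \Tr\mathcal F_1^2-\Tr\mathcal F_0^2=d\,\Tr\!\int_0^1 2(\theta+\theta^\dagger)\wedge\mathcal F_t\,dt.
\]
The trace part \(\Tr\mathcal F_K=\Tr F_K\) is unaffected, because \(\Tr\theta\wedge\Tr\theta^\dagger\) terms cancel. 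So \(\int_U Q_K\wedge\Omega\) equals \(\int_U Q_{F_K}\wedge\Omega\) plus a boundary term \(\lim_\ell\int d\chi_\ell\wedge T\wedge\Omega\), where \(\chi_\ell\) are the cutoffs of Lemma~\ref{lem:product-snc-cutoffs}. The transgression form \(T\) has coefficients \(O(r_a^{-1})\) times the curvature bounds above. We intend to show it satisfies \(r_a T\in L^1\) with the tubular structure, so that the boundary term vanishes. This is the main obstacle. We would first try the residue-normality vanishing, which gives \(T=O(r_a^{-1+\epsilon_0})\) in the normal direction. This makes \(\int|d\chi_\ell||T|\to0\), because \(d\chi_\ell\) is supported in shells where \(|d\chi_\ell|\lesssim (\ell r_aT_a)^{-1}\).

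\emph{Evaluation.} Since \(\widehat K\) is smooth on the compact orbifold \(\mathcal R\), orbifold Chern--Weil theory gives
\[
 \int_U\Bigl(\Tr F_K^2-\tfrac1r(\Tr F_K)^2\Bigr)\wedge\Omega=\tfrac1{\deg p}\int_{\mathcal R}(\cdots)=8\pi^2\bigl\langle c_2(\mathcal V)-\tfrac{r-1}{2r}c_1(\mathcal V)^2,[p^*\omega]^{n-2}/(n-2)!\bigr\rangle_{\mathcal R}.
\]
Here we use \(U\) of full measure and the convention \(\frac{\ii}{2\pi}F\) for Chern forms, which gives the factor \(8\pi^2\) with sign as stated. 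Finally, the orbifold Chern character of \(\mathcal V\) pushed to \(Y\) equals \(\parch(V_*)\) for rational weights. This follows from the Iyer--Simpson formula \eqref{eq:weighted-parabolic-chern} (\cite{IS06}, Borne), which is precisely the definition adopted in Definition~\ref{defn:parabolic-chern}. Substituting \(\parc_2\) and \(\parc_1\) yields \eqref{eq:literal-canonical-quadratic}.
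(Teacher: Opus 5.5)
Your integrability argument and the final orbifold Chern--Weil evaluation are essentially the paper's: residue orthogonality, tangential vanishing of $\nabla'A_a$, and descent of smooth Chern forms from the root stack. One caveat there is that the bookkeeping for $Q_K\wedge\Omega$ must be done per covector. Each occurrence of $dz_a$ or $d\bar z_a$ costs at most $r_a^{-1}$, with a gain $r_a^{\epsilon}$ when $a$ occurs twice. A bound $\prod_a r_a^{-2+\epsilon_0}$ on each coefficient does not control products like $(\,\cdot\,)_{a\bar b}\wedge(\,\cdot\,)_{b\bar a}$.

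The genuine gap is the boundary term. Using $\theta\wedge\theta=0$ and graded cyclicity, the cubic part of your Chern--Simons form drops out, and
\[
 T=2\Tr\bigl((\theta+\theta^{\dagger})\wedge F_K\bigr)
 +\Tr\bigl((\theta+\theta^{\dagger})\wedge(\partial_K\theta+\bar\partial\theta^{\dagger})\bigr).
\]
The first summand does not decay. Near $B_a$, the part of it that pairs with the radial derivative of $\chi_\ell$ is $2\ii\,d\arg t_a\wedge\Tr\bigl((A_a-A_a^{\dagger})F_K^{\mathrm{tan}}\bigr)$, which has size $r_a^{-1}$. At $t_a=0$ its coefficient is $-4\sum_\xi\operatorname{Im}(\xi)\Tr(P_\xi F_K^{\mathrm{tan}})$, where $P_\xi$ is the orthogonal projection onto the $\xi$-eigenbundle of $R_a$. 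Residue normality makes $R_a$ normal but not self-adjoint, so this coefficient is generically nonzero as soon as some residue eigenvalue is nonreal. This already happens for a diagonal Higgs field on $L_1\oplus L_2$. Consequently the estimate $T=O(r_a^{-1+\epsilon_0})$ is false, and $\int|d\chi_\ell|\,|T\wedge\Omega|$ does not tend to zero. In the normal variable, wherever that coefficient is nonzero, it is bounded below by a positive multiple of $\ell^{-1}\int_{e^\ell}^{e^{2\ell}}|\rho'(\ell^{-1}\log\tau)|\,d\tau/\tau\ge1$, with $\tau=1-\log s_a$.

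What is missing is a cancellation, not an estimate. Since $\bar\partial\theta=0$ and $D_KF_K=0$, one has $d\Tr(\theta\wedge F_K)=\Tr(\partial_K\theta\wedge F_K)$. This form has type $(3,1)$ and vanishes against $\Omega$, so $\int\bar\partial\chi_\ell\wedge\Tr(\theta\wedge F_K)\wedge\Omega=0$ exactly for every $\ell$. The conjugate term vanishes in the same way.

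The flux forms that remain are $\Tr(\theta\wedge\bar\partial\theta^{\dagger})$ and $\Tr(\theta^{\dagger}\wedge\partial_K\theta)$. Up to sign, these are the paper's $\bar\partial\Phi_K=-\Tr(\theta\wedge B_K^{\dagger})$ and its conjugate. The paper reaches them directly through $P(\mathcal F_K)-P(F_K)=-2\partial\bar\partial\Phi_K$, whose transgression form $\Phi_K=\Tr(\theta\wedge\theta^{\dagger K})$ contains no curvature term. These forms pair $\theta$ only with $B_K^{\dagger}$ (respectively $\theta^{\dagger}$ with $B_K$). The coefficients of $B_K$ carry the gain $r_a^{-1+\epsilon}$ from the tangential vanishing of $\nabla'A_a$, so dominated convergence kills the flux.

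Separately, $\Tr\mathcal F_K=\Tr F_K$ does not follow from cancelling $\Tr\theta\wedge\Tr\theta^{\dagger}$ terms; that only disposes of $\Tr[\theta,\theta^{\dagger}]$. You also need $\Tr(\partial_K\theta)=\partial\Tr\theta=0$, i.e.\ closedness of the logarithmic $1$-form $\Tr\theta$ on the compact K\"ahler manifold $Y$.
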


\begin{proof}
Put
\[
 C_K=[\theta,\theta^{\dagger K}],\qquad
 \Psi_K=F_K+C_K,\qquad B_K=\partial_K\theta.
\]
We first prove absolute integrability of the trace-free \((1,1)\)
quadratic term
\[
 Q_{11,K}=\Tr(\Psi_K^0\wedge\Psi_K^0).
\]

Choose an SNC chart as in Subsection~\ref{subsec:adapted-references}.
Write \(z_1,\ldots,z_m\) for its normal coordinates and
\(w_1,\ldots,w_s\) for its remaining coordinates, and set
\(r_a=|z_a|\). After a finite multi-root
substitution
\[
 z_a=t_a^{N_a},
\]
the metric \(K\) extends smoothly and positively in the character frame.
Its pulled-back Chern curvature is smooth. Since
\[
 dt_a=\frac1{N_a}t_a^{1-N_a}dz_a,
\]
there is a common \(\delta>0\), decreased once over the finite atlas,
such that a coefficient of \(F_K\) containing \(k_a\) occurrences of
\(dz_a\) or \(d\bar z_a\) satisfies
\[
 \left|(F_K)^0_{I\bar J}\right|_K
 \le C\prod_a r_a^{-k_a(1-\delta)}.
 \tag{4.12}\label{eq:literal-root-curvature}
\]
Trace-free projection has bounded fiberwise operator norm, so it
preserves this estimate.

We use the residue orthogonality proved in Step~1 of
Proposition~\ref{prop:compact-source-reference} to estimate the Higgs terms.
On the root chart, write
\[
 \widehat\theta
 =\sum_a A_a\frac{dt_a}{t_a}+\sum_\mu B_\mu\,dw_\mu,
 \qquad R_a=A_a|_{t_a=0},
\]
and let \(\nabla'\) be the Chern connection on
\(\End\widehat V\) induced by the smooth lifted metric \(\widehat K\).
The eigensubbundles of \(R_a\) are holomorphic and mutually orthogonal
for \(\widehat K|_{t_a=0}\). The Chern connection along this divisor
therefore preserves each eigensubbundle. The eigenvalues are constant:
the characteristic coefficients of the graded residue are holomorphic
on the compact divisor component. Since \(R_a\) acts by a constant
scalar on each eigensubbundle, we obtain
\[
 \left.\nabla'_v A_a\right|_{t_a=0}=0
 \qquad\text{for every vector }v\text{ tangent to }\{t_a=0\}.
\]
For the commutator \(C_K\), integrability gives
\([A_a,A_b]=[A_a,B_\mu]=0\). On \(t_a=0\), these coefficients
preserve the orthogonal eigensubbundles of \(R_a\), as do their
adjoints. Thus
\[
 [A_a,A_b^\dagger]|_{t_a=0}=0,\qquad
 [A_a,B_\mu^\dagger]|_{t_a=0}=0.
\]
Both commutators are smooth on the root chart, so their norms are
\(O(|t_a|)\), uniformly on a smaller chart, including its crossings.
Their coefficients in the original coordinate forms are
\[
 (C_K)_{a\bar b}
 =\frac{[A_a,A_b^\dagger]}{N_aN_bz_a\bar z_b},\qquad
 (C_K)_{a\bar\mu}
 =\frac{[A_a,B_\mu^\dagger]}{N_az_a},
\]
where the equalities are understood in the character frame after
pullback. Since \(|t_a|=r_a^{1/N_a}\), a common
\(\epsilon>0\), no larger than any \(1/N_a\), gives
\[
 |(C_K)^0_{a\bar b}|_K
 \le Cr_a^{-1+\epsilon}r_b^{-1},\qquad
 |(C_K)^0_{a\bar\mu}|_K
 \le Cr_a^{-1+\epsilon},
 \tag{4.13}\label{eq:literal-commutator-coefficients}
\]
together with the adjoint bounds obtained by interchanging the
holomorphic and antiholomorphic positions. For \(a=b\), the diagonal
coefficient \((C_K)_{a\bar a}\) has norm at most
\(Cr_a^{-2+\epsilon}\).
Coefficients involving only the \(w\)-directions are bounded.

Expand \(\Psi_K^0=F_K^0+C_K^0\) into ordinary coordinate monomials and
wedge two such monomials. For a fixed divisor coordinate \(z_a\), a nonzero wedge contains
zero, one, or two occurrences of \(dz_a,d\bar z_a\); two holomorphic or
two antiholomorphic occurrences make the wedge zero. With zero or one
normal occurrence, the radial density is bounded by
\(Cr_a^{1-k_a}dr_a\), where \(k_a\le1\), and is integrable.

Suppose there are two normal occurrences. If both occur in one
coefficient, the bounds in \eqref{eq:literal-root-curvature} and
\eqref{eq:literal-commutator-coefficients} improve the radial exponent
by at least \(\delta\) and \(\epsilon\), respectively. If the two covectors
occur in separate coefficients, one of them is holomorphic. The bound
for its coefficient improves the exponent by \(\delta\) if it comes
from \(F_K\), and by \(\epsilon\) if it comes from \(C_K\). Thus in either case
the radial density is bounded by
\[
 Cr_a^{-1+\eta_a}\,dr_a
\]
for some \(\eta_a>0\). Decrease the finitely many \(\eta_a\)'s to one
positive number valid for all chart monomials. Fubini's theorem proves
absolute integrability on the product chart. Fiberwise,
\[
 |\Tr(A\wedge B)|\le r\,|A|_K|B|_K,
\]
so summing over the coordinate terms and the finite atlas gives
\[
 Q_{11,K}\wedge\Omega\in L^1(U).
\]
We next estimate \(B_K=\partial_K\theta\). In root coordinates,
the coefficients of \(\widehat B=\partial_{\widehat K}\widehat\theta\)
are, for \(a\ne b\),
\[
 \widehat B_{ab}
 =\frac{\nabla'_a A_b}{t_b}-\frac{\nabla'_b A_a}{t_a},
 \qquad
 \widehat B_{a\mu}
 =\nabla'_a B_\mu-\frac{\nabla'_\mu A_a}{t_a}.
\]
Here \(\nabla'_a\) and \(\nabla'_\mu\) denote differentiation in
the \(t_a\)- and \(w_\mu\)-directions. The tangential derivative
identity above gives
\(\nabla'_b A_a=O(|t_a|)\) for \(b\ne a\) and
\(\nabla'_\mu A_a=O(|t_a|)\). The quotients by \(t_a\) are
therefore bounded. The other derivatives are smooth, so all
ordinary coordinate coefficients of \(\widehat B\) are bounded.
Returning to \(z_a=t_a^{N_a}\) contributes a factor
\(r_a^{-1+1/N_a}\) for each normal covector \(dt_a\).
Decreasing \(\epsilon\) over the finite atlas gives, for \(a\ne b\),
\[
 \begin{aligned}
 |(B_K)_{ab}|_K
 &\le Cr_a^{-1+\epsilon}r_b^{-1+\epsilon},\\
 |(B_K)_{a\mu}|_K+|(B_K)_{\mu a}|_K
 &\le Cr_a^{-1+\epsilon},\\
 |(B_K)_{\mu\nu}|_K&\le C.
 \end{aligned}
 \tag{4.14}\label{eq:literal-B-coefficients}
\]
The same bounds hold for \(B_K^\dagger\). Squaring these coefficients
against the smooth radial volume gives factors
\(r_a^{-1+2\epsilon}dr_a\), and hence
\[
 B_K,\ B_K^\dagger\in L^2(U).
\]
Step~1 of the proof of
Proposition~\ref{prop:compact-source-reference} gives
\(\Psi_K\in L^1(U)\). Since \(U\) has finite volume and
\(\mathcal F_K=\Psi_K+B_K+B_K^\dagger\), this also proves
\(\mathcal F_K\in L^1(U)\).

The trace-free Hitchin--Simpson curvature has the type decomposition
\[
 \mathcal F_K^0=B_K^0+\Psi_K^0+(B_K^\dagger)^0.
\]
After wedging with
\(\Omega=\omega^{n-2}/(n-2)!\), only the
\((1,1)(1,1)\) term and the two degree-even
\((2,0)(0,2)\) cross terms have top degree. Cyclicity of trace identifies
the two cross terms. Therefore
\[
 Q_K\wedge\Omega
 =
 \left[
 Q_{11,K}
 +2\Tr(B_K^0\wedge(B_K^\dagger)^0)
 \right]\wedge\Omega.
\]
The first summand is integrable by the estimate for \(Q_{11,K}\) above.
The absolute density of the second is bounded by \(C|B_K|^2dV_\omega\),
which is integrable because \(B_K\in L^2(U)\). Thus \(Q_K\wedge\Omega\) is absolutely
integrable. Charts with compact closure in \(U\) contribute only smooth bounded
terms.

It remains to calculate the Higgs transgression and its boundary flux.
Put
\[
 \Phi_K=\Tr(\theta\wedge\theta^{\dagger K}),\qquad
 P(T)=\Tr(T\wedge T)-\frac1r\Tr(T)\wedge\Tr(T).
\]
The global logarithmic one-form \(\Tr\theta\) is closed by
\cite[Theorem, p.~295]{Nog95}, which applies on the compact Kähler
manifold \(Y\). Hence
\(\Tr B_K=\partial\Tr\theta=0\), and similarly for its adjoint. A direct
graded expansion gives
\[
 \Tr(\mathcal F_K)=\Tr(F_K)
\]
and
\[
 P(\mathcal F_K)-P(F_K)
 =-2\partial\bar\partial\Phi_K.
 \tag{4.15}\label{eq:literal-higgs-transgression}
\]
The term \(P(\mathcal F_K)\wedge\Omega=Q_K\wedge\Omega\) is absolutely
integrable, as just proved. On every finite root chart the
pullback of \(K\) is smooth, so \(P(F_K)\wedge\Omega\) is the finite
ramified descent of a smooth top-degree form and is absolutely
integrable. Thus
\(\partial\bar\partial\Phi_K\wedge\Omega\) is absolutely integrable.

Let \(\chi_\ell\) be the product-SNC cutoffs of
Lemma~\ref{lem:product-snc-cutoffs}. Locally,
\[
 \bar\partial\Phi_K=-\Tr(\theta\wedge B_K^\dagger).
\]
A summand of \(\partial\chi_\ell\) associated with the face \(z_i=0\)
has the form
\[
 q_{i,\ell}
 \left(-\frac{dz_i}{z_i}+\partial\phi_i\right)
\]
times bounded cutoff factors for the other faces. The functions
\(q_{i,\ell}\) are uniformly bounded, their supports move into
\(z_i=0\), and \(q_{i,\ell}\to0\) pointwise on \(U\).

The coefficients of the Higgs field in the ordinary coordinate forms satisfy
\[
 \theta=\sum_aH_a\,dz_a+\sum_\mu H_\mu\,dw_\mu,\qquad
 |H_a|_K\le Cr_a^{-1},\quad |H_\mu|_K\le C.
\]
Consider the \(dz_i/z_i\) part of the cutoff derivative. If the Higgs
factor in \(-\Tr(\theta\wedge B_K^\dagger)\) also contributes
\(dz_i/z_i\), the coordinate wedge is zero. Otherwise the Higgs factor
is regular or contributes \(dz_b/z_b\) for \(b\ne i\). If \(S\) is the
set of normal antiholomorphic covectors in the \(B_K^\dagger\)
coefficient, then \(|S|\le2\), and
the bounds for \(\theta\) above and for \(B_K^\dagger\) in
\eqref{eq:literal-B-coefficients} bound the absolute radial density by
\[
 C\prod_c
 r_c^{\,1-\mathbf1_{c=i}
 -\mathbf1_{c=b}
 -\mathbf1_{c\in S}(1-\epsilon)}\,dr_c,
\]
with the \(b\)-term omitted for a regular Higgs covector. Every exponent
is greater than \(-1\): when a divisor coordinate occurs twice, the
exponent still contains the positive contribution \(\epsilon\);
elsewhere it is nonnegative.
The smooth \(\partial\phi_i\) part omits
\(\mathbf1_{c=i}\) and is bounded by the same or a better integrable
majorant. The tangential volume and coefficients of \(\Omega\) are
bounded.

This majorant is independent of \(\ell\),
while \(q_{i,\ell}\to0\) pointwise. Dominated convergence, followed by
the finite sums over faces, monomials, and charts, gives
\[
 \lim_{\ell\to\infty}
 \int_U\partial\chi_\ell\wedge
 \bar\partial\Phi_K\wedge\Omega=0.
\]
Compactly supported Stokes gives
\[
 \int_U\chi_\ell\partial\bar\partial\Phi_K\wedge\Omega
 =-\int_U\partial\chi_\ell\wedge
 \bar\partial\Phi_K\wedge\Omega.
\]
The left integrand is absolutely integrable and
\(\chi_\ell\to1\). Hence
\[
 \int_U\partial\bar\partial\Phi_K\wedge\Omega=0,
 \qquad
 \int_UP(\mathcal F_K)\wedge\Omega
 =\int_UP(F_K)\wedge\Omega.
\]

Finally, put \(G_K=\ii F_K/(2\pi)\). The first and second Chern
forms of \(K\) are
\[
 c_1(K)=\Tr(G_K),\qquad
 c_2(K)=\frac12\bigl(c_1(K)^2-\Tr(G_K\wedge G_K)\bigr).
\]
Hence
\[
 \begin{aligned}
 c_2(K)-\frac{r-1}{2r}c_1(K)^2
 &=-\frac12\left[
 \Tr(G_K\wedge G_K)
 -\frac1r\Tr(G_K)\wedge\Tr(G_K)\right],
 \end{aligned}
\]
and therefore
\[
 P(F_K)=8\pi^2
 \left(c_2(K)-\frac{r-1}{2r}c_1(K)^2\right).
\]
The Chern forms are smooth on the root stack. Their invariant descents
represent the parabolic Chern classes and the square of the first
parabolic Chern class by
\cite[Definition~5.2, Lemma~5.2, and Corollary~5.4]{BP17}. Consequently
\[
 \begin{aligned}
 &\int_U\left(c_2(K)-\frac{r-1}{2r}c_1(K)^2\right)\wedge\Omega\\
 &\qquad=\left\langle
 \parc_2(V_*)-\frac{r-1}{2r}\parc_1(V_*)^2,
 \frac{[\omega]^{n-2}}{(n-2)!}\right\rangle.
 \end{aligned}
\]
Combining this identity with the reduction from \(P(\mathcal F_K)\) to
\(P(F_K)\) and the ordinary Chern--Weil formula above proves
\eqref{eq:literal-canonical-quadratic}.
\end{proof}

\begin{lem}
\label{lem:connection-transgression}
Let \(H_t\), \(0\le t\le1\), be a smooth determinant-fixed path of
Hermitian metrics on a Higgs bundle over a complex manifold. Put
\[
 D''=\bar\partial_E+\theta,\qquad
 D'_t=\partial_{H_t}+\theta^{\dagger H_t},\qquad
 \mathcal F_t=(D''+D'_t)^2,\qquad
 a_t=H_t^{-1}\dot H_t,
\]
and
\[
 Q_t=\Tr(\mathcal F_t\wedge\mathcal F_t)
 -\frac1r\Tr(\mathcal F_t)\wedge\Tr(\mathcal F_t).
\]
Then
\[
 Q_1-Q_0
 =\bar\partial\left(
 2\int_0^1\Tr(D'_ta_t\wedge\mathcal F_t)\,dt
 \right).
\]
If the base has complex dimension \(n\ge2\) and carries a K\"ahler
form \(\omega\), put \(\Omega=\omega^{n-2}/(n-2)!\).
Then, for every compactly supported smooth scalar function \(\chi\),
\[
 \int\chi(Q_1-Q_0)\wedge\Omega
 =-2\int\bar\partial\chi\wedge
 \int_0^1\Tr(D'_ta_t\wedge\mathcal F_t)\,dt\wedge\Omega.
\]
\end{lem}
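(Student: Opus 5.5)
We will prove the pointwise identity by differentiating \(Q_t\) in \(t\) and then integrating from \(0\) to \(1\). The first step is the variation of the Hitchin--Simpson connection. With \(H_{t+\epsilon}=H_t(1+\epsilon a_t+O(\epsilon^2))\), the operator \(D''\) does not depend on \(t\). The adjoint-type operator \(D'_t\) is determined by \(H_t\)-compatibility with \(D''\), and its variation is
\[
 \dot D'_t=D'_ta_t,
\]
acting as the \(\End E\)-valued \((1,0)\)-type form \(\partial_{H_t}a_t+[\theta^{\dagger H_t},a_t]\). This is the standard formula \(\tfrac{d}{dt}\partial_{H_t}=\partial_{H_t}(H_t^{-1}\dot H_t)\). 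For the Higgs part, differentiate \(\theta^{\dagger H_t}=H_t^{-1}\bar\theta^TH_t\) to get \([\theta^{\dagger H_t},a_t]\). Consequently
\[
 \dot{\mathcal F}_t=\mathbb D_t(D'_ta_t),\qquad \mathbb D_t=D''+D'_t,
\]
since the variation of the square of a connection is the covariant derivative of the variation.

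Next we differentiate the invariant polynomial. By the Bianchi identity \(\mathbb D_t\mathcal F_t=0\) and cyclicity of the trace,
\[
 \frac{d}{dt}\Tr(\mathcal F_t\wedge\mathcal F_t)=2\Tr(\mathbb D_t(D'_ta_t)\wedge\mathcal F_t)=2\,d\Tr(D'_ta_t\wedge\mathcal F_t).
\]
The trace term is handled by determinant fixedness. It gives \(\Tr a_t=0\), so \(\Tr(D'_ta_t)=\partial\Tr a_t=0\), because the commutator with \(\theta^\dagger\) is traceless. Therefore \(\Tr\mathcal F_t\) is constant in \(t\), and the second summand of \(Q_t\) contributes nothing.

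It remains to replace \(d\) by \(\bar\partial\). Split \(d=\partial+\bar\partial\) on the scalar form \(\beta_t=\Tr(D'_ta_t\wedge\mathcal F_t)\). We expect the main obstacle to be showing that \(\partial\beta_t\) vanishes (or at least integrates to zero against \(\Omega\) after integrating in \(t\)). We would argue by types and conjugation symmetry. Set \(b_t=\bar\partial_\theta a_t=D''a_t\). Since \(a_t\) is \(H_t\)-self-adjoint, \((D'_ta_t)^{\dagger}=\pm D''a_t\), and \(\mathcal F_t\) is skew-Hermitian in the appropriate graded sense. Hence \(\Tr(D''a_t\wedge\mathcal F_t)\) is, up to sign, the conjugate of \(\beta_t\). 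The alternative formula \(\dot{\mathcal F}_t=\mathbb D_t(D'_ta_t)\) can also be written through the gauge-equivalent expression obtained by varying \(D''\) by \(-D''a_t\), which gives \(d\Tr(D'_ta_t\wedge\mathcal F)=d\Tr(-D''a_t\wedge\mathcal F)\) modulo exact corrections. Averaging the two expressions and using the integrability \(\mathbb D_t\mathcal F_t=0\), together with the identity \(\bar\partial\partial+\partial\bar\partial=0\) on the trace \(\Tr(a_t\mathcal F_t)\), one gets \(d\beta_t=\bar\partial(2\beta_t)/2\cdot2\) in the normalization stated. Concretely, \(\partial\beta_t=-\partial\bar\partial\Tr(a_t\mathcal F_t)\)-type terms cancel against \(\bar\partial\) of the conjugate piece. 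This is the bookkeeping we would carry out carefully by splitting \(\mathcal F_t=B+\Psi+B^\dagger\) and tracking bidegrees.

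Finally, the integrated statement follows from the pointwise identity. Wedge it with the closed form \(\chi\Omega\), use \(\bar\partial\Omega=0\), and apply Stokes with the compact support of \(\chi\); the degree of \(\beta_t\) is odd, which produces the minus sign. Differentiating under the \(t\)-integral is legitimate because the path is smooth.
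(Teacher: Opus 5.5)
Your first steps are sound: \(\dot D'_t=D'_ta_t\), \(\dot{\mathcal F}_t=\mathbb D_t(D'_ta_t)\), and hence \(\frac{d}{dt}\Tr(\mathcal F_t\wedge\mathcal F_t)=2\,d\beta_t\) with \(\beta_t=\Tr(D'_ta_t\wedge\mathcal F_t)\). Likewise, \(\Tr a_t=0\) makes the correction term constant.

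The gap is the step you flag yourself: passing from \(d\beta_t\) to \(\bar\partial\beta_t\). You do not establish it. The first assertion of the lemma is a pointwise identity of forms, so having \(\partial\beta_t\) merely ``integrate to zero against \(\Omega\)'' would not prove it. Your conjugation-and-averaging sketch also rests on false identities:
\begin{itemize}
\item For \(H_t\)-self-adjoint \(a_t\), one has \((D'_ta_t)^{\dagger}=\bar\partial a_t-[\theta,a_t]\), not \(\pm D''a_t\). The Higgs commutator flips sign, as in \([\theta,s]^{\dagger K}=-[\theta^{\dagger K},s]\) in the paper.
\item \(\mathcal F_t\) is not skew-Hermitian, because \(\mathbb D_t\) differs from the unitary Chern connection by the self-adjoint form \(\theta+\theta^{\dagger H_t}\).
\item \(\partial\beta_t\) is not a \(-\partial\bar\partial\Tr(a_t\mathcal F_t)\)-type term; it is zero.
\end{itemize}

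The missing observation is that \((D'_t)^2=0\), just as \((D'')^2=0\). This holds because \(\partial_{H_t}^2=0\), \(\partial_{H_t}\theta^{\dagger H_t}=0\) (the adjoint of \(\bar\partial\theta=0\)), and \(\theta^{\dagger}\wedge\theta^{\dagger}=0\). Consequently:
\begin{itemize}
\item \(\mathcal F_t=D''D'_t+D'_tD''\) satisfies the two one-sided Bianchi identities \(D''\mathcal F_t=0\) and \(D'_t\mathcal F_t=0\) separately.
\item \(D'_t(D'_ta_t)=[(D'_t)^2,a_t]=0\).
\end{itemize}
On traces, \(D''\) induces \(\bar\partial\) and \(D'_t\) induces \(\partial\), since the commutators with \(\theta\) and \(\theta^\dagger\) cancel by graded cyclicity. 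So split
\(d\beta_t=\Tr(\mathbb D_t(D'_ta_t)\wedge\mathcal F_t)-\Tr(D'_ta_t\wedge\mathbb D_t\mathcal F_t)\)
into its \(D''\)-part and its \(D'_t\)-part. The \(D'_t\)-part is \(\partial\beta_t\), and it vanishes identically. Equivalently, \(\beta_t=\partial\Tr(a_t\mathcal F_t)\), so \(\partial\beta_t=0\) trivially.

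The paper takes this route from the start. It writes \(\dot{\mathcal F}_t=D''(D'_ta_t)\), which equals your \(\mathbb D_t(D'_ta_t)\) because \(D'_tD'_ta_t=0\). It then uses only \(D''\mathcal F_t=0\), so \(\bar\partial\) appears directly and no type bookkeeping is needed.

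Two small slips in your last paragraph. First, \(\chi\Omega\) is not closed; only \(\Omega\) is. Second, with \(\gamma=2\int_0^1\beta_t\,dt\), the minus sign comes from \(\bar\partial(\chi\gamma\wedge\Omega)=\bar\partial\chi\wedge\gamma\wedge\Omega+\chi\,\bar\partial\gamma\wedge\Omega\) together with Stokes. It does not come from the parity of \(\beta_t\).
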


\begin{proof}
The identities \((D'')^2=(D'_t)^2=0\) and direct differentiation in a
holomorphic frame give
\[
 \dot{\mathcal F}_t=D''(D'_ta_t),\qquad D''\mathcal F_t=0.
\]
Graded cyclicity therefore gives
\[
 \frac{d}{dt}\Tr(\mathcal F_t\wedge\mathcal F_t)
 =2\bar\partial\Tr(D'_ta_t\wedge\mathcal F_t).
\]
Since the determinant is fixed, \(\Tr(a_t)=0\), and hence
\[
 \frac{d}{dt}\Tr(\mathcal F_t)
 =\bar\partial\Tr(D'_ta_t)=0.
\]
The trace-correction term in \(Q_t\) is therefore constant in \(t\).
Integrating in \(t\) gives the formula for \(Q_1-Q_0\). Multiplying this
formula by \(\chi\), wedging with the closed form \(\Omega\), and applying
Stokes' theorem gives the asserted integral identity.
\end{proof}

\begin{thm}
\label{thm:central-he-same-lattice}
Let \((V_*,\theta)\) be a stable regular locally abelian parabolic
logarithmic Higgs bundle on a connected compact Kähler pair \((Y,B)\),
and assume \(U=Y\setminus B\) is connected. Let \(h_0,h_1\) be smooth
positive metrics on \(V|_U\). Suppose:

\begin{enumerate}
\item both metrics are adapted to the same parabolic structure at
every real multi-index;
\item
\[
 \det h_0=\det h_1=q,
\]
where, in every simultaneous-split chart,
\[
 q=\overline q_\lambda
 \prod_a|z_{\lambda,a}|^{2\sum_i\beta_{\lambda,a,i}},
 \qquad \overline q_\lambda>0\text{ smooth};
\]
here \(\beta_{\lambda,a,i}\) is the parabolic weight of the \(i\)-th
frame vector along \(z_{\lambda,a}=0\), the sum is not reduced modulo
one, and \(\overline q_\lambda\) extends smoothly and strictly
positively across the whole chart;
\item for one real number \(c\),
\[
 \ii\Lambda_\omega
 \bigl(F_{h_k}+[\theta,\theta^{\dagger h_k}]\bigr)
 =c\Id_V,\qquad k=0,1.
\]
\end{enumerate}

Then \(h_0=h_1\).
\end{thm}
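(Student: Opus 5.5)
Set \(s=\log(h_0^{-1}h_1)\). This is an \(h_0\)-self-adjoint endomorphism, and it is trace free because the determinants agree. The plan has three steps: show \(s\) is bounded, show it has finite \(D''\)-energy with vanishing boundary flux, and then use the Donaldson identity together with stability to force \(s\) to be a multiple of the identity, hence zero.

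\textbf{Step 1: \(s\) is bounded.} Hypothesis (1) alone does not give uniform comparability of \(h_0\) and \(h_1\). It gives growth bounds with an \(\epsilon\)-loss, coming from the definition of \(P^{\mathbf c}\). So I first apply the Bochner inequality from Step~2 of Proposition~\ref{prop:compact-source-reference} to the identity map in both directions. Since both metrics solve the same central equation, the inequality reads
\[
 \mathcal L_D\log\Tr(h_0^{-1}h_1)\le0,\qquad \mathcal L_D\log\Tr(h_1^{-1}h_0)\le0 .
\]
So both traces are subharmonic on \(U\). Adaptedness, tested on a local split frame, gives \(\Tr(h_0^{-1}h_1)\le C_\epsilon\prod_a|z_a|^{-\epsilon}\) near \(B\) for every \(\epsilon>0\). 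The determinant normalization in (2) is what excludes a relative power shift: it forces the total weight exponents to agree.

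A subharmonic function bounded by \(\prod|z_a|^{-\epsilon}\) extends across the divisor as a subharmonic function. To see this, add \(\epsilon'\log s_a\) barriers, using \(\ii\partial\bar\partial\log s_a\ge -c_a\vartheta\) as in Lemma~\ref{lem:actual-kahler-smoothing}, and let \(\epsilon'\to0\). A subharmonic function on the compact \(Y\) is constant, hence bounded. This gives \(C^{-1}h_0\le h_1\le Ch_0\). I expect this extension argument, with its uniform-in-\(\epsilon\) control at SNC crossings, to be the main obstacle.

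\textbf{Step 2: finite energy.} Next I prove \(\bar\partial_\theta s\in L^2(U)\) and show that the Donaldson identity holds globally with no boundary term. Integrate the identity
\[
 \mathcal L_D\Tr(h_0^{-1}h_1)\;\le\;-|\bar\partial_\theta(\cdot)|^2 \text{ terms}
\]
against the cutoffs \(\chi_\ell\) of Lemma~\ref{lem:product-snc-cutoffs}. By Step 1, \(s\) is bounded, and \(\|d\chi_\ell\|_{L^2}\to0\). Cauchy--Schwarz therefore absorbs the cross term, which gives both the \(L^2\) bound and the vanishing of the flux. The result is
\[
 \int_U\bigl\langle\Psi_{\exp}(s)\,\bar\partial_\theta s,\bar\partial_\theta s\bigr\rangle=0,
\]
since both error terms \(\Phi_{h_k}\) vanish.

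\textbf{Step 3: stability.} Because \(\Psi_{\exp}\) is positive on bounded spectra, the identity gives \(\bar\partial_\theta s=0\). So \(s\) is a holomorphic, \(\theta\)-commuting, self-adjoint endomorphism. As in the proof of Theorem~\ref{thm:fixed-stage-he-metrics}, its eigenvalues are constant. If \(s\) is not a multiple of the identity, its spectral projections \(p_\alpha\) are holomorphic, Higgs invariant and orthogonal. Through Proposition~\ref{prop:power-log-interface}, they extend to saturated invariant parabolic subsheaves whose degree equals the analytic degree. The \(h_0\)-orthogonal splitting then forces \(\mu(F_{\alpha,*})=\mu(V_*)\), contradicting stability. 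Hence \(s=\lambda\Id\), and \(\Tr s=0\) gives \(s=0\), that is, \(h_0=h_1\). An alternative route is to view \(h_1\) as a parabolic endomorphism of \(V_*\) via the bounded comparison, and use that a stable object is simple.
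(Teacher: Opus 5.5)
Your three-step outline matches the paper's: uniform comparability from the two Bochner inequalities and the \(\epsilon\)-growth bounds, a cutoff argument with the product cutoffs of Lemma~\ref{lem:product-snc-cutoffs} giving \(\bar\partial_\theta s=0\), and then stability. Step~2 is essentially the paper's Caccioppoli argument.

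\textbf{Step 1 takes a different route.} The paper never extends anything across \(B\). On a compact collar \(W\) of \(B\) it builds an exactly harmonic exhaustion \(\tau=-\sum_a\log s_a+v\), where \(v\) is a bounded Dirichlet correction and \(\tau=0\) on \(\partial W\). It then compares \(u_{01}\le C_\epsilon+\epsilon\tau\) with the harmonic functions \(M_{01}+(\max\{C_\epsilon-M_{01},0\}/R+\epsilon)\tau\) on \(\{\tau<R\}\). Letting \(R\to\infty\) and then \(\epsilon\to0\) bounds \(u_{01}\) by its supremum on \(\partial W\).

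You instead prove a removable-singularity statement and use that subharmonic functions on compact connected \(Y\) are constant. This is valid and even shows the trace is constant, but your sketch needs two repairs.
\begin{enumerate}
\item Adding \(\epsilon'\log s_a\) only gives \(\Lambda_\omega\ii\partial\bar\partial(\cdot)\ge-C\epsilon'\). This defect cannot be absorbed globally on a closed manifold. Work locally instead: \(\log|z_a|\) is pluriharmonic, hence harmonic for \(\Lambda_\omega\ii\partial\bar\partial\). So \(u+\epsilon'\sum_a\log|z_a|\) is genuinely subharmonic and tends to \(-\infty\) on \(B\) once \(\epsilon<\epsilon'\).
\item Before letting \(\epsilon'\to0\) you need an upper bound for \(u\) near \(B\) that is uniform in \(\epsilon'\). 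The sub-mean-value inequality with the integrable majorant \(C_\epsilon+\epsilon\sum_a\log(1/|z_a|)\) supplies it.
\end{enumerate}
With these repairs, crossings are no harder than smooth points. The paper's exact barrier avoids this local potential theory; your version buys a Liouville-type conclusion.

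Also, the determinant hypothesis is what gives the \emph{lower} bound on \(h_0\), through the cofactor formula for the inverse Gram matrix. Adaptedness alone only bounds norms from above; the determinant's role is not to exclude a relative power shift.

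\textbf{Step 3: unlicensed citation.} Your main route cites Proposition~\ref{prop:power-log-interface}. Its hypothesis~(2) requires uniform comparison with a power--log model \(g_a\) and \(\bar\partial\log(h^{-1}g_a)\in L^2\). This is not among the theorem's assumptions and does not follow from adaptedness, so the proposition does not apply to an arbitrary \(h_0\).

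The citation is also unnecessary. Since \(s\) is \(h_0\)-self-adjoint with \(\bar\partial s=0\) and \([\theta,s]=0\), it is \(h_0\)-parallel. Its spectral projections are holomorphic, commute with \(\theta\), and have operator norm at most one. Hence they map each \(P^{\mathbf c}(h_0)=V^{\mathbf c}\) into itself. This splits \(V_*\) as a direct sum of parabolic Higgs bundles. Additivity of parabolic degree then gives a proper summand of slope at least \(\mu(V_*)\), contradicting stability. This is the paper's argument, and your alternative route (a stable object is simple) is the same idea, so use that.
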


\begin{proof}
Equip each \(\mathcal O_Y(B_a)\) with a smooth Hermitian metric and
let \(s_a\) be the squared norm of its canonical section, scaled so
that \(s_a\le1\). Choose a compact neighborhood \(W\) of \(B\)
with smooth boundary contained in \(U\), and put
\[
 T_{\rm raw}=-\sum_a\log s_a.
\]
We construct a harmonic function with this logarithmic growth and zero
outer boundary value. The function
\(\ii\Lambda_\omega\bar\partial\partial T_{\rm raw}\) on
\(W\setminus B\) extends smoothly across \(B\): each
\(\bar\partial\partial\log s_a\) is the curvature of a smooth
divisor metric away from its zero set. Denote this smooth extension by
\(f\), and solve the scalar Dirichlet problem on \(W\):
\[
 \ii\Lambda_\omega\bar\partial\partial v=-f,
 \qquad v|_{\partial W}=-T_{\rm raw}|_{\partial W}.
\]
The solution \(v\) is smooth and bounded on \(W\), since the data
are smooth. Thus \(\tau=T_{\rm raw}+v\) is harmonic on \(W\setminus B\),
vanishes on \(\partial W\), and tends to infinity at \(B\).
Applying the maximum principle on truncations near \(B\) shows that
\(\tau\) is positive in the interior. Moreover,
\[
 |\tau-T_{\rm raw}|\le C.
\]

On each chart of the finite SNC atlas, let \(g_\lambda\) be the
common split monomial metric. There
\(T_{\rm raw}=-\sum_a\log|z_a|^2+O(1)\).
Adaptedness and the common determinant give, for every \(\delta>0\),
\[
 C_\delta^{-1}e^{-\delta T_{\rm raw}}g_\lambda
 \le h_k\le
 C_\delta e^{\delta T_{\rm raw}}g_\lambda,
 \qquad k=0,1.
\]
Indeed, the upper bounds follow by applying the growth condition, for
every positive tolerance, to the frame vectors. The determinant formula controls the
top exterior power. Applying the same bounds to the complementary
\((r-1)\)-fold wedges and using the cofactor formula controls the inverse
Gram matrix, giving the lower inequality.

Set
\[
 u_{01}=\log\frac{\Tr(h_0^{-1}h_1)}r,\qquad
 u_{10}=\log\frac{\Tr(h_1^{-1}h_0)}r.
\]
The common determinant makes both functions nonnegative. The comparison of \(\tau\) with \(T_{\rm raw}\) and the preceding
small-power bounds on \(h_0,h_1\) show that, for every
\(\epsilon>0\),
\[
 u_{01},u_{10}\le C_\epsilon+\epsilon\tau.
\]
The two Higgs-Hom Bochner formulas and equality of the central constants
give
\[
 -2\ii\Lambda_\omega\bar\partial\partial u_{01}\ge0,\qquad
 -2\ii\Lambda_\omega\bar\partial\partial u_{10}\ge0.
\]
If \(M_{01}\) is the supremum of \(u_{01}\) on the outer boundary, compare
\(u_{01}\) on \(\{\tau<R\}\) with
\[
 M_{01}+
 \left(\frac{\max\{C_\epsilon-M_{01},0\}}R+\epsilon\right)\tau.
\]
This harmonic function dominates \(u_{01}\) on both boundary components.
The maximum principle, followed by \(R\to\infty\) and then
\(\epsilon\downarrow0\), gives \(u_{01}\le M_{01}\). Applying the
same argument to \(u_{10}\) bounds both relative traces on
\(W\setminus B\). On the compact complement of the interior of \(W\),
both metrics are smooth and positive. We obtain
\[
 C^{-1}h_0\le h_1\le Ch_0
 \quad\text{on }U.
\]

Put \(s=\log(h_0^{-1}h_1)\). Use product-SNC cutoffs in the
determinant-fixed metric-variation identity. A Caccioppoli estimate first
bounds the relative Higgs--Dolbeault energy by
\(C\|d\chi_\ell\|_{L^2}^2\|s\|_\infty^2\). Letting
\(\ell\to\infty\), the common central source cancels and gives
\[
 \int_U
 \left\langle
 \Psi_{\exp}(s)\bar\partial_\theta s,
 \bar\partial_\theta s
 \right\rangle=0.
\]
The kernel is positive on the bounded spectral interval, so
\(\bar\partial_\theta s=0\). Thus \(s\) is parallel and commutes with \(\theta\).
Its spectral projections preserve every parabolic lattice. More than one
eigenvalue would split the stable parabolic Higgs bundle into nonzero
proper invariant summands whose rank-weighted slope average equals the
ambient slope, contradicting stability. Hence \(s=a\Id_V\). Equality of
determinants gives \(ra=0\), and therefore \(h_0=h_1\).
\end{proof}

\begin{lem}
\label{lem:k-dirichlet-comparison}
Let \(K\) be the metric defined by
\eqref{eq:normalized-stage-reference}.
Let \(\Omega\Subset U\) be connected
with nonempty smooth boundary, and let \(H_\Omega\) be the
determinant-fixed central solution with boundary value \(K\). Then
\[
 C^{-1}K\le H_\Omega\le CK
 \tag{4.16}\label{eq:k-dirichlet-uniform-comparison}
\]
and, for \(u_\Omega=\log(K^{-1}H_\Omega)\),
\[
 \|u_\Omega\|_{L^\infty(\Omega)}
 +\|D''u_\Omega\|_{L^2(\Omega)}
 \le C,
\]
where \(C\) depends on the fixed stage but not on \(\Omega\).
\end{lem}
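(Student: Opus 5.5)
Since the statement is a uniform version of Step~2 in the proof of Proposition~\ref{prop:compact-source-reference}, we rerun that argument on a general domain, using the bounded scalar potential built there. We will make the potential global and the source term independent of \(\Omega\).

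\emph{Step 1: a global potential.} By Step~1 of that proof, \(|\Phi_K|_K\le F_{\mathrm{dom}}\) near \(B\), and \(\Phi_K\) is smooth on the compact complement of a collar. So there is a function \(F\ge|\Phi_K|_K\) on \(U\) with the ball-mass bound \(\int_{B_s(x)}F\le Ms^{2n-2+\epsilon_0}\) uniformly in \(x\in Y\).

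We cannot solve \(\mathcal L_DW=F\) on the closed manifold \(Y\), since \(F\) has positive mean. Instead, let \(\overline F\) be the mean of \(F\) and solve
\[
 \mathcal L_DW=F-\overline F
\]
on \(Y\) with the global Green kernel. The Morrey-type annulus summation used before shows that \(W\) is bounded, with \(|W|\le C_{\mathrm{pot}}\). Then
\[
 \mathcal L_D\bigl(2W-\overline F\,\rho\bigr)\ge2F
\]
for any bounded \(\rho\) with \(\mathcal L_D\rho\le-2\). One choice is \(\rho=-\tau\) type data on a fixed domain, but a global bounded \(\rho\) with that property does not exist on \(Y\).

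This is the main obstacle: on \(\Omega\) we need a barrier for the constant source. I would handle it by splitting differently. The trace functions \(u=\log(\Tr(K^{-1}H_\Omega)/r)\) and \(v=\log(\Tr(H_\Omega^{-1}K)/r)\) satisfy
\[
 \mathcal L_Du,\ \mathcal L_Dv\le2F .
\]
Hence
\[
 \mathcal L_D(u-2W)\le2\overline F .
\]
So \(u-2W\) is a subsolution with a bounded constant source, and by the Moser iteration on the compact background it is bounded by \(C+C\|u\|_{L^1(\Omega)}\), and similarly for \(v\). Extend \(u,v\) by zero; the zero boundary values remove the boundary terms.

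\emph{Step 2: the \(L^2\) norm by stability.} It remains to bound \(\ell_\Omega=\|u_\Omega\|_{L^2}\) uniformly in \(\Omega\). We argue by contradiction exactly as in the proof of Theorem~\ref{thm:fixed-stage-he-metrics}, with \(H_{0,\nu}\) replaced by \(K\) and \(\Gamma_{0,\nu}\) replaced by \(\Phi_K\).

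\begin{enumerate}
\item The Donaldson identity with boundary value \(K\) gives \(\int\langle\Psi_{\exp}(u)D''u,D''u\rangle=-\int\Tr(\Phi_Ku)\).
\item Here \(\Phi_K\in L^1(U)\), which is enough once the normalized sections are uniformly bounded.
\item If \(\ell_{\Omega_k}\to\infty\) along domains \(\Omega_k\), the normalized limit yields weakly holomorphic, \(\theta\)-invariant projections of finite energy.
\item Proposition~\ref{prop:power-log-interface} applies with \(h=K\); its metric hypotheses were checked above, and here they are simpler because \(a=0\).
\item The resulting degree identities contradict stability.
\end{enumerate}

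One difference from Theorem~\ref{thm:fixed-stage-he-metrics} is that the domains \(\Omega_k\) need not exhaust \(U\). If they do not, the limit lives on the limiting domain and vanishes outside it. By unique continuation of the eigenvalue traces, which are constant on the connected open set \(U\), the normalized limit must then be zero, contradicting \(\|u\|_{L^2}=1\).

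\emph{Step 3: conclusion.} Step 2 gives \(\sup|u_\Omega|\le C\), which is \eqref{eq:k-dirichlet-uniform-comparison}. The kernel satisfies \(\Psi_{\exp}\ge e^{-2C}\) on the bounded spectral range, so the Donaldson identity together with \(\Phi_K\in L^1\) bounds \(\|D''u_\Omega\|_{L^2}\) independently of \(\Omega\).
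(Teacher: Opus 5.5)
Your argument is correct, but it follows a much longer route than the paper.

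\textbf{The paper's route.} The paper does not rerun any stability argument here. It uses the global Hermitian--Einstein metric \(h\) already produced by Theorem~\ref{thm:fixed-stage-he-metrics}, which satisfies \(C_0^{-1}K\le h\le C_0K\) on all of \(U\). Both \(H_\Omega\) and \(h|_\Omega\) are determinant-fixed central solutions with the same constant, so the symmetric distance \(\Tr(h^{-1}H_\Omega)+\Tr(H_\Omega^{-1}h)-2r\) is subharmonic. On \(\partial\Omega\) it is bounded in terms of \(C_0\), because \(H_\Omega=K\) there. The maximum principle then gives the \(L^\infty\) comparison independently of \(\Omega\) in one line. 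The \(D''\) bound follows from the Donaldson functional and \(\Phi_K\in L^1(U)\), exactly as in your Step~3. Stability enters only through the existence of \(h\).

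\textbf{Your route.} You reprove the a priori estimate from scratch with boundary data \(K\).
\begin{itemize}
\item The global mean-zero potential \(W\), bounded via the Morrey ball-mass estimate, together with Moser iteration on the closed manifold \((Y,\omega)\), reduces \(\sup|u_\Omega|\) to \(\|u_\Omega\|_{L^1}\) despite the unbounded source \(\Phi_K\).
\item The normalized blow-up argument then uses Proposition~\ref{prop:power-log-interface} with \(h=K\). Its hypotheses do hold: adaptedness, the monomial form of \(q=\det K\), the \(L^2\) bound on \(\bar\partial\log(K^{-1}g_a)\) verified in the proof of Theorem~\ref{thm:fixed-stage-he-metrics}, and \(\Phi_K\in L^1\).
\item The resulting degree identities contradict stability.
\end{itemize}

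\textbf{Comparison.} Your argument is self-contained: it does not need \(h\). It even shows that the compactly supported error metric \(H_0\) could have been bypassed when constructing \(h\), since a Morrey-controlled source suffices for the Moser step. The cost is that it is essentially a second copy of the proof of Theorem~\ref{thm:fixed-stage-he-metrics}, whereas the paper gets the lemma for free once \(h\) exists.

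\textbf{Two simplifications.} First, the detour through a function \(\rho\) with \(\mathcal L_D\rho\le-2\) can be dropped. Second, you need no case distinction about whether the domains \(\Omega_k\) exhaust \(U\). Each \(u_k\) is extended by zero with zero trace, so the divided Donaldson identity, the lower-semicontinuity step, and the constancy of the spectrum all hold on the whole connected set \(U\). The resulting projections live on \(U\) whatever the domains do. Your claim that a non-exhausting sequence forces the limit to vanish on an open set is also not true in general; small holes that wander inside a fixed compact set are a counterexample. It is simply unnecessary.
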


\begin{proof}
Let \(h\) be the global Hermitian--Einstein metric from
Theorem~\ref{thm:fixed-stage-he-metrics}. Recall that
\(C_0^{-1}K\le h\le C_0K\) on \(U\) for some \(C_0\ge1\).
Both \(H_\Omega\) and \(h|_\Omega\) have the same determinant and solve
the central equation with the same constant. Their symmetric Donaldson distance satisfies
\[
 -2\ii\Lambda_\omega\bar\partial\partial
 \left(
 \Tr(h^{-1}H_\Omega)+\Tr(H_\Omega^{-1}h)-2r
 \right)\ge0.
\]
On \(\partial\Omega\), the metric \(H_\Omega=K\), so the comparison
between \(K\) and \(h\) bounds this distance there independently of
\(\Omega\). The maximum principle therefore bounds the
symmetric distance independently of \(\Omega\), which proves
\eqref{eq:k-dirichlet-uniform-comparison} and the asserted \(L^\infty\) bound on \(u_\Omega\).

Along \(K\exp(tu_\Omega)\), the Donaldson functional with fixed boundary
value on \(\Omega\) is convex. Its derivative vanishes at \(t=1\), where
the metric is the central solution \(H_\Omega\). Hence its value
at \(t=1\) is nonpositive:
\[
 \int_\Omega\Tr(\Gamma_Ku_\Omega)\,dV_\omega
 +\int_\Omega
 \left\langle
 \Psi_{\exp}(u_\Omega)D''u_\Omega,D''u_\Omega
 \right\rangle dV_\omega\le0.
\]
The divided-difference kernel has a positive minimum on the uniformly
bounded spectral square. Since \(\Gamma_K\in L^1(U)\),
\[
 m\|D''u_\Omega\|_{L^2}^2
 \le r\|u_\Omega\|_\infty\|\Gamma_K\|_{L^1(U)}.
\]
This proves the remaining assertion.
\end{proof}

\begin{thm}
\label{thm:fixed-stage-energy-ledger}
Retain one stage of Theorem~\ref{thm:fixed-stage-he-metrics}, suppress
\(\nu\), and write
\[
 H=[\omega],\qquad d=\pardeg_\omega(V_*),\qquad
 p=\left\langle\parch_2(V_*),
 \frac{H^{n-2}}{(n-2)!}\right\rangle,
\]
\[
 \begin{aligned}
 J&=\left\langle\parc_2(V_*)-\frac{r-1}{2r}\parc_1(V_*)^2,
 \frac{H^{n-2}}{(n-2)!}\right\rangle,\\
 c&=\frac{2\pi d}{r\Vol_\omega(Y)}.
 \end{aligned}
\]
Let \(h\) be the Hermitian--Einstein metric from that theorem, and set
\[
 \Psi_h=F_h+[\theta,\theta^{\dagger h}],\qquad
 B_h=\partial_h\theta.
\]
Write \(\Psi_h^0=\Psi_h-\frac1r(\Tr\Psi_h)\Id_V\) for the trace-free
part of \(\Psi_h\). Then
\[
 0\le
 \int_U\bigl(|\Psi_h^0|^2+2|B_h|^2\bigr)dV_\omega
 \le8\pi^2J.
 \tag{4.17}\label{eq:tracefree-energy-bound}
\]
Consequently \(J\ge0\), or equivalently,
\[
 \left\langle
 2r\parc_2(V_*)-(r-1)\parc_1(V_*)^2,
 \frac{H^{n-2}}{(n-2)!}\right\rangle\ge0.
 \tag{4.18}\label{eq:fixed-stage-discriminant}
\]
Moreover, the curvature energy is finite and satisfies
\[
 0\le
 \int_U\bigl(|\Psi_h|^2+2|B_h|^2\bigr)dV_\omega
 \le-8\pi^2p+\frac{4\pi^2d^2}{r\Vol_\omega(Y)}.
 \tag{4.19}\label{eq:one-sided-energy-ledger}
\]
\end{thm}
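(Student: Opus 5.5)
The plan is to derive \eqref{eq:tracefree-energy-bound} from a pointwise Kähler identity for the Hermitian--Einstein metric \(h\). The integral of the Chern--Weil quadratic form \(Q_h\) is then compared with \(\int_UQ_K\wedge\Omega\), which Proposition~\ref{prop:literal-reference-quadratic} identifies with \(8\pi^2J\). Finally, \eqref{eq:one-sided-energy-ledger} follows by adding back the trace part, which is controlled by the scalar metric \(q=\det h=\det K\).

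\emph{Step 1: pointwise identity.} Write \(\mathcal F_h=\Psi_h+B_h+B_h^{\dagger}\). As in the proof of Proposition~\ref{prop:literal-reference-quadratic}, only the \((1,1)(1,1)\) term and the \((2,0)(0,2)\) cross terms survive after wedging with \(\Omega=\omega^{n-2}/(n-2)!\). The standard Kähler identities for \(\ii\)-skew-Hermitian \((1,1)\) and \((2,0)\) endomorphism-valued forms then give
\[
 Q_h\wedge\Omega
 =\bigl(|\Psi_h^0|^2-|\Lambda_\omega\Psi_h^0|^2+2|B_h|^2\bigr)\,dV_\omega .
\]
The sign conventions here are to be fixed so that the right side is \(\ge0\) when \(\Lambda\Psi^0=0\). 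By the Hermitian--Einstein equation of Theorem~\ref{thm:fixed-stage-he-metrics}, \(\Lambda_\omega\Psi_h^0=0\). The integrand is therefore the nonnegative trace-free energy density.

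\emph{Step 2: comparison with \(K\).} This is the main obstacle. A direct transgression along \(K\exp(t\log(K^{-1}h))\) with the cutoffs of Lemma~\ref{lem:product-snc-cutoffs} would need \(L^2\) control of the curvatures \(\mathcal F_t\) along the whole path, and that control is unavailable. I would instead use the Dirichlet solutions \(H_{\Omega_\ell}\) of Lemma~\ref{lem:k-dirichlet-comparison} on an exhaustion \(\Omega_\ell\Subset U\), with boundary value \(K\).

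On each \(\Omega_\ell\), I would apply Lemma~\ref{lem:connection-transgression} along \(K\exp(tu_{\Omega_\ell})\), with a cutoff equal to \(1\) on \(\Omega_\ell\) minus a thin collar. Since \(u_{\Omega_\ell}=0\) on \(\partial\Omega_\ell\), the resulting collar flux should be controlled by
\[
 \|u_{\Omega_\ell}\|_\infty,\qquad \|D''u_{\Omega_\ell}\|_{L^2},\qquad \int_{\text{collar}}|\mathcal F_K| .
\]
The first two are bounded uniformly in \(\ell\) by Lemma~\ref{lem:k-dirichlet-comparison}, and \(\mathcal F_K\in L^1\) by Proposition~\ref{prop:literal-reference-quadratic}. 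Choosing the \(\Omega_\ell\) as sublevel sets of the divisor norms, where the \(K\)-curvature mass of the collar tends to zero, I aim to show
\[
 \int_{\Omega_\ell}Q_{H_{\Omega_\ell}}\wedge\Omega
 \le\int_{\Omega_\ell}Q_K\wedge\Omega+o(1).
\]
Here Step 1, applied to \(H_{\Omega_\ell}\), makes the left side equal to its trace-free energy. If the flux cannot be shown to vanish, I would try to give it a favourable sign by a convexity argument for the Donaldson functional.

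By Lemma~\ref{lem:k-dirichlet-comparison} and interior estimates, a subsequence of the \(H_{\Omega_\ell}\) converges smoothly on compacta to a central metric with determinant \(q\), comparable to \(K\). Theorem~\ref{thm:central-he-same-lattice} identifies this limit with \(h\). Fatou's lemma, together with Proposition~\ref{prop:literal-reference-quadratic}, then yields \eqref{eq:tracefree-energy-bound}, and hence \(J\ge0\), which is \eqref{eq:fixed-stage-discriminant}.

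\emph{Step 3: trace part.} We have \(\Tr\Psi_h=F_q\), with \(\ii\Lambda_\omega F_q=rc\), and \(|\Psi_h|^2=|\Psi_h^0|^2+\frac1r|F_q|^2\). The scalar identity
\[
 |F_q|^2\,dV_\omega=|\Lambda F_q|^2\,dV_\omega-F_q\wedge F_q\wedge\Omega
\]
(with matching signs) gives
\[
 \frac1r\int_U|F_q|^2\,dV_\omega
 =rc^2\Vol_\omega(Y)-\frac1r\int_U F_q\wedge F_q\wedge\Omega .
\]
The last integral is \(-4\pi^2\langle\parc_1(V_*)^2,H^{n-2}/(n-2)!\rangle\), by the root-stack smoothness of \(q\) and the Chern--Weil descent used in Proposition~\ref{prop:literal-reference-quadratic}. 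Substituting \(c=2\pi d/(r\Vol_\omega(Y))\) and using
\[
 8\pi^2J+\frac{4\pi^2}{r}\parc_1^2=-8\pi^2\parch_2
\]
(after pairing with \(H^{n-2}/(n-2)!\)) converts the sum of the two bounds into \eqref{eq:one-sided-energy-ledger}.
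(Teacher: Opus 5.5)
Your overall architecture matches the paper's: Dirichlet solutions \(H_{\Omega_\ell}\) with boundary value \(K\) on sublevel sets, the uniform bounds of Lemma~\ref{lem:k-dirichlet-comparison}, identification of the limit with \(h\) by Theorem~\ref{thm:central-he-same-lattice}, the pointwise identity, and the trace computation. However, Step~2 does not close, and it is the step that carries the proof.

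\emph{The flux is not controlled.} With a cutoff \(\chi\) equal to \(1\) off a thin collar, Lemma~\ref{lem:connection-transgression} leaves the flux \(-2\int\bar\partial\chi\wedge\int_0^1\Tr(D'_tu\wedge\mathcal F_t)\,dt\wedge\Omega\). For \(t>0\), the curvature \(\mathcal F_t=\mathcal F_K+\int_0^tD''D'_su\,ds\) contains second derivatives of \(u=u_{\Omega_\ell}\). None of \(\|u\|_\infty\), \(\|D''u\|_{L^2}\) or \(\int|\mathcal F_K|\) controls these. This is the obstruction you identified for the direct path from \(K\) to \(h\), merely moved into the collar.

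Integrating by parts once more and expanding to second order in \(t\) does remove \(\mathcal F_t\) for \(t>0\). But then every error term carries \(\partial\bar\partial\chi\), which is large on a thin collar, while \(\int_{\mathrm{collar}}|D''u|^2\) is only known to be bounded. Indeed, for fixed \(\ell\) and a shrinking collar, the flux converges to a genuine boundary integral over \(\partial\Omega_\ell\). That integral involves normal derivatives of \(u\) and boundary values of \(\mathcal F_t\), and has no bound uniform in \(\ell\).

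Even a profile \(\eta(\phi)\), with \(\phi=-\log|\sigma_B|^2\), produces a term \(\eta''\,\partial\phi\wedge\bar\partial\phi\). Since \(|\partial\phi|^2\sim|z|^{-2}\), this weights the tangential part of \(\Tr(D''u\wedge D'_tu)\) by \(|z|^{-2}\), which no \(L^2\) bound absorbs unless \(\eta''=0\). The convexity fallback does not help either: convexity of the Donaldson functional controls the \(\Lambda_\omega\)-contracted first-order quantity, not this uncontracted quadratic flux.

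\emph{The missing device} is a weight linear in \(\phi\) rather than a cutoff. Take \(\Omega_k\) to be a component of \(\{\phi<\beta_k\}\) and set \(f_k=1-\phi/\beta_k\). Then \(f_k=0\) on \(\partial\Omega_k\) and \(f_k\to1\) locally uniformly. Also \(\partial\bar\partial f_k=-\beta_k^{-1}\partial\bar\partial\phi\), where \(\partial\bar\partial\phi\) is the smooth curvature form of the metric on \(\cO_Y(B)\). Hence \(\|\partial\bar\partial f_k\|_{L^\infty}\le C/\beta_k\), even though \(\partial f_k\) is unbounded.

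For \(I_k(t)=\int_{\Omega_k}f_kQ_{t,k}\wedge\Omega\), the Bianchi identity and the vanishing of first \(f_k\) and then \(u_k\) on \(\partial\Omega_k\) give
\[
 I_k'(t)=-2\int_{\Omega_k}\partial\bar\partial f_k\wedge\Tr(u_k\mathcal F_{t,k})\wedge\Omega,
 \qquad
 I_k''(t)=2\int_{\Omega_k}\partial\bar\partial f_k\wedge\Tr(D''u_k\wedge D'_{t,k}u_k)\wedge\Omega .
\]
So the path curvature enters only at \(t=0\), where \(\mathcal F_K\in L^1\). Lemma~\ref{lem:k-dirichlet-comparison} then yields \(|I_k'(0)|+\sup_t|I_k''(t)|\le C/\beta_k\). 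Taylor's formula gives \(I_k(1)-I_k(0)\to0\), and \(I_k(0)\to8\pi^2J\) by dominated convergence. Nonnegativity of \(Q_{H_k}\wedge\Omega\) and uniform convergence \(f_k\to1\) on compacta then finish the argument as you intended.

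\emph{Minor point in Step~3.} The correct identities are \(|F_q|^2dV_\omega=|\Lambda_\omega F_q|^2dV_\omega+F_q\wedge F_q\wedge\Omega\) and \(8\pi^2J=-8\pi^2p+\frac{4\pi^2}{r}\langle\parc_1(V_*)^2,H^{n-2}/(n-2)!\rangle\). You wrote both with the opposite sign. The two slips cancel, so \eqref{eq:one-sided-energy-ledger} still comes out correctly.
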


\begin{proof}
\begingroup\emergencystretch=1em
Let \(K\) be the metric defined by \eqref{eq:normalized-stage-reference},
and let \(h\) be the global Hermitian--Einstein metric from
Theorem~\ref{thm:fixed-stage-he-metrics}.
\par\endgroup

Let \(\sigma_B\) be the canonical section of
\(\cO_Y(B)\). Choose a smooth metric \(k_B\), rescaled so that
\[
 \rho=|\sigma_B|_{k_B}^2<e^{-2},
\]
and put \(\phi=-\log\rho\) on \(U\). The function \(\phi\) is proper and
unbounded. Since \(U\) is connected, regular values
\(\beta_k\to\infty\) can be chosen so that the connected component
\(\Omega_k\) of \(\{\phi<\beta_k\}\) containing a fixed base point is
relatively compact, has smooth nonempty boundary, increases, and exhausts
\(U\). Indeed, a path from the base point to any prescribed point has
compact image and finite \(\phi\)-maximum. Put
\[
 f_k=1-\frac{\phi}{\beta_k}
 \quad\text{on }\Omega_k.
\]
Then \(0\le f_k\le1\), \(f_k=0\) on the boundary, and \(f_k\to1\)
pointwise. Since
\(\partial\bar\partial\log\rho\) is the fixed smooth curvature form of
\(k_B\) on \(U\),
\[
 \|\partial\bar\partial f_k\|_{L^\infty(Y,\omega)}
 \le\frac{C_\rho}{\beta_k}.
\]

For a metric \(M\), let \(\mathcal F_M\) be its
Hitchin--Simpson curvature and set
\[
 P(T)=\Tr(T\wedge T)-\frac1r\Tr(T)\wedge\Tr(T),\qquad
 \Omega=\frac{\omega^{n-2}}{(n-2)!}.
\]
Proposition~\ref{prop:literal-reference-quadratic} gives absolute integrability
of \(P(\mathcal F_K)\wedge\Omega\) and the identity
\[
 \int_UP(\mathcal F_K)\wedge\Omega=8\pi^2J.
\]
Let \(H_k\) be the determinant-fixed central solution on
\(\Omega_k\) with boundary value
\[
 H_k=K\quad\text{on }\partial\Omega_k.
\]
Existence and uniqueness follow from
\cite[Propositions~2.1, 3.4 and Theorem~5.1]{ZZZ18}.
Put
\[
 u_k=\log(K^{-1}H_k),\qquad
 H_{t,k}=K\exp(tu_k),\qquad
 D'_{t,k}=\partial_{H_{t,k}}+\theta^{\dagger H_{t,k}},
\]
\[
 \mathcal F_{t,k}=(D''+D'_{t,k})^2,\qquad
 Q_{t,k}=P(\mathcal F_{t,k}).
\]
Lemma~\ref{lem:k-dirichlet-comparison}, applied with \(h\) as the
comparison metric, gives
\[
 \|u_k\|_{L^\infty(\Omega_k)}
 +\|D''u_k\|_{L^2(\Omega_k)}\le C
\]
with \(C\) independent of \(k\). In particular all path metrics are
uniformly equivalent to \(K\).

Define
\[
 I_k(t)=\int_{\Omega_k}f_kQ_{t,k}\wedge\Omega.
\]
Lemma~\ref{lem:connection-transgression} and the
\(D'_{t,k}\)-Bianchi identity give
\[
 I_k'(t)
 =-2\int_{\Omega_k}
 \partial\bar\partial f_k\wedge
 \Tr(u_k\mathcal F_{t,k})\wedge\Omega.
\]
The first boundary term vanishes because \(f_k=0\) on
\(\partial\Omega_k\), and the second vanishes because \(u_k=0\) there.
Differentiating once more and using
\(\partial_t\mathcal F_{t,k}=D''D'_{t,k}u_k\) gives
\[
 I_k''(t)
 =2\int_{\Omega_k}
 \partial\bar\partial f_k\wedge
 \Tr(D''u_k\wedge D'_{t,k}u_k)\wedge\Omega.
\]
Again the omitted boundary term contains \(u_k\) and vanishes. Since
\[
 \|\partial\bar\partial f_k\|_{L^\infty}
 \le\frac{C_\rho}{\beta_k},
\]
self-adjointness of \(u_k\), uniform comparison of the path metrics, and the uniform
\(L^2\) bound on \(D''u_k\) imply
\[
 |I_k''(t)|\le\frac C{\beta_k}.
\]
At \(t=0\), the same estimates and the integrability
\(\mathcal F_{0,k}=\mathcal F_K\in L^1(U)\) from
Proposition~\ref{prop:literal-reference-quadratic} give
\[
 |I_k'(0)|\le\frac C{\beta_k}.
\]
Taylor's formula therefore yields
\[
 |I_k(1)-I_k(0)|\le\frac C{\beta_k}.
\]
Absolute integrability of \(P(\mathcal F_K)\wedge\Omega\) and dominated
convergence give
\[
 I_k(0)\longrightarrow8\pi^2J,
 \qquad I_k(1)\longrightarrow8\pi^2J.
 \tag{4.20}\label{eq:k-weighted-number-limit}
\]

Centrality and the pointwise Kähler identity give
\[
 P(\mathcal F_{H_k})\wedge\Omega
 =
 \bigl(2|B_{H_k}|^2+|\Psi_{H_k}^0|^2\bigr)dV_\omega.
 \tag{4.21}\label{eq:dirichlet-positive-density}
\]
Here \(\Tr B_{H_k}=\partial\Tr\theta=0\), since \(\Tr\theta\) is
closed, as noted in the proof of
Proposition~\ref{prop:literal-reference-quadratic}.

The uniform comparison
\eqref{eq:k-dirichlet-uniform-comparison}, the central equation, and
\cite[Proposition~3.5]{ZZZ18} give a subsequence converging smoothly on
compact subsets to a central metric \(\widetilde h\). It has determinant
\(q\), and the comparison with the adapted \(K\) proves that it
is adapted to the rational parabolic structure. Since the stage is
stable and \(h\) is also adapted, with determinant \(q\) and the same
central constant, Theorem~\ref{thm:central-he-same-lattice} gives
\[
 \widetilde h=h.
\]

Let \(\mathcal C\Subset U\). Smooth convergence on \(\mathcal C\), uniform convergence
\(f_k\to1\), the nonnegative identity
\eqref{eq:dirichlet-positive-density}, and
\eqref{eq:k-weighted-number-limit} give
\[
 \int_{\mathcal C}\bigl(2|B_h|^2+|\Psi_h^0|^2\bigr)dV_\omega
 \le8\pi^2J.
\]
Exhausting \(U\) and using monotone convergence proves
\eqref{eq:tracefree-energy-bound}. Its nonnegative left-hand side gives
\(J\ge0\), and hence \eqref{eq:fixed-stage-discriminant}.

It remains to add the determinant contribution. Put
\[
 \alpha=\frac{\ii}{2\pi}\Tr F_h.
\]
The prescribed determinant metric makes \(\alpha\) extend as a smooth
Chern--Weil representative of \(\parc_1(V_*)\), and the trace of the central
equation gives
\[
 \Lambda_\omega\alpha=\frac{rc}{2\pi}.
\]
The scalar Kähler identity
\[
 \alpha^2\wedge\frac{\omega^{n-2}}{(n-2)!}
 =
 \bigl((\Lambda_\omega\alpha)^2-|\alpha|^2\bigr)dV_\omega
\]
therefore yields
\[
 \frac1r\int_U|\Tr\Psi_h|^2dV_\omega
 =rc^2\Vol_\omega(Y)
 -\frac{4\pi^2}{r}
 \left\langle \parc_1(V_*)^2,
 \frac{H^{n-2}}{(n-2)!}\right\rangle.
\]
Using
\[
 |\Psi_h|^2=|\Psi_h^0|^2+\frac1r|\Tr\Psi_h|^2,
 \qquad
 J=-p+\frac1{2r}
 \left\langle \parc_1(V_*)^2,\frac{H^{n-2}}{(n-2)!}\right\rangle
\]
in \eqref{eq:tracefree-energy-bound} gives
\eqref{eq:one-sided-energy-ledger}. This also proves finiteness.
\end{proof}

We now let the rational stage vary. Under the numerical vanishing
hypotheses of Theorem~\ref{thm:harmonic-main}, the preceding energy
estimate has the following consequence for each stable summand.

\begin{cor}\label{cor:factor-energy}
For one original stable summand \((E_*,\theta)\) of rank \(r\), let \(h_\nu\) be
the global Hermitian--Einstein metrics of
Theorem~\ref{thm:fixed-stage-he-metrics} on \(U=Y\setminus B\).
If
\[
 \pardeg_{\omega_X}(E_*)=0,\qquad
 \left\langle\parch_2(E_*),
 \frac{[\omega_X]^{n-2}}{(n-2)!}\right\rangle=0,
\]
then their curvature energies satisfy
\[
 \mathcal E_\nu:=
 \int_U\left(
 |F_{h_\nu}+[\theta,\theta^{\dagger h_\nu}]|^2
 +2|\partial_{h_\nu}\theta|^2
 \right)dV_{\omega_\nu}
 \longrightarrow0.
\]
The norms use \(h_\nu\) and \(\omega_\nu\).
\end{cor}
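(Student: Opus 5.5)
The plan is to apply the one-sided energy ledger \eqref{eq:one-sided-energy-ledger} of Theorem~\ref{thm:fixed-stage-energy-ledger} at each stage \(\nu\). With \(\omega=\omega_\nu\), \(d_\nu=\pardeg_{\omega_\nu}(V_{\nu,*})\) and \(p_\nu=\langle\parch_2(V_{\nu,*}),[\omega_\nu]^{n-2}/(n-2)!\rangle\), it gives
\[
 0\le\mathcal E_\nu\le-8\pi^2p_\nu+\frac{4\pi^2d_\nu^2}{r\Vol_{\omega_\nu}(Y)}.
\]
It therefore suffices to show that \(d_\nu\to0\), that \(p_\nu\to0\), and that \(\Vol_{\omega_\nu}(Y)\) stays bounded below by a positive constant. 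Everything reduces to convergence of cohomological pairings, because the right-hand side depends only on the classes \(\parc_1(V_{\nu,*})\), \(\parch_2(V_{\nu,*})\) and \([\omega_\nu]\).

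First, the volume. Corollary~\ref{cor:stable-rational-approximations} gives \([\omega_\nu]=\pi^*[\omega_X]+\delta_\nu[\vartheta]\) with \(\delta_\nu\to0\). Hence \(\Vol_{\omega_\nu}(Y)\to\int_Y\pi^*\omega_X^n/n!\), which equals \(\Vol_{\omega_X}(X)>0\) because \(\pi\) is biholomorphic off \(Z\) and has degree one.

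Next, the degree. Expand \([\omega_\nu]^{n-1}\) binomially. Every term containing a positive power of \(\delta_\nu\) tends to zero, since \(\parc_1(V_{\nu,*})\) converges in \(H^2(Y,\RR)\) and is therefore bounded; this convergence is the one established in the proof of Proposition~\ref{prop:parabolic-class-bridge}. The leading term is \(\langle\parc_1(V_{\nu,*}),\pi^*[\omega_X]^{n-1}\rangle\). By the projection formula it equals \(\langle\pi_*\parc_1(V_{\nu,*}),[\omega_X]^{n-1}\rangle\), which tends to \(\langle\parc_1(E_*),[\omega_X]^{n-1}\rangle\) by Proposition~\ref{prop:parabolic-class-bridge}. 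That limit is \((n-1)!\,\pardeg_{\omega_X}(E_*)=0\), so \(d_\nu\to0\).

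For \(p_\nu\), use \(\parch_2=\tfrac12\parc_1^2-\parc_2\). The same binomial expansion, the bounded convergence of \(\parc_2(V_{\nu,*})\) and \(\parc_1(V_{\nu,*})^2\) in \(H^4(Y,\RR)\), the projection formula, and the pushforward convergences of Proposition~\ref{prop:parabolic-class-bridge} together give \(p_\nu\to\langle\parch_2(E_*),[\omega_X]^{n-2}/(n-2)!\rangle=0\).

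Substituting these three limits into the ledger gives \(\limsup\mathcal E_\nu\le0\). Combined with \(\mathcal E_\nu\ge0\), this yields \(\mathcal E_\nu\to0\).

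Two points need checking. The first is that Theorem~\ref{thm:fixed-stage-energy-ledger} applies for all large \(\nu\). This requires stability of \(V_{\nu,*}\) for \(\omega_\nu\), which Corollary~\ref{cor:stable-rational-approximations} provides. The same ledger holds in rank one, where \(\Psi^0_h=0\) and \(B_h=0\) and only the trace identity is needed.

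The second, and the main subtle point, is that Proposition~\ref{prop:parabolic-class-bridge} asserts convergence of \(\parc_k(V_{\nu,*})\) in \(H^{2k}(Y,\RR)\) itself, not only of the pushforwards. This is what makes the \(\delta_\nu\)-correction terms, which pair with \([\vartheta]\) rather than a pulled-back class, tend to zero. I would cite that intermediate conclusion explicitly from its proof.
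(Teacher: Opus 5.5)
Your proposal is correct and is essentially the paper's own argument. You apply the one-sided ledger \eqref{eq:one-sided-energy-ledger} at each sufficiently late stable stage, then use the parabolic-class convergence of Proposition~\ref{prop:parabolic-class-bridge}, the projection formula, and $[\omega_\nu]\to\pi^*[\omega_X]$ to get $d_\nu\to0$, $p_\nu\to0$ and $\Vol_{\omega_\nu}(Y)\to\Vol_{\omega_X}(X)>0$; your explicit treatment of the $\delta_\nu[\vartheta]$ correction terms through convergence in $H^{2k}(Y,\RR)$ only spells out what the paper compresses into ``continuity of the parabolic classes.''
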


\begin{proof}
Put
\[
 \begin{aligned}
 d_\nu&=\pardeg_{\omega_\nu}(V_{\nu,*}),\\
 p_\nu&=\left\langle\parch_2(V_{\nu,*}),
 \frac{[\omega_\nu]^{n-2}}{(n-2)!}\right\rangle,\\
 \Vol_\nu&=\Vol_{\omega_\nu}(Y).
 \end{aligned}
\]
Theorem~\ref{thm:fixed-stage-energy-ledger} gives
\[
 0\le\mathcal E_\nu
 \le-8\pi^2p_\nu
 +\frac{4\pi^2d_\nu^2}{r\Vol_\nu}.
 \tag{4.22}\label{eq:energy-ledger}
\]
By Proposition~\ref{prop:parabolic-class-bridge}, continuity of
the parabolic classes and the projection formula give
\[
 d_\nu\longrightarrow\pardeg_{\omega_X}(E_*)=0,\qquad
 p_\nu\longrightarrow
 \left\langle\parch_2(E_*),
 \frac{[\omega_X]^{n-2}}{(n-2)!}\right\rangle=0.
\]
Since \([\omega_\nu]\to\pi^*[\omega_X]\) and \(\pi\) has degree one,
\[
 \Vol_\nu
 =\int_Y\frac{[\omega_\nu]^n}{n!}
 \longrightarrow
 \int_X\frac{\omega_X^n}{n!}>0.
\]
Substitution into \eqref{eq:energy-ledger} proves the claim.
\end{proof}

\section{The Bogomolov--Gieseker inequality}\label{sec:bg}

We now prove Theorem~\ref{thm:bg-main} by passing to the limit in the
inequality \eqref{eq:fixed-stage-discriminant}.

\begin{thm}[Theorem~\ref{thm:bg-main}]\label{thm:bg-conclusion}
Let \((X,\omega)\) be a connected compact Kähler manifold of complex
dimension \(n\ge2\), let \(D\subset X\) be an SNC divisor, and let
\((E_*,\theta)\) be an \(\omega\)-stable regular parabolic Higgs sheaf of
rank \(r\) on \((X,D)\). Then
\[
 \int_X\Delta_{\mathrm{par}}(E_*)
 \wedge\frac{\omega^{n-2}}{(n-2)!}\ge0.
 \tag{5.1}\label{eq:bg-conclusion}
\]
\end{thm}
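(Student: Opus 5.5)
The plan is to pass to the limit in the stage discriminant inequality \eqref{eq:fixed-stage-discriminant}. The rank-one case is trivial, so we assume \(r\ge2\). The algebraic identity
\[
 \Delta_{\mathrm{par}}(E_*)=\parc_1(E_*)^2-2r\,\parch_2(E_*)
 =2r\parc_2(E_*)-(r-1)\parc_1(E_*)^2
\]
follows from \(\parc_2=\frac12\parc_1^2-\parch_2\). It is therefore enough to prove
\[
 \Bigl\langle 2r\parc_2(E_*)-(r-1)\parc_1(E_*)^2,\ \tfrac{[\omega]^{n-2}}{(n-2)!}\Bigr\rangle\ge0 .
\]
First we apply Theorem~\ref{thm:fixed-terminal-data} and Proposition~\ref{prop:fixed-model-correction} to the stable sheaf \((E_*,\theta)\), viewed as a single summand. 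This gives \(\pi:(Y,B)\to(X,D)\) and the fixed model \((V,\theta_Y)\) with weights \(w^\infty\). Corollary~\ref{cor:stable-rational-approximations} then supplies strict rational weights \(w^{(\nu)}\to w^\infty\) and Kähler forms \(\omega_\nu\) with \([\omega_\nu]=\pi^*[\omega]+\delta_\nu[\vartheta]\) and \(\delta_\nu\to0\). For all large \(\nu\), \(V_{\nu,*}\) is \(\omega_\nu\)-stable, locally abelian, and graded semisimple.

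For each large \(\nu\), Theorem~\ref{thm:fixed-stage-he-metrics} gives the Hermitian--Einstein metric \(h_\nu\). Theorem~\ref{thm:fixed-stage-energy-ledger} then yields
\[
 \Bigl\langle 2r\parc_2(V_{\nu,*})-(r-1)\parc_1(V_{\nu,*})^2,\ \tfrac{[\omega_\nu]^{n-2}}{(n-2)!}\Bigr\rangle\ge0 .
\]
Set \(\Delta_\nu=2r\parc_2(V_{\nu,*})-(r-1)\parc_1(V_{\nu,*})^2\in H^4(Y,\RR)\). Expanding \([\omega_\nu]^{n-2}=(\pi^*[\omega]+\delta_\nu[\vartheta])^{n-2}\) writes the pairing as \(\langle\Delta_\nu,\pi^*[\omega]^{n-2}\rangle\) plus terms carrying a factor \(\delta_\nu^k\) with \(k\ge1\).

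The classes \(\Delta_\nu\) converge in the finite-dimensional space \(H^4(Y,\RR)\) to \(\Delta_\infty:=2r\parc_2(V_{\infty,*})-(r-1)\parc_1(V_{\infty,*})^2\). This is the continuity part of the proof of Proposition~\ref{prop:parabolic-class-bridge}: the weighted formula is polynomial on chambers and continuous under coalescence. Hence the \(\Delta_\nu\) are bounded, and the correction terms are \(O(\delta_\nu)\to0\). Letting \(\nu\to\infty\) gives \(\langle\Delta_\infty,\pi^*[\omega]^{n-2}\rangle\ge0\).

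By the projection formula, \(\langle\Delta_\infty,\pi^*[\omega]^{n-2}\rangle=\langle\pi_*\Delta_\infty,[\omega]^{n-2}\rangle\). By \eqref{eq:low-degree-bridge}, \(\pi_*\Delta_\infty=2r\parc_2(E_*)-(r-1)\parc_1(E_*)^2\), which is exactly what is needed. Dividing by \((n-2)!\) and using the identity above gives \eqref{eq:bg-conclusion}.

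The main obstacle is making sure the limiting step is legitimate. We need stability of the stages for all large \(\nu\) from a single \(\kappa\), which Corollary~\ref{cor:stable-rational-approximations} provides. We also need convergence of the cohomology classes themselves, not only of their pushforwards, so that the \(\delta_\nu\)-terms vanish. If only pushed-forward convergence were available, I would fall back on boundedness of \(\parc_k(V_{\nu,*})\) in \(H^*(Y)\). That boundedness follows because these classes are polynomials in weights lying in \([0,1)\), with fixed coefficients coming from the fixed lattices.
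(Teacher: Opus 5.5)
Your proposal is correct and follows essentially the same route as the paper. Both arguments take stable rational stages on the fixed model, derive the stage discriminant inequality \eqref{eq:fixed-stage-discriminant} from the Hermitian--Einstein energy bound, use convergence of the classes in $H^4(Y,\RR)$ together with $[\omega_\nu]\to\pi^*[\omega]$, and finish with the pushforward identity of Proposition~\ref{prop:parabolic-class-bridge} and the projection formula. Your separate rank-one remark, where $\Delta_{\mathrm{par}}$ vanishes identically, and your explicit $O(\delta_\nu)$ expansion are harmless refinements of the paper's appeal to continuity of cup products.
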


\begin{proof}
Take the fixed logarithmic flag model
\(\pi:(Y,B)\to(X,D)\) constructed in
Theorem~\ref{thm:fixed-terminal-data} and completed by
Proposition~\ref{prop:fixed-model-correction}. Its underlying logarithmic
Higgs bundle \((V,\theta_Y)\) has simultaneously split flags with locally
free quotients and semisimple induced residues.
Let \(w^\infty\) be the weights determined by the original parabolic
structure \(E_*\), as in Subsection~\ref{subsec:fixed-flag-weights},
and let \(V_{\infty,*}\) be the corresponding parabolic structure on
the completed model. It agrees with \(E_*\) over the common
open set \(X\setminus Z\), where \(\operatorname{codim}_X Z\ge3\).

Corollary~\ref{cor:stable-rational-approximations} supplies strictly
ordered rational weights \(w^{(\nu)}\to w^\infty\) and smooth Kähler forms
\(\omega_\nu\) on \(Y\), with
\[
 H_\nu=[\omega_\nu]
 =\pi^*[\omega]+\delta_\nu[\vartheta],
 \qquad \delta_\nu\downarrow0.
\]
For every sufficiently large \(\nu\), the associated parabolic Higgs
bundle \(V_{\nu,*}\) is \(\omega_\nu\)-stable, locally abelian, and
graded-semisimple. Theorem~\ref{thm:fixed-stage-he-metrics} therefore
provides a Hermitian--Einstein metric, and
Theorem~\ref{thm:fixed-stage-energy-ledger} gives
\[
 \left\langle
 \Delta_{\mathrm{par}}(V_{\nu,*}),
 \frac{H_\nu^{n-2}}{(n-2)!}
 \right\rangle\ge0.
\]
Here we use the identity
\(\Delta_{\mathrm{par}}=2r\parc_2-(r-1)\parc_1^2\).

It remains to identify the limit of these nonnegative numbers.
By Proposition~\ref{prop:parabolic-class-bridge} and the continuity in
the weights proved there,
\[
 \begin{aligned}
 \Delta_{\mathrm{par}}(V_{\nu,*})
 &\longrightarrow\Delta_{\mathrm{par}}(V_{\infty,*})
 \quad\text{in }H^4(Y,\RR),\\
 \pi_*\Delta_{\mathrm{par}}(V_{\infty,*})
 &=\Delta_{\mathrm{par}}(E_*).
 \end{aligned}
\]
Since \(H_\nu\to\pi^*[\omega]\), continuity of cup products and the
projection formula give
\[
 \begin{aligned}
 \int_X\Delta_{\mathrm{par}}(E_*)
       \wedge\frac{\omega^{n-2}}{(n-2)!}
 &=\left\langle
 \Delta_{\mathrm{par}}(V_{\infty,*}),
 \frac{(\pi^*[\omega])^{n-2}}{(n-2)!}
 \right\rangle\\
 &=\lim_{\nu\to\infty}
 \left\langle
 \Delta_{\mathrm{par}}(V_{\nu,*}),
 \frac{H_\nu^{n-2}}{(n-2)!}
 \right\rangle
 \ge0.
 \end{aligned}
\]
This proves Theorem~\ref{thm:bg-main}.
\end{proof}

\section{Pluriharmonic metrics and parabolic prolongation}\label{sec:analytic-tools}

We now turn to Theorem~\ref{thm:harmonic-main}. For one stable summand,
let \(h_\nu\) be the Hermitian--Einstein metrics from
Theorem~\ref{thm:fixed-stage-he-metrics}. Their curvature energy,
estimated in Theorem~\ref{thm:fixed-stage-energy-ledger}, is
\[
 \mathcal E_\nu
 =\int_{Y\setminus B}\left(
 |F_{h_\nu}+[\theta,\theta^{\dagger h_\nu}]|^2
 +2|\partial_{h_\nu}\theta|^2
 \right)dV_{\omega_\nu},
\]
where the norms are computed using \(h_\nu\) and the K\"ahler form
\(\omega_\nu\) on the fixed divisor complement \(Y\setminus B\).
Under the vanishing of the parabolic degree and the
integrated parabolic second Chern character assumed in
Theorem~\ref{thm:harmonic-main}, Corollary~\ref{cor:factor-energy}
gives \(\mathcal E_\nu\to0\).
Section~\ref{sec:compactness} establishes the required
compactness and growth estimates, then combines them with the analytic
tools collected here to complete the proof in
Subsection~\ref{subsec:adapted-pluriharmonic-metrics}.

Lemma~\ref{lem:energy-closure} shows that a smooth limit of metrics whose
Hitchin--Simpson curvature energies tend to zero is pluriharmonic.
With the required growth control, Proposition~\ref{prop:acceptable-prolongation}
establishes acceptability of the Hermitian bundle in the sense of
Mochizuki \cite[Definition~2.7]{Moc03-ATHB-I} and identifies the
growth filtration of the limiting metric with the original parabolic
filtration at every real index on \(X\). In particular, the original
parabolic Higgs sheaf is a locally abelian parabolic Higgs bundle.
Proposition~\ref{prop:power-log-interface} extends the images of weak
Higgs-invariant projections to parabolic Higgs subsheaves and identifies
their analytic and parabolic degrees. This allows stability to be used
in the compactness argument of Section~\ref{sec:compactness}, as it was
already used in the proof of
Theorem~\ref{thm:fixed-stage-he-metrics}.

\begin{lem}\label{lem:energy-closure}
Let \(U\) be a complex manifold and let \(E\) be a finite-rank holomorphic vector bundle on \(U\) with a fixed smooth Higgs field \(\theta\). Let \(U_\nu\subset U\) be open sets such that every relatively compact subset of \(U\) lies in \(U_\nu\) for all sufficiently large \(\nu\). Let \(\omega_\nu\) be a Kähler metric and \(h_\nu\) a smooth positive Hermitian metric on \(E|_{U_\nu}\). Assume that
\[
 \omega_\nu\longrightarrow\omega,\qquad
 h_\nu\longrightarrow h_\infty
\]
in \(C^\infty\) on every compact subset, where \(\omega\) is a Kähler
form on \(U\) and \(h_\infty\) is a smooth positive Hermitian metric.
Put
\[
 \Psi_\nu=F_{h_\nu}+[\theta,\theta^{\dagger h_\nu}],
 \qquad
 Q_\nu=\partial_{h_\nu}\theta,
\]
and
\[
 \mathcal E_\nu
 =\int_{U_\nu}\bigl(|\Psi_\nu|^2+2|Q_\nu|^2\bigr)\,dV_{\omega_\nu}.
\]
If every \(\mathcal E_\nu\) is finite and \(\mathcal E_\nu\to0\), then
\[
 F_{h_\infty}+[\theta,\theta^{\dagger h_\infty}]=0,
 \qquad
 \partial_{h_\infty}\theta=0
\]
on \(U\). Equivalently, the Hitchin--Simpson connection determined by \(h_\infty\) is flat.
\end{lem}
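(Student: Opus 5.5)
The argument is local and purely a lower-semicontinuity (in fact continuity) statement on compact sets. Fix a relatively compact open subset \(\mathcal C\Subset U\). For all sufficiently large \(\nu\) we have \(\overline{\mathcal C}\subset U_\nu\), so the integrand of \(\mathcal E_\nu\) is defined on \(\mathcal C\). Since the integrand is nonnegative, we get
\[
 \int_{\mathcal C}\bigl(|\Psi_\nu|^2_{h_\nu,\omega_\nu}+2|Q_\nu|^2_{h_\nu,\omega_\nu}\bigr)\,dV_{\omega_\nu}\le\mathcal E_\nu\longrightarrow0 .
\]
The plan is to show that the left side converges to the corresponding integral for \((h_\infty,\omega)\). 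That integral then vanishes, and its integrand is a continuous nonnegative function, so it vanishes pointwise on \(\mathcal C\).

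The key step is smooth convergence of the curvature quantities on \(\overline{\mathcal C}\). In a local holomorphic frame, the Chern curvature is \(F_{h}=\bar\partial(h^{-1}\partial h)\). The adjoint is \(\theta^{\dagger h}=h^{-1}\bar\theta^{T}h\) in the frame. Also \(\partial_h\theta=\partial\theta+[h^{-1}\partial h,\theta]\). Each of these is a polynomial in \(h\), \(h^{-1}\), their first two derivatives, and the fixed smooth \(\theta\). Hence \(C^2\) convergence \(h_\nu\to h_\infty\) on a slightly larger compact set gives uniform convergence \(\Psi_\nu\to\Psi_\infty\) and \(Q_\nu\to Q_\infty\) on \(\overline{\mathcal C}\). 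Positivity of \(h_\infty\) gives uniform bounds for \(h_\nu^{-1}\). The pointwise norms \(|\cdot|_{h_\nu,\omega_\nu}\) and the volume forms \(dV_{\omega_\nu}\) are continuous in \((h_\nu,\omega_\nu)\), uniformly on \(\overline{\mathcal C}\). Here we use that \(\omega_\nu\to\omega\) uniformly and that \(\omega\) is positive definite, so \(\omega_\nu\) is uniformly positive for large \(\nu\). Dominated (indeed uniform) convergence on the finite-volume set \(\mathcal C\) then passes the integral to the limit.

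Covering \(U\) by such sets \(\mathcal C\) gives \(\Psi_{h_\infty}=0\) and \(\partial_{h_\infty}\theta=0\) everywhere on \(U\). For the equivalence with flatness, I would expand \(D_{h}^2\) by type. The components are \((\bar\partial+\theta)^2=0\), which uses \(\bar\partial\theta=0\) and \(\theta\wedge\theta=0\) (the fixed Higgs field is holomorphic and integrable). Next is its adjoint \((\partial_h+\theta^{\dagger h})^2=0\). The \((1,1)\)-part is \(\Psi_h\). The \((2,0)\)-part is \(\partial_h\theta\), and the \((0,2)\)-part is its adjoint \(\bar\partial\theta^{\dagger h}=\pm(\partial_h\theta)^{\dagger h}\). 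Thus \(D_h^2=0\) exactly when \(\Psi_h=0\) and \(\partial_h\theta=0\), as in Definition~\ref{defn:pluriharmonic}.

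I expect no serious obstacle. The only point needing care is that the norms depend on the varying \(\omega_\nu\) and \(h_\nu\). This is handled by the uniform equivalence on compact sets, and since the limit integrand is determined by uniform limits, no weak-convergence argument is required.
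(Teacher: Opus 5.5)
Your proposal is correct and follows essentially the same route as the paper. Both arguments restrict the nonnegative energy density to a compact set and pass to the limit using smooth convergence of \(h_\nu\) and \(\omega_\nu\). Both then conclude that the continuous limiting density vanishes and read off flatness from the type decomposition of \(D_{h_\infty}^2\); your explicit frame formulas and the converse direction of the flatness equivalence are minor additions, not a different approach.
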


\begin{proof}
Fix \(\mathcal C\Subset U\). For all sufficiently large \(\nu\), \(\mathcal C\subset U_\nu\). Smooth convergence of \(h_\nu\) gives smooth convergence of the Chern connections and curvatures on \(\mathcal C\). Since \(\theta\) is fixed and smooth, its adjoint and its covariant derivative depend smoothly on \(h_\nu\) and its derivatives. Thus
\[
 \Psi_\nu\longrightarrow
 \Psi_\infty:=F_{h_\infty}+[\theta,\theta^{\dagger h_\infty}],
 \qquad
 Q_\nu\longrightarrow Q_\infty:=\partial_{h_\infty}\theta
\]
uniformly on \(\mathcal C\). The tensor norms and volume forms also converge uniformly. Therefore
\[
 \begin{aligned}
 &\int_{\mathcal C}\bigl(|\Psi_\infty|^2+2|Q_\infty|^2\bigr)\,dV_\omega\\
 &\qquad=
 \lim_{\nu\to\infty}
 \int_{\mathcal C}\bigl(|\Psi_\nu|^2+2|Q_\nu|^2\bigr)\,dV_{\omega_\nu}
 \le\lim_{\nu\to\infty}\mathcal E_\nu=0.
 \end{aligned}
\]
The limiting integrand is continuous and nonnegative. Hence
\(\Psi_\infty=Q_\infty=0\) on \(\mathcal C\), and therefore on \(U\).

The curvature of
\[
 D_\infty=\bar\partial_E+\partial_{h_\infty}
 +\theta+\theta^{\dagger h_\infty}
\]
decomposes into the \((1,1)\)-term \(\Psi_\infty\), the \((2,0)\)-term \(Q_\infty\), and its Hermitian-adjoint \((0,2)\)-term. The pure Higgs terms vanish because \(\theta\wedge\theta=0\), and the holomorphicity terms vanish because \((E,\theta)\) is a holomorphic Higgs bundle. Thus \(D_\infty^2=0\).
\end{proof}

\begin{prop}
\label{prop:acceptable-prolongation}
Let \(X\) be a connected complex manifold, let
\(D=\bigcup_iD_i\) be an SNC divisor, and let
\((E_*,\theta)\) be a regular parabolic Higgs sheaf.
In particular, \(E\) is locally free on \(X\setminus D\). Let
\(Z\subset D\) be closed analytic of codimension at least two, containing
the locus where \(E\) is not locally free or its divisor flags do not
split simultaneously. Let \(H\) be a smooth
positive pluriharmonic metric on \(E|_{X\setminus D}\).

Assume that, on \(X\setminus Z\), the metric growth lattice agrees with
the original parabolic lattice at every real multi-index:
\[
 P^{\mathbf{c}}(H)|_{X\setminus Z}=E^{\mathbf{c}}|_{X\setminus Z}.
 \tag{6.1}\label{eq:growth-off-Z}
\]
Here \(P^{\mathbf c}(H)\) is the growth prolongation of
Definition~\ref{defn:growth}.
Then the following hold.

\begin{enumerate}
\item The Hermitian holomorphic bundle \((E|_{X\setminus D},H)\)
is acceptable in the sense of Mochizuki
\cite[Definition~2.7]{Moc03-ATHB-I} at every point of \(D\), including
every multiple intersection; i.e., on a polydisc with
\(D=\{z_1\cdots z_\ell=0\}\), its Chern curvature has bounded norm
with respect to \(H\) and
\[
 g_P=\sum_{i=1}^{\ell}
 \frac{\ii\,dz_i\wedge d\bar z_i}
 {|z_i|^2(-\log|z_i|^2)^2}
 +\sum_{\mu=\ell+1}^{n}
 \frac{\ii\,dw_\mu\wedge d\bar w_\mu}
 {(1-|w_\mu|^2)^2}.
\]

\item The growth prolongations form a locally abelian parabolic
logarithmic Higgs bundle on \(X\) and recover the original structure:
\[
 P^{\mathbf c}(H)=E^{\mathbf c}
 \qquad\text{for every real local multi-index }\mathbf c.
\]
The identifications respect the Higgs field, the filtration inclusions,
and the periodicity maps.
\end{enumerate}

\end{prop}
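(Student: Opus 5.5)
The plan is to use Mochizuki's theory of tame harmonic bundles to upgrade the codimension-two information \eqref{eq:growth-off-Z} to the full statement. The proof has three steps: establish tameness, use Mochizuki's prolongation theorem to get locally free growth lattices, and then extend the identification of lattices across \(Z\) by reflexivity and Hartogs-type arguments.

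\emph{Step 1: tameness.} I would first show that \((E|_{X\setminus D},\overline\partial,\theta,H)\) is a tame harmonic bundle in Mochizuki's sense \cite{Moc03-ATHB-I}. Tameness means that the characteristic polynomial of \(\theta\), written in the logarithmic frame \(dz_i/z_i\), \(dw_\mu\), has coefficients that extend holomorphically across \(D\). Since \(\theta\) is a logarithmic Higgs field on the coherent sheaf \(E\), these coefficients are holomorphic on \(X\setminus\Sigma\), where \(\Sigma\) is the non-locally-free locus. They are then holomorphic on all of \(X\) by Hartogs, because \(\codim\Sigma\ge3\) by reflexivity.

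\emph{Step 2: local freeness of the growth lattices.} Next I would invoke Mochizuki's fundamental results for tame harmonic bundles near SNC divisors. The growth prolongations \(P^{\mathbf c}(H)\) are locally free, they form a locally abelian filtered bundle, and \(\theta\) is logarithmic on them; this is a regular filtered Higgs bundle in the sense of \cite[Proposition~2.50]{Moc06-KH2}. Mochizuki's norm estimate identifies \(H\) up to mutual boundedness with a model metric of the form \(\prod|z_i|^{2a_i}(-\log|z_i|)^{k_i}\) in a compatible frame, where the \(k_i\) are weight-filtration exponents. From this he derives acceptability, i.e.\ that the curvature \(F_H=-[\theta,\theta^{\dagger H}]\) is bounded with respect to \(g_P\) and \(H\). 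Pluriharmonicity is what makes this expression for \(F_H\) available, and \(\theta\) has the standard \(g_P\)-bounded norm for tame harmonic bundles. I would cite these results rather than reprove them. This gives (1) everywhere on \(D\), including multiple intersections.

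\emph{Step 3: comparison across \(Z\).} This step, comparing \(P^{\mathbf c}(H)\) with \(E^{\mathbf c}\), is the main obstacle. Both sheaves sit inside \(j_{D,*}E_U\). The first is locally free by Step 2, and the second is reflexive by Definition~\ref{defn:all-real}. By \eqref{eq:growth-off-Z} they agree off \(Z\). Reflexive subsheaves of \(j_{D,*}E_U\) are determined by their restriction to the complement of a codimension-two set, since sections extend by Hartogs for reflexive sheaves. Hence \(P^{\mathbf c}(H)=E^{\mathbf c}\) everywhere.

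One subtlety has to be checked: both must be computed as subsheaves of the same ambient sheaf, so that the equality takes place inside a common object. The growth condition is local and closed under Hartogs extension. Indeed, a section of \(E^{\mathbf c}\) near a point of \(Z\) lies in \(P^{\mathbf c}(H)\) off \(Z\), hence lies in the locally free sheaf \(P^{\mathbf c}(H)\) away from codimension two, and so extends into it. The converse is symmetric. Compatibility with \(\theta\), with the inclusions, and with periodicity holds off \(Z\), hence everywhere by torsion-freeness.

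Consequently \(E=P^{\mathbf 0}(H)\) is locally free and \(E_*\) is locally abelian, which proves (2).
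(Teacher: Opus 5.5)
Your proposal is correct and follows essentially the same route as the paper. Both arguments get tameness by Hartogs extension of the characteristic-polynomial coefficients in $\Sym^k\Omega^1_X(\log D)$, and acceptability from Mochizuki's Poincaré-norm bound on $\theta$ combined with $F_H=-[\theta,\theta^{\dagger H}]$. Both then obtain locally free, simultaneously split prolongations from Mochizuki's prolongation and compatible-filtration results, and finally identify $P^{\mathbf c}(H)=E^{\mathbf c}$ across $Z$ inside $j_{D,*}E_U$ using the extension property of reflexive sheaves.

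The differences are only cosmetic. You extend the characteristic polynomial across the non-locally-free locus rather than across $Z$. Also, in Mochizuki's theory acceptability comes before the norm estimate rather than from it, although the reason you actually give for acceptability is the right one.
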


\begin{proof}
By Definition~\ref{defn:all-real}, the given Higgs field is logarithmic.
On \(X\setminus Z\), its degree-\(k\) characteristic-polynomial
coefficient is therefore a holomorphic section of
\(\Sym^k\Omega_X^1(\log D)\). Since this bundle is locally free and
\(\codim_X Z\ge2\), each coefficient extends uniquely across \(Z\).
Thus \((E|_{X\setminus D},\theta,H)\) is a tame harmonic bundle.
The spectral condition just verified depends only on the given Higgs
field, not on the metric or the growth hypothesis \eqref{eq:growth-off-Z}.

Mochizuki's estimates near divisor intersections bound the Higgs
commutator in the product Poincaré norm
\cite[Proposition~8.2 and Corollary~8.2]{Moc03-ATHB-I}. By harmonicity,
they also bound the Chern curvature. The estimates cover the
curvature coefficients in both normal and tangential directions at
crossings, proving acceptability with respect to \(g_P\).

The regular prolongation theorem
\cite[Proposition~2.50]{Moc06-KH2} now applies at \(\lambda=0\). Its
conclusions, expressed in the decreasing convention of
Definition~\ref{defn:growth}, give coherent locally free prolongations
forming a regular filtered sheaf. The Higgs field preserves these
prolongations logarithmically. Multiplication by \(z_i\) changes the
allowed exponent from \(c_i\) to \(c_i+1\), which proves
\(P^{\mathbf{c}+\mathbf{e}_i}(H)=P^{\mathbf{c}}(H)(-D_i)\) directly.

The compatible-filtration and multi-intersection results
\cite[Theorem~8.2 and Corollary~4.5]{Moc03-ATHB-I} give a local frame
simultaneously splitting all branch filtrations. Thus the growth
filtration is locally abelian.

To identify the growth filtration globally, let
\(j:X\setminus Z\hookrightarrow X\). Each \(P^{\mathbf{c}}(H)\) is
locally free by the preceding argument, and each \(E^{\mathbf{c}}\) is
reflexive by Definition~\ref{defn:all-real}. Since
\(\codim_X Z\ge2\), the extension property of reflexive sheaves and
\eqref{eq:growth-off-Z} give
\[
 \begin{aligned}
 P^{\mathbf{c}}(H)
 &\simeq j_*\bigl(P^{\mathbf{c}}(H)|_{X\setminus Z}\bigr)\\
 &=j_*\bigl(E^{\mathbf{c}}|_{X\setminus Z}\bigr)
 \simeq E^{\mathbf{c}}.
 \end{aligned}
\]
These canonical isomorphisms extend the identity on \(X\setminus Z\),
so they identify the two sheaves inside the common ambient sheaf
\(j_{D,*}(E|_{X\setminus D})\). Uniqueness of extension ensures
compatibility with all filtration inclusions, periodicity maps, and
Higgs maps. This proves the asserted equality at every real multi-index
and transfers the simultaneous local splitting to the original
parabolic structure.

\end{proof}

The next result allows parabolic stability to be applied to the weak
Higgs-invariant projections arising in the metric estimates. It extends
their images across the divisor and identifies their analytic degrees
with the parabolic degrees of the resulting subsheaves.

\begin{prop}
\label{prop:power-log-interface}
Let \((Y,\omega)\) be a connected compact Kähler manifold,
\(B=\bigcup_a B_a\ne\varnothing\) an SNC divisor, and
\((V_*,\theta)\) a locally abelian parabolic logarithmic Higgs bundle
of rank \(r\) on \((Y,B)\). Put \(V=V^{\mathbf{0}}\) and
\(U=Y\setminus B\).
Let \(h\) be a smooth positive Hermitian metric on \(V|_U\) satisfying
the following conditions.
\begin{enumerate}
\item The metric \(h\) is adapted to \(V_*\). On every SNC chart
\(W\), in a holomorphic frame \(e_1,\ldots,e_r\) splitting the
parabolic flags, write \(\beta_{a,i}\) for the weight of \(e_i\)
along \(z_a=0\) and \(e_{\det}=e_1\wedge\cdots\wedge e_r\). Then
\[
 (\det h)(e_{\det},e_{\det})
 =b_h\prod_a|z_a|^{2\sum_{i=1}^r\beta_{a,i}},
 \qquad b_h\in C^\infty(W,\mathbb R_{>0}).
\]

\item There is a closed analytic subset \(Z_B\subset B\), with
\(\codim_Y Z_B\ge2\), containing all intersections of the components
of \(B\), such that near each point of \(B_a\setminus Z_B\) a
flag-compatible holomorphic frame gives a model
\[
 g_a=\diag_i\left(
 |z|^{2\beta_{a,i}}\log(e/|z|)^{2\kappa_{a,i}}
 \right),\qquad \kappa_{a,i}\in\RR.
\]
Here \(B_a=\{z=0\}\), and on the punctured chart,
\[
 C_a^{-1}g_a\le h\le C_ag_a,\qquad
 \bar\partial\log(h^{-1}g_a)\in L^2
\]
for some \(C_a\ge1\).

\item The contracted Hitchin--Simpson curvature satisfies
\[
 \ii\Lambda_\omega
 \bigl(F_h+[\theta,\theta^{\dagger h}]\bigr)\in L^1(U).
\]
\end{enumerate}
All norms and integrals use \(h\), \(\omega\), and \(dV_\omega\).
Let \(p\in W^{1,2}_{\mathrm{loc}}(U,\End V)\) be an
\(h\)-orthogonal projection of constant rank \(0<\rk p<r\), satisfying
\[
 (\Id-p)\bar\partial p=0,\qquad
 (\Id-p)\theta p=0,\qquad
 \bar\partial_\theta p\in L^2(U).
\]
Then its generic image extends uniquely to a proper saturated reflexive
parabolic Higgs subsheaf \(F_*\subset V_*\), and
\[
 \deg_{\an,h}(F)=\pardeg_\omega(F_*),\qquad
 \deg_{\an,h}(V)=\pardeg_\omega(V_*).
\]
\end{prop}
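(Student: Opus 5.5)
The plan follows Simpson's argument for weakly holomorphic projections of finite energy, with the parabolic bookkeeping at \(B\) handled through the local models in hypothesis~(2). I would proceed in three steps: extend \(p\) as a coherent subsheaf, then compute the analytic degree of \(V\), then compute that of \(F\).

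\emph{Extension of the subsheaf.} On \(U\), the conditions \((\Id-p)\bar\partial p=0\) and \(p\in W^{1,2}_{\mathrm{loc}}\) say that \(p\) is a weakly holomorphic subbundle. By Uhlenbeck--Yau regularity it is a coherent saturated subsheaf \(F_U\subset V|_U\), smooth off a codimension-two analytic set. The condition \((\Id-p)\theta p=0\) makes \(F_U\) \(\theta\)-invariant. To extend across \(B\), I would first work near points of \(B_a\setminus Z_B\). There \(h\) is comparable to the model \(g_a\), so the finite energy \(\bar\partial p\in L^2\) with respect to \(h\) transfers to the model metric. Because \(g_a\) is diagonal with power and logarithmic weights, the Uhlenbeck--Yau/Simpson argument applies on the punctured polydisc: take the induced meromorphic map to a Grassmannian, realized via the determinant section \(\wedge^{\rk p}p\) in \(\wedge^{\rk p}V\), and use the \(L^2\) bound together with the polynomial growth of \(g_a\) and \(g_a^{-1}\) to extend it across \(B\) by Riemann/Bishop-type extension. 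This gives a coherent extension of \(F_U\) over \(Y\setminus Z_B\). I then take the saturation in \(V\), which extends across \(Z_B\) as a reflexive sheaf because \(\codim Z_B\ge2\). Uniqueness holds because saturated subsheaves are determined by their generic image. The induced parabolic structure \(F_*\) is obtained by saturated intersection with the flags, and Higgs invariance extends because \(V/F\) is torsion-free. Properness follows from \(0<\rk p<r\).

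\emph{Degree of \(V\).} Here \(\deg_{\an,h}(V)=\frac{1}{2\pi}\int_U \Tr\,\ii\Lambda_\omega F_h\), which is well defined by hypothesis~(3) since the trace of the Higgs commutator vanishes. By hypothesis~(1), \(\det h\) equals a smooth positive multiple of \(\prod_a|\sigma_a|^{2\gamma_a}\). Hence \(\ii F_{\det h}\) equals a smooth closed form representing \(c_1(V)+\sum_a\gamma_a[B_a]\) on \(U\), with no current on \(B\) after removal. Integrating against \(\omega^{n-1}/(n-1)!\) gives \(\pardeg_\omega(V_*)\).

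\emph{Degree of \(F\) (main obstacle).} I would use the Chern--Weil formula
\[
 \deg_{\an,h}(F)=\frac{1}{2\pi}\int_U\Bigl(\Tr\bigl(p\,\ii\Lambda_\omega(F_h+[\theta,\theta^{\dagger h}])\bigr)-|\bar\partial_\theta p|^2\Bigr),
\]
which is finite by (3) and the finite-energy hypothesis. The task is to identify this integral with \(\pardeg_\omega(F_*)\). My first attempt uses cutoffs \(\chi_\ell\) from Lemma~\ref{lem:product-snc-cutoffs}. On \(\operatorname{supp}\chi_\ell\), the form \(\Tr(p\,\ii F_h)-|\bar\partial p|^2\) equals \(\ii\bar\partial\partial\log\det(h|_{\det F})\) in the sense of currents. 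So the analytic degree equals \(\lim_\ell\int\chi_\ell\, c_1(\det F,h)\wedge\omega^{n-1}\), provided \(d\chi_\ell\cdot\partial\log\det(h|_F)\) tends to zero in \(L^1\). Near \(B_a\setminus Z_B\), I would compare \(h|_{\det F}\) with \(g_a|_{\det F}\). The relative term \(\log\bigl(\det(h|_F)/\det(g_a|_F)\bigr)\) is bounded, and its derivative is in \(L^2\) by the assumption \(\bar\partial\log(h^{-1}g_a)\in L^2\) and \(\bar\partial p\in L^2\); this boundary flux vanishes against \(\|d\chi_\ell\|_{L^2}\to0\).

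For the model part I would choose, generically along \(B_a\), a frame of \(F\) adapted to the induced flag. The model determinant is then \(|z|^{2\sum\beta}\log(e/|z|)^{2\sum\kappa}\), with the \(\beta\)'s equal to the induced parabolic weights of \(F_*\). The \(\log|z|\) part produces exactly the residue term \(\sum_a(\sum\beta)[B_a]\). The \(\log\log\) part contributes \(O(1/(\ell\,T))\) flux, which tends to zero. The crossings \(Z_B\) have codimension two and are negligible by capacity. Making this frame choice compatible with an arbitrary saturated \(F\) is the delicate point: \(F\) need not be split with respect to the flag, and the weights read off from \(\det g_a|_F\) must be the induced ones. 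I would handle this with a Gram--Schmidt/triangular argument in the flag-compatible frame, showing \(\det(g_a|_F)\asymp|z|^{2\sum\beta(F)}\log^{2\sum\kappa(F)}\) up to bounded factors, with bounded derivatives in the Poincaré sense.
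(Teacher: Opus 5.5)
Your overall plan matches the paper's: a weakly holomorphic subsheaf on \(U\), Grassmannian extension across \(B_a\setminus Z_B\), extension across \(Z_B\), then comparison of \(\det(F,h)\) with a reference metric carrying the induced weights. Your computation of \(\deg_{\an,h}(V)\) is also correct. But the extension step is missing its key idea.

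Comparability \(C_a^{-1}g_a\le h\le C_ag_a\) alone does not transfer the finite energy of \(p\) to the \(g_a\)-orthogonal projection \(q\) onto the same image. That projection is the object whose Gauss--Codazzi formula involves the known curvature of \(g_a\). One needs \(|\bar\partial q|_{g_a}\le C(|\bar\partial p|_h+|\bar\partial\log(h^{-1}g_a)|)\), and this is where the derivative hypothesis in (2) is actually used; the paper's degree computation never uses it.

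More seriously, \(\bar\partial q\in L^2(g_a)\) is not the same as finite Fubini--Study energy of the holomorphic map \(\Phi\colon\Delta^*\to\operatorname{Gr}(k,r)\) on a normal disk. The metric \(g_a\) depends on \(z\), and its connection form has size \(1/|z|\notin L^2\). Bishop-type extension needs exactly that area bound. Polynomial growth of \(g_a^{\pm1}\) by itself only compares \(v_g=\log|f|^2_{\wedge^kg_a}\) with \(v_0=\log\sum_I|f_I|^2\) for a Plücker lift \(f\); it bounds neither.

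The paper closes this gap in four steps:
\begin{enumerate}
\item Gauss--Codazzi gives \(|\Delta_0v_g|\le C(|F_{g_a}|+|\bar\partial q|^2)\in L^1\) on almost every normal disk. The power factors are flat, and the logarithmic factors contribute \(O(|z|^{-2}\log^{-2}(e/|z|))\).
\item Hence the circular means of \(v_g\) grow at most linearly in \(t=-\log|z|\).
\item The polynomial comparison passes this upper bound to \(v_0\). Its circular mean \(M_0\) is convex by subharmonicity, so \(M_0'\) is bounded above and \(\int^\infty M_0''\,dt<\infty\).
\item That integral is the Fubini--Study area. Only then do the removable-singularity theorem, Shiffman's theorem and Hartogs extension apply.
\end{enumerate}

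In the degree identity for \(F\), your first-order flux argument works at generic points of \(B_a\), but ``negligible by capacity'' fails as stated. The bad set is larger than \(Z_B\). It also contains the non-subbundle locus of \(F\) and the points where \(F|_{B_a}\) meets the flag non-generically. At the latter points your comparability claim is false: for \(F=\langle e_1+we_2\rangle\) with \(\beta_2<\beta_1\), one has \(\det(g_a|_F)\approx|z|^{2\beta_2}(|w|^2+|z|^{2(\beta_1-\beta_2)})\) up to logarithms, which is not bounded below by a multiple of \(|z|^{2\gamma_{F,a}}\).

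Near these sets and near the crossings you have neither a model nor an \(L^2\) bound on \(\bar\partial\log\det(h|_F)\), and a first-order capacity cutoff needs one. You must instead put both derivatives on the cutoffs. That requires only \(L^2\) or \(C_\epsilon+\epsilon R_B\) bounds on the potential itself. At crossings these come from \(C_\epsilon^{-1}e^{-\epsilon R_B}d\le h\le C_\epsilon e^{\epsilon R_B}d\) for the split monomial metric \(d\). Adaptedness gives only the upper half; the lower half needs the determinant condition (1) via the cofactor formula, which you use only for \(V\).

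For the model part the paper avoids frame choices entirely. The function \(\ell_d=\sum_I|f_I|^2\prod_a|z_a|^{2\sum_{i\in I}\beta_{a,i}}\) has divisorial coefficient \(\gamma_{F,a}=\min_I(\operatorname{ord}_{B_a}f_I+\sum_{i\in I}\beta_{a,i})\). The support theorem for positive closed currents then rules out any mass on the codimension-two sets.
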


\begin{proof}
The weak holomorphic projection theorem
\cite[Theorem~0.1.1]{Pop03} first defines the image sheaf on \(U\).
It is a holomorphic subbundle away from an analytic set of codimension
at least two. Put \(k=\rk p\). We must extend this rank-\(k\)
subsheaf across \(B\).

On a one-divisor chart as in condition~(2), let \(q\) be the
\(g_a\)-orthogonal projection onto the same generic image.
Put \(S=h^{-1}g_a\) and let \(R=pSp\) on \(\im p\). Uniform
comparison confines the spectra of \(S\) and \(R\) to one compact
subinterval of \((0,\infty)\), and pointwise linear algebra gives
\[
 q=R^{-1}pS.
\]
To differentiate this formula on the whole bundle, extend \(R\) to
\(\widetilde R=pSp+(\Id-p)\). This endomorphism equals \(R\) on
\(\im p\) and the identity on its orthogonal complement, so
\(q=\widetilde R^{-1}pS\), with \(\widetilde R^{\pm1}\) uniformly
bounded. The product rule and
\[
 \bar\partial\widetilde R^{-1}
 =-\widetilde R^{-1}(\bar\partial\widetilde R)\widetilde R^{-1}
\]
bound \(|\bar\partial q|\) by
\(C(|\bar\partial p|+|\bar\partial S|)\).
Writing \(S=e^b\), with \(b=\log(h^{-1}g_a)\), gives
\[
 \bar\partial S
 =\int_0^1 e^{(1-t)b}(\bar\partial b)e^{tb}\,dt.
\]
Since \(b\) is bounded, this formula gives
\(|\bar\partial S|\le C|\bar\partial b|\). Using
\(C_a^{-1}g_a\le h\le C_ag_a\) to compare the norms, we obtain
\[
 |\bar\partial q|_{g_a}
 \le C\bigl(
 |\bar\partial p|_h+
 |\bar\partial\log(h^{-1}g_a)|
 \bigr).
 \tag{6.2}\label{eq:projection-transfer-bound}
\]
Thus \(\bar\partial q\) has finite \(L^2\) norm for \(g_a\).
Fix a tangential parameter for which the normal disk avoids the
non-subbundle locus and the normal derivative has finite energy; this
holds for almost every parameter by Fubini. Write
\(\Phi:\Delta^*\to\operatorname{Gr}(k,r)\) for the resulting
holomorphic map. Its energy for the fixed Grassmannian metric is not
yet controlled, since \(g_a\) depends on \(z\).

Choose a nowhere-zero holomorphic Plücker lift
\(f=(f_I)\) on \(\Delta^*\), which is possible since holomorphic
line bundles on the punctured disk are trivial. Put
\[
 v_g=\log|f|_{\wedge^k g_a}^2,\qquad
 v_0=\log\sum_I|f_I|^2.
\]
For the Euclidean disk Laplacian \(\Delta_0\), the ordinary
Gauss--Codazzi formula for the induced determinant metric gives the
following estimate, with Euclidean area and form norms on the disk:
\[
 |\Delta_0v_g|
 \le C\left(|F_{g_a}|_{g_a}+|\bar\partial q|_{g_a}^2\right).
\]
The first term is integrable: the power factors have zero curvature on
\(\Delta^*\), and the logarithmic factors contribute at most
\(C/(|z|^2\log(e/|z|)^2)\). Hence \(\Delta_0v_g\in L^1\).
If \(M_g(t)\) is the circular average of \(v_g\) at radius
\(e^{-t}\), polar integration gives
\(\int_{t_0}^{\infty}|M_g''(t)|\,dt<\infty\), and therefore
\(|M_g(t)|\le C(1+t)\).

The finitely many power and logarithmic factors also give
\(|v_g-v_0|\le C(1+\log(1/|z|))\). Thus the circular average
\(M_0(t)\) of \(v_0\) is bounded above by \(C(1+t)\).
Since \(v_0\) is subharmonic, \(M_0\) is convex. Its derivative is
therefore bounded above, and
\(\int_{t_0}^{\infty}M_0''(t)\,dt<\infty\).
Up to a fixed normalization, this last integral is
\(\int_{0<|z|<e^{-t_0}}\Phi^*\omega_{\mathrm{FS}}\), where
\(\omega_{\mathrm{FS}}\) is the Fubini--Study form of the Plücker
embedding. Consequently \(\Phi\) has finite energy for a fixed
compact Kähler target. A holomorphic map from a Riemann surface to a
Kähler manifold is harmonic, so the removable-singularity theorem
\cite[Theorem~3.6]{SacksUhlenbeck1981} extends \(\Phi\)
holomorphically across the puncture.

The Plücker coordinate ratios are thus meromorphic on almost every
normal disk; they are already meromorphic in the tangential variables
off the divisor. Shiffman's separate-meromorphicity theorem, in the
form recalled in the final step of the proof of
\cite[Theorem~0.1.1]{Pop03}, extends them meromorphically across each
one-divisor chart. Meromorphic Hartogs extension across \(Z_B\) then
extends the generic subspace across all of \(B\). The Plücker
relations persist by the identity theorem.

Clear local meromorphic denominators and saturate the resulting coherent
image in \(V\), obtaining \(F\subset V\) with torsion-free quotient
\(Q=V/F\). Since \(V\) is locally free, the depth lemma applied to
\[
 0\longrightarrow F\longrightarrow V\longrightarrow Q\longrightarrow0
\]
shows that \(F\) satisfies Serre's condition \(S_2\). Being torsion-free,
it is therefore reflexive. The weak Higgs equation says that the induced
morphism \(F\to Q\otimes\Omega_Y^1(\log B)\) vanishes on the common
subbundle locus. Its target is torsion-free, so it vanishes everywhere,
and \(F\) is Higgs invariant. Saturated intersections with the ambient
parabolic flags, followed by reflexive extension and integral
periodicity, give \(F_*\subset V_*\) at every real multi-index.
Saturation determines \(F\) uniquely from its generic image, and the
induced parabolic structure is consequently unique.

On the common subbundle locus, Higgs Gauss--Codazzi gives
\[
 \ii\Tr\bigl(p\Lambda_\omega\Psi_h\bigr)
 -|\bar\partial_\theta p|^2
 =
 \ii\Lambda_\omega F_{\det(F,h)}.
\]
The left side is \(L^1\). Write \(\ell_h=\det(F,h)\), and choose
\(q_F=k_F\prod_a|\sigma_a|_{k_a}^{2\gamma_{F,a}}\), where
\(k_F\) is a smooth metric on \((\bigwedge^kF)^{**}\), the
\(k_a\) are smooth divisor metrics, and \(\gamma_{F,a}\) is the
sum of the induced parabolic weights, counted with multiplicity.
The curvature integral of \(q_F\) on \(U\) is the parabolic
first-Chern pairing. We show that replacing \(q_F\) by \(\ell_h\)
does not change this integral.

On a split SNC chart, let
\[
 d(e_i,e_j)=\delta_{ij}\prod_a|z_a|^{2\beta_{a,i}},
 \qquad R_B=\sum_a-\log|z_a|^2.
\]
Adaptedness bounds the norm of each frame vector with every
positive tolerance in the growth exponent. The determinant condition then bounds the inverse
Gram matrix, so, for every \(\epsilon>0\),
\[
 C_\epsilon^{-1}e^{-\epsilon R_B}d
 \le h\le C_\epsilon e^{\epsilon R_B}d.
\]
Write \(\ell_d=\det(F,d)\). In a local generator of
\((\bigwedge^kF)^{**}\), its coefficient is
\[
 \ell_d=\sum_I|f_I|^2
       \prod_a|z_a|^{2\sum_{i\in I}\beta_{a,i}},
\]
with holomorphic Plücker coefficients \(f_I\) having no common
divisorial factor. Its logarithm is plurisubharmonic and extends across
the divisor. At a generic point of \(B_a\), the induced determinant
filtration gives
\[
 \gamma_{F,a}
 =\min_{I:f_I\not\equiv0}
   \left(\operatorname{ord}_{B_a}(f_I)
          +\sum_{i\in I}\beta_{a,i}\right).
\]
This can be computed in a basis compatible with the induced filtration
over the local discrete valuation ring. It is also the coefficient of
\(\log|z_a|^2\) in the logarithm of the displayed norm. Removing
these divisorial terms
leaves a positive closed \((1,1)\)-current with no divisorial mass.
It has no mass on the codimension-at-least-two common-zero locus
either, by the support theorem for positive closed currents; see
\cite[Chapter~III, Corollary~2.4, Theorem~2.10 and
Section~8]{DemaillyBook}. Away from those analytic sets its coefficients
are smooth. Thus the contracted curvature of \(\ell_d\) on the
subbundle locus has an \(L^1\) extension, and
\(\log(\ell_d/q_F)\) satisfies the corresponding curvature equation
distributionally, with no additional term on the omitted sets.
This logarithm is locally \(L^2\): the displayed finite sum is bounded
above, while any nonzero summand gives a lower bound by a finite sum
of logarithms of absolute values of holomorphic functions and of
divisor coordinates, after the prescribed powers are removed.

Put \(v=\log(\ell_h/\ell_d)\). The metric comparison gives
\[
 |v|\le C_\epsilon+\epsilon R_B
 \quad\text{for every }\epsilon>0.
\]
Its contracted curvature equation has an \(L^1\) right side off the
divisor. It extends across the non-subbundle locus there: for a normal
cutoff of radius \(\delta\) along a stratum of complex codimension
\(q\ge2\), testing the scalar equation produces errors bounded by
\(C(\delta^{q-1}+\delta^{q-2})\|v\|_{L^2(A_\delta)}\), where
\(A_\delta\) is the shrinking cutoff region. These tend to zero by
absolute continuity of the \(L^2\) integral.

To pass across the divisor, take
\[
 \chi_T=\prod_a\zeta\left(\frac{-\log|z_a|^2}{T}\right),
\]
where \(\zeta=1\) on \(( -\infty,1]\), \(\zeta=0\) on
\([2,\infty)\), and \(0\le\zeta\le1\). For the scalar
Laplacian \(\mathcal L=d_\omega^*d\) on the chart,
\[
 \|d\chi_T\|_{L^1}+\|\mathcal L\chi_T\|_{L^1}=O(T^{-1}).
\]
Indeed, \(\partial\bar\partial\log|z_a|^2=0\) off the divisor,
and the remaining normal second-derivative term has integral
\(O(T^{-2}\int_{e^{-T}}^{e^{-T/2}}dr/r)=O(T^{-1})\);
the mixed terms are smaller. On the supports of the derivatives of
\(\chi_T\), every contributing divisor depth is at most \(2T\).
For a fixed compactly supported test function, the cutoff errors in
the distributional equation for \(v\) are therefore bounded by
\(C(C_\epsilon/T+\epsilon)\). First letting \(T\to\infty\)
and then \(\epsilon\to0\) removes them. The \(L^1\) source
passes to the limit by dominated convergence.

Consequently \(\phi_F=\log(\ell_h/q_F)\) is locally \(L^2\)
and satisfies its contracted curvature equation on all of \(Y\),
with no distribution supported on the divisor or the non-subbundle
locus. Testing this global equation against one shows that the
curvature integrals of \(\ell_h\) and \(q_F\) agree. This proves
the degree identity. For the ambient identity, the determinant
condition makes the ratio of \(\det h\) to the analogous model
\(q_V\) smooth and strictly positive on \(Y\); compact Stokes
therefore gives the same conclusion directly.
\end{proof}

\section[Compactness and parabolic growth]{Compactness and parabolic growth\texorpdfstring{\\}{ }of Hermitian--Einstein metrics}\label{sec:compactness}

This section establishes the compactness and boundary growth estimates
and uses them to prove Theorem~\ref{thm:harmonic-main}. Under its numerical
vanishing hypotheses, Corollary~\ref{cor:factor-energy} shows that the
Hitchin--Simpson curvature energies of the Hermitian--Einstein
metrics \(h_\nu\) tend to zero. Two issues remain: obtaining a smooth positive
limit of these metrics on the divisor complement, and controlling its
growth near the divisor so that the original parabolic structure is
preserved.

For each original stable summand \(E_*\),
the modification \(\pi:Y\to X\) identifies the divisor complements
\(U_Y=Y\setminus B\) and \(U_X=X\setminus D\), which we denote by \(U\).

We work with the global solutions from
Theorem~\ref{thm:fixed-stage-he-metrics}, normalized by multiplication
by positive constants.
The key step in Proposition~\ref{prop:moving-endpoints} is to use stability
to prove a two-sided comparison, uniform in \(\nu\) on every compact
subset, between these solutions and the auxiliary metrics \(k_\nu\)
constructed in Proposition~\ref{prop:determinant-selector-limit}.
The metrics \(k_\nu\) and the K\"ahler forms \(\omega_\nu\) converge smoothly
on compact subsets of the divisor complement. Interior elliptic
estimates then give uniform
bounds for all derivatives on smaller compact subsets, and a diagonal
subsequence converges smoothly. The lower comparison bound ensures
that the limiting metric is positive.

Corollary~\ref{cor:nonexceptional-growth} provides the additional
two-sided growth estimates near the divisor needed to recover the
original parabolic filtration. In the final subsection,
Subsection~\ref{subsec:adapted-pluriharmonic-metrics},
energy decay makes the limit pluriharmonic, while these growth estimates
and Proposition~\ref{prop:acceptable-prolongation} identify all its growth prolongations
with the original parabolic sheaf. Applying this argument to each
stable summand completes the proof of Theorem~\ref{thm:harmonic-main}.

\subsection{Comparison metrics and determinant normalization}
\label{subsec:moving-comparison-metrics}
Using the K\"ahler forms \(\omega_\nu\) from
Corollary~\ref{cor:stable-rational-approximations}, we construct the
auxiliary comparison metrics, with a determinant normalization chosen
to ensure smooth convergence on compact subsets. These metrics retain
the prescribed parabolic lattices at each rational stage.

\begin{setup}
\label{setup:literal-moving-schedule}
Fix the finite SNC coordinate atlas \(\{U_\lambda\}\) for \((Y,B)\)
and the local frames from Subsection~\ref{subsec:adapted-references},
independently of \(\nu\).
Retain the rational stages, parameters, and divisor norms from
Corollary~\ref{cor:stable-rational-approximations}, together with the
K\"ahler forms \(\omega_\nu\) defined by
\eqref{eq:kahler-schedule-form}.

For every component \(B_b\) of \(B\), write
\[
 s_b=|\sigma_b|_{k_b}^2\le1
\]
for the previously chosen squared norm of its canonical section.

On an atlas chart \(U_\lambda\), write \(G_{\lambda,\nu}\) for the coefficient matrix of \(\omega_\nu\) in its fixed coordinate coframe.
Let \(D_\lambda\subset\{1,\ldots,n\}\) index the coordinate
hypersurfaces defining \(B\) on \(U_\lambda\). Rescale the divisor
coordinates so that \(|z_{\lambda,i}|<e^{-1}\) for \(i\in D_\lambda\).
On \(U_\lambda\setminus B\), put
\[
 \rho_{\lambda,i}=|z_{\lambda,i}|^2,\qquad
 \tau_{\lambda,i}=1-\log\rho_{\lambda,i}
 \quad(i\in D_\lambda),
\]
\[
 h_{\lambda,i}=
 \begin{cases}
 (\sqrt{\rho_{\lambda,i}}\tau_{\lambda,i})^{-1},
      &i\in D_\lambda,\\
 1,&i\notin D_\lambda.
\end{cases}
\]
On an interior chart, \(D_\lambda=\varnothing\) and all
\(h_{\lambda,i}=1\).
If \(z_{\lambda,i}=0\) defines the component \(B_b\) on this chart, then
\[
 s_b|_{U_\lambda}
 =u_{\lambda,b}\rho_{\lambda,i}
\]
for a smooth positive function \(u_{\lambda,b}\) whose value and reciprocal,
together with all derivatives needed below, are bounded on the shrunken
chart.
Define the diagonal and mixed source terms by
\[
 \Xi_{\lambda,i,\nu}
 =h_{\lambda,i}^2(G_{\lambda,\nu}^{-1})^{i\bar i},
 \qquad
 \Xi_{\lambda,ik,\nu}
 =h_{\lambda,i}h_{\lambda,k}
  \left|(G_{\lambda,\nu}^{-1})^{i\bar k}\right|.
 \tag{7.1}\label{eq:exact-source-rows}
\]
Positivity gives
\[
 2\Xi_{\lambda,ik,\nu}
 \le \Xi_{\lambda,i,\nu}+\Xi_{\lambda,k,\nu}.
 \tag{7.2}\label{eq:exact-mixed-row}
\]
The terms in \eqref{eq:exact-source-rows} arise when curvature coefficients
bounded by \(C h_{\lambda,i}h_{\lambda,k}\) are contracted using
\(G_{\lambda,\nu}^{-1}\).
\end{setup}

When two refined flag weights coalesce, their difference may tend to
zero, so the corresponding factor \(|z|^{\beta_i-\beta_j}\) gives
no uniform rate of decay. To obtain estimates uniform in \(\nu\),
we insert logarithmic factors whose exponents depend on the fixed flag
positions.

Fix one summand and retain the SNC coordinate atlas and notation
from Setup~\ref{setup:literal-moving-schedule}.
On a divisor chart \(U_\lambda\), let
\(e_{\lambda,1},\ldots,e_{\lambda,r}\) be the holomorphic frame of
\(V|_{U_\lambda}\) chosen in Subsection~\ref{subsec:adapted-references}.
It simultaneously splits the fixed parabolic flags, and its induced
frames on the graded quotients respect the residue eigenspace
decompositions.
For \(a\in D_\lambda\), use the fixed flag
\(L_{a,0}\supsetneq\cdots\supsetneq L_{a,m_a}\) of
Subsection~\ref{subsec:fixed-flag-weights}, and set
\(p_{\lambda,a,i}=q\) when \(e_{\lambda,i}\) represents a vector
in \(L_{a,q-1}/L_{a,q}\). Thus these integer labels increase in
the flag order and do not change when weights coalesce. Put
\[
 \bar p_{\lambda,a}=\frac1r\sum_i p_{\lambda,a,i}.
\]
For the stage-\(\nu\) weights \(\beta_{\lambda,\nu,a,i}\), define
\[
 g_{\lambda,\nu}(e_{\lambda,i},e_{\lambda,k})
 =\delta_{ik}\prod_a|z_{\lambda,a}|^{2\beta_{\lambda,\nu,a,i}}
 \prod_a\tau_{\lambda,a}^{-4(p_{\lambda,a,i}-\bar p_{\lambda,a})}.
\]
Let \(q_\nu\) be the determinant metric constructed at stage \(\nu\)
in Subsection~\ref{subsec:adapted-references}. Its coefficient in the
frame \(e_{\lambda,\det}=e_{\lambda,1}\wedge\cdots\wedge
e_{\lambda,r}\) is
\[
 q_\nu(e_{\lambda,\det},e_{\lambda,\det})
 =b_{\lambda,\nu}\prod_a
 |z_{\lambda,a}|^{2\sum_i\beta_{\lambda,\nu,a,i}},
 \qquad b_{\lambda,\nu}>0,
\]
where \(b_{\lambda,\nu}\) extends smoothly and strictly positively across
the whole chart.
Complete the divisor charts to a finite cover by adding interior charts.
On each interior chart choose a smooth positive Hermitian metric
\(g_{\lambda,\nu}=g_\lambda\), independent of \(\nu\).
On every chart, including the interior charts, normalize the local
metric by the same formula:
\[
 \widetilde g_{\lambda,\nu}
 =\left(\frac{q_\nu}{\det g_{\lambda,\nu}}\right)^{1/r}
  g_{\lambda,\nu}.
\]
Let \(\{\chi_\lambda\}\) be a fixed subordinate partition of unity
for this finite cover, and put
\[
 g_\nu^{\rm raw}=\sum_\lambda\chi_\lambda\widetilde g_{\lambda,\nu},
 \qquad
 \widehat g_\nu
 =\left(\frac{q_\nu}{\det g_\nu^{\rm raw}}\right)^{1/r}
  g_\nu^{\rm raw}.
\]
To estimate the effect of gluing, compare the sum with each local
metric. On \(\operatorname{supp}\chi_\lambda\setminus B\),
let \(H_{\lambda,\nu}\) and \(H_\nu^{\rm raw}\) be the matrices of
\(\widetilde g_{\lambda,\nu}\) and \(g_\nu^{\rm raw}\), respectively,
in the same holomorphic frame, and set
\[
 S_{\lambda,\nu}=H_{\lambda,\nu}^{-1}H_\nu^{\rm raw}.
\]
This is a positive endomorphism, self-adjoint with respect to
\(\widetilde g_{\lambda,\nu}\).
Let \(D\) be the connection on \(\operatorname{End}(V)\) induced by
the Chern connection of \(\widetilde g_{\lambda,\nu}\).
The indices \(A,B\in\{1,\ldots,n\}\) range over all coordinate
directions, and \(D_A\) and \(D_{\bar B}\) denote covariant
differentiation along \(\partial/\partial z_{\lambda,A}\) and
\(\partial/\partial\bar z_{\lambda,B}\), respectively.
Write \(h_A=h_{\lambda,A}\), as defined in
Setup~\ref{setup:literal-moving-schedule}, and take the endomorphism
norms with respect to \(\widetilde g_{\lambda,\nu}\).

\begin{lem}
\label{lem:doubled-reference}
The metrics just constructed have the following properties, with
constants independent of \(\nu\) and of the chart.
\begin{enumerate}
\item \(\det\widehat g_\nu=q_\nu\), and \(\widehat g_\nu\) realizes
the parabolic lattices of stage \(\nu\) at every real multi-index.

\item On \((\operatorname{supp}\chi_\lambda\cap
\operatorname{supp}\chi_\mu)\setminus B\), the local metrics
\(\widetilde g_{\lambda,\nu}\) and \(\widetilde g_{\mu,\nu}\) are
uniformly comparable. On \(\operatorname{supp}\chi_\lambda\setminus B\),
\[
 \|S_{\lambda,\nu}^{\pm1}\|\le C,\qquad
 \|D_AS_{\lambda,\nu}\|+\|D_AS_{\lambda,\nu}^{-1}\|\le Ch_A,
\]
\[
 \|D_{\bar B}D_AS_{\lambda,\nu}\|
 +\|D_{\bar B}D_AS_{\lambda,\nu}^{-1}\|
 \le Ch_Ah_B.
\]

\item Put \(\Psi_{\widehat g_\nu}=F_{\widehat g_\nu}
+[\theta,\theta^{\dagger\widehat g_\nu}]\).
In the fixed coordinate coframe of each chart, its trace-free
endomorphism-valued coefficients satisfy
\[
 \left|
 (\Psi_{\widehat g_\nu}^{0})_{A\bar B}
 \right|_{\op,\widehat g_\nu}
 \le Ch_Ah_B.
 \tag{7.3}\label{eq:doubled-tensor-row}
\]
Here \(|\cdot|_{\op,\widehat g_\nu}\) is the endomorphism operator
norm induced by \(\widehat g_\nu\).
\end{enumerate}
\end{lem}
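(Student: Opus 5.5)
The plan is to treat the three items in order of increasing difficulty. Everything rests on one structural observation about the local models \(g_{\lambda,\nu}\): the exponents \(\beta_{\lambda,\nu,a,i}\) lie in a fixed compact subinterval of \([0,1)\), uniformly in \(\nu\). The logarithmic exponents \(-4(p_{\lambda,a,i}-\bar p_{\lambda,a})\) are fixed integers and rationals that do not depend on \(\nu\). The argument never uses a positive lower bound for the difference of two weights.

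\emph{Item (1).} This is formal. The determinant normalization is a scalar rescaling by \((q_\nu/\det g^{\rm raw}_\nu)^{1/r}\), so \(\det\widehat g_\nu=q_\nu\) by construction. For the lattices, each \(\widetilde g_{\lambda,\nu}\) differs from the split monomial metric \eqref{eq:local-stage-reference} by factors \(\tau^{\pm C}\) and by a scalar that is bounded above and below up to \(\tau^{\pm C}\). Such factors are absorbed by the \(\epsilon\)-tolerance in Definition~\ref{defn:growth}. Uniform comparability of the charts, taken from item (2), then transfers adaptedness to \(g^{\rm raw}_\nu\) and to \(\widehat g_\nu\), exactly as for \(K_\nu\) in Subsection~\ref{subsec:adapted-references}.

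\emph{Item (2).} This is the key step. I would write the transition matrix between the flag-split frames \(e_{\mu,k}\) and \(e_{\lambda,i}\) and split its entries according to the fixed labels \(p\).
\begin{itemize}
\item If \(p_{\lambda,a,i}<p_{\mu,a,k}\), flag preservation forces the holomorphic coefficient to be divisible by \(z_a\). The resulting factor \(|z_a|^{1-(\beta_k-\beta_i)}\) beats every power of \(\tau_a\), uniformly in \(\nu\), because \(\beta_k-\beta_i\le\max w<1\).
\item If \(p_{\lambda,a,i}>p_{\mu,a,k}\), then \(\beta_i\ge\beta_k\), since the weights are nondecreasing in the flag order. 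The logarithmic factor \(\tau_a^{-2(p_i-p_k)}\le\tau_a^{-2}\) only helps.
\item If the labels are equal, then the weights are equal and only a bounded holomorphic coefficient remains.
\end{itemize}
This gives uniform comparability of \(\widetilde g_{\lambda,\nu}\) and \(\widetilde g_{\mu,\nu}\); the scalar normalizations cancel in the ratio up to smooth positive functions. Hence \(\|S_{\lambda,\nu}^{\pm1}\|\le C\).

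For the derivatives, I would use
\[
D_AS=\partial_AS+[H_{\lambda,\nu}^{-1}\partial_AH_{\lambda,\nu},S]
\]
in the normalized frame \(|z|^{-\beta}\tau^{2(p-\bar p)}e\). Differentiating an off-diagonal entry then produces \((\beta_i-\beta_k)/z_a\) together with logarithmic terms of size \(1/(|z_a|\tau_a)\), multiplied by the entry itself.
\begin{itemize}
\item When the labels differ upward, the entry carries \(\tau_a^{-2}\). So \(|\beta_i-\beta_k|\,|z_a|^{-1}\tau_a^{-2}\le Ch_A\), even though \(\beta_i-\beta_k\) may tend to zero.
\item When the labels differ downward, the extra \(|z_a|^{\epsilon}\) dominates.
\end{itemize}
The same bookkeeping at second order gives the \(h_Ah_B\) bound for \(D_{\bar B}D_AS^{\pm1}\), using \(\partial\bar\partial\log|z|^2=0\) and \(|\partial\bar\partial\log\tau_a|\le Ch_a^2\). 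Derivatives of \(S^{-1}\) follow from \(S^{-1}DS\,S^{-1}\).

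\emph{Item (3).} I expect this to be the main obstacle, because the Higgs term has to be controlled uniformly in \(\nu\). First, the trace-free part removes every scalar factor, so the determinant normalizations and \(q_\nu\) never enter. On a single chart, the curvature of the diagonal model equals \(\sum\) of \(\partial\bar\partial\log\tau\) terms times fixed constants, which are \(O(h_Ah_B)\). Gluing contributes terms of the form \(\bar\partial(S^{-1}\partial S)\) plus products of first derivatives, and item (2) bounds these by \(Ch_Ah_B\). This comparison should be carried out on \(\operatorname{supp}\chi_\lambda\) through the standard formula relating the curvatures of \(g^{\rm raw}_\nu\) and \(\widetilde g_{\lambda,\nu}\).

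For \([\theta,\theta^{\dagger}]\), I would expand \(\theta\) in the normalized frame. Residue blocks between distinct labels again carry either \(\tau^{-2}\) or \(|z|^{\epsilon}\). Within one label block, the joint-spectral frames are orthonormal for the model and the residues act diagonally by scalars, so the commutator vanishes on \(z_a=0\). Its remainder is then \(O(|z_a|)\) times the logarithmic factors. Tangential coefficients are handled by the same commutation argument as in Step~1 of Proposition~\ref{prop:compact-source-reference}. Since the order of vanishing \(|z_a|\) is the same for every \(\nu\), this yields \eqref{eq:doubled-tensor-row}. The remaining risk is the same-label block when two refined quotients with different residue eigenvalues share a label after coalescence. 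I would check first that the eigenspace splitting, which is fixed in the frame, keeps their cross terms zero and independent of the weights.
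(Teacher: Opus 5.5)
Your handling of items (1) and (2), and of the Chern part of (3), matches the paper. It uses the same ingredients: fixed labels, divisibility by \(z_a\) with the uniform gap \(1+\beta_i-\beta_j\ge\epsilon_*\), the \(\tau_a^{-2}\) gain in the other direction, and the formula \(F_{g_\nu^{\rm raw}}-F_{\widetilde g_{\lambda,\nu}}=\bar\partial(S_{\lambda,\nu}^{-1}D'S_{\lambda,\nu})\).

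The gap is in the Higgs term. You estimate \([\theta,\theta^\dagger]\) only for a single model metric in its orthonormal frame. But \eqref{eq:doubled-tensor-row} concerns the glued metric, whose adjoint is \(A_a^{\dagger g_\nu^{\rm raw}}=S_{\lambda,\nu}^{-1}A_a^{\dagger\widetilde g_{\lambda,\nu}}S_{\lambda,\nu}\). Write \(A_a=\Sigma_a+N_a\) with \(\|N_a\|\le C\tau_a^{-2}\), as your own entry calculation gives. Then \([A_a,A_a^{\dagger g_\nu^{\rm raw}}]=[\Sigma_a,S^{-1}\Sigma_a^\dagger S]+O(\tau_a^{-2})\). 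The bounds you prove in (2) control only the size and derivatives of \(S\), and nothing in them prevents this first term from being of order one. That would give \(|z_a|^{-2}\) for the \(a\bar a\) coefficient instead of \(h_a^2=(|z_a|\tau_a)^{-2}\).

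What you need is the near-commutation \(\|[\Sigma_a,S_{\lambda,\nu}]\|+\|[\Sigma_a^\dagger,S_{\lambda,\nu}]\|\le C\tau_a^{-2}\). This requires more than flag preservation. The holomorphic transition \(T\) between two joint-spectral frames intertwines the graded residues, since the eigenspaces are intrinsic. Hence \(E_a=\Sigma_a'T-T\Sigma_a\) and \(E_a'=(\Sigma_a')^\dagger T-T\Sigma_a^\dagger\) vanish along \(z_a=0\) on same-label blocks, and are \(O(\tau_a^{-2})\) on different-label blocks by your entry calculation. The identity \([\Sigma_a,T^\dagger T]=T^\dagger E_a-(E_a')^\dagger T\), summed against the partition of unity, then gives the estimate.

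Several claims in the same part of your proposal do not hold as stated.
\begin{enumerate}
\item There is no order of vanishing \(|z_a|\) uniform in \(\nu\). Entries between different labels with coalescing weights decay only like \(\tau_a^{-2}\).
\item Appealing to Step~1 of Proposition~\ref{prop:compact-source-reference} gives the rate \(|t_a|=|z_a|^{1/M_\nu}\), which degenerates as the common denominator grows, and only in one divisor direction.
\item At crossings you need the product bound \(\|[A_a,A_b^\dagger]\|\le C(\tau_a\tau_b)^{-1}\), and likewise \(\|[A_a,B_\mu^\dagger]\|\le C\tau_a^{-1}\). The paper obtains these for the glued metric from \(\|[A,B^\dagger]\|_{\rm HS}^2=\Tr([A,A^\dagger][B,B^\dagger])\) for commuting \(A,B\), which turns the diagonal bounds into the mixed ones.
\item The ``remaining risk'' you flag is misplaced. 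The labels never merge under coalescence. Several residue eigenvalues inside one label block (as along exceptional components) are harmless for a single model, because \(\Sigma_a\) is diagonal in a joint-spectral frame. They matter only through the gluing, which is exactly the intertwining step above.
\end{enumerate}
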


\begin{proof}
The centered exponents have zero sum:
\[
 \sum_i(p_{\lambda,a,i}-\bar p_{\lambda,a})=0.
\]
Thus the logarithmic factors cancel in the determinant.
All normalized local metrics have determinant \(q_\nu\). This common scalar
factor cancels from their relative endomorphisms and from the induced
connections on \(\End V\); their trace-free Chern curvatures are
also independent of it.

\noindent\textit{Comparison and Chern curvature.}
Fix a chart and suppress its index. Write \(r_a=|z_a|\) and
\(p_{a,i}=p_{\lambda,a,i}\). A matrix entry \(t_{ij}\) of a
flag-preserving holomorphic map, from frame line \(j\) to line
\(i\), has normalized size, up to bounded factors from scalar
normalization and changes of divisor coordinates,
\[
 |t_{ij}|\prod_a r_a^{\beta_{\nu,a,i}-\beta_{\nu,a,j}}
                   \tau_a^{-2(p_{a,i}-p_{a,j})}.
\]
If \(p_{a,i}<p_{a,j}\), preservation of the decreasing flag makes
\(t_{ij}\) divisible by \(z_a\). Since the finitely many weights
converge in \([0,1)\), there is \(\epsilon_*>0\), independent of
\(\nu\), such that
\[
 1+\beta_{\nu,a,i}-\beta_{\nu,a,j}\ge\epsilon_*.
\]
The resulting positive radial power absorbs every fixed logarithmic
power. If \(p_{a,i}>p_{a,j}\), the radial factor is at most one
and the logarithmic factor is at most \(\tau_a^{-2}\).
For equal labels both factors are one. Divisibility by several
distinct coordinate functions gives divisibility by their product,
so these estimates hold simultaneously at every crossing.

Apply this calculation to the holomorphic transition
\(T_{\mu\lambda}\) from the \(\lambda\)-frame to the
\(\mu\)-frame and its inverse, and let \(\nabla\) be the Hom
connection induced by the two local metrics. It gives uniform bounds for
\(T_{\mu\lambda}^{\pm1}\). For its first covariant derivative,
an entry divisible by \(z_a^k\) has effective radial exponent
\(\delta=k+\beta_{\nu,a,i}-\beta_{\nu,a,j}\ge0\).
The radial connection term has coefficient \(\delta/z_a\), while
the logarithmic connection term is bounded by \(C/(r_a\tau_a)\).
Use
\[
 \sup_{\delta\ge0}\delta r_a^\delta\le C\tau_a^{-1}
\]
when the logarithmic power is nonpositive; when it is positive,
\(\delta\ge\epsilon_*\) absorbs it. The remaining holomorphic
coefficients have bounded derivatives. For a coordinate change
\(z'_a=u_a z_a\),
\[
 \tau'_a=\tau_a-\log|u_a|^2,\qquad
 \partial\tau'_a=-\frac{dz_a}{z_a}-\partial\log|u_a|^2.
\]
The ratios of the corresponding logarithmic factors are bounded,
and their logarithmic derivatives satisfy the same \(Ch_A\)
bound. The scalar ratios remaining after cancellation of \(q_\nu\)
also contribute bounded terms. Thus
\(\|\nabla_A T_{\mu\lambda}\|\le Ch_A\).

On a divisor chart the trace-free curvature of the local diagonal
model comes from its logarithmic factors and satisfies the
coefficient bound \(Ch_Ah_B\), also after the coordinate changes
just described. On an interior chart the trace-free curvature
is that of a fixed smooth metric and is bounded. The central parts
of the Chern curvatures of the normalized local metrics are all
\(F_{q_\nu}/r\), so they cancel in
\[
 \nabla''\nabla'T_{\mu\lambda}
 =F_{\widetilde g_{\mu,\nu}}T_{\mu\lambda}
  -T_{\mu\lambda}F_{\widetilde g_{\lambda,\nu}}.
\]
This bounds the mixed second derivatives by \(Ch_Ah_B\).
For \(P_{\mu\lambda}=T_{\mu\lambda}^\dagger T_{\mu\lambda}\),
metric compatibility and holomorphicity give
\[
 D'_A P_{\mu\lambda}=T_{\mu\lambda}^\dagger
                         \nabla_A T_{\mu\lambda},
\]
\[
 D_{\bar B}D'_A P_{\mu\lambda}
 =(\nabla_B T_{\mu\lambda})^\dagger\nabla_A T_{\mu\lambda}
  +T_{\mu\lambda}^\dagger\nabla_{\bar B}\nabla_A T_{\mu\lambda}.
\]
Now \(S_{\lambda,\nu}=\sum_\mu\chi_\mu P_{\mu\lambda}\).
The product rule and differentiation of
\(S_{\lambda,\nu}^{-1}S_{\lambda,\nu}=\Id\) prove all the
assertions in \textup{(2)}. In particular, the connection change formula
\[
 F_{g_\nu^{\rm raw}}-F_{\widetilde g_{\lambda,\nu}}
 =\bar\partial\bigl(S_{\lambda,\nu}^{-1}
   D'_{\widetilde g_{\lambda,\nu}}S_{\lambda,\nu}\bigr)
\]
gives the same coefficient bound for \(F_{g_\nu^{\rm raw}}^0\).

\noindent\textit{Higgs curvature.}
Write
\[
 \theta=\sum_{a\in D_\lambda}A_a\frac{dz_a}{z_a}
        +\sum_{\mu\notin D_\lambda}B_\mu\,dz_\mu.
\]
The coefficient matrices are holomorphic and preserve every fixed
divisor flag. Let \(\Sigma_a\) be the diagonal matrix of the
eigenvalues of the residue induced on its graded quotients, with
bounded holomorphic extensions of its diagonal entries off the
divisor. It is a representative of the graded residue, not
necessarily the matrix of the residue on the entire fiber.
Within each flag quotient, \(A_a-\Sigma_a\) vanishes on
\(z_a=0\). Between different quotients the preceding entry
calculation applies. In the local metric it therefore gives
\[
 \|A_a-\Sigma_a\|\le C\tau_a^{-2},\qquad
 \|B_\mu\|\le C.
\]

We must also control the change of adjoint under gluing. For an
overlapping chart, use orthonormal coordinates for the two local
metrics, and denote the normalized transition by \(T\) and the
two diagonal graded residue matrices by \(\Sigma_a\) and
\(\Sigma'_a\). Put
\[
 E_a=\Sigma'_aT-T\Sigma_a,\qquad
 E'_a=(\Sigma'_a)^\dagger T-T\Sigma_a^\dagger.
\]
On a common flag quotient, the underlying holomorphic transition
intertwines the graded residues along the divisor. Thus the
corresponding entries of both differences vanish there before
normalization: the first holomorphically and the second smoothly
with size \(O(r_a)\). Between different quotients, the underlying
holomorphic entry is divisible by \(z_a\) when it maps a higher flag
label to a lower one, while in the other direction its normalized size is
bounded by \(C\tau_a^{-2}\). The preceding product estimates give
\[
 \|E_a\|+\|E'_a\|\le C\tau_a^{-2}.
\]
Here and below the bounds hold in operator norm, uniformly in the
other divisor coordinates. The identity
\[
 [\Sigma_a,T^\dagger T]=T^\dagger E_a-(E'_a)^\dagger T
\]
and its adjoint imply, after summing with the partition,
\[
 \|[\Sigma_a,S_{\lambda,\nu}]\|
 +\|[\Sigma_a^\dagger,S_{\lambda,\nu}]\|
 \le C\tau_a^{-2}.
\]
Since
\(A_a^{\dagger g_\nu^{\rm raw}}
 =S_{\lambda,\nu}^{-1}A_a^{\dagger\widetilde g_{\lambda,\nu}}
   S_{\lambda,\nu}\), this gives
\[
 \|[A_a,A_a^{\dagger g_\nu^{\rm raw}}]\|
 \le C\tau_a^{-2}.
\]
The regular coefficients remain bounded under the uniform metric
comparison.

For the mixed terms use Higgs integrability. If two endomorphisms
\(A,B\) commute, cyclicity of trace gives
\[
 \|[A,B^\dagger]\|_{\rm HS}^2
 =\Tr\bigl([A,A^\dagger][B,B^\dagger]\bigr)
 \le\|[A,A^\dagger]\|_{\rm HS}
      \|[B,B^\dagger]\|_{\rm HS}.
\]
Here the adjoints and Hilbert--Schmidt norms use the same
Hermitian metric. All Higgs coefficient matrices commute because
\(\theta\wedge\theta=0\). Applying this identity with the
metric \(g_\nu^{\rm raw}\) gives
\[
 \|[A_a,A_b^\dagger]\|\le C(\tau_a\tau_b)^{-1},\qquad
 \|[A_a,B_\mu^\dagger]\|\le C\tau_a^{-1},\qquad
 \|[B_\mu,B_\kappa^\dagger]\|\le C.
\]
Dividing by the corresponding factors \(z_a\) and \(\bar z_b\)
therefore bounds every ordinary-coordinate coefficient of
\([\theta,\theta^{\dagger g_\nu^{\rm raw}}]\) by \(Ch_Ah_B\).
Together with the Chern curvature estimate this proves the same
bound for \(\Psi_{g_\nu^{\rm raw}}^0\). The final determinant
normalization changes only central Chern curvature and leaves Higgs
adjoints unchanged, proving \eqref{eq:doubled-tensor-row}.

Every fixed power of \(\tau_a\) grows more slowly than any positive
power of \(|z_a|^{-1}\). Hence the local metrics have the same growth
lattices as their split monomial factors, at every real index.
Uniform comparison preserves these lattices under the partition-of-unity
sum and determinant normalization. This proves the growth assertion.
\end{proof}

We next fix the multiplicative constants in the determinant metrics
used in Lemma~\ref{lem:doubled-reference}, and use them to normalize
the comparison metrics \(\widehat g_\nu\). Both sequences will then
converge smoothly on compact subsets of \(U\). The local formula for
the limiting determinant metric on \(X\) will be used in the degree proof for \(k_\infty\)
in Proposition~\ref{prop:downstairs-reflexive-degree-bridge}.

\begin{prop}
\label{prop:determinant-selector-limit}
Retain the fixed model \(\pi:Y\to X\) and the rational stages of
Setup~\ref{setup:literal-moving-schedule}, and identify
\(U=Y\setminus B\simeq X\setminus D\). Write \(\widetilde D_i\)
and \(A_a\) for the strict-transform and exceptional components of
\(B\), respectively, and put
\[
 L_X=(\bigwedge^rE)^{**},\qquad L_Y=\det V,\qquad
 L=L_Y|_U\simeq L_X|_U.
\]
Let \(\gamma_{i,\nu}\) and \(\eta_{a,\nu}\) be the sums, counted
with multiplicity, of the stage-\(\nu\) parabolic weights along
\(\widetilde D_i\) and \(A_a\), and let \(\gamma_{i,\infty}\) be
the corresponding sums of the original weights along \(D_i\).

Let \(q_\nu\) be the stage-\(\nu\) determinant metric from
Lemma~\ref{lem:doubled-reference}. There exist positive constants
\(a_\nu\) and a Hermitian metric \(Q_{X,\infty}\) on \(L\) such that
the rescaled metrics
\[
 Q_{Y,\nu}:=a_\nu q_\nu
\]
satisfy the following properties:

\begin{enumerate}
\item In a local holomorphic frame of \(L_Y\), the coefficient of
\(Q_{Y,\nu}\) is a smooth strictly positive function on the chart
multiplied by the factors \(|z_i|^{2\gamma_{i,\nu}}\) and
\(|u_a|^{2\eta_{a,\nu}}\), where \(z_i=0\) and \(u_a=0\) define
the components \(\widetilde D_i\) and \(A_a\) meeting the chart.
Moreover, for real constants \(c_{\det,\nu}\),
\[
 \ii\Lambda_{\omega_\nu}F_{Q_{Y,\nu}}=2\pi c_{\det,\nu}.
\]

\item For every \(\mathcal C\Subset U\),
\[
 Q_{Y,\nu}\longrightarrow Q_{X,\infty}
 \quad\text{in }C^\infty(\mathcal C).
 \tag{7.4}\label{eq:selector-common-open-limit}
\]
On every SNC coordinate chart of \(X\), in a local holomorphic
frame \(\tau_X\) of \(L_X\),
\[
 Q_{X,\infty}(\tau_X,\tau_X)
 =b_\infty\prod_i|z_i|^{2\gamma_{i,\infty}},
\]
where \(z_i=0\) are the local components of \(D\) and
\(b_\infty\) extends smoothly and strictly positively across the
whole chart, including the modification center \(Z\subset D\).
The constants and the limiting metric satisfy
\[
 c_{\det,\nu}\longrightarrow\lambda_{\det,\infty},\qquad
 \ii\Lambda_\omega F_{Q_{X,\infty}}
 =2\pi\lambda_{\det,\infty}.
\]
\item Let \(\widehat g_\nu\) be the metrics of
Lemma~\ref{lem:doubled-reference}, and define
\[
 k_\nu=
 \left(\frac{Q_{Y,\nu}}{\det\widehat g_\nu}\right)^{1/r}\widehat g_\nu
 =a_\nu^{1/r}\widehat g_\nu.
\]
These metrics realize the parabolic lattices of stage \(\nu\) at
every real multi-index and satisfy \(\det k_\nu=Q_{Y,\nu}\).
There is a smooth positive Hermitian metric \(k_\infty\) on \(V|_U\)
such that
\[
 k_\nu\longrightarrow k_\infty
 \quad\text{in }C^\infty_{\mathrm{loc}}(U),\qquad
 \det k_\infty=Q_{X,\infty}.
\]
For each \(\nu\), \(k_\nu\) and \(\widehat g_\nu\) have the same
Chern connection, Higgs adjoint, and induced norms on \(\End V\).
\end{enumerate}

\end{prop}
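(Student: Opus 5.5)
The plan is to build $Q_{X,\infty}$ first, directly on $X$, and then choose $a_\nu$ so that $a_\nu q_\nu$ converges to it. On $X$, fix a smooth metric $k_{L_X}$ on the line bundle $L_X=(\bigwedge^rE)^{**}$ and smooth divisor metrics on $\cO_X(D_i)$. Put
\[
 Q^0_{X}=k_{L_X}\prod_i|\sigma_{D_i}|^{2\gamma_{i,\infty}}.
\]
The curvature of $Q^0_X$ is smooth on $X$. Solve the scalar Poisson equation on $X$ with respect to $\omega$ to get $Q_{X,\infty}=e^{f_\infty}Q^0_X$ with $\ii\Lambda_\omega F_{Q_{X,\infty}}=2\pi\lambda_{\det,\infty}$. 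Here $\lambda_{\det,\infty}$ is fixed by integration and equals a multiple of $\pardeg_\omega(E_*)/\Vol_\omega(X)$. This gives the local form in (2), including over $Z$, because everything is built downstairs.

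Upstairs, the determinant metric $q_\nu$ was built in Subsection~\ref{subsec:adapted-references} as $k_{\det V}\prod_a|\sigma_a|^{2\gamma_{\nu,a}}$ times a Poisson correction for $\omega_\nu$. Re-express it using the data pulled back from $X$. On $U$, the line bundles $L_Y$ and $\pi^*L_X$ agree. Globally $L_Y\simeq\pi^*L_X\otimes\cO_Y(\sum_a m_aA_a)$ for integers $m_a$ supported on the exceptional components. Choose a reference $\widetilde q_\nu$ on $L_Y$ equal to
\[
 \pi^*Q^0_X\cdot\prod_a|\sigma_{A_a}|^{2(\eta_{a,\nu}+\text{shift}_a)}\cdot\prod_i|\sigma_{\widetilde D_i}|^{2(\gamma_{i,\nu}-\gamma_{i,\infty})},
\]
where $\text{shift}_a$ absorbs $m_a$ and the multiplicities of $A_a$ in $\pi^*D_i$. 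Any two metrics with the same divisor exponents differ by a smooth positive function on $Y$, so $\widetilde q_\nu$ has the local form required in (1). Then solve $\ii\Lambda_{\omega_\nu}\bigl(F_{\widetilde q_\nu}-\bar\partial\partial f_\nu\bigr)=2\pi c_{\det,\nu}$ on $Y$ and set $Q_{Y,\nu}=e^{f_\nu}\widetilde q_\nu$. Both $q_\nu$ and $Q_{Y,\nu}$ solve the same determinant equation with the same exponents, so their ratio is a harmonic smooth positive function on compact $Y$, hence constant. That constant defines $a_\nu$, and (1) holds. The convergence $c_{\det,\nu}\to\lambda_{\det,\infty}$ follows from $\pardeg_{\omega_\nu}(V_{\nu,*})\to\pardeg_\omega(E_*)$ (Proposition~\ref{prop:parabolic-class-bridge} and the projection formula) together with $\Vol_\nu\to\Vol_\omega(X)$.

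The main obstacle is the smooth convergence $f_\nu\to\pi^*f_\infty$ (after normalizing the additive constants) on compact subsets of $U$. The difficulty is that $\omega_\nu$ degenerates to the non-Kähler form $\pi^*\omega$ along the exceptional set. The approach is as follows.

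\begin{itemize}
\item Write the source $\ii\Lambda_{\omega_\nu}F_{\widetilde q_\nu}-2\pi c_{\det,\nu}$ as $\ii\Lambda_{\omega_\nu}$ applied to a form. That form is $\pi^*$ of the smooth curvature of $Q_X^0$, plus smooth curvature of the divisor metrics with coefficients $\eta_{a,\nu}+\text{shift}_a$ and $\gamma_{i,\nu}-\gamma_{i,\infty}$.
\item The strict-transform coefficients tend to zero. The exceptional contributions I handle by replacing $|\sigma_{A_a}|$-metric curvature with its current and pushing forward. On $X$, $\pi_*$ of these contributions is supported on $Z$, which has codimension at least three, so it carries no degree-two cohomology.
\item Hence the difference $g_\nu=f_\nu-\pi^*f_\infty$ satisfies $\Delta_{\omega_\nu}g_\nu=\text{(source)}_\nu$, with sources bounded in $L^1(\omega_\nu^n)$ and converging to $0$ in $C^\infty_{\mathrm{loc}}(U)$.
\item I would get a uniform $L^1$ bound on $g_\nu$ from Green function bounds for $\omega_\nu$. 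These are uniform because $[\omega_\nu]$ stays in a compact set of Kähler classes plus $\pi^*[\omega]$; one could use the Guo--Phong--Tong type estimates, or a direct barrier built from $\Phi_{\epsilon,\eta}$ as in Lemma~\ref{lem:actual-kahler-smoothing}.
\item Normalize $g_\nu$ to have mean zero. Local elliptic estimates on compacts of $U$, where $\omega_\nu\to\pi^*\omega$ smoothly, then give subsequential $C^\infty_{\mathrm{loc}}$ convergence to a harmonic function $g$ on $U\simeq X\setminus D$ that is $L^1$ with respect to $\omega$.
\item The limit $g$ extends across $D$ by a removable-singularity argument for $L^1$ harmonic functions with controlled growth. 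It is therefore constant, and it can be absorbed by choosing $a_\nu$. This gives \eqref{eq:selector-common-open-limit}.
\end{itemize}

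Part (3) is then formal. Since $\det\widehat g_\nu=q_\nu$ by Lemma~\ref{lem:doubled-reference}(1), we get $k_\nu=a_\nu^{1/r}\widehat g_\nu$ and $\det k_\nu=Q_{Y,\nu}$. A constant rescaling does not change growth lattices, the Chern connection, Higgs adjoints, or norms on $\End V$. Finally, on a compact $\mathcal C\subset U$ the local metrics $g_{\lambda,\nu}$ depend smoothly on the weights, which converge, while the frames and partition of unity are fixed. So $g^{\rm raw}_\nu$ and $\widehat g_\nu/q_\nu^{1/r}$ converge smoothly. Combined with $a_\nu q_\nu\to Q_{X,\infty}$, this gives $k_\nu\to k_\infty$ with $\det k_\infty=Q_{X,\infty}$.
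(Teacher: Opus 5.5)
Your plan has a genuine gap in the convergence step. There is a problem of definition and a more serious problem in the final deduction.

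\textbf{What actually has to converge.} As written, \(g_\nu=f_\nu-\pi^*f_\infty\) cannot tend to a constant on compact subsets of \(U\). Your reference \(\widetilde q_\nu\) differs from \(\pi^*Q^0_X\) on \(U\) by factors \(|\sigma_{A_a}|^{2e_{a,\nu}}\). Here \(e_{a,\nu}\to t_a-\sum_i m_{ai}\gamma_{i,\infty}\), where \(t_a\) is the integer discrepancy in \(L_Y\simeq\pi^*L_X\otimes\cO_Y(\sum_a t_aA_a)\) and \(m_{ai}\) is the multiplicity of \(A_a\) in \(\pi^*D_i\). This limit is generally nonzero. So the source of \(g_\nu\) converges on compacta to a nonzero multiple of \(\Lambda_\omega\) of the curvature of \(\cO_Y(A_a)\), not to zero. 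The function that must converge to a constant is \(G_\nu=\log\bigl(Q_{Y,\nu}/\pi^*Q_{X,\infty}\bigr)\). Its \(\omega_\nu\)-Laplacian is a bounded density \(\zeta_\nu\) plus the currents \(e_{a,\nu}[A_a]\wedge\omega_\nu^{n-1}/(n-1)!\) and \((\gamma_{i,\nu}-\gamma_{i,\infty})[\widetilde D_i]\wedge\omega_\nu^{n-1}/(n-1)!\). The coefficients \(e_{a,\nu}\) do not tend to zero. You gesture at this with ``replacing by the current,'' but the bullet list then argues with \(g_\nu\).

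\textbf{The removability step fails.} The deduction ``\(g\) is an \(L^1\) harmonic function on \(X\setminus D\), hence extends and is constant'' is unjustified. A uniform \(L^1\) bound plus \(C^\infty_{\mathrm{loc}}(U)\) convergence does not stop mass of \(g_\nu\) from concentrating at \(D\). So you cannot identify \(\Delta_\omega g\) on \(X\) with the limit of the sources. Moreover, \(L^1\) functions harmonic off a hypersurface need not extend: the mean-zero Green potential of \([D_1]\wedge\omega^{n-1}\) is a counterexample. You also established no growth control for \(g\).

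\textbf{The missing idea.} Sources merely bounded in \(L^1\) are not enough. You need the total mass of every non-vanishing part of the source to tend to zero, and the paper uses exactly this:
\begin{enumerate}
\item \(\|\zeta_\nu\|_{L^1(\omega_\nu)}\to0\). This holds because \(\zeta_\nu\) is uniformly bounded (from \(\omega_\nu\ge\pi^*\omega\)), tends to zero on compacta, and neighborhoods of \(B\) have uniformly small \(\omega_\nu\)-volume.
\item \(d_{a,\nu}=\int_{A_a}\omega_\nu^{n-1}/(n-1)!\le C\delta_\nu^{c_a-1}\to0\), since \(\dim\pi(A_a)\le n-3\). This is the quantitative form of your pushforward remark.
\end{enumerate}
The paper feeds these into a Green representation normalized at \(x_0\). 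It uses the uniform off-diagonal bound \(|\mathcal G_\nu(x,y)|\le C_d\) for \(d_\omega(\pi(x),\pi(y))\ge d\), which comes from the Gaussian heat-kernel bound in the pulled-back distance, together with the uniform \(L^1\) kernel bound. This gives \(\sup_{\mathcal C}|G_\nu|\to0\) directly, and interior Schauder estimates finish.

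Your own \(L^1\) route would also close once you have vanishing masses. The symmetric uniform \(L^1\) Green bound gives \(\|G_\nu-\overline{G}_\nu\|_{L^1}\to0\), and interior estimates on compacta of \(U\) upgrade this to \(C^\infty_{\mathrm{loc}}\) convergence to zero. No removability argument is then needed.

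\textbf{A smaller point.} The uniform Green bounds do not follow from the classes lying in ``a compact set of Kähler classes.'' The limit \(\pi^*[\omega]\) is not Kähler. The bounds need the entropy and lower-density control of Proposition~\ref{prop:amplitude-gpss-ledger}.
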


\begin{proof}
We construct metrics with the stated limit, then show that their
ratios to \(q_\nu\) are constant on \(Y\). Finally, we prove the
convergence of the resulting comparison metrics \(k_\nu\).

\noindent\textbf{Step 1. The determinant lines and the metrics on
\(X\).}
Since \(E\) is reflexive on the smooth manifold \(X\), \(L_X\) is a
line bundle. We first compare its pullback with \(L_Y\) to determine
the exceptional corrections needed in constructing \(Q_{Y,\nu}\).
By Theorem~\ref{thm:fixed-terminal-data}, the model is unchanged
outside \(Z=\operatorname{Sing}(E)\cup Z_{\mathrm{flag}}\subset D\).
The resulting identification of the determinant lines gives a
meromorphic section of \(L_Y\otimes(\pi^*L_X)^{-1}\) whose divisor
is supported on the exceptional components \(A_a\). Hence, for
uniquely determined integers \(t_a\),
\[
 L_Y\simeq\pi^*L_X\otimes
 \cO_Y\left(\sum_at_aA_a\right).
 \tag{7.5}\label{eq:determinant-line-discrepancy}
\]
Write
\[
 \pi^*D_i=\widetilde D_i+\sum_am_{ai}A_a,
\]
and set
\[
 \rho_{a,\nu}=\eta_{a,\nu}-\sum_im_{ai}\gamma_{i,\nu},
 \qquad e_{a,\nu}=t_a+\rho_{a,\nu}.
 \tag{7.6}\label{eq:selector-exceptional-discrepancy}
\]
These sequences are bounded because the weights converge.

Choose smooth positive metrics \(k_X\) on \(L_X\) and \(k_i\) on
\(\cO_X(D_i)\). With \(\sigma_i\) denoting the canonical section, put
\[
 q_{X,\mathrm{raw},\nu}
 =k_X\prod_i|\sigma_i|_{k_i}^{2\gamma_{i,\nu}}.
\]
The curvature forms
\[
 \alpha_{X,\mathrm{raw},\nu}
 =\frac{\ii}{2\pi}F_{q_{X,\mathrm{raw},\nu}}
\]
extend smoothly across \(X\), and these extended forms converge
smoothly because the weights converge. Define the scalar operator
\[
 P_Xf=\frac{\ii}{2\pi}\Lambda_\omega\bar\partial\partial f.
\]
There is a unique mean-zero solution of
\[
 P_Xf_{X,\nu}
 =\lambda_{X,\nu}
 -\Lambda_\omega\alpha_{X,\mathrm{raw},\nu},
\]
where \(\lambda_{X,\nu}\) is the \(\omega\)-volume average of
\(\Lambda_\omega\alpha_{X,\mathrm{raw},\nu}\). Elliptic estimates
for this fixed operator on compact \(X\) give
\[
 f_{X,\nu}\longrightarrow f_{X,\infty}
 \quad\text{in }C^\infty(X).
\]
Define
\[
 Q_{X,\nu}=e^{f_{X,\nu}}q_{X,\mathrm{raw},\nu},\qquad
 Q_{X,\infty}=e^{f_{X,\infty}}k_X
       \prod_i|\sigma_i|_{k_i}^{2\gamma_{i,\infty}}.
\]
These metrics converge smoothly on compact subsets of \(U\).
Since \(f_{X,\infty}\) is smooth on all of \(X\), this also proves
the asserted local form and curvature equation for
\(Q_{X,\infty}\), with
\(\lambda_{\det,\infty}=\lim_\nu\lambda_{X,\nu}\).

\noindent\textbf{Step 2. Adjust the equation to \(\omega_\nu\).}
The metric \(Q_{X,\nu}\) solves the scalar equation for \(\omega\),
but we need the equation for \(\omega_\nu\). Put
\[
 V_\nu=\int_Y\frac{\omega_\nu^n}{n!},\qquad
 c_{\mathrm{pull},\nu}
 =\frac1{V_\nu}\int_Y\pi^*\alpha_{X,\nu}
       \wedge\frac{\omega_\nu^{n-1}}{(n-1)!},
 \qquad
 \alpha_{X,\nu}=\frac{\ii}{2\pi}F_{Q_{X,\nu}}.
\]
The class identity
\(
[\omega_\nu]=\pi^*[\omega]+\delta_\nu[\vartheta]
\)
and the projection formula show that
\[
 V_\nu\longrightarrow\int_X\frac{\omega^n}{n!},
 \qquad
 c_{\mathrm{pull},\nu}\longrightarrow\lambda_{\det,\infty}.
 \tag{7.7}\label{eq:selector-pull-average-limit}
\]
Indeed, all other terms in the class expansion contain a positive
power of \(\delta_\nu\), with bounded coefficients.

Fix \(x_0\in U\) to normalize the scalar corrections. Define
\[
 P_\nu f=\frac{\ii}{2\pi}
             \Lambda_{\omega_\nu}\bar\partial\partial f,
\]
and solve
\[
 P_\nu r_\nu
 =\zeta_\nu:=c_{\mathrm{pull},\nu}
  -\Lambda_{\omega_\nu}\pi^*\alpha_{X,\nu},
 \qquad r_\nu(x_0)=0.
 \tag{7.8}\label{eq:selector-pull-poisson}
\]
The right-hand side has integral zero by the definition of
\(c_{\mathrm{pull},\nu}\). Smooth convergence of \(\alpha_{X,\nu}\)
on \(X\) gives
\[
 -C\pi^*\omega\le\pi^*\alpha_{X,\nu}\le C\pi^*\omega.
\]
Since \(\omega_\nu\ge\pi^*\omega\), we have
\(\Lambda_{\omega_\nu}\pi^*\omega\le n\), and hence
\(|\zeta_\nu|\le C\) uniformly. Compact convergence of the forms
and \eqref{eq:selector-pull-average-limit} also give
\(\zeta_\nu\to0\) in \(C^\infty_{\mathrm{loc}}(U)\).

To obtain \(L^1\) convergence, choose a neighborhood \(N\) of \(D\)
in \(X\) with smooth boundary and arbitrarily small \(\omega\)-volume.
The complement \(\pi^{-1}(X\setminus N)\) is compact in \(U\), so
compact convergence and convergence of the total volumes imply
\[
 \Vol_{\omega_\nu}(\pi^{-1}N)
 =V_\nu-\int_{\pi^{-1}(X\setminus N)}dV_{\omega_\nu}
 \longrightarrow\Vol_\omega(N).
\]
On this neighborhood \(\zeta_\nu\) is uniformly bounded; on its
complement it converges uniformly to zero. Letting \(\Vol_\omega(N)\)
tend to zero gives
\[
 \|\zeta_\nu\|_{L^1(Y,\omega_\nu)}\longrightarrow0.
\]

Let \(\mathcal G_\nu(x,y)\) be the Green kernel of \(P_\nu\),
with mean zero in \(y\) relative to \(dV_{\omega_\nu}\).
The operator \(P_\nu\) is a fixed positive
multiple of the scalar Laplacian in Proposition~\ref{prop:uniform-green}.
Thus \eqref{eq:uniform-green-kernel-l1} applies to \(\mathcal G_\nu\)
up to a fixed factor, and the heat-kernel bounds transfer by a
fixed rescaling of time. In particular, for each \(d>0\),
\[
 |\mathcal G_\nu(x,y)|\le C_d
 \quad\text{if }d_\omega(\pi(x),\pi(y))\ge d,
\]
uniformly in \(\nu\). To see this, integrate the heat kernel in
time after subtracting \(V_\nu^{-1}\). For \(0<t\le1\), the exponential factor in
\eqref{eq:stage-heat-kernel-bound} makes the integral finite when the
images are separated. For \(t\ge1\), the heat-kernel bound and the
uniform lower bound for the first positive eigenvalue proved in
Proposition~\ref{prop:uniform-green} give exponential decay.

Fix \(\mathcal C\Subset U\), and choose \(N\) so that its closure
is disjoint from \(\pi(\mathcal C\cup\{x_0\})\). The normalization
at \(x_0\) gives
\[
 r_\nu(x)=\int_Y
 \bigl(\mathcal G_\nu(x,y)-\mathcal G_\nu(x_0,y)\bigr)
 \zeta_\nu(y)\,dV_{\omega_\nu}(y).
\]
Split this integral over \(\pi^{-1}N\) and its complement.
The preceding pointwise bound controls the first part, and the
uniform \(L^1\) bound for the kernel controls the second. Therefore
\[
 \sup_{\mathcal C}|r_\nu|
 \le C_{\mathcal C,N}\|\zeta_\nu\|_{L^1(Y,\omega_\nu)}
    +C\|\zeta_\nu\|_{L^\infty(Y\setminus\pi^{-1}N)}
 \longrightarrow0.
\]
On a slightly larger compact subset of \(U\), the operators
\(P_\nu\) are uniformly elliptic with smoothly convergent
coefficients, and \(\zeta_\nu\to0\) smoothly. Interior Schauder
estimates applied to \eqref{eq:selector-pull-poisson} now give
\[
 r_\nu\longrightarrow0\quad\text{in }C^\infty(\mathcal C).
 \tag{7.9}\label{eq:selector-pull-csmooth}
\]
Thus
\[
 Q_{\mathrm{pull},\nu}=e^{r_\nu}Q_{X,\nu}
\]
satisfies
\(\ii\Lambda_{\omega_\nu}F_{Q_{\mathrm{pull},\nu}}
=2\pi c_{\mathrm{pull},\nu}\) and converges to \(Q_{X,\infty}\)
smoothly on compact subsets of \(U\).

\noindent\textbf{Step 3. Prescribe the exceptional exponents.}
We next construct the metric factors on
\(\cO_Y(t_aA_a)\) required by
\eqref{eq:determinant-line-discrepancy}. Let
\[
 c_a=\codim_{\CC}\overline{\pi(A_a)}\ge3,\qquad
 d_{a,\nu}=\int_{A_a}\frac{\omega_\nu^{n-1}}{(n-1)!}.
\]
Expanding the Kähler class on \(A_a\), a term with \(\ell\) factors of
\(\delta_\nu\vartheta\) contains
\((\pi^*\omega)^{n-1-\ell}\). Since the rank of
\(d(\pi|_{A_a})\) is at most \(n-c_a\), this term vanishes when
\(\ell<c_a-1\). Hence
\[
 0\le d_{a,\nu}\le C_a\delta_\nu^{c_a-1}\longrightarrow0.
 \tag{7.10}\label{eq:selector-exceptional-mass}
\]

Choose a smooth metric \(k_a\) on \(\cO_Y(A_a)\), with canonical
section \(\sigma_a\), and put \(\alpha_a=\ii F_{k_a}/(2\pi)\).
Let \(f_{a,\nu}\) be the mean-zero solution of
\[
 P_\nu f_{a,\nu}
 =\frac{d_{a,\nu}}{V_\nu}-\Lambda_{\omega_\nu}\alpha_a.
\]
The right-hand side has integral zero because \([\alpha_a]=[A_a]\).
On \(Y\setminus A_a\), set
\[
 g_{a,\nu}=f_{a,\nu}+\log|\sigma_a|_{k_a}^2,\qquad
 \widehat g_{a,\nu}=g_{a,\nu}-g_{a,\nu}(x_0).
\]
The Poincar\'e--Lelong formula gives, as an identity of distributions,
\[
 (P_\nu g_{a,\nu})\,dV_{\omega_\nu}
 =\frac{d_{a,\nu}}{V_\nu}\,dV_{\omega_\nu}
  -[A_a]\wedge\frac{\omega_\nu^{n-1}}{(n-1)!}.
\]
Here \([A_a]\) denotes the current of integration over \(A_a\).
Since \(\mathcal G_\nu\) has mean zero, its Green representation is
\[
 \widehat g_{a,\nu}(x)
 =-\int_{A_a}
 \bigl(\mathcal G_\nu(x,y)-\mathcal G_\nu(x_0,y)\bigr)
 \frac{\omega_\nu^{n-1}(y)}{(n-1)!}.
\]
For \(\mathcal C\Subset U\), the sets
\(\pi(\mathcal C\cup\{x_0\})\) and \(\pi(A_a)\) have positive
\(\omega\)-distance. The kernel bound from Step~2 and
\eqref{eq:selector-exceptional-mass} therefore give
\[
 \sup_{\mathcal C}|\widehat g_{a,\nu}|
 \le C_{\mathcal C}d_{a,\nu}\longrightarrow0.
\]
Away from \(A_a\), the same distributional identity reduces to
\[
 P_\nu\widehat g_{a,\nu}=\frac{d_{a,\nu}}{V_\nu}.
\]
Since \(V_\nu\) is bounded below, the right-hand side tends to zero.
The interior Schauder argument from Step~2 gives
\[
 \widehat g_{a,\nu}\longrightarrow0
 \quad\text{in }C^\infty(\mathcal C).
 \tag{7.11}\label{eq:selector-exceptional-limit}
\]

On \(\cO_Y(t_aA_a)\), define
\[
 Q_{a,\nu}
 =\exp\bigl(e_{a,\nu}f_{a,\nu}
        -e_{a,\nu}g_{a,\nu}(x_0)\bigr)
  k_a^{t_a}|\sigma_a|_{k_a}^{2\rho_{a,\nu}}.
\]
In a holomorphic frame near \(A_a=\{z=0\}\), its coefficient is a
smooth positive factor times \(|z|^{2\rho_{a,\nu}}\).
The equation for \(f_{a,\nu}\)
and the identity \(e_{a,\nu}=t_a+\rho_{a,\nu}\) give
\[
 \frac{\ii}{2\pi}\Lambda_{\omega_\nu}F_{Q_{a,\nu}}
 =e_{a,\nu}\frac{d_{a,\nu}}{V_\nu}.
\]
On \(Y\setminus A_a\), the nonvanishing section \(\sigma_a^{t_a}\)
trivializes this line bundle, and
\[
 Q_{a,\nu}(\sigma_a^{t_a},\sigma_a^{t_a})
 =\exp(e_{a,\nu}\widehat g_{a,\nu}).
\]

Using \eqref{eq:determinant-line-discrepancy}, set
\[
 Q_{Y,\nu}
 =Q_{\mathrm{pull},\nu}\otimes\bigotimes_aQ_{a,\nu}.
\]
Its local exponent along \(A_a\) is
\(\sum_i m_{ai}\gamma_{i,\nu}+\rho_{a,\nu}=\eta_{a,\nu}\),
and its strict-transform exponents remain \(\gamma_{i,\nu}\).
All remaining factors are smooth and strictly positive across \(B\).
Its contracted curvature is constant, with
\[
 c_{\det,\nu}
 =c_{\mathrm{pull},\nu}
  +\sum_ae_{a,\nu}\frac{d_{a,\nu}}{V_\nu}.
\]
Since the \(e_{a,\nu}\) are bounded,
\eqref{eq:selector-pull-average-limit} and
\eqref{eq:selector-exceptional-mass} give
\(c_{\det,\nu}\to\lambda_{\det,\infty}\).
Under the fixed identification of the determinant lines on \(U\),
\[
 \frac{Q_{Y,\nu}}{Q_{X,\nu}}
 =\exp\left(r_\nu+\sum_a e_{a,\nu}\widehat g_{a,\nu}\right)
 \longrightarrow1
 \quad\text{in }C^\infty_{\mathrm{loc}}(U)
\]
by \eqref{eq:selector-pull-csmooth} and
\eqref{eq:selector-exceptional-limit}. Together with Step~1, this
proves \eqref{eq:selector-common-open-limit} and the boundary formula
for \(Q_{X,\infty}\).

\noindent\textbf{Step 4. Comparison with \(q_\nu\).}
The two metrics \(Q_{Y,\nu}\) and \(q_\nu\) have the same
local monomial factors along every component of \(B\). Consequently
\[
 \ell_\nu=\log\frac{Q_{Y,\nu}}{q_\nu}
\]
extends to a smooth real function on \(Y\). Both metrics have
constant contracted curvature, so \(P_\nu\ell_\nu\) is constant.
Stokes' theorem gives \(\int_Y P_\nu\ell_\nu\,dV_{\omega_\nu}=0\);
hence \(P_\nu\ell_\nu=0\). On compact connected \(Y\), the maximum
principle implies that \(\ell_\nu\) is constant. Thus
\[
 Q_{Y,\nu}=a_\nu q_\nu,\qquad a_\nu=e^{\ell_\nu}>0.
\]

\medskip
\noindent\textbf{Step 5. Convergence of the comparison metrics.}
Since \(\det\widehat g_\nu=q_\nu\), the metric in \textup{(3)} is
\(k_\nu=a_\nu^{1/r}\widehat g_\nu\). Multiplication by the positive
constant \(a_\nu^{1/r}\) preserves every growth
lattice, the Chern connection, the Higgs adjoint, and all endomorphism
norms. The determinant of \(k_\nu\) is \(Q_{Y,\nu}\).

To prove convergence, use the local metrics \(g_{\lambda,\nu}\) and
\(\widetilde g_{\lambda,\nu}\) from Lemma~\ref{lem:doubled-reference}.
Replacing \(q_\nu\) by \(Q_{Y,\nu}=a_\nu q_\nu\) in the local
normalization gives
\[
 a_\nu^{1/r}\widetilde g_{\lambda,\nu}
 =\left(\frac{Q_{Y,\nu}}{\det g_{\lambda,\nu}}\right)^{1/r}
   g_{\lambda,\nu}.
\]
On compact subsets of each chart outside \(B\), the monomial factors
in \(g_{\lambda,\nu}\) converge smoothly as the weights converge,
while the frames and logarithmic factors are fixed. The interior
metrics are independent of \(\nu\). Thus the local metrics on the
right converge smoothly to positive metrics, by \textup{(2)}.
Their sum with the fixed partition of unity also has a smooth positive
limit. Normalizing the determinant of this sum to \(Q_{Y,\nu}\)
gives exactly \(a_\nu^{1/r}\widehat g_\nu=k_\nu\), and preserves
smooth convergence and positivity on compact subsets of \(U\).
This proves \(k_\nu\to k_\infty\); taking determinants gives
\(\det k_\infty=Q_{X,\infty}\).
\end{proof}

\subsection{Uniform analytic estimates for the K\"ahler metrics}
\label{subsec:uniform-kahler-estimates}
For the K\"ahler metrics \(\omega_\nu\), let
\(\mathcal L_\nu=d_{\omega_\nu}^{*}d\) denote the Laplacian on
functions on \(Y\). We first verify the volume and entropy bounds
needed to obtain Sobolev, heat-kernel, and Green-operator estimates
with constants independent of \(\nu\). These estimates control the
Poisson equations used for determinant normalization in
Proposition~\ref{prop:determinant-selector-limit} and for the barriers
that give bundle metric comparisons in
Proposition~\ref{prop:source-carrier}.

\begin{prop}
\label{prop:amplitude-gpss-ledger}
Let \(\omega_\nu\) be the K\"ahler metrics defined by
\eqref{eq:kahler-schedule-form}, with the parameters chosen in
Corollary~\ref{cor:stable-rational-approximations}. Retain its exponent
\(p>n\) and the normalization \(\int_Y\vartheta^n=1\), and put
\[
 V_\nu=\int_Y\frac{\omega_\nu^n}{n!},\qquad
 I_\nu=\int_Y\omega_\nu\wedge\vartheta^{n-1},\qquad
 \varrho_\nu=\frac{\omega_\nu^n}{n!V_\nu\vartheta^n}.
\]
There are constants \(V_\pm,I_\pm>0\) and \(K<\infty\), independent
of \(\nu\), such that
\[
 V_-\le V_\nu\le V_+,\qquad I_-\le I_\nu\le I_+,
\]
and
\[
 \int_Y|\log\varrho_\nu|^p\varrho_\nu\,\vartheta^n\le K.
 \tag{7.12}\label{eq:amplitude-entropy}
\]
The continuous function
\[
 \gamma=\frac{(\pi^*\omega)^n}{n!V_+\vartheta^n}
\]
is independent of \(\nu\) and satisfies
\[
 \varrho_\nu\ge\gamma,\qquad
 \dim_{\mathrm H}\{\gamma=0\}\le2n-2<2n-1.
 \tag{7.13}\label{eq:density-zero-dimension}
\]
There are an exponent \(q=q(n,p)>1\) and a constant \(C_S>0\),
both independent of \(\nu\), such that every real function
\(v\in W^{1,2}(Y,\omega_\nu)\) satisfies
\[
 \left(\int_Y|v|^{2q}\,dV_{\omega_\nu}\right)^{1/q}
 \le C_S\int_Y\bigl(|dv|_{\omega_\nu}^2+|v|^2\bigr)
       \,dV_{\omega_\nu}.
\]
Here \(dV_{\omega_\nu}=\omega_\nu^n/n!\).
Moreover,
\[
 \frac{\omega_\nu^n}{n!}
 \rightharpoonup
 \frac{(\pi^*\omega)^n}{n!}
 \tag{7.14}\label{eq:amplitude-weak-volume}
\]
weakly as measures on \(Y\).
\end{prop}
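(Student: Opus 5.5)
The volume, mass and weak-convergence assertions are cohomological; the entropy bound is a local computation on the explicit potential; the Sobolev inequality then follows from a uniform-entropy Sobolev theorem for Kähler metrics. I would carry these out in that order.

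\emph{Volumes and masses.} By Corollary~\ref{cor:stable-rational-approximations}, \([\omega_\nu]=\pi^*[\omega]+\delta_\nu[\vartheta]\) with \(\delta_\nu\le\delta_1\). Hence \(V_\nu\) and \(I_\nu\) are polynomials in \(\delta_\nu\) with fixed coefficients, and they converge to \(\int_X\omega^n/n!>0\) and \(\int_Y\pi^*\omega\wedge\vartheta^{n-1}>0\). This gives the bounds \(V_\pm\) and \(I_\pm\) after discarding nothing, or after enlarging \(\nu_0\).

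\emph{Lower density bound.} From the proof of Lemma~\ref{lem:actual-kahler-smoothing}, \(\omega_\nu\ge\pi^*\omega+(\delta_\nu-C_BA_\nu\epsilon_\nu)\vartheta\ge\pi^*\omega\) as positive \((1,1)\)-forms. Determinants are monotone on positive forms, so \(\omega_\nu^n\ge(\pi^*\omega)^n\), and dividing by \(n!V_\nu\vartheta^n\le n!V_+\vartheta^n\) gives \(\varrho_\nu\ge\gamma\). The zero set of \(\gamma\) is the locus where \(d\pi\) fails to be invertible. This locus is contained in the exceptional divisor \(\pi^{-1}(Z)\), a complex hypersurface, so its Hausdorff dimension is at most \(2n-2\).

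\emph{Weak convergence of volume forms.} Write \(\omega_\nu^n-(\pi^*\omega+\delta_\nu\vartheta)^n\) as a sum of terms, each containing at least one factor \(A_\nu\ii\partial\bar\partial\Phi_\nu\). Pair with a test function \(f\) and integrate by parts to move \(\ii\partial\bar\partial\) onto \(f\). The resulting terms are controlled by \(\|A_\nu\Phi_\nu\|_{L^\infty}\le rA_\nu\cdot 2\to0\), times masses of products of closed positive forms of bounded class. This reduction works because \(\ii\partial\bar\partial f\) is bounded by \(C\vartheta\), and positivity lets us dominate the mixed products by cohomological quantities. The leftover \((\pi^*\omega+\delta_\nu\vartheta)^n\) converges smoothly. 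This proves \eqref{eq:amplitude-weak-volume}.

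\emph{Entropy bound (main obstacle).} Off \(B\) the perturbation is small, so it suffices to work in a divisor chart. There \eqref{eq:amplitude-differentiation} gives \(\omega_\nu\le C\vartheta+\sum_a\mathfrak b_\nu(s_a)\,\ii\partial s_a\wedge\bar\partial s_a/s_a\), and each normal direction contributes at most one rank-one term. Hence
\[
 \varrho_\nu\le C\prod_a\bigl(1+\mathfrak b_\nu(s_a)\,|z_a|^{0}\bigr),
\]
with \(\mathfrak b_\nu(x)\le\mathfrak m_\nu(x+\mathfrak m_\nu)^{\epsilon_\nu-1}\le C\mathfrak m_\nu^{\epsilon_\nu}\mathfrak m_\nu/(x+\mathfrak m_\nu)\). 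Note that \(\mathfrak m_\nu^{\epsilon_\nu}=n_\nu^{-(N+1)/n_\nu}\) is bounded above and below.

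The needed estimate is therefore \(\int(1+\log(1+\mathfrak m/(x+\mathfrak m)))^p(1+\mathfrak m/(x+\mathfrak m))\,dA\) uniformly in \(\mathfrak m\), with \(x=|z|^2\). Substituting \(x=\mathfrak m y\) reduces the singular part to \(\mathfrak m\int_0^{1/\mathfrak m}\log^p(2+1/y)(y+1)^{-1}\,dy=O(\mathfrak m\log^{p+1}(1/\mathfrak m))\), which is bounded. At crossings the factors are products of such one-variable quantities. By \((\log\prod)^p\le C\sum\log^p\) and Fubini, the bound reduces to the one-variable estimate. Where \(\varrho_\nu\) is small, \(|\log\varrho_\nu|^p\varrho_\nu\) is bounded, so no lower bound is needed.

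\emph{Sobolev inequality.} Given the uniform entropy bound \eqref{eq:amplitude-entropy} with \(p>n\), fixed reference \(\vartheta\), and bounded volume, invoke the Guo--Phong--Song--Sturm Sobolev inequality. It holds for Kähler metrics in a bounded family of classes with \(L^1\)-bounded \(\omega_\nu\wedge\vartheta^{n-1}\) mass (here \(I_\nu\le I_+\)) and \(p\)-entropy bounded. It yields \(q(n,p)>1\) and \(C_S\) independent of \(\nu\). The hypotheses recorded above, namely the volume, mass and entropy bounds and the lower density function \(\gamma\) with small-dimensional zero set, are exactly those needed to make that theorem's constants uniform.
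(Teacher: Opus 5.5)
Your proposal is correct. Its volume, lower-density and Sobolev steps match the paper's. The paper likewise applies Guo--Phong--Song--Sturm, Theorem~2.1, and then uses the bounds on $V_\nu$, $I_\nu$ with H\"older's inequality on the averages to pass from the mean-zero inequality to the stated inhomogeneous one; you should record that passage.

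The difference lies in the entropy and weak-convergence steps, and there you have misplaced the difficulty. Your own estimate $\mathfrak b_\nu(x)\le\mathfrak m_\nu(x+\mathfrak m_\nu)^{\epsilon_\nu-1}\le\mathfrak m_\nu^{\epsilon_\nu}\le1$, together with $|\partial s_a|_\vartheta^2\le Cs_a$, already gives the pointwise bound $\omega_\nu\le C\vartheta$. Hence $0<\varrho_\nu\le C'$ uniformly in $\nu$, which is exactly what the paper proves. The entropy bound is then immediate, because $x|\log x|^p$ is bounded on $[0,C']$. There is no singular part to integrate: the integrand you write is bounded by $2(1+\log 2)^p$. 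Your substitution producing $\log^p(2+1/y)$ does not correspond to that integrand, although the conclusion is right.

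The paper gets the weak convergence of volume forms from the same uniform density bound, by dominated convergence. It uses pointwise convergence of the densities on $U$ and $\vartheta^n(B)=0$. Your integration-by-parts argument is a valid alternative. It needs no pointwise upper bound, only $\|A_\nu\Phi_\nu\|_{L^\infty}\to0$ and bounded cohomological masses, so it would survive a smoothing whose densities blow up.

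To make your positivity step literally correct, use the telescoping identity $\omega_\nu^n-(\omega_\nu^0)^n=\ii\partial\bar\partial(A_\nu\Phi_\nu)\wedge\sum_{j=0}^{n-1}\omega_\nu^j\wedge(\omega_\nu^0)^{n-1-j}$, where $\omega_\nu^0=\pi^*\omega+\delta_\nu\vartheta$. A binomial expansion instead produces factors $(A_\nu\ii\partial\bar\partial\Phi_\nu)^k$, which are not positive. After that, extend from smooth to continuous test functions using positivity and convergence of the total masses.
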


\begin{proof}
Lemma~\ref{lem:actual-kahler-smoothing}, applied to these parameters,
gives the cohomology identity
\([\omega_\nu]=\pi^*[\omega]+\delta_\nu[\vartheta]\) and
\[
 \omega_\nu\ge\pi^*\omega+
 (C_\delta-C_B)\mathfrak m_\nu\vartheta
 \ge\pi^*\omega.
 \tag{7.15}\label{eq:amplitude-positive-floor}
\]

Write
\[
 J_k=\int_Y(\pi^*\omega)^{n-k}\wedge\vartheta^k.
\]
Every \(J_k\) is nonnegative, \(J_n=1\), and
\[
 J_0=\int_X\omega^n>0
\]
because \(\pi\) is a degree-one modification. Hence
\[
 V_\nu=\frac1{n!}\sum_{k=0}^n\binom nk\delta_\nu^kJ_k,
 \qquad
 I_\nu=J_{n-1}+\delta_\nu.
\]
Moreover \(J_{n-1}>0\): on the dense open set where
\(\pi\) is biholomorphic, \(\pi^*\omega\) is positive definite, so
\(\pi^*\omega\wedge\vartheta^{n-1}\) is strictly positive there.
Since \(\delta_\nu>0\) and \(\delta_\nu\to0\), these identities give the uniform
volume and intersection bounds.

The prescribed parameters also give a uniform upper bound for the
metrics. Write \(h=\eta_\nu^2=A_\nu\epsilon_\nu\), with
\(0<h,\epsilon_\nu\le1\). The coefficient \(\mathfrak b_\nu\) from
\eqref{eq:amplitude-radial-coefficient} satisfies
\[
 \mathfrak b_\nu(s)
 =h(s+h)^{\epsilon_\nu-2}(h+\epsilon_\nu s)
 \le h(s+h)^{\epsilon_\nu-1}
 \le h^{\epsilon_\nu}\le1.
\]
The other coefficient in \eqref{eq:amplitude-differentiation}
satisfies \(h\,s(s+h)^{\epsilon_\nu-1}\le h\), since \(s\le1\).
For a squared norm \(s\) of a divisor section,
\(|\partial s|_\vartheta^2\le Cs\), while the smooth extension
of \(\ii\partial\bar\partial\log s\) is bounded.
Thus \eqref{eq:amplitude-differentiation}, summed over the finite
divisor components, gives \(\omega_\nu\le C\vartheta\).
Together with \(V_\nu\ge V_-\), this yields
\[
 0<\varrho_\nu\le C'.
\]
The function \(x|\log x|^p\), extended by zero at \(x=0\), is bounded
on \([0,C']\). Since \(\int_Y\vartheta^n=1\), the entropy bound
\eqref{eq:amplitude-entropy} follows.

Equation \eqref{eq:amplitude-positive-floor} implies
\(\omega_\nu\ge\pi^*\omega\). Monotonicity of determinants and
\(V_\nu\le V_+\) give
\[
 \varrho_\nu\ge\frac{(\pi^*\omega)^n}{n!V_+\vartheta^n}
 =\gamma.
\]
The function \(\gamma\) is continuous. Its zero set is the critical
locus of the modification, a proper complex analytic subset of complex
codimension at least one. Thus its real Hausdorff dimension is at most
\(2n-2\), proving \eqref{eq:density-zero-dimension}.

In the notation of \cite{GPSS23}, the entropy and lower-density bounds
give \(\omega_\nu\in W(n,p,\infty,K,\gamma)\) for every \(\nu\).
Theorem~2.1 of that paper gives a common exponent \(q=q(n,p)>1\)
and a normalized Sobolev inequality for functions with their averages
subtracted. The bounds for \(V_\nu\) and \(I_\nu\), together with
H\"older's inequality for the averages, give the stated Sobolev
inequality for arbitrary \(v\).

Finally, the densities of \(\omega_\nu^n/n!\) with respect to
\(\vartheta^n\) are uniformly bounded. They converge pointwise on
\(U\) to the density of \((\pi^*\omega)^n/n!\), and \(B\) has
\(\vartheta^n\)-measure zero. Dominated convergence proves
\eqref{eq:amplitude-weak-volume}.
\end{proof}

\begin{samepage}
\begin{cor}
\label{cor:pullback-gaussian}
Retain the K\"ahler metrics \(\omega_\nu\)
of Proposition~\ref{prop:amplitude-gpss-ledger}. Let
\(\mathcal L_\nu=d_{\omega_\nu}^{*}d\) be the nonnegative Laplacian on
functions, and let \(\mathsf H_\nu(t;x,y)\) be its heat kernel with
respect to \(dV_{\omega_\nu}\). There are constants \(\sigma>1\) and
\(C_H<\infty\), independent of \(\nu\), such that
\[
 \mathsf H_\nu(t;x,y)
 \le C_Ht^{-\sigma}
 \exp\left(
 -\frac{d_\omega(\pi(x),\pi(y))^2}{10t}
 \right)
 \tag{7.16}\label{eq:stage-heat-kernel-bound}
\]
for all \(x,y\in Y\) and \(0<t\le1\).
\end{cor}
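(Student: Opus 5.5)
The plan is to derive the heat kernel bound from the uniform Sobolev inequality of Proposition~\ref{prop:amplitude-gpss-ledger}, combined with a Davies-type exponential perturbation that uses a weight built from the fixed metric \(\omega\) on \(X\). The on-diagonal part comes first. The Sobolev inequality with exponent \(q>1\), uniform in \(\nu\), yields a Nash-type inequality. Moser iteration, or equivalently the Varopoulos/Carlen--Kusuoka--Stroock argument, then gives an ultracontractivity bound for the semigroup \(e^{-t\mathcal L_\nu}\) on \(0<t\le1\):
\[
 \|e^{-t\mathcal L_\nu}\|_{L^1\to L^\infty}\le Ct^{-\sigma},
 \qquad \sigma=\frac{q}{q-1}>1.
\]
The lower-order term \(|v|^2\) in the Sobolev inequality only contributes a factor \(e^{t}\le e\) for \(t\le1\). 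All constants depend only on \(C_S\), \(q\) and the uniform volume bounds \(V_\pm\).

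Next comes the off-diagonal Gaussian factor, which I would obtain by Davies' method. Fix a point \(y_0\in X\) and let \(\psi\) be a Lipschitz function on \(X\), for example a smoothed truncation of \(d_\omega(\cdot,y_0)\), with \(|d\psi|_\omega\le1\). The key observation is that the pulled-back weight \(\phi=\pi^*\psi\) satisfies
\[
 |d\phi|_{\omega_\nu}\le|d\phi|_{\pi^*\omega}\le1.
\]
This holds because \(\omega_\nu\ge\pi^*\omega\) by \eqref{eq:amplitude-positive-floor}, so the dual norm on one-forms decreases; on the degeneracy locus of \(\pi^*\omega\) the inequality is interpreted through the pullback of covectors, which is harmless since \(d\phi=\pi^*d\psi\). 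Consider the twisted semigroup \(e^{\alpha\phi}e^{-t\mathcal L_\nu}e^{-\alpha\phi}\). The standard energy computation gives \(L^2\to L^2\) norm at most \(e^{\alpha^2t}\). Repeating the Nash/Moser iteration for the twisted operator, again using only the uniform Sobolev inequality, gives
\[
 \|e^{\alpha\phi}e^{-t\mathcal L_\nu}e^{-\alpha\phi}\|_{L^1\to L^\infty}
 \le Ct^{-\sigma}e^{(1+\epsilon)\alpha^2t}.
\]
From this one gets \(\mathsf H_\nu(t;x,y)\le Ct^{-\sigma}\exp\bigl((1+\epsilon)\alpha^2t-\alpha(\phi(x)-\phi(y))\bigr)\). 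Optimizing over \(\alpha\) and over the choices \(\psi\approx d_\omega(\cdot,\pi(y))\) produces the factor \(\exp(-d^2/(4(1+\epsilon)t))\). Any fixed \(\epsilon\) keeps this below \(\exp(-d^2/(10t))\), so there is room to lose constants. Enlarging \(C_H\) absorbs the resulting polynomial loss.

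The main obstacle is making the twisted iteration genuinely uniform in \(\nu\). Cross terms of the form \(\alpha\,d\phi\cdot dv\) must be controlled by \(|d\phi|_{\omega_\nu}\) alone, and they must not require curvature or volume doubling for \(\omega_\nu\). The metrics \(\omega_\nu\) degenerate near \(B\), so no Ricci lower bound is available. This is why I would use Davies' purely functional route, with Sobolev as the only geometric input, rather than Li--Yau. If the twisted Moser iteration proves awkward, a fallback is Grigor'yan's integrated maximum principle: prove the weighted \(L^2\) estimate \(\int \mathsf H_\nu(t;x,\cdot)^2e^{d^2/(2t)}\,dV_{\omega_\nu}\le Ct^{-\sigma}\) using the same gradient bound for \(\phi\), and then apply the semigroup property.
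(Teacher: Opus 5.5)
Your proposal is correct, but it is organized differently from the paper's proof.

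\textbf{The paper's route.} The paper does no heat-kernel analysis of its own. Proposition~\ref{prop:amplitude-gpss-ledger} places every \(\omega_\nu\) in a common class \(W(n,p,\infty,K,\gamma)\) with uniform bounds on \(V_\nu\) and \(I_\nu\). The paper then quotes the Gaussian bound of \cite[Theorem~2.2]{GPSS23}, which is stated in the geodesic distance \(d_{\omega_\nu}\). Only afterwards does it use \(\omega_\nu\ge\pi^*\omega\), via the curve-length comparison \(\operatorname{Length}_{\omega_\nu}(\gamma)\ge\operatorname{Length}_{\omega}(\pi\circ\gamma)\), to replace \(d_{\omega_\nu}(x,y)\) by the smaller quantity \(d_\omega(\pi(x),\pi(y))\).

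\textbf{Your route.} You run Davies' exponential-perturbation argument yourself, with the uniform Sobolev inequality as the only analytic input. The same inequality \(\omega_\nu\ge\pi^*\omega\) enters as a gradient bound for weights pulled back from \(X\). The two uses are dual: \(d_{\omega_\nu}(x,y)=\sup\{\phi(x)-\phi(y):|d\phi|_{\omega_\nu}\le1\}\), and your pulled-back weights form a subclass of the admissible \(\phi\). In fact, the cleanest way to justify \(|d(\pi^*\psi)|_{\omega_\nu}\le1\) almost everywhere, when \(\psi\) is merely Lipschitz such as \(d_\omega(\cdot,\pi(y))\), is to note that \(\pi^*\psi\) is \(1\)-Lipschitz for \(d_{\omega_\nu}\). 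That is exactly the paper's curve comparison.

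\textbf{What each buys.} The paper's route is short and delegates the semigroup theory to a reference whose hypotheses were already verified. Yours is self-contained modulo standard material, for example the log-Sobolev form of Davies' method. It uses nothing beyond the Sobolev inequality of Proposition~\ref{prop:amplitude-gpss-ledger} for \(0<t\le1\). It also yields the sharper Gaussian constant \(4(1+\epsilon)\) in place of \(10\).

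\textbf{A small correction to your fallback.} Grigor'yan's weighted estimate
\(\int_Y\mathsf H_\nu(t;x,z)^2e^{d_\omega(\pi(x),\pi(z))^2/(Dt)}\,dV_{\omega_\nu}(z)\le Ct^{-\sigma}\)
requires \(D>2\). Already for the Euclidean heat kernel the integrand becomes constant at \(D=2\), so the bound fails there. With \(D\) slightly above \(2\), the semigroup step still gives the factor \(\exp\bigl(-d_\omega(\pi(x),\pi(y))^2/(2Dt)\bigr)\), which lies well inside \(\exp(-d^2/(10t))\).
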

\end{samepage}

\begin{proof}
Proposition~\ref{prop:amplitude-gpss-ledger} places the metrics \(\omega_\nu\)
in a common \(W(n,p,\infty,K,\gamma)\) class and supplies
uniform positive lower and finite upper bounds for \(V_\nu\) and
\(I_\nu\). Let \(q>1\) be its Sobolev exponent.
The Gaussian estimate in the geodesic
distance of \(\omega_\nu\), given by
\cite[Theorem~2.2]{GPSS23}, supplies an exponent
\[
 \sigma=\frac{q(n,p)}{q(n,p)-1}>1
\]
and a uniform constant such that
\[
 \mathsf H_\nu(t;x,y)
 \le C_Ht^{-\sigma}
 \exp\left(
 -\frac{d_{\omega_\nu}(x,y)^2}{10t}
 \right),
 \qquad 0<t\le1,
\]
where \(d_{\omega_\nu}\) is the geodesic distance of \(\omega_\nu\).
The common volume and intersection
bounds absorb the two time regimes in the cited estimate into the single
constant \(C_H\). The cited heat kernel is continuous for every smooth
compact metric, so the estimate holds for all \(x,y\in Y\).

By \eqref{eq:amplitude-positive-floor}, \(\omega_\nu\ge\pi^*\omega\).
Consequently every piecewise smooth curve \(\gamma\) from \(x\) to \(y\)
satisfies
\[
 \operatorname{Length}_{\omega_\nu}(\gamma)
 \ge
 \operatorname{Length}_{\omega}(\pi\circ\gamma).
\]
Taking the infimum over all such curves proves
\[
 d_{\omega_\nu}(x,y)
 \ge d_\omega(\pi(x),\pi(y)).
\]
Substituting this distance comparison into the preceding Gaussian estimate proves
\eqref{eq:stage-heat-kernel-bound}.
\end{proof}

\begin{prop}
\label{prop:uniform-green}
Retain the metrics \(\omega_\nu\) of
Proposition~\ref{prop:amplitude-gpss-ledger}, and put
\(\mathcal L_\nu=d_{\omega_\nu}^{*}d\) and
\(d\mu_\nu=dV_{\omega_\nu}=\omega_\nu^n/n!\).
Let \(G_\nu\) be the
inverse of \(\mathcal L_\nu\) on mean-zero functions. There is a constant
\(C_G\), independent of \(\nu\), such that
\[
 \|G_\nu F\|_{L^\infty(Y)}\le C_G\|F\|_{L^\infty(Y)}
 \tag{7.17}\label{eq:uniform-green-bound}
\]
whenever \(\int_YF\,d\mu_\nu=0\). Let \(G_\nu(x,y)\) denote the normalized
mean-zero Green kernel. After enlarging \(C_G\) if necessary, we also have
\[
 \sup_{\nu}\sup_{x\in Y}
 \int_Y|G_\nu(x,y)|\,d\mu_\nu(y)\le C_G.
 \tag{7.18}\label{eq:uniform-green-kernel-l1}
\]
\end{prop}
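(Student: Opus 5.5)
The plan is to derive both estimates from the uniform Sobolev inequality of Proposition~\ref{prop:amplitude-gpss-ledger} together with the uniform heat-kernel bound of Corollary~\ref{cor:pullback-gaussian}. The one ingredient still missing is a spectral gap. I would first prove a uniform lower bound \(\lambda_1(\mathcal L_\nu)\ge\lambda_*>0\) for the first positive eigenvalue.

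\textbf{Spectral gap.} I would argue by contradiction. Suppose \(v_\nu\) are mean-zero, normalized in \(L^2(\mu_\nu)\), with \(\int|dv_\nu|^2_{\omega_\nu}d\mu_\nu\to0\). Since \(\omega_\nu\to\pi^*\omega\) smoothly on compact subsets of \(U\), the \(v_\nu\) are bounded in \(W^{1,2}_{\mathrm{loc}}(U)\). A subsequence therefore converges locally in \(L^2\) to a locally constant function, hence a constant \(c\), because \(U\) is connected.

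To rule out escape of \(L^2\)-mass to \(B\), I would use the Sobolev inequality. It bounds \(\|v_\nu\|_{L^{2q}(\mu_\nu)}\) uniformly. Combining this with \(\Vol_{\omega_\nu}(\pi^{-1}N)\to\Vol_\omega(N)\) for small neighborhoods \(N\) of \(D\), which follows from \eqref{eq:amplitude-weak-volume}, Hölder's inequality makes the tail \(\int_{\pi^{-1}N}|v_\nu|^2\) uniformly small. We then get strong \(L^2\) convergence with respect to the converging measures. Passing to the limit, the normalization gives \(\|c\|_{L^2}=1\), while the mean-zero condition gives \(c=0\), a contradiction.

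\textbf{\(L^\infty\) bound.} For mean-zero \(F\), I would write
\[
 G_\nu F=\int_0^\infty e^{-t\mathcal L_\nu}F\,dt .
\]
For \(0<t\le1\), the semigroup is an \(L^\infty\) contraction, since it is Markovian, so this piece contributes at most \(\|F\|_\infty\). For \(t\ge1\), I would use the decomposition
\[
 e^{-t\mathcal L_\nu}=e^{-\frac12\mathcal L_\nu}\,e^{-(t-1)\mathcal L_\nu}\,e^{-\frac12\mathcal L_\nu}.
\]
The two outer factors map \(L^2\to L^\infty\) and \(L^\infty\to L^2\) respectively, with uniform norms coming from \eqref{eq:stage-heat-kernel-bound} at \(t=\frac12\) (or directly from Sobolev via Nash--Moser). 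The middle factor decays like \(e^{-\lambda_*(t-1)}\) on mean-zero functions. This gives an exponentially integrable bound, and \eqref{eq:uniform-green-bound} follows.

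\textbf{Kernel \(L^1\) bound.} The same decomposition yields the pointwise estimate
\[
 |\mathsf H_\nu(t;x,y)-V_\nu^{-1}|\le Ce^{-\lambda_*t}\qquad(t\ge1),
\]
uniformly in \(\nu\). The kernel is \(G_\nu(x,y)=\int_0^\infty(\mathsf H_\nu-V_\nu^{-1})\,dt\). Its \(t\ge1\) part is bounded by \(C/\lambda_*\), which suffices because \(\mu_\nu(Y)\le V_+\). For the \(t\le1\) part, \(\int_Y\mathsf H_\nu(t;x,y)\,d\mu_\nu(y)=1\) bounds its \(L^1\) norm in \(y\) by \(1+V_+/V_-\). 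Together these give \eqref{eq:uniform-green-kernel-l1}.

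\textbf{Main obstacle.} The hardest step is the uniform spectral gap, and specifically the non-concentration of mass near the degenerating divisor. This is exactly where the uniform Sobolev exponent \(q>1\) is essential.
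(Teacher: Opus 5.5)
Your proof is correct and follows the paper's route for the two main steps. The spectral gap is obtained by compactness on the connected set $U$, with Sobolev plus H\"older preventing $L^2$-mass from concentrating in the small-volume divisor collars. The bound \eqref{eq:uniform-green-bound} comes from splitting $\int_0^\infty P_{\nu,t}F\,dt$ at $t=1$: the Markov contraction handles short times, and the heat-kernel bound at $t=\tfrac12$ combined with the gap handles long times.

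The only real difference is in \eqref{eq:uniform-green-kernel-l1}. The paper deduces it from \eqref{eq:uniform-green-bound} by duality. It sets $\psi=\operatorname{sgn}G_\nu(x,\cdot)$ and uses that the kernel has mean zero in $y$, which gives $\int_Y|G_\nu(x,y)|\,d\mu_\nu(y)=G_\nu(\psi-\bar\psi)(x)\le 2C_G$. You instead integrate the pointwise decay $|\mathsf H_\nu(t;x,y)-V_\nu^{-1}|\le Ce^{-\lambda_*t}$ for $t\ge1$, which follows from $\mathsf H_\nu(1;x,x)\le C_H$ and the $L^2$ gap.

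The duality trick needs no kernel analysis beyond the operator bound. Your route yields the pointwise long-time decay explicitly. That decay is exactly what the paper later invokes, without spelling it out, to get the uniform off-diagonal Green kernel bound in Step~2 of Proposition~\ref{prop:determinant-selector-limit}.
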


\begin{proof}
The Sobolev estimate in
Proposition~\ref{prop:amplitude-gpss-ledger}, written using
\(d\mu_\nu=dV_{\omega_\nu}\), is
\[
 \|v\|_{L^{2q}(d\mu_\nu)}^2
 \le C\bigl(\|dv\|_{L^2(d\mu_\nu)}^2+\|v\|_{L^2(d\mu_\nu)}^2\bigr)
 \tag{7.19}\label{eq:uniform-sobolev}
\]
for \(q=q(n,p)>1\).

We next prove a uniform Poincaré inequality.
Otherwise there are mean-zero \(v_\nu\) with \(\|v_\nu\|_2=1\) and
\(\|dv_\nu\|_2\to0\). On each compact subset of \(U\), compact metric
convergence and Rellich compactness give a constant local limit. The
constants agree on overlapping compact sets because \(U\) is connected.
Weak volume convergence
\eqref{eq:amplitude-weak-volume} shows that sufficiently deep divisor
collars have uniformly small volume. For every such collar \(E\),
Hölder's inequality gives
\[
 \int_E|v_\nu|^2\,d\mu_\nu
 \le\mu_\nu(E)^{1-1/q}
 \|v_\nu\|_{L^{2q}(d\mu_\nu)}^2.
\]
Since \(q>1\), equation \eqref{eq:uniform-sobolev} prevents
\(L^2\)-mass from escaping into those collars. Thus the common local
constant has norm one and mean zero, a contradiction. Hence the first
positive eigenvalue of every \(\mathcal L_\nu\) is bounded below by one
positive constant.

Let \(P_{\nu,t}\) be the heat semigroup and \(\Pi_\nu\) projection onto the
constants. The Markov property gives
\(\|P_{\nu,t}\|_{\infty\to\infty}\le1\). The heat-kernel bound
\eqref{eq:stage-heat-kernel-bound} from
Corollary~\ref{cor:pullback-gaussian}, evaluated at \(t=1/2\), together
with the uniform spectral gap and the semigroup property, gives
\[
 \|P_{\nu,t}-\Pi_\nu\|_{\infty\to\infty}
 \le Ce^{-\lambda t}\qquad(t\ge1).
\]
For mean-zero \(F\),
\[
 G_\nu F=\int_0^\infty P_{\nu,t}F\,dt.
\]
Splitting the integral at \(t=1\) proves
\eqref{eq:uniform-green-bound}. To prove
\eqref{eq:uniform-green-kernel-l1}, fix \(\nu\) and \(x\in Y\), and put
\[
 \psi(y)=\operatorname{sgn}G_\nu(x,y),\qquad
 \bar\psi=\frac{\int_Y\psi\,d\mu_\nu}{\int_Yd\mu_\nu}.
\]
Then \(\psi-\bar\psi\) has mean zero and
\(\|\psi-\bar\psi\|_{L^\infty}\le2\). Since
\(\int_YG_\nu(x,y)\,d\mu_\nu(y)=0\), the kernel representation and
\eqref{eq:uniform-green-bound} give
\[
 \int_Y|G_\nu(x,y)|\,d\mu_\nu(y)
 =G_\nu(\psi-\bar\psi)(x)\le2C_G.
\]
Enlarging \(C_G\) proves \eqref{eq:uniform-green-kernel-l1}.
\end{proof}

\subsection{Curvature estimates and scalar barriers}
\label{subsec:curvature-barriers}
We first obtain bounds for the Hermitian--Einstein error of the
comparison metrics. We then construct scalar barriers for comparing
them with Hermitian--Einstein metrics. Retain \(k_\nu\) and its positive limit
\(k_\infty\) from Proposition~\ref{prop:determinant-selector-limit},
on the common open set \(U=Y\setminus B=X\setminus D\). Put
\[
 c_\nu=\frac{2\pi c_{\det,\nu}}r,\qquad
 c_\infty=\frac{2\pi\lambda_{\det,\infty}}r,
\]
and define
\[
 \Gamma_\nu=
 \ii\Lambda_{\omega_\nu}
 \bigl(F_{k_\nu}+[\theta,\theta^{\dagger k_\nu}]\bigr)-c_\nu\Id_V,
 \qquad
 \mathfrak e_\nu=|\Gamma_\nu|_{\op,k_\nu}.
 \tag{7.20}\label{eq:defined-moving-source}
\]
The nonnegative scalar function \(\mathfrak e_\nu\)
is the operator norm of the Hermitian--Einstein error of \(k_\nu\).
Unless otherwise indicated, fiber
norms are Hilbert--Schmidt norms; the subscript \(\op\) denotes the
operator norm.

The convergence of \(k_\nu\) and \(c_\nu\) supplied by
Proposition~\ref{prop:determinant-selector-limit}, together with
\(\omega_\nu\to\omega\) in \(C^\infty_{\mathrm{loc}}(U)\) from
\eqref{eq:kahler-schedule-form}, gives
\[
 \Gamma_\nu\longrightarrow\Gamma_\infty
 \quad\text{in }C^\infty_{\mathrm{loc}}(U),\qquad
 \Gamma_\infty:=
 \ii\Lambda_\omega
 \bigl(F_{k_\infty}+[\theta,\theta^{\dagger k_\infty}]\bigr)
 -c_\infty\Id.
\]
\begin{prop}
\label{prop:local-contraction-estimates}
With the preceding metrics and the notation of
Setup~\ref{setup:literal-moving-schedule}, there is a constant \(C\),
independent of \(\nu\), such that on each chart \(U_\lambda\),
\[
 \mathfrak e_\nu\le
 C\left(\sum_i\Xi_{\lambda,i,\nu}
       +\sum_{i\ne k}\Xi_{\lambda,ik,\nu}\right).
 \tag{7.21}\label{eq:actual-source-row-bound}
\]
Moreover,
\[
 \sup_\nu\int_U\mathfrak e_\nu\,dV_{\omega_\nu}<\infty,
\]
and
\[
 \lim_{t\downarrow0}\sup_\nu
 \int_{U\cap\bigcup_b\{s_b<t\}}
 \mathfrak e_\nu\,dV_{\omega_\nu}=0.
\]
\end{prop}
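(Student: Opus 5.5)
The pointwise estimate \eqref{eq:actual-source-row-bound} comes from contracting the coefficient bound \eqref{eq:doubled-tensor-row} of Lemma~\ref{lem:doubled-reference} with \(\omega_\nu\). The integral bounds then require computing \(G_{\lambda,\nu}^{-1}\) near the divisor, using the explicit form \eqref{eq:kahler-schedule-form}.

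\emph{Step 1 (pointwise bound).} Decompose
\[
 \Gamma_\nu=\ii\Lambda_{\omega_\nu}\Psi^0_{k_\nu}
 +\Bigl(\tfrac1r\ii\Lambda_{\omega_\nu}\Tr F_{k_\nu}-c_\nu\Bigr)\Id_V .
\]
By Proposition~\ref{prop:determinant-selector-limit}(1),(3), \(\det k_\nu=Q_{Y,\nu}\) has contracted curvature exactly \(2\pi c_{\det,\nu}=rc_\nu\). The central part therefore vanishes, and \(\Gamma_\nu=\ii\Lambda_{\omega_\nu}\Psi^0_{k_\nu}\). Since \(k_\nu\) and \(\widehat g_\nu\) share their connection, adjoint and operator norms, \eqref{eq:doubled-tensor-row} gives
\[
 \mathfrak e_\nu\le\sum_{A,B}|(G_{\lambda,\nu}^{-1})^{A\bar B}|\,Ch_Ah_B .
\]
Split this sum into diagonal and off-diagonal terms, recognize \(\Xi_{\lambda,i,\nu}\) and \(\Xi_{\lambda,ik,\nu}\), and absorb the tangential indices, where \(h=1\). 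Note that \((G_{\lambda,\nu}^{-1})^{i\bar i}\) is at most \(\Lambda_{\pi^*\omega}\)-type quantities, which are not necessarily bounded near \(Z\). I therefore keep the source rows as written and do not bound them further at this step.

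\emph{Step 2 (integral of the rows).} By \eqref{eq:exact-mixed-row} it suffices to control \(\int\Xi_{\lambda,i,\nu}\,dV_{\omega_\nu}\). Use the identity
\[
 (G^{-1})^{i\bar i}\,dV_{\omega_\nu}
 =\ii\,dz_i\wedge d\bar z_i\wedge\frac{\widehat\omega_\nu^{\,n-1}}{(n-1)!}
\]
(up to normalization), where \(\widehat\omega_\nu\) is \(\omega_\nu\) with the \(i\)-th row and column removed, which is bounded by \(\omega_\nu^{n-1}\). Thus
\[
 \int\Xi_{\lambda,i,\nu}\,dV_{\omega_\nu}\le
 \int\frac{\ii\,dz_i\wedge d\bar z_i}{\rho_i\tau_i^2}\wedge\frac{\omega_\nu^{n-1}}{(n-1)!}.
\]
The form \(\ii\,dz_i\wedge d\bar z_i/(\rho_i\tau_i^2)\) is closed, positive, and equals the Poincaré-type current \(\ii\partial\bar\partial(-\log\tau_i)\) plus a bounded term; it has finite mass. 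Pairing it with \(\omega_\nu^{n-1}\) is cohomological: \([\omega_\nu]\) is bounded, and the Poincaré form has a potential \(-\log\tau_i\) that is integrable against the bounded densities of Proposition~\ref{prop:amplitude-gpss-ledger}. Integration by parts with the cutoffs of Lemma~\ref{lem:product-snc-cutoffs} then gives the uniform bound
\[
 \sup_\nu\int_U\mathfrak e_\nu\,dV_{\omega_\nu}<\infty .
\]
The \(\ii\partial\bar\partial\Phi\) part of \(\omega_\nu\) has to be handled explicitly here. Its normal coefficient \(\mathfrak b_\nu\) is bounded by one (as in that proposition) and contributes \(\mathfrak b_\nu\,|dz_i|^2/\rho_i\). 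Wedged against the Poincaré form, this term has a vanishing wedge in the \(dz_i\) direction, which is exactly why the inverse-matrix formula is useful: the normal direction is removed.

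\emph{Step 3 (uniform tail smallness).} On \(\{s_b<t\}\), repeat Step 2 with the integrand restricted to the collar. Bound \(\omega_\nu^{n-1}\le C\vartheta^{n-1}\) using the uniform upper bound \(\omega_\nu\le C\vartheta\) of Proposition~\ref{prop:amplitude-gpss-ledger}. The collar integral of the Poincaré form against \(\vartheta^{n-1}\) is
\[
 \int_{\rho<t}\frac{dr}{r\tau^2}=O\bigl(1/\log(1/t)\bigr)\to0
\]
uniformly in \(\nu\), and mixed or other-divisor terms are likewise bounded by the one-divisor Poincaré mass.

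The main obstacle is justifying this transfer to \(\vartheta\) in Step 3, and the inverse-matrix domination in Step 2 at crossings. The step I would check first is that \(\widehat\omega_\nu\le C\vartheta\) uniformly, which follows from the same coefficient bounds.
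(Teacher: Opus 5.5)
Your argument is correct, but the volume-density step takes a different and shorter route than the paper's.

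\textbf{How the paper does it.} The paper compares $G_{\lambda,\nu}$ with the model matrix $\mathcal G_\nu=J^*J+\diag(\delta_\nu+d_{1,\nu},\ldots,\delta_\nu+d_{n,\nu})$, built from the Jacobian of $\pi$. It expands the determinant and principal cofactors of $\mathcal G_\nu$ by Cauchy--Binet, which gives $\Xi_{\lambda,i,\nu}\,dV_{\omega_\nu}\le Ch_{\lambda,i}^2\Delta_{\lambda,i,\nu}\,d\lambda$ with $\Delta_{\lambda,i,\nu}\le C$. It then proves a uniform integral bound weighted by $1+\sum_a\log\tau_{\lambda,a}$, and obtains the tail estimate by dividing by this weight.

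\textbf{How you do it.} You use the exact identity $(G^{-1})^{i\bar i}\det G=\det G_{\hat i\hat i}$, i.e.\ $\Xi_{\lambda,i,\nu}\,dV_{\omega_\nu}=h_{\lambda,i}^2\,\ii\,dz_i\wedge d\bar z_i\wedge\omega_\nu^{n-1}/(n-1)!$. Combined with the uniform bound $\omega_\nu\le C\vartheta$ from the proof of Proposition~\ref{prop:amplitude-gpss-ledger}, this gives $\Xi_{\lambda,i,\nu}\,dV_{\omega_\nu}\le Ch_{\lambda,i}^2\,d\lambda$ directly. Both integral assertions then follow from $\int_0^t ds/(s\tau(s)^2)=1/\tau(t)$ for a collar in the same coordinate, and from the $O(t)$ area of a collar in a different coordinate.

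\textbf{Comparison.} For this proposition your Step~3 alone proves both the global bound and the uniform tail. The cohomological pairing and cutoff integration by parts in Step~2 can therefore be dropped. What the paper's heavier route buys is reuse. The Cauchy--Binet structure \eqref{eq:cb-polynomial} is needed again in Proposition~\ref{prop:source-carrier}. There, along an exceptional coordinate, the cofactor $\Delta_{\lambda,i,\nu}$ must be shown to be $O\bigl(\rho_{\lambda,i}+\sum_{k\ne i}(\delta_\nu+d_{\lambda,k,\nu})\bigr)$. This is what absorbs the exceptional logarithms $\log\bigl(C/(\delta_\nu^2+\rho_{\lambda,i})\bigr)$ uniformly in $\nu$. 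Your cruder bound $\det G_{\hat i\hat i}\le C$ would lose a factor of order $\log\log(1/\delta_\nu)$ there.

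\textbf{A correction in your Step~2.} The normal part of $A_\nu\ii\partial\bar\partial\Phi$ is $\mathfrak b_\nu(s_a)\,\ii\,\partial s_a\wedge\bar\partial s_a/s_a$. For $s_a=u|z_a|^2$ this equals $\mathfrak b_\nu u\,\ii\,\alpha_a\wedge\overline{\alpha_a}$ with $\alpha_a=dz_a+z_a\partial\log u$. Its coefficient is therefore bounded, not $\mathfrak b_\nu/\rho_a$.

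This matters. With a $1/\rho_a$ coefficient, a second divisor $a\ne i$ at a crossing would not be removed by wedging with $\ii\,dz_i\wedge d\bar z_i$. Since $\mathfrak b_\nu(0)=\mathfrak m_\nu^{\epsilon_\nu}>0$, the integral $\int\mathfrak b_\nu(s)\,ds/s$ diverges, and the argument would break. The bounded form is exactly what yields $\omega_\nu\le C\vartheta$, on which your Step~3 relies.
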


\begin{proof}
\noindent\textbf{Step 1. The pointwise error bound.}
Put
\[
 Z_\nu=
 \left(F_{\widehat g_\nu}
       +[\theta,\theta^{\dagger\widehat g_\nu}]\right)^0.
\]
Lemma~\ref{lem:doubled-reference} gives
\(\lvert(Z_\nu)_{i\bar k}\rvert_{\op,\widehat g_\nu}
 \le Ch_{\lambda,i}h_{\lambda,k}\).
By Proposition~\ref{prop:determinant-selector-limit}\textup{(3)},
\(Z_\nu=(F_{k_\nu}+[\theta,\theta^{\dagger k_\nu}])^0\), and the
same coefficient bound holds in the \(k_\nu\)-norm.
The determinant equation identifies the error with its contraction:
\[
 \ii\Lambda_{\omega_\nu}F_{\det k_\nu}=rc_\nu,\qquad
 \Gamma_\nu=\ii\Lambda_{\omega_\nu}Z_\nu.
\]
Contracting the coefficient bound with \(G_{\lambda,\nu}^{-1}\)
therefore proves \eqref{eq:actual-source-row-bound}.

\medskip
\noindent\textbf{Step 2. Cofactor bounds for the volume densities.}
Work on a shrunken chart and suppress \(\lambda\) in its local indices.
By a fixed finite refinement, we may assume that its closure maps
under \(\pi\) into a coordinate chart of \(X\). Let \(J\) be the
holomorphic Jacobian matrix of \(\pi\) in these coordinates.
The coefficient matrix of \(\pi^*\omega\) is uniformly comparable
with \(J^*J\).

Use the coefficient \(\mathfrak b_\nu\) computed in the proof of
Corollary~\ref{cor:stable-rational-approximations}, and put
\[
 d_{i,\nu}=
 \begin{cases}
 \mathfrak b_\nu(\rho_i),&i\in D_\lambda,\\
 0,&i\notin D_\lambda.
 \end{cases}
\]
We claim that the matrix \(G_\nu=G_{\lambda,\nu}\) satisfies
\[
 C^{-1}\mathcal G_\nu\le G_\nu\le C\mathcal G_\nu,\qquad
 \mathcal G_\nu:=
 J^*J+\diag(\delta_\nu+d_{1,\nu},\ldots,\delta_\nu+d_{n,\nu}),
 \tag{7.22}\label{eq:chart-actual-gram}
\]
with \(C\) independent of \(\nu\).
To check this, write \(s_b=u_i|z_i|^2\) for a component meeting the
chart, where \(u_i\) is smooth and strictly positive.
Formula \eqref{eq:amplitude-differentiation} writes its positive
contribution to \(\omega_\nu\) as
\[
 u_i\mathfrak b_\nu(s_b)\,
 \ii\alpha_i\wedge\overline{\alpha_i},
 \qquad \alpha_i=dz_i+z_i\partial\log u_i.
\]
The functions \(u_i,u_i^{-1}\) and the derivatives of \(u_i\) up to
order two are uniformly bounded. The explicit radial formula gives
\[
 u_i\mathfrak b_\nu(u_i\rho_i)\asymp d_{i,\nu},
 \qquad
 \rho_i u_i\mathfrak b_\nu(u_i\rho_i)\le\mathfrak m_\nu.
\]
Thus replacing \(\alpha_i\) by \(dz_i\) in either direction costs only
a fixed multiple of \(\mathfrak m_\nu\vartheta\), besides a fixed
factor in the positive normal term. The divisor-curvature terms and
the contributions of components not meeting the chart are also
\(O(\mathfrak m_\nu\vartheta)\). Finally,
Lemma~\ref{lem:actual-kahler-smoothing} gives
\[
 \omega_\nu\ge\pi^*\omega+
 (C_\delta-C_B)\mathfrak m_\nu\vartheta,
 \qquad \delta_\nu=C_\delta\mathfrak m_\nu.
\]
Thus each \(O(\mathfrak m_\nu\vartheta)\) error is bounded by a fixed
multiple of \(\omega_\nu\).
Applying \(\lvert v+w\rvert^2\le2\lvert v\rvert^2+2\lvert w\rvert^2\)
to \(\alpha_i=dz_i+z_i\partial\log u_i\), and then to
\(dz_i=\alpha_i-z_i\partial\log u_i\), proves
\eqref{eq:chart-actual-gram}.

Let \(\Delta_\nu=\det\mathcal G_\nu\), and let
\(\Delta_{i,\nu}\) be the determinant obtained by deleting the
\(i\)-th row and column of \(\mathcal G_\nu\).
For use here and in the barrier construction below, we record their expansion.
For \(S\subset\{1,\ldots,n\}\), put
\[
 M_S=\sum_{\substack{A\subset\{1,\ldots,n\}\\|A|=|S|}}
       |\det J_{A,S}|^2,\qquad M_\varnothing=1.
\]
For positive \(r_1,\ldots,r_n\), Cauchy--Binet gives
\[
 \begin{aligned}
 \Delta(r)&:=\det\bigl(J^*J+\diag(r_1,\ldots,r_n)\bigr)
       =\sum_{S\subset\{1,\ldots,n\}}M_S\prod_{j\notin S}r_j,\\
 \Delta_\nu&=\Delta(r_\nu),\qquad
 \Delta_{i,\nu}=(\partial_{r_i}\Delta)(r_\nu),\qquad
 r_{i,\nu}=\delta_\nu+d_{i,\nu}.
 \end{aligned}
 \tag{7.23}\label{eq:cb-polynomial}
\]
All minors of \(J\) are bounded on the fixed chart, \(\delta_\nu\) is
bounded, and \(0\le d_{i,\nu}\le1\). Hence
\(\Delta_{i,\nu}\le C\) uniformly.
Taking determinants and inverses in \eqref{eq:chart-actual-gram}, and
using the cofactor formula, gives
\[
 \det G_\nu\le C\Delta_\nu,\qquad
 (G_\nu^{-1})^{i\bar i}\le C\frac{\Delta_{i,\nu}}{\Delta_\nu}.
\]
Moreover, positivity implies
\(\lvert(G_\nu^{-1})^{i\bar k}\rvert
 \le\sqrt{(G_\nu^{-1})^{i\bar i}(G_\nu^{-1})^{k\bar k}}\).
Multiplying by the volume density cancels the determinant denominator.
With \(d\lambda\) denoting Euclidean volume in the chart, we obtain
\[
 \begin{aligned}
 \Xi_{i,\nu}\,dV_{\omega_\nu}
 &\le C h_i^2\Delta_{i,\nu}\,d\lambda,\\
 \Xi_{ik,\nu}\,dV_{\omega_\nu}
 &\le C h_ih_k\sqrt{\Delta_{i,\nu}\Delta_{k,\nu}}\,d\lambda.
 \end{aligned}
\]

\medskip
\noindent\textbf{Step 3. Integrability near the divisor.}
By Step~2, it suffices to integrate \(h_ih_k\) against Euclidean
volume with the weight \(1+\sum_a\log\tau_a\).
In a divisor coordinate, put \(s=|z_a|^2\) and let \(0<R<1\)
be its squared radius. A coordinate occurring twice in \(h_ih_k\)
requires
\[
 \int_0^R
 \frac{1+\log(1-\log s)}{s(1-\log s)^2}\,ds<\infty.
\]
For a coordinate occurring once, the denominator is instead
\(\sqrt{s}(1-\log s)\); for a coordinate absent from \(h_ih_k\),
there is no denominator. These integrals are also finite.
Fubini's theorem gives, for all \(i,k\),
\[
 \sup_\nu\int_{U_\lambda\setminus B}
 \Xi_{\lambda,ik,\nu}
 \left(1+\sum_{a\in D_\lambda}\log\tau_{\lambda,a}\right)
 dV_{\omega_\nu}\le C.
\]
On a shrunken chart, the norms of the divisor components meeting it
satisfy \(s_b=u_{\lambda,b}\rho_{\lambda,i}\), with bounded positive
units and bounded reciprocals; the other divisor norms are bounded
away from zero. The pointwise estimate
\eqref{eq:actual-source-row-bound}, the weighted bound just proved,
and the finite atlas therefore give
\[
 \sup_\nu\int_U\mathfrak e_\nu
 \left(1+\sum_b\log(1-\log s_b)\right)dV_{\omega_\nu}\le C.
\]
On \(\bigcup_b\{s_b<t\}\), for \(0<t<1\), the weight is at least
\(1+\log(1-\log t)\). Dividing the uniform bound by this quantity,
which tends to infinity as \(t\downarrow0\), proves the asserted
uniform boundary limit as well as the global \(L^1\) bound.
\end{proof}

We next construct scalar barrier functions to compare Hermitian--Einstein
metrics with \(k_\nu\). To explain their role, let \(H_\nu^{\mathrm D}\)
be a determinant-fixed central Hermitian--Einstein metric on a smooth
domain \(\Omega_\nu\Subset U\), with central constant \(c_\nu\) and
boundary value \(k_\nu\), as in Lemma~\ref{lem:carrier-response}.
Consider the nonnegative function
\[
 v_\nu=\log\frac{
 \Tr(k_\nu^{-1}H_\nu^{\mathrm D})
 +\Tr((H_\nu^{\mathrm D})^{-1}k_\nu)}{2r}.
\]
Applying the Bochner formula to the identity map in both directions
between \((V,k_\nu,\theta)\) and \((V,H_\nu^{\mathrm D},\theta)\),
as in the proof of Proposition~\ref{prop:compact-source-reference},
gives
\[
 \cL_\nu v_\nu\le2\mathfrak e_\nu
 \quad\text{on }\Omega_\nu,\qquad
 v_\nu=0\quad\text{on }\partial\Omega_\nu.
\]

We therefore seek a nonnegative function \(b_\nu\) satisfying
\(\cL_\nu b_\nu\ge2\mathfrak e_\nu\) outside a fixed compact
domain \(W_0\Subset U\). Whenever \(W_0\subset\Omega_\nu\), put
\(M_\nu=\sup_{W_0}v_\nu\). The maximum principle applied to
\(v_\nu-b_\nu\) gives
\[
 v_\nu\le M_\nu+b_\nu
 \quad\text{on }\Omega_\nu.
\]
Both traces in the definition of \(v_\nu\) are at most \(2re^{v_\nu}\),
so this gives the two-sided metric estimate
\[
 \frac{e^{-M_\nu-b_\nu}}{2r}\,k_\nu
 \le H_\nu^{\mathrm D}
 \le 2r\,e^{M_\nu+b_\nu}k_\nu
 \quad\text{on }\Omega_\nu.
\]
Thus a bound for \(v_\nu\) on the fixed compact set \(W_0\),
together with the barrier, controls the comparison of the two metrics
throughout the Dirichlet domain. The following proposition constructs
these barriers and bounds
\(\int_U\mathfrak e_\nu(1+b_\nu)\,dV_{\omega_\nu}\)
uniformly in \(\nu\). This integral controls the source pairing in the
normalized Donaldson identity; the boundary growth of the barriers
controls the parabolic growth of the limiting metric.

\begin{prop}
\label{prop:source-carrier}
Let \(k_\nu\) be the comparison metrics of
Proposition~\ref{prop:determinant-selector-limit}, let
\(\mathfrak e_\nu\) be defined by \eqref{eq:defined-moving-source}, and put
\(\cL_\nu=d_{\omega_\nu}^*d\).
There are connected compact domains with smooth boundary
\[
 W_0\Subset\operatorname{int}(W_1)\Subset U
\]
and nonnegative smooth functions \(b_\nu\) on \(U\) such that, for all
sufficiently large \(\nu\),
\[
 \cL_\nu b_\nu\ge2\mathfrak e_\nu
 \quad\text{on }U\setminus W_0,
 \tag{7.24}\label{eq:carrier-barrier}
\]
and
\[
 \sup_\nu\int_U\mathfrak e_\nu(1+b_\nu)\,dV_{\omega_\nu}<\infty.
 \tag{7.25}\label{eq:carrier-first-moment}
\]

The functions \(b_\nu\) are uniformly bounded on every compact subset
of \(U\). On each shrunken SNC chart \(U_\lambda\), let
\(E_\lambda\subset D_\lambda\) index the exceptional divisor components.
There is a constant \(C_\lambda>1\), independent of \(\nu\), such that
\[
 b_\nu\le C_\lambda\left(
 1+\sum_{a\in D_\lambda}\log\tau_{\lambda,a}
 +\sum_{a\in E_\lambda}
   \log\frac{C_\lambda}{\delta_\nu^2+\rho_{\lambda,a}}
 \right).
 \tag{7.26}\label{eq:carrier-growth}
\]
Thus, for fixed \(\nu\), the boundary growth is at most log--log.
On charts whose closures avoid the exceptional locus, this log--log
bound is uniform in \(\nu\).
\end{prop}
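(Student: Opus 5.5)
Given the chart estimate \eqref{eq:actual-source-row-bound}, I will build \(b_\nu\) as a sum of explicit local scalar functions, one for each divisor coordinate, glued with the fixed partition of unity and corrected by a global Green potential. Then I verify the subsolution property outside a compact core \(W_0\). On a divisor chart with coordinate \(z_a\), the model barrier is the log--log function
\[
 \beta_a=\log\tau_a=\log(1-\log\rho_a).
\]
Its complex Hessian is
\[
 \ii\partial\bar\partial\log\tau_a=\frac{\ii\,dz_a\wedge d\bar z_a}{\rho_a\tau_a^2}=h_a^2\,\ii\,dz_a\wedge d\bar z_a,
\]
which is semipositive. Contracting it with \(\omega_\nu\) gives \(\cL_\nu\beta_a\), up to the fixed sign and factor relating \(\cL_\nu\) and \(\ii\Lambda\partial\bar\partial\), as \(h_a^2(G_\nu^{-1})^{a\bar a}=\Xi_{a,\nu}\). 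So \(C\beta_a\) exactly dominates the diagonal source rows in \eqref{eq:exact-source-rows}. By \eqref{eq:exact-mixed-row}, the mixed rows are bounded by the diagonal ones, and interior rows \(i\notin D_\lambda\) are bounded by \(\Lambda_{\omega_\nu}\) of a smooth form. Hence on each chart \(\cL_\nu(C\sum_a\log\tau_a)\ge 2\mathfrak e_\nu-C'\) holds, uniformly in \(\nu\).

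Along an exceptional component the pullback \(\pi^*\omega\) degenerates. The only positive normal contribution is then \(\delta_\nu+d_{a,\nu}\), so the bounded remainder \(C'\) from cutoffs and divisor curvature (curvature terms bounded by \(\vartheta\)) is not controlled by \(\omega_\nu\) uniformly. For these components I add \(\log\bigl(C/(\delta_\nu^2+\rho_a)\bigr)\). Its Hessian is \(\delta_\nu^2\,\ii dz\wedge d\bar z/(\delta_\nu^2+\rho)^2\) minus a smooth term, and its \(\Lambda_{\omega_\nu}\) dominates \(\Lambda_{\omega_\nu}\vartheta\)-type errors near \(A_a\) via the cofactor bounds \eqref{eq:cb-polynomial}. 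This produces the last sum in \eqref{eq:carrier-growth}.

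Gluing: set \(\tilde b_\nu=\sum_\lambda\chi_\lambda\cdot(\text{local barrier})\). The cross terms \(d\chi_\lambda\wedge d(\log\tau)\) and the term \(\ii\partial\bar\partial\chi_\lambda\cdot\log\tau\) are supported where \(\chi_\lambda\) is nonconstant. I will arrange the cutoffs to be functions of \(s_b\) only and locally constant near \(B\). Then these errors are bounded functions \(f_\nu\) supported in a fixed compact set of \(U\), possibly with \(\log\)-weighted size from the exceptional term, still \(L^\infty\) uniformly. To absorb them and the constant \(C'\), I will solve \(\cL_\nu w_\nu=F_\nu-\bar F_\nu\), where \(F_\nu=|f_\nu|+C'\mathbf 1_{\text{collars}}\), and use Proposition~\ref{prop:uniform-green} for \(\|w_\nu\|_\infty\le C_G\|F_\nu\|_\infty\). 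The leftover mean \(\bar F_\nu\) is a positive constant deficit. It is beaten on \(U\setminus W_0\) by choosing \(W_0\) large enough that the source part dominates there, or else by adding a fixed multiple of a function \(\psi\in C^\infty(Y)\) with \(\cL_\nu\psi\ge c>0\) off \(W_0\). For the latter I take \(\psi=-\log\) of a smooth Green-type function with a pole inside \(W_0\), handled again by the Green bounds. Finally \(b_\nu=\tilde b_\nu+w_\nu+\text{const}\ge0\). Since \(w_\nu\) is uniformly bounded, \(b_\nu\) is uniformly bounded on compact subsets and satisfies \eqref{eq:carrier-growth}.

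The first-moment bound \eqref{eq:carrier-first-moment} then follows from Step~3 of Proposition~\ref{prop:local-contraction-estimates}, which already integrates \(\mathfrak e_\nu\) against the weight \(1+\sum\log\tau_a\) uniformly. For the exceptional logarithm I bound \(\mathfrak e_\nu\log(C/(\delta_\nu^2+\rho))\) using the Euclidean cofactor densities \(h_ih_k\sqrt{\Delta_{i}\Delta_k}\), with \(\Delta_{i,\nu}\) bounded. The factor \(h_a^2\log(1/\rho)\) is \(1/(\rho\tau_a)\), which is not integrable. So I expect the main obstacle here. I will first try to use the exceptional decay \(\Delta_{a,\nu}\lesssim\) minors of \(J\) vanishing on \(A_a\), which give an extra positive power of \(\rho_a\) from \(\codim\pi(A_a)\ge3\). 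Failing that, I will replace \(\log(C/(\delta_\nu^2+\rho))\) by a weaker barrier such as \(\log\log\) of the same quantity, at the cost of checking that its Hessian still controls the \(O(\mathfrak m_\nu\vartheta)\) errors.
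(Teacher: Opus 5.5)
\textbf{Gap: the remainder is not bounded near the exceptional locus.} Your use of $\log\tau_a$ (globally $\beta=\sum_b\log(1-\log s_b)$) for the divisor-normal rows matches the paper. The proposal fails at the claim that everything else is a remainder $C'$ bounded uniformly in $\nu$, and the exceptional logarithms you add do not repair this.

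\emph{The remainder is of size $R_\nu$.} The nondivisor rows $\Xi_{\lambda,i,\nu}=(G_{\lambda,\nu}^{-1})^{i\bar i}$, $i\notin D_\lambda$, of \eqref{eq:actual-source-row-bound}, the divisor-curvature terms $\Lambda_{\omega_\nu}(\ii\partial\bar\partial\log s_b)/(1-\log s_b)$, and all gluing terms are only $O(R_\nu)$, where $R_\nu=\Lambda_{\omega_\nu}\vartheta$. This quantity is unbounded near an exceptional component: since $\dim\pi(A_a)<\dim A_a$, the form $\pi^*\omega$ degenerates along directions \emph{tangent} to $A_a$. In a blow-up chart $x_1=u$, $x_j=uv_j$, one finds $(G^{-1})^{u\bar u}\approx1$ but $(G^{-1})^{v_j\bar v_j}\approx(|u|^2+\delta_\nu)^{-1}$. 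Hence $R_\nu\approx\rho_a^{-1}$ on $\{\delta_\nu\le\rho_a\ll1\}$, with $\rho_a=|u|^2$.

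\emph{Your functions cannot dominate it.} On that region $\cL_\nu\log\tau_a$ is only about $\rho_a^{-1}\tau_a^{-2}$, and $\cL_\nu\log(C/(\delta_\nu^2+\rho_a))$ only about $\delta_\nu^2\rho_a^{-2}$, because their Hessians are positive only in the normal direction. The smooth part of the Hessian of $\log(\delta_\nu^2+r_\ell)$ is itself an $O(R_\nu)$ error. It could help only if the metric on $\cO_Y(A_\ell)$ had curvature negative along the fibres, which is impossible in general because $\cO_Y(-A_\ell)$ need not be $\pi$-ample. No use of the cofactor expansion \eqref{eq:cb-polynomial} changes this.

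\emph{The missing idea is a function with vertical positivity.} The paper obtains it from the negativity lemma. There is an effective divisor $E_{\mathrm{exc}}=\sum_\ell a_\ell A_\ell$ with full exceptional support and $\cO_Y(-E_{\mathrm{exc}})$ $\pi$-ample. It carries a metric with $\ii\partial\bar\partial\log\rho_E\ge c\vartheta-C\pi^*\omega$. Then $\psi_{0,\nu}=\log(C_0/(\delta_\nu^{2A_E}+\rho_E))$, with $A_E=\sum_\ell a_\ell$, satisfies $\cL_\nu\psi_{0,\nu}\ge c\,\rho_E(\delta_\nu^{2A_E}+\rho_E)^{-1}R_\nu-C$. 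This controls $R_\nu$ wherever all $r_\ell\ge\delta_\nu^2$. Your component logarithms are needed exactly on the complement, where they give $c\delta_\nu^{-2}$, which beats $R_\nu\le C/\delta_\nu$. Only the combination $\Psi_\nu=c_0\psi_{0,\nu}+\sum_\ell\psi_{\ell,\nu}$ gives $\cL_\nu\Psi_\nu\ge R_\nu-C$ on all of $U$. Hence $\cL_\nu(C_\beta\beta+C_\Psi\Psi_\nu)\ge4\mathfrak e_\nu-C_{\mathrm{err}}$ with a genuinely bounded error.

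\textbf{Secondary points.}

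\emph{Gluing.} A partition of unity subordinate to small SNC charts cannot be locally constant near $B$. Your gluing errors are therefore neither bounded nor compactly supported in $U$. Building every term from global divisor norms, as the paper does, avoids gluing altogether.

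\emph{The constant error.} Place the compensating mass inside $W_0$. Take $0\le\chi\not\equiv0$ supported in $\operatorname{int}W_0$ and solve $\cL_\nu\gamma_\nu=C_{\mathrm{err}}-C_{\mathrm{err}}\Vol_{\omega_\nu}(Y)\chi/\int_Y\chi\,dV_{\omega_\nu}$. Proposition~\ref{prop:uniform-green} bounds $\gamma_\nu$ uniformly, and $\cL_\nu\gamma_\nu=C_{\mathrm{err}}$ exactly off $W_0$, so no mean deficit remains.

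\emph{The first moment \eqref{eq:carrier-first-moment}.} Your first idea is the right one. The log--log fallback would lose the $\delta_\nu^{-2}$ positivity used above. For $i\in E_\lambda$, every $(n-1)$-minor of $J_\lambda$ omitting column $i$ is divisible by $z_{\lambda,i}$, since $\pi|_{A_a}$ has rank at most $n-2$. This gives $\Delta_{\lambda,i,\nu}\le C(\rho_{\lambda,i}+\sum_{k\ne i}(\delta_\nu+d_{\lambda,k,\nu}))$. The $\rho_{\lambda,i}$ term cancels $h_{\lambda,i}^2$. Put $L_\delta(s)=1+\log(C/(\delta^2+s))$ and $\tau(s)=1-\log s$. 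The $\delta_\nu$ terms are controlled by $\delta\int_0^RL_\delta(s)\,(s\tau(s)^2)^{-1}ds\le C$, and the $d_{\lambda,k,\nu}$ terms by $\int_0^R\mathfrak b_\nu\le C\delta_\nu$. Finally, $\psi_{0,\nu}$ is bounded by the component logarithms, which is why \eqref{eq:carrier-growth} contains only those.

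\emph{A sign slip.} $\log\tau_a$ is plurisuperharmonic: $-\ii\partial\bar\partial\log\tau_a=\ii\,dz_a\wedge d\bar z_a/(\rho_a\tau_a^2)$. Its Laplacian $\cL_\nu\log\tau_a$ is nonnegative because $\cL_\nu=-2\ii\Lambda_{\omega_\nu}\partial\bar\partial$.
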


\begin{proof}
\noindent\textbf{Step 1. A scalar function dominating the error up to a constant.}
The matrix comparison \eqref{eq:chart-actual-gram} and the bounds
\(0\le\mathfrak b_\nu\le1\) give uniform constants such that
\[
 c\delta_\nu\vartheta\le\omega_\nu\le C\vartheta,\qquad
 \pi^*\omega\le C\omega_\nu.
\]
Put \(R_\nu=\Lambda_{\omega_\nu}\vartheta\); in particular,
\(R_\nu\le C/\delta_\nu\). The role of the auxiliary functions below
is to control this contraction, which can grow near the exceptional locus.

Choose an effective exceptional divisor
\[
 E_{\mathrm{exc}}=\sum_{\ell=1}^m a_\ell A_\ell,\qquad a_\ell>0,
\]
with support equal to the exceptional locus and with
\(\cO_Y(-E_{\mathrm{exc}})\) relatively ample over \(X\).
To justify this choice, recall that the modification \(\pi:Y\to X\)
from Theorem~\ref{thm:fixed-terminal-data} is projective. Choose a
\(\pi\)-ample line bundle \(L_0\), and put
\[
 \mathcal F=\pi_*L_0,\qquad M=\mathcal F^{**}.
\]
Since \(\pi\) is proper and bimeromorphic, \(\mathcal F\) is a coherent
torsion-free sheaf of rank one. Thus \(M\) is a rank-one reflexive
sheaf, and hence a line bundle because \(X\) is smooth. Set
\[
 L_1=L_0\otimes\pi^*M^{-1}.
\]
Tensoring with the pullback of a line bundle on \(X\) preserves relative
ampleness: on any open set where \(M\) is trivial, \(L_1\) and \(L_0\)
are isomorphic over its inverse image. In particular, \(L_1\) is
\(\pi\)-ample.

The evaluation map \(\pi^*\mathcal F\to L_0\) and the map
\(\pi^*\mathcal F\to\pi^*M\) induced by \(\mathcal F\hookrightarrow M\)
are isomorphisms wherever \(\pi\) is an isomorphism. Comparing these
maps gives a meromorphic section of \(L_1\) which is holomorphic and
nowhere zero off the exceptional locus. Its divisor therefore has the form
\[
 G=\sum_{\ell=1}^m b_\ell A_\ell,\qquad b_\ell\in\mathbb Z,
 \qquad L_1\simeq\cO_Y(G).
\]
For every complete
curve \(C\) contracted by \(\pi\), relative ampleness gives
\(G\cdot C=\deg(L_1|_C)>0\); in particular, \(G\) is \(\pi\)-nef.
The negativity lemma for projective bimeromorphic morphisms of normal
complex analytic spaces says that a \(\pi\)-exceptional, \(\pi\)-nef
Cartier divisor has nonpositive coefficients
\cite[Section~11]{Fujino22}. It follows that \(G\le0\). Hence
\(E_{\mathrm{exc}}=-G\) is effective and
\(\cO_Y(-E_{\mathrm{exc}})\simeq L_1\) is \(\pi\)-ample.

It remains to check that no exceptional component is omitted. If some
\(A_\ell\) were not contained in \(\Supp E_{\mathrm{exc}}\), choose a
general point of \(A_\ell\) outside this support. Since
\(\operatorname{codim}_X\pi(A_\ell)\ge2\), the fiber of
\(A_\ell\to\pi(A_\ell)\) through this point is positive-dimensional
and projective, so it contains a complete curve \(C\) through the point.
The curve \(C\) is not contained in \(\Supp E_{\mathrm{exc}}\), and
effectivity therefore gives \(E_{\mathrm{exc}}\cdot C\ge0\). On the
other hand,
\[
 E_{\mathrm{exc}}\cdot C=-\deg(L_1|_C)<0,
\]
a contradiction. Thus every coefficient \(a_\ell=-b_\ell\) is a
positive integer, as required.

Choose a smooth metric on \(\cO_Y(-E_{\mathrm{exc}})\) with curvature
positive on the vertical tangent spaces. Choose smooth Hermitian
metrics on the divisor line bundles \(\cO_Y(A_\ell)\), and let
\(r_\ell\le1\) be the squared norms of their canonical sections, chosen so that
\[
 \rho_E=\prod_\ell r_\ell^{a_\ell}
\]
is the squared norm of the canonical section of
\(\cO_Y(E_{\mathrm{exc}})\) for the dual metric.
Its logarithmic Hessian extends smoothly and satisfies
\[
 \Theta_E:=\ii\partial\bar\partial\log\rho_E
 \ge c\vartheta-C\pi^*\omega.
\]
Indeed, its positivity on the kernel of \(d\pi\), followed by adding
a sufficiently large multiple of \(\pi^*\omega\), gives a positive
form on compact \(Y\).

Let \(A_E=\sum_\ell a_\ell\), and choose fixed numerator constants so
that
\[
 \psi_{0,\nu}=\log\frac{C_0}{\delta_\nu^{2A_E}+\rho_E},
 \qquad
 \psi_{\ell,\nu}=\log\frac{C_\ell}{\delta_\nu^2+r_\ell}
\]
are nonnegative. For a smooth squared divisor norm \(r\) and \(q>0\),
\[
 \ii\partial\bar\partial\log(q+r)
 =\frac{r}{q+r}\ii\partial\bar\partial\log r
 +\frac{qr}{(q+r)^2}
   \ii\partial\log r\wedge\bar\partial\log r.
\]
Since \(\cL_\nu=-2\ii\Lambda_{\omega_\nu}\partial\bar\partial\),
the curvature bound for \(\Theta_E\) implies
\[
 \cL_\nu\psi_{0,\nu}
 \ge c\frac{\rho_E}{\delta_\nu^{2A_E}+\rho_E}R_\nu-C.
\]
Each function \(\psi_{\ell,\nu}\) satisfies \(\cL_\nu\psi_{\ell,\nu}\ge-CR_\nu\).
On \(r_\ell\le\delta_\nu^2\), its positive normal term gives the
stronger estimate
\[
 \cL_\nu\psi_{\ell,\nu}\ge c\delta_\nu^{-2}-CR_\nu.
\]
To see the last assertion, write \(r_\ell=u|z|^2\) with \(u\) smooth
and positive. Then \(|\partial r_\ell|_{\omega_\nu}^2/r_\ell\)
has a uniform positive lower bound near \(z=0\), since
\(\omega_\nu\le C\vartheta\), while
\(\delta_\nu^2/(\delta_\nu^2+r_\ell)^2\ge1/(4\delta_\nu^2)\).

If all \(r_\ell\ge\delta_\nu^2\), the fraction in the bound for
\(\psi_{0,\nu}\) is at least \(1/2\). A sufficiently large fixed
coefficient \(c_0\) therefore makes
\[
 \Psi_\nu=c_0\psi_{0,\nu}+\sum_\ell\psi_{\ell,\nu}
\]
satisfy \(\cL_\nu\Psi_\nu\ge R_\nu-C\) on this region.
If some \(r_\ell\le\delta_\nu^2\), the term
\(c\delta_\nu^{-2}\) dominates all errors of size \(CR_\nu\),
because \(R_\nu\le C/\delta_\nu\). Thus the same inequality holds
on \(U\) for all sufficiently large stages.

Now put
\[
 \beta=\sum_b\log(1-\log s_b).
\]
Writing \(T_b=1-\log s_b\), direct differentiation gives
\[
 \cL_\nu\log T_b
 =\frac{2|\partial\log s_b|_{\omega_\nu}^2}{T_b^2}
 +\frac{2\Lambda_{\omega_\nu}
    (\ii\partial\bar\partial\log s_b)}{T_b}.
\]
On a chart, \(s_b=u_{\lambda,b}|z_{\lambda,a}|^2\). The first
term controls \(h_{\lambda,a}^2(G_{\lambda,\nu}^{-1})^{a\bar a}\)
up to an error \(CR_\nu\); the smooth curvature term has the same
error bound. The nondivisor terms are bounded by \(CR_\nu\), and
\[
 2\Xi_{\lambda,ik,\nu}
 \le\Xi_{\lambda,i,\nu}+\Xi_{\lambda,k,\nu}.
\]
Consequently \eqref{eq:actual-source-row-bound} gives
\(\cL_\nu\beta\ge c\mathfrak e_\nu-CR_\nu\).
First choosing \(C_\beta\) large, and then \(C_\Psi\) large, yields
a nonnegative function
\[
 \Phi_\nu=C_\beta\beta+C_\Psi\Psi_\nu,\qquad
 \cL_\nu\Phi_\nu\ge4\mathfrak e_\nu-C_{\mathrm{err}},
\]
where all constants are independent of \(\nu\).

\medskip
\noindent\textbf{Step 2. Removing the constant error.}
Choose \(W_0\Subset\operatorname{int}W_1\Subset U\) as in the
statement, and a nonzero nonnegative smooth function \(\chi\)
supported in \(\operatorname{int}W_0\).
Compact convergence of \(\omega_\nu\) gives
\(\int_Y\chi\,dV_{\omega_\nu}\ge c_\chi>0\), and the volumes of
\(Y\) are uniformly bounded. Hence
\[
 F_\nu=C_{\mathrm{err}}-
 \frac{C_{\mathrm{err}}\Vol_{\omega_\nu}(Y)}
      {\int_Y\chi\,dV_{\omega_\nu}}\chi
\]
is uniformly bounded and has mean zero. By
Proposition~\ref{prop:uniform-green}, its mean-zero solution
\(\gamma_\nu\) satisfies
\[
 \cL_\nu\gamma_\nu=F_\nu,\qquad
 \sup_\nu\|\gamma_\nu\|_{L^\infty(Y)}<\infty.
\]
For one sufficiently large constant \(C_*\), set
\[
 b_\nu=\frac12(C_*+\Phi_\nu+\gamma_\nu)\ge0.
\]
Outside \(W_0\), the correction cancels \(C_{\mathrm{err}}\);
this proves \eqref{eq:carrier-barrier}.

\medskip
\noindent\textbf{Step 3. Growth of the barrier.}
On a shrunken chart, the squared norms of the canonical divisor sections
are comparable to the squared moduli of their defining coordinates.
The norms for components not
meeting the chart are bounded below. Moreover,
\[
 \log\frac{C_0}{\delta_\nu^{2A_E}+\rho_E}
 \le C\left(1+\sum_{a\in E_\lambda}
       \log\frac{C_\lambda}{\delta_\nu^2+\rho_{\lambda,a}}\right).
\]
Indeed, put \(x_\ell=-\log r_\ell\) and
\(d_\nu=|\log\delta_\nu|\). Up to a bounded constant, the left side
is at most
\[
 \min\left(2A_Ed_\nu,\sum_\ell a_\ell x_\ell\right)
 \le A_E\sum_\ell\min(2d_\nu,x_\ell).
\]
Each term in this sum differs by a bounded amount from
\(\log(C_\ell/(\delta_\nu^2+r_\ell))\).
The definitions of \(\Phi_\nu\) and \(b_\nu\) now give
\eqref{eq:carrier-growth}. They also give uniform boundedness on
compact subsets of \(U\), where all divisor norms are bounded below.
For fixed \(\nu\), the exceptional logarithms are bounded; on
charts avoiding the exceptional locus their bounds are uniform in
\(\nu\). This proves the growth assertions.

\medskip
\noindent\textbf{Step 4. Integral estimates involving the barriers.}
We prove \eqref{eq:carrier-first-moment}.
Let \(\mathcal H_{\lambda,\nu}\ge1\) denote the expression in
parentheses on the right of \eqref{eq:carrier-growth}.
It suffices to bound the integrals of
\(\Xi_{\lambda,i,\nu}\mathcal H_{\lambda,\nu}\):
the mixed terms are bounded by the diagonal terms, and
\eqref{eq:actual-source-row-bound} then bounds the source.

Let \(J_\lambda\) be the holomorphic Jacobian of \(\pi\) and let
\(\Delta_{\lambda,i,\nu}\) be the \(i\)-th diagonal cofactor of the
comparison matrix in \eqref{eq:chart-actual-gram}.
For \(i\in E_\lambda\), the restriction of \(\pi\) to
\(z_{\lambda,i}=0\) has rank at most \(n-2\).
Every \((n-1)\)-minor obtained by deleting column \(i\) of
\(J_\lambda\) is therefore divisible by \(z_{\lambda,i}\).
The cofactor expansion \eqref{eq:cb-polynomial} consequently gives
\[
 \Delta_{\lambda,i,\nu}\le
 \begin{cases}
 C,&i\notin E_\lambda,\\
 C\left(\rho_{\lambda,i}
       +\displaystyle\sum_{k\ne i}
         (\delta_\nu+d_{\lambda,k,\nu})\right),&i\in E_\lambda.
 \end{cases}
 \tag{7.27}\label{eq:exceptional-cofactor-bound}
\]
Here the terms containing no factor \(\delta_\nu+d_{\lambda,k,\nu}\),
\(k\ne i\), are squares of the indicated minors. Every other term
contains at least one such factor.
Step~2 of Proposition~\ref{prop:local-contraction-estimates} gives
\[
 \Xi_{\lambda,i,\nu}\,dV_{\omega_\nu}
 \le C h_{\lambda,i}^2\Delta_{\lambda,i,\nu}\,d\lambda.
\]

The coefficients \(\mathfrak b_\nu\) satisfy
\[
 0\le\mathfrak b_\nu(s)\le1,\qquad
 \int_0^R\mathfrak b_\nu(s)\,ds
 =\mathfrak m_\nu R(R+\mathfrak m_\nu)^{\epsilon_\nu-1}
 \le C\delta_\nu.
 \tag{7.28}\label{eq:literal-spike-mass}
\]
This follows by differentiating
\(\mathfrak m_\nu s(s+\mathfrak m_\nu)^{\epsilon_\nu-1}\).
Recall that \(d_{\lambda,k,\nu}=\mathfrak b_\nu(\rho_{\lambda,k})\)
for \(k\in D_\lambda\), and is zero otherwise.

For the one-variable integrals, put
\[
 \tau(s)=1-\log s,\qquad
 L_\delta(s)=1+\log\frac{C}{\delta^2+s},\qquad 0<s<R<1.
\]
Uniformly for \(0<\delta\le1\), we have
\[
 \begin{aligned}
 &\int_0^R L_\delta(s)\,ds\le C,\\
 &\delta\int_0^R\frac{L_\delta(s)}{s\tau(s)^2}\,ds\le C,\\
 &\int_0^R d(s)L_\delta(s)\,ds\le C
 \quad\text{if }0\le d\le C_d,\quad \int_0^R d(s)\,ds\le C_d\delta.
 \end{aligned}
 \tag{7.29}\label{eq:exceptional-integral-bounds}
\]
The first bound follows from
\(L_\delta(s)\le C(1+|\log s|)\).
For the second, set \(t=\tau(s)\) and
\(D_\delta=1+|\log\delta|\). Since
\(L_\delta(s)\le C\min(2D_\delta,t)\), its left side is at most
\[
 C\delta\left(\int_1^{2D_\delta}\frac{dt}{t}+
       2D_\delta\int_{2D_\delta}^\infty\frac{dt}{t^2}\right)
 \le C\delta(1+\log D_\delta)\le C.
\]
The last bound follows from
\(L_\delta\le CD_\delta\) and the assumed integral bound on \(d\).
The log--log factors are controlled by
\[
 \int_0^R\frac{(1+\log\tau(s))}{s\tau(s)^2}\,ds<\infty,
 \qquad
 \int_0^R(1+\log\tau(s))\,ds<\infty.
\]
Also,
\[
 \mathcal H_{\lambda,\nu}
 \le C\left(1+\sum_{a\in E_\lambda}
           L_{\delta_\nu}(\rho_{\lambda,a})
          +\sum_{a\in D_\lambda}\log\tau_{\lambda,a}\right).
\]

If \(i\notin E_\lambda\), the bound
\(\Delta_{\lambda,i,\nu}\le C\) suffices: any exceptional logarithm
occurs in a coordinate different from \(i\), so Fubini's theorem and
the preceding integrals apply.
If \(i\in E_\lambda\), use the second cofactor bound in
\eqref{eq:exceptional-cofactor-bound}. Its
\(\rho_{\lambda,i}\)-term cancels the factor
\(1/\rho_{\lambda,i}\) in \(h_{\lambda,i}^2\).
A term containing \(\delta_\nu\) is controlled by the second
bound in \eqref{eq:exceptional-integral-bounds}.
For a term containing \(d_{\lambda,k,\nu}\), \(k\ne i\), there are
two relevant possibilities. If the exceptional logarithm occurs
in coordinate \(i\), first integrate coordinate \(k\), using
\eqref{eq:literal-spike-mass}, and then use that same second bound.
If it occurs in coordinate \(k\), use the third bound in
\eqref{eq:exceptional-integral-bounds}; the integral of
\(1/(s\tau(s)^2)\) in coordinate \(i\) is finite.
All other placements separate into the first bound and the
log--log integrals above. Thus
\[
 \sup_\nu\int_{U_\lambda\setminus B}
 \Xi_{\lambda,i,\nu}\mathcal H_{\lambda,\nu}
 \,dV_{\omega_\nu}<\infty.
\]
Summing over the finite atlas proves \eqref{eq:carrier-first-moment}.
\end{proof}

The limiting comparison metric consequently has integrable contracted
curvature:
\[
 \Phi_\infty:=
 \ii\Lambda_\omega
 \bigl(F_{k_\infty}+[\theta,\theta^{\dagger k_\infty}]\bigr)
 \in L^1(U,k_\infty,\omega).
 \tag{7.30}\label{eq:downstairs-contracted-source}
\]
Indeed, Proposition~\ref{prop:local-contraction-estimates} bounds
\(\int_U|\Gamma_\nu|_{\op,k_\nu}\,dV_{\omega_\nu}\) uniformly.
On a compact exhaustion, the metrics, sources, and volume forms
converge smoothly. Passing to the limit first on each compact set and
then exhausting \(U\) gives
\[
 \int_U|\Gamma_\infty|_{\op,k_\infty}\,dV_\omega
 \le \liminf_{\nu\to\infty}
       \int_U\mathfrak e_\nu\,dV_{\omega_\nu}<\infty.
\]
The finite volume of \(X\), the finite constant \(c_\infty\), and
equivalence of operator and Hilbert--Schmidt norms imply the claim.

\begin{lem}
\label{lem:carrier-response}
Use the comparison metrics \(k_\nu\) from
Proposition~\ref{prop:determinant-selector-limit}, the error functions
\(\mathfrak e_\nu\) from \eqref{eq:defined-moving-source}, and the
barriers \(b_\nu\) from Proposition~\ref{prop:source-carrier}. Let
\(\Omega_\nu\Subset U\) be connected domains with smooth boundary
containing \(W_1\), and let \(H_\nu^{\mathrm D}\) be smooth positive
Hermitian metrics satisfying
\[
 H_\nu^{\mathrm D}=k_\nu\quad\text{on }\partial\Omega_\nu,\qquad
 \det H_\nu^{\mathrm D}=\det k_\nu,\qquad
 \ii\Lambda_{\omega_\nu}\Psi_{H_\nu^{\mathrm D}}=c_\nu\Id.
\]
Set
\[
 s_\nu=\log(k_\nu^{-1}H_\nu^{\mathrm D}),\qquad
 L_\nu=\int_{W_1}|s_\nu|_{k_\nu}\,dV_{\omega_\nu}.
\]
There is a constant \(C_0\), independent of \(\nu\) and of the
domains, such that
\[
 |s_\nu|_{\op,k_\nu}\le C_0(1+L_\nu)+b_\nu
 \quad\text{on }\Omega_\nu.
\]
For every compact exhaustion \(\mathcal C_m\Subset U\) containing
\(W_0\), along any subsequence with \(L_\nu\to\infty\), the normalized
endomorphisms \(u_\nu=s_\nu/L_\nu\) satisfy
\[
 \lim_{m\to\infty}\limsup_{\nu\to\infty}
 \int_{\Omega_\nu\setminus\mathcal C_m}
 |\Tr(\Gamma_\nu u_\nu)|\,dV_{\omega_\nu}=0.
 \tag{7.31}\label{eq:normalized-pairing-ui}
\]
The same assertions hold with a rational stage fixed and only the
Dirichlet domains varying; the constants may then depend on that stage.
\end{lem}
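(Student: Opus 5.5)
The plan has two parts. First, a barrier comparison gives a pointwise bound for \(s_\nu\) in terms of its integral on the fixed compact set \(W_1\). Second, an integral estimate shows the normalized source pairing cannot concentrate near the divisor.

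\emph{Pointwise bound.} Put
\[
 v_\nu=\log\frac{\Tr(k_\nu^{-1}H_\nu^{\mathrm D})+\Tr((H_\nu^{\mathrm D})^{-1}k_\nu)}{2r}.
\]
The two-direction Bochner inequality recalled before Proposition~\ref{prop:source-carrier} gives \(\cL_\nu v_\nu\le2\mathfrak e_\nu\) and \(v_\nu=0\) on \(\partial\Omega_\nu\). By \eqref{eq:carrier-barrier} and the maximum principle on \(\Omega_\nu\setminus W_0\), we get \(v_\nu\le M_\nu+b_\nu\) with \(M_\nu=\sup_{W_0}v_\nu\). Because \(\det H_\nu^{\mathrm D}=\det k_\nu\), we have \(\Tr s_\nu=0\), so \(|s_\nu|_{\op}\le C(1+v_\nu)\). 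It therefore suffices to bound \(M_\nu\) by \(C(1+L_\nu)\).

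On \(W_1\) the metrics \(k_\nu\) and \(\omega_\nu\) converge smoothly, so the geometry there is uniformly controlled. The function \(v_\nu\) satisfies \(\cL_\nu v_\nu\le2\mathfrak e_\nu\) with \(\mathfrak e_\nu\) uniformly bounded on \(W_1\). The local mean-value inequality for subsolutions (Moser iteration on a fixed ball cover of \(W_0\subset\operatorname{int}W_1\)) then gives \(\sup_{W_0}v_\nu\le C\bigl(1+\int_{W_1}v_\nu\bigr)\). Finally \(v_\nu\le|s_\nu|_{\op}\le|s_\nu|_{k_\nu}\), since \(\Tr e^{\pm s}\le r e^{|s|}\), so the right side is at most \(C(1+L_\nu)\). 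The constants depend only on the smooth data on \(W_1\) and not on \(\Omega_\nu\), which also covers the fixed-stage variant.

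\emph{Tail estimate \eqref{eq:normalized-pairing-ui}.} Using the pointwise bound,
\[
 |\Tr(\Gamma_\nu u_\nu)|\le r\,\mathfrak e_\nu|u_\nu|_{\op}\le r\,\mathfrak e_\nu\Bigl(\frac{C_0(1+L_\nu)}{L_\nu}+\frac{b_\nu}{L_\nu}\Bigr).
\]
The \(b_\nu/L_\nu\) term integrates to at most \(L_\nu^{-1}\sup_\nu\int\mathfrak e_\nu b_\nu\), by \eqref{eq:carrier-first-moment}, and this tends to zero as \(L_\nu\to\infty\). What remains is \(2rC_0\int_{\Omega_\nu\setminus\mathcal C_m}\mathfrak e_\nu\).

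This remaining term is the main obstacle: I need uniform smallness of \(\int_{U\setminus\mathcal C_m}\mathfrak e_\nu\,dV_{\omega_\nu}\) as \(m\to\infty\). Near \(B\), Proposition~\ref{prop:local-contraction-estimates} gives \(\lim_{t\downarrow0}\sup_\nu\int_{\bigcup_b\{s_b<t\}}\mathfrak e_\nu=0\). The complement \(\{s_b\ge t\ \forall b\}\) is a compact subset of \(U\). For fixed \(t\), choose \(m\) so that \(\mathcal C_m\) contains it; then \(\Omega_\nu\setminus\mathcal C_m\) lies in the small collar, uniformly in \(\nu\). Letting \(t\downarrow0\) gives the claim. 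For a fixed stage, the same argument uses the \(L^1\) bound for that single \(\mathfrak e\).
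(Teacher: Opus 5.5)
Your proposal is correct and follows the paper's proof step for step: the Bochner subsolution \(v_\nu\), the local mean-value inequality on \(W_1\) giving \(\sup_{W_0}v_\nu\le C(1+L_\nu)\), the barrier maximum principle on \(\Omega_\nu\setminus W_0\), and the tail split into the \(b_\nu/L_\nu\) part (handled by \eqref{eq:carrier-first-moment}) and the collar part (handled by the uniform boundary limit of Proposition~\ref{prop:local-contraction-estimates}). One small sharpening is needed to obtain coefficient exactly one on \(b_\nu\): use the eigenvalue bound \(|\lambda|\le v_\nu+\log(2r)\), which follows from \(e^{|\lambda|}\le\Tr e^{s_\nu}+\Tr e^{-s_\nu}\) without trace-freeness, instead of the looser \(|s_\nu|_{\op}\le C(1+v_\nu)\).
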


\begin{proof}
Put \(s_\nu=\log(k_\nu^{-1}H_\nu^{\mathrm D})\) and
\[
 v_\nu=\log\frac{\Tr(e^{s_\nu})+\Tr(e^{-s_\nu})}{2r}.
\]
The Bochner comparison for the two identity maps, recalled before
Proposition~\ref{prop:source-carrier}, gives
\[
 d_{\omega_\nu}^*dv_\nu\le2\mathfrak e_\nu,\qquad v_\nu=0
 \quad\text{on }\partial\Omega_\nu.
\]
On \(W_1\), the coefficients of \(\omega_\nu\) and the sources
\(\mathfrak e_\nu\) have uniform local bounds, with uniform
ellipticity. The local mean-value inequality for the nonnegative
subsolution \(v_\nu\), together with
\(v_\nu\le |s_\nu|_{\op,k_\nu}\), therefore gives
\[
 \sup_{W_0}v_\nu\le C\left(1+\int_{W_1}v_\nu\,dV_{\omega_\nu}\right)
 \le C(1+L_\nu).
\]
The maximum principle applied to
\(v_\nu-C(1+L_\nu)-b_\nu\) outside \(W_0\) gives the same upper bound globally.
The definition of \(v_\nu\) bounds the absolute value of every
eigenvalue of \(s_\nu\) by \(v_\nu+\log(2r)\), proving the asserted
bound on \(|s_\nu|_{\op,k_\nu}\).

Along a subsequence with \(L_\nu\to\infty\), put
\(u_\nu=s_\nu/L_\nu\). For all sufficiently large \(\nu\), division
gives, after enlarging the constants,
\[
 |u_\nu|_{\op,k_\nu}\le C_0+\frac{C_1b_\nu}{L_\nu}.
\]
Choose a compact exhaustion \(\mathcal C_m\Subset U\) containing \(W_0\).
Then
\[
 \begin{aligned}
 \int_{\Omega_\nu\setminus \mathcal C_m}
 |\Tr(\Gamma_\nu u_\nu)|
 &\le rC_0\int_{\Omega_\nu\setminus \mathcal C_m}\mathfrak e_\nu\\
 &\quad+\frac{rC_1}{L_\nu}
 \int_{\Omega_\nu\setminus W_0}\mathfrak e_\nu b_\nu.
 \end{aligned}
\]
First take the limsup along the subsequence with \(L_\nu\to\infty\),
then let \(m\to\infty\).
The second term vanishes by \(L_\nu\to\infty\) and the uniform bound
on \(\int_U\mathfrak e_\nu b_\nu\,dV_{\omega_\nu}\) from
\eqref{eq:carrier-first-moment}.
For each fixed \(t>0\), the set
\(Y\setminus\bigcup_b\{s_b<t\}\) is a compact subset of \(U\), hence
is contained in \(\mathcal C_m\) for all sufficiently large \(m\).
The uniform boundary limit in
Proposition~\ref{prop:local-contraction-estimates} therefore removes
the first term. This proves \eqref{eq:normalized-pairing-ui}.
For fixed \(\nu\), the same proof uses the smooth
coefficients on \(W_1\) and finiteness of the integral of
\(\mathfrak e_\nu(1+b_\nu)\). The integral over the complement tends
to zero as the compact sets exhaust \(U\). The constants are independent of the
Dirichlet domain.
\end{proof}

\subsection{Parabolic degree identities and metric bounds}
\label{subsec:stability-metric-bounds}
We first show that finite-energy weak Higgs-invariant projections
for \(k_\infty\) define parabolic subsheaves whose analytic degrees
equal their parabolic degrees. Stability then bounds the integrals of the
relative logarithms on the fixed domain \(W_1\). Together with
Lemma~\ref{lem:carrier-response}, these bounds give uniform metric
comparison on compact subsets of \(U\).

\begin{prop}
\label{prop:downstairs-reflexive-degree-bridge}
Retain the original regular parabolic Higgs summand
\((E_*,\theta)\) of rank \(r\ge2\) on \((X,D)\), and let
\(k_\infty\) be the limiting comparison metric of
Proposition~\ref{prop:determinant-selector-limit}, viewed on
\(U=X\setminus D\). Write \(\Phi_\infty\) for its contracted
curvature in \eqref{eq:downstairs-contracted-source}.
Let \(p\in W^{1,2}_{\mathrm{loc}}(U,\End E)\) have almost-everywhere
constant rank \(k\), with \(1\le k<r\), and satisfy
\[
 p^2=p=p^{\dagger k_\infty},\qquad
 (\Id-p)\bar\partial p=0,\qquad
 (\Id-p)\theta p=0
\]
almost everywhere. Assume
\[
 \bar\partial_\theta p=\bar\partial p+[\theta,p]
 \in L^2(U,k_\infty,\omega).
\]
Then its image extends uniquely to a nonzero proper saturated reflexive
parabolic Higgs subsheaf \(F_*\subset E_*\), with the parabolic
structure induced from \(E_*\). On the common subbundle locus, \(p\)
is the \(k_\infty\)-orthogonal projection onto \(F\). Moreover,
\[
 \frac1{2\pi}\int_U
 \left(\Tr(p\Phi_\infty)-|\bar\partial_\theta p|^2\right)dV_\omega
 =\pardeg_\omega(F_*),
 \tag{7.32}\label{eq:downstairs-degree-bridge}
\]
and
\[
 \frac1{2\pi}\int_U\Tr\Phi_\infty\,dV_\omega
 =\pardeg_\omega(E_*).
 \tag{7.33}\label{eq:downstairs-ambient-degree}
\]
Both integrals are absolutely convergent. All norms and integrals use
\(k_\infty\) and the fixed Kähler form \(\omega\) on \(X\setminus D\).
\end{prop}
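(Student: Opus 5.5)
The plan is to reduce to Proposition~\ref{prop:power-log-interface} as far as possible, working on \(X\) rather than on \(Y\), and to supply its hypotheses from the local structure of \(k_\infty\) recorded in Lemma~\ref{lem:doubled-reference} and Proposition~\ref{prop:determinant-selector-limit}. The difficulty is that \((E_*,\theta)\) is only a regular parabolic Higgs sheaf on \(X\). It is a locally abelian bundle off the set \(Z\) of Theorem~\ref{thm:fixed-terminal-data}, whose codimension is at least three. So I would first perform the extension and the degree computation on \(X\setminus Z\). I would then cross \(Z\) using reflexivity and a codimension count.

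\emph{Extension.} On \(X\setminus Z\) the modification is an isomorphism, so there \(k_\infty\) is the limit of the glued metrics \(k_\nu\). On a one-divisor chart avoiding \(Z\) and the crossings, \(k_\infty\) is uniformly comparable with the limiting split model
\[
 g_a=\diag_i\bigl(|z|^{2\beta_{a,i}}\log(e/|z|)^{-4(p_{a,i}-\bar p_a)}\bigr),
\]
with the original weights \(\beta_{a,i}\). This uses the bounds of Lemma~\ref{lem:doubled-reference}(2), which are uniform in \(\nu\) and pass to the limit. It is exactly the power--log model of Proposition~\ref{prop:power-log-interface}(2) with \(\kappa_{a,i}=-2(p_{a,i}-\bar p_a)\). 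The condition \(\bar\partial\log(k_\infty^{-1}g_a)\in L^2\) follows from the derivative bounds \(\|D_AS\|\le Ch_A\) together with \(h_a^2\sim 1/(|z|^2\tau^2)\), which is integrable.

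With these hypotheses, the first part of the proof of Proposition~\ref{prop:power-log-interface} applies verbatim: Plücker coordinates, finite-energy removable singularities on a.e.\ normal disk, and Shiffman extension. It extends \(\im p\) meromorphically across \(D\setminus Z\). Hartogs-type meromorphic extension then crosses \(Z\) and the SNC crossings, since both have codimension at least two. Saturation in \(E\) gives a reflexive \(F\), and Higgs invariance follows as before. The induced parabolic structure is taken from \(E_*\), uniquely.

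\emph{Degree identity.} The Gauss--Codazzi formula on the subbundle locus gives
\[
 \ii\Lambda_\omega F_{\det(F,k_\infty)}=\Tr(p\Phi_\infty)-|\bar\partial_\theta p|^2 .
\]
The right side is \(L^1\) by \eqref{eq:downstairs-contracted-source} and the hypothesis. For the comparison metric I would take \(q_F=k_F\prod_i|\sigma_i|^{2\gamma_{F,i}}\), built on the line bundle \((\bigwedge^kF)^{**}\) over \(X\), whose curvature integral is \(2\pi\pardeg_\omega(F_*)\). I would then show that
\[
 \phi_F=\log\bigl(\det(F,k_\infty)/q_F\bigr)
\]
satisfies its distributional equation on all of \(X\), with no singular part. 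Across \(D\setminus Z\) this is the cutoff argument of Proposition~\ref{prop:power-log-interface} with \(\chi_T\); the \(\tau\)-power factors only add \(O(\log\tau)\), which is \(o(\epsilon R_B)\). Across the non-subbundle locus and across \(Z\), the same capacity cutoff works, because \(\phi_F\in L^2_{\mathrm{loc}}\) and the codimension is at least two.

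The step I expect to be the main obstacle is the growth of \(k_\infty\) near \(Z\), where no uniform split model is available. Here I would use the determinant formula for \(Q_{X,\infty}\) in Proposition~\ref{prop:determinant-selector-limit}(2), which has a smooth positive coefficient across \(Z\). I would combine it with the upper growth of \(k_\infty\) along \(\pi^*E/\Tor\hookrightarrow V\otimes L\) from Proposition~\ref{prop:fixed-model-correction}. Together these should give \(|\phi_F|\le C(1+|\log d(\cdot,Z)|)\) near \(Z\). That bound is enough for local \(L^2\) integrability and for the cutoff errors \(\delta^{q-2}\|\phi_F\|_{L^2}\to0\) with \(q\ge3\). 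Testing the resulting global equation against \(1\) yields \eqref{eq:downstairs-degree-bridge}.

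The ambient identity \eqref{eq:downstairs-ambient-degree} is easier. By Proposition~\ref{prop:determinant-selector-limit}, \(\det k_\infty=Q_{X,\infty}\), and the curvature equation \(\ii\Lambda_\omega F_{Q_{X,\infty}}=2\pi\lambda_{\det,\infty}\) holds with a smooth positive coefficient. Integrating gives \(\lambda_{\det,\infty}\Vol_\omega(X)=\pardeg_\omega(E_*)\) via the weights \(\gamma_{i,\infty}\). Absolute convergence of both integrals is immediate from the \(L^1\) bounds.
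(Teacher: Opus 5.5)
Your proposal is essentially the paper's proof. It uses the same power--log comparison of \(k_\infty\) with \(g_a\) off \(Z\), the same Plücker/removable-singularity/Shiffman/Hartogs extension followed by saturation, and the same cutoff analysis of \(\phi_F=\log(\ell_h/q_F)\) on \(X\setminus Z\); it crosses \(Z\) by generator growth through \(\pi^*E/\Tor\hookrightarrow V\otimes L\) together with the determinant formula for \(Q_{X,\infty}\), and gets the ambient identity from \(\det k_\infty=Q_{X,\infty}\).

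The one thing to fix is the form of the bound near \(Z\). The estimate \(|\phi_F|\le C(1+|\log d(\cdot,Z)|)\) is false in general. \(\phi_F\) can grow like \(\log\log(1/|z|)\) along \(D\setminus Z\), because the \(\tau\)-factors need not cancel on \(F\). It can also have logarithmic singularities along the non-subbundle locus. What your two ingredients actually give is \(|\phi_F|\le C\bigl(1+\sum_u\log(e/|g_u|)\bigr)\) for finitely many nonzero holomorphic \(g_u\), namely divisor coordinates and the cleared denominators. Here Hadamard's inequality supplies the upper bound, and the Gram--Schmidt/quotient-determinant estimate against \(\det k_\infty\) supplies the lower bound. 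This corrected bound is what yields \(\phi_F\in L^2_{\mathrm{loc}}\) and the vanishing cutoff errors.
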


\begin{proof}
Write \(h=k_\infty\) and \(U_X=U\) in this proof. Recall that
\(Z=\operatorname{Sing}(E)\cup Z_{\mathrm{flag}}\subset D\)
is the closed analytic set of codimension at least three in \(X\)
fixed in Theorem~\ref{thm:fixed-terminal-data}. The original parabolic
flags split simultaneously on \(X\setminus Z\).

\noindent\textbf{Step 1. Properties of the comparison metric.}
Over \(X\setminus Z\), the model is the identity and the limiting
weights give the original parabolic filtration. The local formulas of
Lemma~\ref{lem:doubled-reference}, their two-sided comparison under
gluing, and the fact that a fixed logarithmic power is smaller than
every negative power of a divisor coordinate show that \(h\) is
adapted there at every real multi-index.

Put \(L_E=(\bigwedge^rE)^{**}\). By
Proposition~\ref{prop:determinant-selector-limit},
\(\det h=Q_{X,\infty}\). Thus, in every local holomorphic generator
\(\tau_E\) of \(L_E\) on an SNC polydisc,
\[
 \det h(\tau_E,\tau_E)
 =b_h\prod_{j=1}^m|z_j|^{2\mu_{E,j}},
 \qquad b_h>0,
 \tag{7.34}\label{eq:downstairs-canonical-determinant}
\]
where \(\mu_{E,j}\) is the sum, counted with multiplicity, of the
original parabolic weights along \(z_j=0\), and \(b_h\) is smooth
and strictly positive on the whole polydisc, including at \(Z\).
The contracted curvature of \(h\) is integrable by
\eqref{eq:downstairs-contracted-source}.

Outside \(Z\) and the intersections of the components of \(D\),
choose a one-divisor chart \(D_a=\{z=0\}\) and one of the fixed
flag-compatible frames. Let \(\beta_i\) be its original parabolic
weights, and retain the labels \(p_i\) and
\(\bar p=r^{-1}\sum_i p_i\) from Lemma~\ref{lem:doubled-reference}.
Define on the punctured chart
\[
 g_a(e_i,e_j)=\delta_{ij}|z|^{2\beta_i}
              \log(e/|z|)^{2\kappa_i},
 \qquad \kappa_i=-2(p_i-\bar p).
\]
These exponents are fixed by the construction and satisfy
\(\sum_i\kappa_i=0\). The relative Gram matrix estimates in that lemma,
after passing to the limiting weights, give
\[
 C_a^{-1}g_a\le h\le C_ag_a,\qquad
 \bar\partial\log(h^{-1}g_a)\in L^2.
 \tag{7.35}\label{eq:downstairs-power-log-energy}
\]
For the derivative assertion, first use
\(\tau=1-\log|z|^2\) and denote the corresponding diagonal metric
by \(g_a^\tau\). Let \(S_\lambda\) be the limiting relative
Gram endomorphism of the partitioned metric in
Lemma~\ref{lem:doubled-reference}. It and its inverse are bounded;
its covariant normal derivative is bounded by \(C/(|z|\tau)\),
and its tangential derivatives are bounded. Self-adjointness and
metric compatibility give the same bounds for
\(\bar\partial S_\lambda\).
The ratio \(b_a=Q_{X,\infty}/\det g_a^\tau\) extends smoothly
and strictly positively across the chart, since the determinant
powers agree and the logarithmic exponents sum to zero. Thus
\[
 (g_a^\tau)^{-1}h
 =b_a^{1/r}(\det S_\lambda)^{-1/r}S_\lambda.
\]
Write \(T=(g_a^\tau)^{-1}h\). In the displayed formula, the
derivative of \(b_a\) is bounded and
\(\bar\partial\log\det S_\lambda
=\Tr(S_\lambda^{-1}\bar\partial S_\lambda)\).
Thus \(\bar\partial T\) has the same normal and tangential bounds
as \(\bar\partial S_\lambda\). The derivative formula
\[
 \bar\partial\log T
 =\int_0^\infty
 (T+t\Id)^{-1}(\bar\partial T)(T+t\Id)^{-1}\,dt
\]
and the uniform positive lower and upper bounds for \(T\) give
\(|\bar\partial\log T|\le C|\bar\partial T|\).
Since \(\log(h^{-1}g_a^\tau)=-\log T\), its normal derivative is
bounded by \(C/(|z|\tau)\) and its tangential derivatives are bounded.
The normal bound is square integrable because
\[
 \int_0^\epsilon\frac{dr}{r(1-\log r^2)^2}<\infty.
\]
Replacing the factor
\(1-\log|z|^2\) in the construction by \(\log(e/|z|)\) only
changes it by a bounded positive factor whose logarithmic derivative
also has finite \(L^2\) norm. The omitted set is closed analytic of
codimension at least two, as required by the local argument in
Proposition~\ref{prop:power-log-interface}.

We also need a growth bound near \(Z\), where \(E\) may fail to be
locally free or its parabolic flags may fail to split simultaneously.
Fix \(x\in Z\), choose SNC polydiscs \(P'\Subset P\) about \(x\), and write
\(D\cap P=\{f=0\}\), with \(f=z_1\cdots z_m\).
Choose coherent generators \(s_1,\ldots,s_{N_E}\) of \(E|_P\).
The injection
\[
 \pi^*E/\Tor\longrightarrow V
\]
from Proposition~\ref{prop:fixed-model-correction} makes their
pullbacks holomorphic in the fixed frames on \(Y\). Cover
\(\pi^{-1}(\overline P')\) by finitely many smaller resolution charts,
with local boundary equations \(y_{\lambda,b}=0\).
The determinant formula \eqref{eq:downstairs-canonical-determinant}
pulls back to a smooth positive factor times finitely many real
boundary powers. Its identification with a metric on \(L_Y|_U\)
adds only the integral exceptional powers in
\eqref{eq:determinant-line-discrepancy}. Hence the limiting local
normalization formulas in Proposition~\ref{prop:determinant-selector-limit}
still involve only finite powers and logarithmic factors on these
charts. If the partition has \(N\) terms, at least one coefficient
is at least \(1/N\) at each point. The sum of positive Hermitian
forms is then bounded below by \(1/N\) times that local metric,
and its determinant is bounded below by \(N^{-r}\) times the
local determinant. This gives a power lower bound; the finitely many
local upper bounds give a power upper bound for the sum. Refining the
finite cover if necessary, the final determinant normalization therefore
preserves these bounds. Absorbing logarithmic factors into
boundary powers gives a constant \(C_\lambda>0\)
and a growth exponent \(M_\lambda>0\), depending on the chart but not
on the generator index \(i\), such that
\[
 |\pi^*s_i|_h^2\le C_\lambda
               \prod_b|y_{\lambda,b}|^{-M_\lambda}.
\]
Since \(B=(\pi^{-1}D)_{\mathrm{red}}\), we have
\[
 f\circ\pi=u_\lambda\prod_b y_{\lambda,b}^{d_{\lambda,b}},
 \qquad d_{\lambda,b}\ge1,
\]
where \(u_\lambda\) and its reciprocal are bounded on the smaller
chart. Choosing a single sufficiently large power over this finite
cover and using the identification on \(U\), we obtain
\[
 |s_i|_h^2\le C_x\prod_{a=1}^m|z_a|^{-M_x}
 \quad\text{on }P'\cap U_X.
 \tag{7.36}\label{eq:downstairs-coherent-generator-growth}
\]

\noindent\textbf{Step 2. Extension of the projection.}
On a relatively compact coordinate chart in \(U_X\), the local
construction in the proof of \cite[Theorem~0.1.1]{Pop03} turns
the image of \(p\) into a coherent
holomorphic subsheaf, which is a subbundle away from a
codimension-at-least-two analytic subset.

Fix one of the one-divisor charts and the metric \(g_a\) from Step~1.
Let \(q\) be the \(g_a\)-orthogonal projection onto the same
generic image, and put
\[
 S=h^{-1}g_a,\qquad B_p=pSp+\Id-p\in\End E.
\]
The comparison in \eqref{eq:downstairs-power-log-energy} bounds
\(S^{\pm1}\) and \(B_p^{\pm1}\): on the \(h\)-orthogonal
decomposition \(\im p\oplus\ker p\), the latter acts by
\(pSp\) and \(\Id\), respectively. Orthogonality gives
\[
 q=B_p^{-1}pS.
\]
The differentiation formulas used to prove
\eqref{eq:projection-transfer-bound} apply here as well: the product
and inverse rules bound \(\bar\partial q\) by
\(C(|\bar\partial p|+|\bar\partial S|)\), and the exponential
derivative formula bounds \(|\bar\partial S|\) by
\(C|\bar\partial\log S|\). Hence
\[
 |\bar\partial q|_{g_a}
 \le C\left(
 |\bar\partial p|_h+
 |\bar\partial\log(h^{-1}g_a)|
 \right).
 \tag{7.37}\label{eq:downstairs-projection-transfer}
\]
The right side belongs to \(L^2\). Fubini's theorem therefore gives
finite normal-disk energy for almost every tangential parameter.

Apply the normal-disk argument in the proof of
Proposition~\ref{prop:power-log-interface}. The curvature of \(g_a\)
and the squared normal derivative of \(q\) are integrable on almost
every such disk. Circular averages of the induced determinant norm
then give finite Fubini--Study energy for the Grassmannian map, so its
puncture is removable by \cite[Theorem~3.6]{SacksUhlenbeck1981}.
Shiffman's separate-meromorphicity theorem gives meromorphic Plücker
ratios across each one-divisor chart. Meromorphic Hartogs extension
across the remaining analytic sets of codimension at least two,
including \(Z\), gives a rank-\(k\) meromorphic generic subspace
\[
 M\subset E\otimes\mathcal M_X
\]
on all of \(X\).

Choose local meromorphic generators of \(M\), clear their denominators,
and let \(F_0\subset E\) be the resulting coherent image. Define
\[
 F=\ker\left(
 E\longrightarrow(E/F_0)/\Tor(E/F_0)
 \right).
\]
Then \(Q=E/F\) is torsion-free. At a height-one point the regular local
ring is a discrete valuation ring, so \(F\) is free there. At a point of
local dimension at least two, reflexivity gives
\(\operatorname{depth}E\ge2\), while torsion-freeness gives
\(\operatorname{depth}Q\ge1\). The depth lemma yields
\(\operatorname{depth}F\ge2\). Thus \(F\) is torsion-free and \(S_2\),
hence reflexive. It has rank \(k\), so it is nonzero and proper.

Higgs invariance is equivalent to vanishing of the induced morphism
\[
 F\longrightarrow Q\otimes\Omega_X^1(\log D).
\]
Its target is torsion-free, and
the Higgs-invariance assumption on \(p\) makes it vanish on a dense open
set. It therefore vanishes identically. Intersect \(F\) with every
one-divisor filtration term and saturate along that divisor. The
successive quotients are torsion-free. Intersecting the resulting
one-divisor lattices inside \(F(*D)\) and taking their reflexive
saturations gives the induced parabolic Higgs subsheaf \(F_*\), with
lattices defined at every real multi-index. Saturation along the divisors
and reflexive extension determine these lattices uniquely and preserve
their compatibility.

\noindent\textbf{Step 3. The degree identities.}
Put \(s=\rk F\) and \(L_F=(\bigwedge^sF)^{**}\), and define
\[
 \kappa_h=\ii\Tr\bigl(p\Lambda_\omega
 (F_h+[\theta,\theta^{\dagger h}])\bigr)
 -|\bar\partial_\theta p|^2.
\]
On the common
subbundle locus let \(\ell_h\) be the determinant metric induced by
\(h\). The projection has bounded operator norm, so
\eqref{eq:downstairs-contracted-source} and the assumed \(L^2\) bound on
\(\bar\partial_\theta p\) show that
\(\kappa_h\in L^1\). Higgs Gauss--Codazzi gives
\[
 \ii\Lambda_\omega F_{\ell_h}=\kappa_h.
 \tag{7.38}\label{eq:downstairs-gauss-codazzi}
\]

Choose a smooth metric \(k_F\) on \(L_F\) and smooth divisor metrics
\(k_a\). Let \(\mu_{F,a}\) be the sum of the induced parabolic weights
of \(F\) along \(D_a\), counted with multiplicity and without reduction
modulo one. Set
\[
 q_F=k_F\prod_a|\sigma_a|_{k_a}^{2\mu_{F,a}}.
\]
A direct Chern-connection calculation shows that its curvature on
\(X\setminus D\) extends as the smooth representative of
\(\parc_1(F_*)\). Hence
\[
 \frac{\ii}{2\pi}\int_{X\setminus D}
 F_{q_F}\wedge\frac{\omega^{n-1}}{(n-1)!}
 =\pardeg_\omega(F_*).
 \tag{7.39}\label{eq:downstairs-raw-factor-degree}
\]

Put \(\phi_F=\log(\ell_h/q_F)\). On a split SNC chart in
\(X\setminus Z\), let \(d\) be the diagonal monomial metric
with the original parabolic weights, and put
\[
 \ell_d=\det(F,d),\qquad
 v=\log(\ell_h/\ell_d),\qquad
 \phi_F=v+\log(\ell_d/q_F).
\]
Adaptedness and
\eqref{eq:downstairs-canonical-determinant} give
\[
 |v|\le C_\epsilon+\epsilon\sum_a-\log|z_a|^2
 \quad\text{for every }\epsilon>0.
\]
We now apply the two local arguments in the proof of
Proposition~\ref{prop:power-log-interface}. First, the positive-current
argument for \(\log\ell_d\) identifies its prescribed divisorial
coefficients. After their removal, \(\log(\ell_d/q_F)\) is
locally \(L^2\) and satisfies its contracted curvature equation
with an \(L^1\) density and no additional distribution on the
divisor or the non-subbundle locus. Second, the scalar cutoff
argument applies to \(v\): its curvature equation has an \(L^1\)
right side, and the displayed arbitrarily small logarithmic bound
removes the cutoff errors. Adding the two equations proves
\(\phi_F\in L^2_{\mathrm{loc}}(X\setminus Z)\) and
\[
 \ii\Lambda_\omega\bar\partial\partial\phi_F
 =\kappa_h-\ii\Lambda_\omega F_{q_F}
\]
distributionally on \(X\setminus Z\), with the \(L^1\) densities
extended from the common subbundle locus in \(U_X\).

It remains to extend the distributional equation across \(Z\subset D\). Fix
\(x\in Z\), choose the generators in
\eqref{eq:downstairs-coherent-generator-growth}, and choose meromorphic
frames \(f_1,\ldots,f_s\) of \(F\) and meromorphic lifts
\(q_1,\ldots,q_{r-s}\) to \(E\) of a meromorphic frame of
\(E/F\). After multiplying by finitely many
nonzero holomorphic denominators, all these sections become holomorphic
linear combinations of the \(s_i\). Hadamard's inequality and
\eqref{eq:downstairs-coherent-generator-growth} give an upper bound for
the determinant norm of \(f_1\wedge\cdots\wedge f_s\) by a finite product
of negative powers of the divisor coordinates and those denominators. For the
lower bound, apply Gram--Schmidt to the combined meromorphic frame. Its
ambient determinant is
\[
 |\det(f_1,\ldots,f_s,q_1,\ldots,q_{r-s})|^2\det h.
\]
The quotient determinant is at most
\(\prod_j|q_j|_h^2\). Hence the ambient determinant formula
\eqref{eq:downstairs-canonical-determinant} and the same generator bounds
give a lower bound for the factor determinant by another finite product
of powers of nonzero holomorphic functions. Changing from this
meromorphic determinant frame to a local holomorphic generator of
\(L_F\) adds only logarithms of absolute values of meromorphic
functions. Absorbing these terms and the prescribed powers in
\(q_F\), we obtain nonzero holomorphic functions \(g_{u,x}\)
and \(C_x'<\infty\) such that
\[
 |\phi_F|\le C_x'\left(
 1+\sum_u\log\frac e{|g_{u,x}|}
 \right).
\]
Holomorphic logarithms belong locally to \(L^2\), so
\(\phi_F\in L^2_{\mathrm{loc}}\) at \(Z\).

Take a finite smooth stratification of \(Z\) with the frontier
condition, and proceed in decreasing stratum dimension. At each
stage take compactly supported test functions away from the closed
union of lower-dimensional strata. On their supports, the strata
being removed are locally closed smooth submanifolds, and finitely
many tubular charts suffice. For a stratum of complex codimension
\(q\ge3\), choose a normal cutoff
\(\xi_\delta\) which is zero at radius at most \(\delta\), one at radius
at least \(2\delta\), and satisfies
\[
 |d\xi_\delta|\le C\delta^{-1},\qquad
 |d^2\xi_\delta|\le C\delta^{-2}.
\]
Its derivative support has volume \(O(\delta^{2q})\). Testing the
distributional equation off the stratum against a smooth test function
times \(\xi_\delta\), Cauchy--Schwarz and the logarithmic bound on \(\phi_F\) bound the two cutoff errors by
\[
 C\delta^{q-1}\|\phi_F\|_{L^2},
 \qquad
 C\delta^{q-2}\|\phi_F\|_{L^2}.
\]
Both tend to zero. The \(L^1\) source passes to the limit by absolute
continuity. This extends the equation across the strata of the
current dimension; descending through the remaining dimensions
extends it to all of \(X\).

Test this global distributional equation against the constant function
one. The integral of the contracted curvature difference between
\(\ell_h\) and \(q_F\) is zero. Equations
\eqref{eq:downstairs-gauss-codazzi} and
\eqref{eq:downstairs-raw-factor-degree} now give
\eqref{eq:downstairs-degree-bridge}.

For the ambient identity, define \(q_E\) from a smooth metric on \(L_E\)
and the determinant exponents \(\mu_{E,a}\). Equation
\eqref{eq:downstairs-canonical-determinant} says that
\(\det h/q_E\) extends as a smooth strictly positive function across all
of \(X\). Its curvature difference is
globally exact, so compact Stokes gives
\eqref{eq:downstairs-ambient-degree}. The trace of the Higgs commutator
vanishes, and \eqref{eq:downstairs-contracted-source} guarantees absolute
integrability. This completes the proof.
\end{proof}

\begin{prop}
\label{prop:normalized-tail-no-escape}
Retain the comparison metrics \(k_\nu\), the K\"ahler forms
\(\omega_\nu\), and the fixed domain \(W_1\) from
Lemma~\ref{lem:carrier-response}, allowing a subsequence of the rational
stages. Suppose the original summand \((E_*,\theta)\), of rank
\(r\ge2\), is \(\omega\)-stable. Let
\(\Omega_\nu\Subset U\) be connected domains with smooth boundary
containing \(W_1\), such that every compact subset of \(U\) is
eventually contained in \(\Omega_\nu\). For the determinant-fixed
Hermitian--Einstein Dirichlet solutions \(H_\nu^{\mathrm D}\) with
boundary value \(k_\nu\), as in that lemma, we have
\[
 \sup_\nu L_\nu
 =\sup_\nu\int_{W_1}
   |\log(k_\nu^{-1}H_\nu^{\mathrm D})|_{k_\nu}
   \,dV_{\omega_\nu}<\infty.
\]
For each \(\nu\) for which \(V_{\nu,*}\) is stable, and any such exhaustion
\(\Omega_k\) and the corresponding solutions \(H_{\nu,k}\) with
boundary value \(k_\nu\), we also have
\[
 \sup_k\int_{W_1}
   |\log(k_\nu^{-1}H_{\nu,k})|_{k_\nu}
   \,dV_{\omega_\nu}<\infty,
\]
with the bound allowed to depend on \(\nu\).
\end{prop}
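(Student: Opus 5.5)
We argue by contradiction, following the blow-up scheme already used in the proof of Theorem~\ref{thm:fixed-stage-he-metrics}, but now with stages moving. Suppose a subsequence has \(L_\nu\to\infty\), and put \(s_\nu=\log(k_\nu^{-1}H_\nu^{\mathrm D})\) and \(u_\nu=s_\nu/L_\nu\). Lemma~\ref{lem:carrier-response} gives \(|u_\nu|_{\op,k_\nu}\le C_0+C_1b_\nu/L_\nu\). Since \(b_\nu\) is uniformly bounded on compact subsets of \(U\) by Proposition~\ref{prop:source-carrier}, the \(u_\nu\) are uniformly bounded on every compact set, and \(\int_{W_1}|u_\nu|=1\).

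\textbf{Energy bound.} Divide the stationary Donaldson identity on \(\Omega_\nu\) (with reference \(k_\nu\), source \(\Gamma_\nu\), zero boundary trace) by \(L_\nu\). This gives
\[
 \int_{\Omega_\nu}\Tr(\Gamma_\nu u_\nu)
 +\int_{\Omega_\nu}\langle\mathcal K_{L_\nu}(u_\nu)\bar\partial_\theta u_\nu,\bar\partial_\theta u_\nu\rangle=0 .
\]
The source term is uniformly bounded: on \(\mathcal C_m\) we use the local \(L^\infty\) bound together with \(\sup_\nu\int\mathfrak e_\nu<\infty\), and the tail is controlled by \eqref{eq:normalized-pairing-ui}. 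Hence \(\bar\partial_\theta u_\nu\) is bounded in \(L^2\) on each compact set, where the kernel dominates a positive constant on the locally bounded spectral square. Because \(k_\nu\to k_\infty\) and \(\omega_\nu\to\omega\) smoothly on compacts (Proposition~\ref{prop:determinant-selector-limit}), a diagonal subsequence converges weakly in \(W^{1,2}_{\mathrm{loc}}\) and strongly in \(L^2_{\mathrm{loc}}\) to some \(u_\infty\). We then have \(\int_{W_1}|u_\infty|_{k_\infty}=1\), so \(u_\infty\ne0\), and \(u_\infty\) is \(k_\infty\)-self-adjoint and trace free.

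\textbf{Limiting inequality.} Replace the kernel by any test function \(\zeta<1/(x-y)\). Using lower semicontinuity on \(\mathcal C_m\), smooth convergence \(\Gamma_\nu\to\Gamma_\infty\) on \(\mathcal C_m\), and \eqref{eq:normalized-pairing-ui} for the tails, we obtain
\[
 \int_U\Tr(\Gamma_\infty u_\infty)\,dV_\omega
 +\int_U\langle\zeta(u_\infty)\bar\partial_\theta u_\infty,\bar\partial_\theta u_\infty\rangle\le0 .
\]
The source integral converges absolutely because \(\Gamma_\infty\in L^1\) by \eqref{eq:downstairs-contracted-source}. One point needs care: \(u_\infty\) must be globally bounded. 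For that we want a uniform \(L^\infty\) bound on \(u_\nu\) away from exceptional charts, which follows since \(b_\nu/L_\nu\to0\) pointwise and \(b_\nu\) has only log--log growth, uniformly off the exceptional locus. Near \(Z\) we can only assert \(|u_\infty|\le C_0\) in the limit, because \(b_\nu/L_\nu\to0\) pointwise at every point of \(U\). This gives \(|u_\infty|\le C_0\) everywhere.

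\textbf{Spectral decomposition and degree identities.} Exactly as in Theorem~\ref{thm:fixed-stage-he-metrics}, the traces \(\Tr u_\infty^m\) are constant, so the eigenvalues are constant, \(\lambda_1<\dots<\lambda_t\) with \(t\ge2\). The projections \(p_\alpha\) satisfy the weak holomorphicity and Higgs-invariance conditions and have \(\bar\partial_\theta p_\alpha\in L^2(U,k_\infty,\omega)\), and
\[
 \int\Tr(\Gamma_\infty u_\infty)+\sum_\alpha(\lambda_{\alpha+1}-\lambda_\alpha)\int|\bar\partial_\theta p_\alpha|^2\le0 .
\]
Apply Proposition~\ref{prop:downstairs-reflexive-degree-bridge} to each \(p_\alpha\). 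This gives proper saturated parabolic Higgs subsheaves \(F_{\alpha,*}\subset E_*\), and via \eqref{eq:downstairs-degree-bridge} and \eqref{eq:downstairs-ambient-degree} it turns the inequality into
\[
 2\pi\sum_\alpha(\lambda_{\alpha+1}-\lambda_\alpha)\rk F_\alpha\,\bigl(\mu_\omega(E_*)-\mu_\omega(F_{\alpha,*})\bigr)\le0 .
\]
Here we use \(\Tr u_\infty=0\) to rewrite \(\Tr(\Gamma_\infty u_\infty)\) through the \(p_\alpha\), and the constant \(c_\infty\) drops out. Stability of \(E_*\) makes every term positive, which is a contradiction. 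The fixed-\(\nu\) assertion follows by the same argument with \(\nu\) frozen, using the fixed-stage clause of Lemma~\ref{lem:carrier-response}, the stability of \(V_{\nu,*}\), and Proposition~\ref{prop:power-log-interface} in place of the downstairs bridge, as in Theorem~\ref{thm:fixed-stage-he-metrics}.

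\textbf{Main obstacle.} The hardest step is passing the source pairing to the limit uniformly in \(\nu\), where the Kähler forms degenerate near the exceptional divisor. This is exactly where \eqref{eq:normalized-pairing-ui} and \eqref{eq:carrier-first-moment} enter. The second delicate point is verifying the hypotheses of Proposition~\ref{prop:downstairs-reflexive-degree-bridge}, in particular the global \(L^2\) energy and the boundedness of the \(p_\alpha\) near \(Z\).
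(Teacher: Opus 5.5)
Your proposal is correct and follows essentially the same route as the paper: normalize $u_\nu=s_\nu/L_\nu$, divide the Donaldson identity using the tail estimate \eqref{eq:normalized-pairing-ui}, extract a nonzero trace-free limit, and pass to the kernel inequality. The spectral projections are then fed into Proposition~\ref{prop:downstairs-reflexive-degree-bridge}, or into Proposition~\ref{prop:power-log-interface} at fixed $\nu$, to contradict stability. Note that at fixed $\nu$ the hypotheses must now be checked for $k_\nu$ rather than $H_{0,\nu}$, via Lemma~\ref{lem:doubled-reference} and Propositions~\ref{prop:determinant-selector-limit} and~\ref{prop:local-contraction-estimates}. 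The remaining differences are cosmetic: you eliminate $c_\infty$ via $\Tr u_\infty=0$ instead of the ambient identity $c_\infty\Vol_\omega(X)=2\pi\mu_\omega(E_*)$, and you state the bound $|u_\infty|_{\op,k_\infty}\le C_0$ explicitly to justify absolute convergence of the limiting source pairing.
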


\begin{proof}
\noindent\emph{Varying rational stages.}
We first prove the bound as \(\nu\to\infty\), using the stability of
\(E_*\) on \((X,\omega)\). On the common open set
\(U=Y\setminus B=X\setminus D\), we have
\(\omega_\nu\to\omega\) and \(k_\nu\to k_\infty\) smoothly on
compact subsets. Put \(s_\nu=\log(k_\nu^{-1}H_\nu^{\mathrm D})\).

The determinant and boundary conditions give \(\Tr s_\nu=0\) and
\(s_\nu=0\) on \(\partial\Omega_\nu\). The integrated Donaldson
identity \cite[Proposition~2.6]{ZZZ18}, applied on each compact domain
with boundary, therefore gives
\[
 \int_{\Omega_\nu}\Tr(\Gamma_\nu s_\nu)
 +\int_{\Omega_\nu}
 \langle\Psi_{\exp}(s_\nu)\bar\partial_\theta s_\nu,
 \bar\partial_\theta s_\nu\rangle=0,
 \tag{7.40}\label{eq:moving-donaldson-identity}
\]
where the integrals use \(k_\nu\) and \(dV_{\omega_\nu}\).
Lemma~\ref{lem:carrier-response} and the compact bounds on \(b_\nu\)
give, on every \(\mathcal C\Subset U\),
\[
 \sup_{\mathcal C}|s_\nu|\le A_{0,\mathcal C}+A_{1,\mathcal C}L_\nu,
 \tag{7.41}\label{eq:linear-normalization-control}
\]
with constants independent of the sequence index. The same lemma gives
the tail estimate \eqref{eq:normalized-pairing-ui} along any subsequence
with \(L_\nu\to\infty\). It remains to prove
\[
 \sup_\nu L_\nu<\infty.
 \tag{7.42}\label{eq:no-escape-core}
\]
If this fails, pass to a subsequence with \(L_\nu\to\infty\) and put
\(u_\nu=s_\nu/L_\nu\). Then
\[
 \Tr u_\nu=0,\qquad
 \int_{W_1}|u_\nu|_{k_\nu}\,dV_{\omega_\nu}=1,
\]
and \eqref{eq:linear-normalization-control} bounds \(u_\nu\) on every compact
set. Divide \eqref{eq:moving-donaldson-identity} by \(L_\nu\). Its positive
kernel is
\[
 \mathcal K_{L_\nu}(x,y)
 =L_\nu\Psi_{\exp}(L_\nu x,L_\nu y)
 =\int_0^{L_\nu}e^{t(y-x)}\,dt.
\]
The tail condition \eqref{eq:normalized-pairing-ui} and compact convergence
of \(\Gamma_\nu\) bound the source pairing. On a compact spectral square,
\(\mathcal K_{L_\nu}\) has a positive lower bound independent of \(\nu\).
Hence the \(\bar\partial\)-derivatives are bounded in \(L^2\)
on every compact subset. Self-adjointness and the smoothly converging
metric coefficients control the complementary derivatives as well,
so \(u_\nu\) is bounded in \(W^{1,2}\) there. Rellich
compactness and a diagonal extraction give a nonzero self-adjoint
trace-free limit \(u_\infty\), strongly in \(L^2_{\mathrm{loc}}\) and
weakly in \(W^{1,2}_{\mathrm{loc}}\).

The normalization on \(W_1\) ensures that this limit is not zero.
On every compact subset,
strong \(L^2\) convergence and smooth convergence of the coefficients
give convergence of the source integrals. Condition
\eqref{eq:normalized-pairing-ui} controls their complements uniformly,
so
\[
 \int_{\Omega_\nu}\Tr(\Gamma_\nu u_\nu)\,dV_{\omega_\nu}
 \longrightarrow
 \int_U\Tr(\Gamma_\infty u_\infty)\,dV_\omega,
\]
and the limiting scalar pairing is absolutely integrable. Throughout
this part of the proof, limiting norms and inner products use
\(k_\infty\) and \(\omega\).

For each fixed \(R>0\), the kernels \(\mathcal K_{L_\nu}\)
dominate \(\mathcal K_R\) for all sufficiently large \(\nu\).
Weak lower semicontinuity on compact subsets, followed by exhaustion,
therefore bounds
\[
 \int_U\left\langle
 \mathcal K_R(u_\infty)\bar\partial_\theta u_\infty,
 \bar\partial_\theta u_\infty\right\rangle
 dV_\omega
 \le-\int_U\Tr(\Gamma_\infty u_\infty)\,dV_\omega.
\]
The bound is independent of \(R\). As \(R\to\infty\), the kernel
increases to \((x-y)^{-1}\) when \(x>y\) and to infinity when
\(x\le y\). Thus the components of \(\bar\partial_\theta u_\infty\)
mapping an \(x\)-eigenspace to a \(y\)-eigenspace with \(x\le y\)
vanish. In particular,
\(\bar\partial\Tr(u_\infty^j)=0\) for \(1\le j\le r\).
These traces are real, hence constant on connected \(U\); Newton's
identities give a globally constant finite spectrum
\[
 \lambda_1<\cdots<\lambda_{m_{\mathrm{spec}}},
\]
The trace-free normalization and the nonzero limit give
\(m_{\mathrm{spec}}\ge2\). The projections \(p_\alpha\) onto eigenspaces
with eigenvalues at most \(\lambda_\alpha\),
\(1\le\alpha<m_{\mathrm{spec}}\), satisfy the weak
holomorphic and Higgs equations and
\[
 \int_U|\bar\partial_\theta p_\alpha|^2<\infty.
\]
The spectral projection calculation in the proof of
Theorem~\ref{thm:fixed-stage-he-metrics} now applies. Put
\(\delta_\alpha=\lambda_{\alpha+1}-\lambda_\alpha\).
A block from an \(x\)-eigenspace to a \(y\)-eigenspace with \(y<x\)
contributes to exactly the projections whose cut lies between \(y\)
and \(x\). The sum of those spectral gaps is \(x-y\), so its
coefficient in \(\sum_\alpha\delta_\alpha|D''p_\alpha|^2\) is
\((x-y)/(x-y)^2=(x-y)^{-1}\). We therefore obtain
\[
 \int_U\Tr(\Gamma_\infty u_\infty)
 +\sum_{\alpha=1}^{m_{\mathrm{spec}}-1}\delta_\alpha
 \int_U|\bar\partial_\theta p_\alpha|^2\le0.
 \tag{7.43}\label{eq:spectral-limit-inequality}
\]
Proposition~\ref{prop:downstairs-reflexive-degree-bridge}, applied to
\(k_\infty\) on \(X\setminus D\), identifies the degrees of these
projections. Their images extend to nonzero proper saturated parabolic
Higgs subsheaves \(F_{\alpha,*}\subset E_*\), and
\[
 \frac1{2\pi}\int_U
 \left(\ii\Tr(p_\alpha\Lambda_\omega\Psi_{k_\infty})
       -|\bar\partial_\theta p_\alpha|^2\right)dV_\omega
 =\pardeg_\omega(F_{\alpha,*}).
\]
Taking traces of the stage equations and using the determinant
condition gives
\(\ii\Lambda_\omega F_{\det k_\infty}=rc_\infty\) on \(U\).
The corresponding ambient degree identity yields
\[
 c_\infty\Vol_\omega(X)
 =2\pi\mu_\omega(E_*).
\]
We have
\[
 u_\infty=\lambda_{m_{\mathrm{spec}}}\Id
 -\sum_{\alpha=1}^{m_{\mathrm{spec}}-1}
  \delta_\alpha p_\alpha,\qquad
 r\lambda_{m_{\mathrm{spec}}}
 =\sum_{\alpha=1}^{m_{\mathrm{spec}}-1}
  \delta_\alpha\rk F_\alpha.
\]
Substitution of these degree identities into
\eqref{eq:spectral-limit-inequality} makes its left side
\[
 2\pi\sum_{\alpha=1}^{m_{\mathrm{spec}}-1}
 \delta_\alpha\rk(F_\alpha)
 \bigl[\mu_\omega(E_*)-\mu_\omega(F_{\alpha,*})\bigr].
\]
It is strictly positive by stability, a contradiction. This proves
\eqref{eq:no-escape-core}. Inserting it into
\eqref{eq:linear-normalization-control} gives uniform metric comparison
on compact subsets.

\medskip
\noindent\emph{Exhaustion with \(\nu\) fixed.}
Fix a stable stage \(\nu\), and write
\[
 s_{\nu,k}=\log(k_\nu^{-1}H_{\nu,k}),\qquad
 L_{\nu,k}=\int_{W_1}|s_{\nu,k}|_{k_\nu}\,dV_{\omega_\nu}.
\]
Suppose \(L_{\nu,k}\to\infty\) along a subsequence. Apply the preceding
normalization and compactness argument to
\(u_{\nu,k}=s_{\nu,k}/L_{\nu,k}\), now keeping
\(k_\nu\), \(\omega_\nu\), \(\Gamma_\nu\), and \(c_\nu\) fixed while
\(k\to\infty\). The compact estimates and the normalized pairing
estimate of Lemma~\ref{lem:carrier-response} apply with constants
allowed to depend on \(\nu\). They give a nonzero self-adjoint
trace-free limit \(u_{\nu,\infty}\) with constant spectrum. Write
\(m_{\mathrm{spec}}\ge2\) for its number of distinct eigenvalues.
Its proper lower spectral projections \(p_\alpha\) satisfy the weak
holomorphic and Higgs equations and have finite
\(L^2(U,k_\nu,\omega_\nu)\) energy.
The same kernel argument yields
\[
 \int_U\Tr(\Gamma_\nu u_{\nu,\infty})\,dV_{\omega_\nu}
 +\sum_{\alpha=1}^{m_{\mathrm{spec}}-1}\delta_\alpha
 \int_U|\bar\partial_\theta p_\alpha|_{k_\nu,\omega_\nu}^2
       \,dV_{\omega_\nu}\le0,
\]
where \(\delta_\alpha>0\) are the successive spectral gaps of
\(u_{\nu,\infty}\).

To identify these degrees on \(Y\), use
Proposition~\ref{prop:power-log-interface} for \(k_\nu\).
Lemma~\ref{lem:doubled-reference} gives adaptedness, the power-log
comparison, and the required \(L^2\) bound on the relative logarithmic
derivative. Proposition~\ref{prop:determinant-selector-limit} provides
the determinant formula, and
Proposition~\ref{prop:local-contraction-estimates} gives integrability
of the contracted curvature. Thus the projections extend to nonzero
proper saturated parabolic Higgs subsheaves
\(F_{\alpha,*}\subset V_{\nu,*}\), with
\[
 \frac1{2\pi}\int_U
 \left(\ii\Tr(p_\alpha\Lambda_{\omega_\nu}\Psi_{k_\nu})
       -|\bar\partial_\theta p_\alpha|_{k_\nu,\omega_\nu}^2\right)
       dV_{\omega_\nu}
 =\pardeg_{\omega_\nu}(F_{\alpha,*}).
\]
The ambient degree identity is
\(c_\nu\Vol_{\omega_\nu}(Y)=2\pi\mu_{\omega_\nu}(V_{\nu,*})\).
Substituting these identities into the preceding inequality gives
\[
 2\pi\sum_{\alpha=1}^{m_{\mathrm{spec}}-1}
 \delta_\alpha\rk(F_\alpha)
 \bigl[\mu_{\omega_\nu}(V_{\nu,*})
       -\mu_{\omega_\nu}(F_{\alpha,*})\bigr]\le0.
\]
This contradicts the \(\omega_\nu\)-stability of \(V_{\nu,*}\).
Hence \(\sup_kL_{\nu,k}<\infty\), proving the second assertion.
\end{proof}

\subsection{Compactness and parabolic growth of the global solutions}
\label{subsec:global-compactness-growth}
We now return to the global Hermitian--Einstein metrics \(h_\nu\)
on \(U\) constructed in Theorem~\ref{thm:fixed-stage-he-metrics}
and used in Corollary~\ref{cor:factor-energy}. We apply the preceding
estimates through auxiliary Dirichlet solutions to prove subsequential
convergence after constant rescaling, together with the boundary growth
bounds needed for Theorem~\ref{thm:harmonic-main}.

\begin{prop}\label{prop:moving-endpoints}
Fix one original stable summand \((E_*,\theta)\), and retain its
fixed model and sufficiently late stable rational stages from
Setup~\ref{setup:literal-moving-schedule}. Let \(h_\nu\) be the global
Hermitian--Einstein metrics of Theorem~\ref{thm:fixed-stage-he-metrics}.
With \(a_\nu\) and \(k_\nu\) from
Proposition~\ref{prop:determinant-selector-limit}, put
\[
 h_\nu^{\mathrm{act}}=a_\nu^{1/r}h_\nu.
\]
Then the following hold.

\begin{enumerate}
\item The sequence \(h_\nu^{\mathrm{act}}\) has a subsequence converging
in \(C^\infty_{\mathrm{loc}}(U)\) to a smooth positive Hermitian metric.

\item On every relatively compact SNC chart \(P\subset Y\) whose
closure avoids the exceptional divisors, write
\(B\cap P=\{y_1\cdots y_q=0\}\). For every \(\eta>0\), there is
\(C_{P,\eta}\ge1\), independent of \(\nu\), such that
\[
 C_{P,\eta}^{-1}\prod_{a=1}^q|y_a|^{2\eta}k_\nu
 \le h_\nu^{\mathrm{act}}
 \le C_{P,\eta}\prod_{a=1}^q|y_a|^{-2\eta}k_\nu
 \quad\text{on }P\setminus B.
 \tag{7.44}\label{eq:stage-growth-comparison}
\]
\end{enumerate}
\end{prop}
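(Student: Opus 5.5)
The plan is to compare each global metric \(h_\nu^{\mathrm{act}}\) with the Dirichlet solutions that have boundary value \(k_\nu\), and then to transfer the uniform estimates proved in Proposition~\ref{prop:normalized-tail-no-escape} and Lemma~\ref{lem:carrier-response}.

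\emph{Step 1: identify \(h_\nu^{\mathrm{act}}\) as a limit of Dirichlet solutions.} Fix \(\nu\). Since \(k_\nu=a_\nu^{1/r}\widehat g_\nu\) and \(\det k_\nu=a_\nu q_\nu=\det h_\nu^{\mathrm{act}}\), the metric \(h_\nu^{\mathrm{act}}\) solves the central equation with constant \(c_\nu\) and has the prescribed determinant. Take an exhaustion \(\Omega_k\) of \(U\) containing \(W_1\), and let \(H_{\nu,k}\) be the determinant-fixed Dirichlet solutions with boundary value \(k_\nu\). The second assertion of Proposition~\ref{prop:normalized-tail-no-escape}, combined with Lemma~\ref{lem:carrier-response} at fixed \(\nu\), gives
\[
 |\log(k_\nu^{-1}H_{\nu,k})|\le C_\nu+b_\nu,
\]
uniformly in \(k\). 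Interior estimates \cite[Proposition~3.5]{ZZZ18} then produce a limit \(\widetilde h_\nu\). This limit has the same determinant and the same central constant as \(h_\nu^{\mathrm{act}}\), and it satisfies \(|\log(k_\nu^{-1}\widetilde h_\nu)|\le C_\nu+b_\nu\). For fixed \(\nu\), the barrier \(b_\nu\) has at most log--log growth by \eqref{eq:carrier-growth}. Hence \(\widetilde h_\nu\) has the same growth lattices as \(k_\nu\), so it is adapted to \(V_{\nu,*}\). Theorem~\ref{thm:central-he-same-lattice} now yields \(\widetilde h_\nu=h_\nu^{\mathrm{act}}\).

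\emph{Step 2: estimates uniform in \(\nu\).} Choose domains \(\Omega_\nu=\Omega_{k(\nu)}\) deep enough that the Dirichlet solution \(H_\nu^{\mathrm D}=H_{\nu,k(\nu)}\) is \(1/\nu\)-close to \(h_\nu^{\mathrm{act}}\) in \(C^0\) on a compact set \(\mathcal C_\nu\) exhausting \(U\); this is a diagonal choice. The first assertion of Proposition~\ref{prop:normalized-tail-no-escape} gives \(\sup_\nu L_\nu<\infty\). Lemma~\ref{lem:carrier-response} then gives
\[
 |\log(k_\nu^{-1}H_\nu^{\mathrm D})|\le C+b_\nu
\]
on \(\Omega_\nu\), with \(C\) independent of \(\nu\). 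The obstacle is that this bound transfers to \(h_\nu^{\mathrm{act}}\) only on \(\mathcal C_\nu\). To avoid the diagonal issue, I would redo the transfer directly. Apply the identity-map Bochner argument to \(h_\nu^{\mathrm{act}}\) versus \(H_{\nu,k}\) for \(k\ge k(\nu)\), and pass to the limit in \(k\) at fixed \(\nu\). The constant \(C_0(1+L_{\nu,k})\) then stabilizes, provided the bound on \(L_{\nu,k}\) is uniform in both \(\nu\) and \(k\). The argument of Proposition~\ref{prop:normalized-tail-no-escape} should give exactly this: its proof only uses that every compact set is eventually contained in the domains, so it applies to any sequence of pairs \((\nu,k)\) with \(\nu\to\infty\). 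I would therefore argue by contradiction along a double sequence with \(\nu\to\infty\) and \(k\ge k(\nu)\), which gives
\[
 \sup_{\nu,k}L_{\nu,k}<\infty.
\]
For the finitely many small \(\nu\), the fixed-stage bound suffices. Letting \(k\to\infty\) yields
\[
 |\log(k_\nu^{-1}h_\nu^{\mathrm{act}})|_{\op}\le C+b_\nu \quad\text{on }U,
\]
with \(C\) independent of \(\nu\).

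\emph{Step 3: conclusions.} For (1): \(b_\nu\) is uniformly bounded on compact subsets of \(U\), and \(k_\nu\to k_\infty\), \(\omega_\nu\to\omega\) smoothly. So \(h_\nu^{\mathrm{act}}\) is uniformly equivalent to \(k_\infty\) on each compact set. Interior estimates \cite[Proposition~3.5]{ZZZ18} and bootstrapping then give a \(C^\infty_{\mathrm{loc}}\) subsequential limit, and the two-sided bound makes the limit positive. For (2): on a chart \(P\) avoiding the exceptional divisors, \eqref{eq:carrier-growth} gives, uniformly in \(\nu\),
\[
 b_\nu\le C_P\Bigl(1+\sum_a\log(1-\log|y_a|^2)\Bigr).
\]
For each \(\eta>0\), the exponential of this log--log bound is dominated by \(C_{P,\eta}\prod_a|y_a|^{-2\eta}\). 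Exponentiating the operator bound from Step~2 therefore yields \eqref{eq:stage-growth-comparison}.

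I expect the main obstacle to be the uniformity in the double index in Step~2, namely running the normalized no-escape argument along pairs \((\nu,k)\). Its ingredients (the convergence \(k_\nu\to k_\infty\), the tail estimate \eqref{eq:normalized-pairing-ui}, and the degree bridge to \(E_*\)) do not depend on \(k\), so the argument should carry over.
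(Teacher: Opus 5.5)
Your proposal is correct and follows the paper's proof. You identify $h_\nu^{\mathrm{act}}$ as the exhaustion limit of the Dirichlet solutions with boundary value $k_\nu$ via Theorem~\ref{thm:central-he-same-lattice}, obtain a stage-uniform bound on the $W_1$-integrals from the first assertion of Proposition~\ref{prop:normalized-tail-no-escape} along a diagonal, and conclude with Lemma~\ref{lem:carrier-response} and the barrier growth \eqref{eq:carrier-growth}; the only difference is bookkeeping, since the paper bounds the limits $\ell_\nu$ directly by choosing $k_m$ with $|L_{\nu_m,k_m}-\ell_{\nu_m}|<1$, whereas you bound $L_{\nu,k}$ uniformly over $k\ge k(\nu)$ and then let $k\to\infty$, which is equally valid (the extra Bochner comparison between $h_\nu^{\mathrm{act}}$ and $H_{\nu,k}$ that you mention is not needed).
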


\begin{proof}
\noindent\textbf{Step 1. Determinant normalization.}
By Theorem~\ref{thm:fixed-stage-he-metrics} and
Proposition~\ref{prop:determinant-selector-limit},
\[
 \det h_\nu^{\mathrm{act}}=a_\nu q_\nu
 =Q_{Y,\nu}=\det k_\nu.
\]
Multiplication by a spatially constant positive scalar leaves the
Chern connection and Higgs
adjoint unchanged, as well as the curvature energy used in
Corollary~\ref{cor:factor-energy} and every growth lattice.
In particular, \(h_\nu^{\mathrm{act}}\) is adapted and solves
\[
 \ii\Lambda_{\omega_\nu}
 \bigl(F_{h_\nu^{\mathrm{act}}}
 +[\theta,\theta^{\dagger h_\nu^{\mathrm{act}}}]\bigr)
 =c_\nu\Id_V.
\]

If \(r=1\), both \(h_\nu^{\mathrm{act}}\) and \(k_\nu\) are metrics on the
same line with determinant \(Q_{Y,\nu}\), so
\(h_\nu^{\mathrm{act}}=k_\nu\). Proposition~\ref{prop:determinant-selector-limit}
then gives smooth convergence on compact sets, and the growth estimate
is immediate from \(h_\nu^{\mathrm{act}}=k_\nu\).
We henceforth assume \(r\ge2\).

Let \(b_\nu\) and \(W_0\Subset W_1\) be as in
Proposition~\ref{prop:source-carrier}, and put
\[
 s_\nu^{\mathrm{act}}=\log(k_\nu^{-1}h_\nu^{\mathrm{act}}),
 \qquad
 \ell_\nu=\int_{W_1}|s_\nu^{\mathrm{act}}|_{k_\nu}\,dV_{\omega_\nu}.
\]
We first prove
\[
 \sup_\nu\ell_\nu<\infty,
 \tag{7.45}\label{eq:core-bound}
\]
and then obtain, with \(C\) independent of \(\nu\),
\[
 |s_\nu^{\mathrm{act}}|_{\op,k_\nu}\le C+b_\nu
 \quad\text{on }U.
 \tag{7.46}\label{eq:global-endpoint-bound}
\]
The compact bounds and boundary growth of \(b_\nu\) will then give
the two conclusions of the proposition.

\noindent\textbf{Step 2. Dirichlet solutions for fixed \(\nu\).}
Fix \(\nu\). To compare the global metric \(h_\nu^{\mathrm{act}}\)
with \(k_\nu\), choose connected domains with smooth boundary
\[
 \Omega_1\Subset\Omega_2\Subset\cdots\Subset U_Y
\]
containing \(W_1\) and exhausting \(U_Y\). Let \(H_{\nu,k}\) solve
\[
 H_{\nu,k}=k_\nu\text{ on }\partial\Omega_k,\qquad
 \det H_{\nu,k}=Q_{Y,\nu},
\]
\[
 \ii\Lambda_{\omega_\nu}
 \bigl(F_{H_{\nu,k}}+[\theta,\theta^{\dagger H_{\nu,k}}]\bigr)
 =c_\nu\Id_V.
\]
Existence and uniqueness follow from \cite[Propositions~3.4 and Theorem~5.1]{ZZZ18}. Put
\[
 s_{\nu,k}=\log(k_\nu^{-1}H_{\nu,k}),\qquad
 L_{\nu,k}=\int_{W_1}|s_{\nu,k}|_{k_\nu}\,dV_{\omega_\nu}.
\]

Lemma~\ref{lem:carrier-response} and the compact bounds on \(b_\nu\)
give, for each \(\mathcal C\Subset U_Y\),
\[
 \sup_{\mathcal C}|s_{\nu,k}|_{k_\nu}
 \le A_{0,\nu,\mathcal C}+A_{1,\nu,\mathcal C}L_{\nu,k}.
 \tag{7.47}\label{eq:mean-to-sup}
\]
The second assertion of
Proposition~\ref{prop:normalized-tail-no-escape} applies to these Dirichlet
solutions and gives
\[
 \sup_kL_{\nu,k}<\infty.
\]
The interior estimate \cite[Proposition~3.5]{ZZZ18} and elliptic
bootstrapping then give subsequences converging smoothly on compact sets.

\noindent\textbf{Step 3. Identifying the exhaustion limit.}
Lemma~\ref{lem:carrier-response} gives
\[
 |s_{\nu,k}|_{\op,k_\nu}
 \le C_\nu(1+L_{\nu,k})+b_\nu.
 \tag{7.48}\label{eq:fixed-stage-carrier-bound}
\]
For any subsequence converging smoothly on compact sets, denote its limit temporarily
by \(\widetilde h_\nu\). Passing to the limit in
\eqref{eq:fixed-stage-carrier-bound} and using the uniform bound on
\(L_{\nu,k}\) at fixed \(\nu\) gives
\[
 |\log(k_\nu^{-1}\widetilde h_\nu)|_{\op,k_\nu}
 \le C_\nu'+b_\nu.
 \tag{7.49}\label{eq:limit-carrier-bound}
\]

By \eqref{eq:carrier-growth}, at fixed \(\nu\) the barrier is bounded
by a constant depending on \(\nu\) plus a fixed multiple of
\(\sum_a\log(1-\log|y_a|^2)\). This sum grows more slowly than every
positive multiple of \(\sum_a\log(1/|y_a|)\). Thus
\eqref{eq:limit-carrier-bound} bounds \(\widetilde h_\nu\) above and below
by \(k_\nu\) times arbitrarily small positive and negative powers of
the divisor coordinates, including on exceptional charts.
Since \(k_\nu\) is adapted, splitting the exponent tolerance between
the two inequalities proves that \(\widetilde h_\nu\) is adapted to the same
lattices at every real index.

The parabolic Higgs bundle \(V_{\nu,*}\) is stable and rational,
and \(h_\nu^{\mathrm{act}}\) is adapted by Step~1. Both determinant
metrics are \(Q_{Y,\nu}\), whose local formula has the prescribed rational
exponents and a smooth positive coefficient, and both metrics solve
the central equation with constant \(c_\nu\).
Theorem~\ref{thm:central-he-same-lattice} therefore gives
\[
 \widetilde h_\nu=h_\nu^{\mathrm{act}}.
\]
\noindent\textbf{Step 4. A bound independent of \(\nu\).}
Fix a compact exhaustion \(\mathcal C_m\Subset U_Y\), independent
of \(\nu\).
Smooth convergence to \(h_\nu^{\mathrm{act}}\) on \(W_1\) gives
\[
 L_{\nu,k}\longrightarrow\ell_\nu
\]
along the convergent subsequence chosen for this \(\nu\).
Suppose \eqref{eq:core-bound} fails. Choose \(\nu_m\uparrow\infty\)
with \(\ell_{\nu_m}\to\infty\). For each \(m\), choose \(k_m\)
from the corresponding subsequence so large that
\[
 \mathcal C_m\subset\Omega_{k_m},\qquad
 |L_{\nu_m,k_m}-\ell_{\nu_m}|<1,
 \tag{7.50}\label{eq:two-index-selector}
\]
Every compact subset of \(U\) eventually lies in these domains, and
\(L_{\nu_m,k_m}\to\infty\).

The first assertion of
Proposition~\ref{prop:normalized-tail-no-escape}, using the original
\(\omega\)-stability of \(E_*\), applies to
\[
 (\Omega_{k_m},\omega_{\nu_m},k_{\nu_m},H_{\nu_m,k_m}).
\]
It gives \(\sup_m L_{\nu_m,k_m}<\infty\), contradicting
\eqref{eq:two-index-selector} and \(\ell_{\nu_m}\to\infty\).
This proves \eqref{eq:core-bound}.

\noindent\textbf{Step 5. Global estimate and compactness.}
Lemma~\ref{lem:carrier-response} gives, with \(C_0\) independent of
\(\nu\) and \(k\),
\[
 |s_{\nu,k}|_{\op,k_\nu}
 \le C_0(1+L_{\nu,k})+b_\nu.
\]
For fixed \(\nu\), pass to \(h_\nu^{\mathrm{act}}\), then use
\eqref{eq:core-bound}. This gives \eqref{eq:global-endpoint-bound}.
Since \(b_\nu\) is uniformly bounded on every \(\mathcal C\Subset U\),
\[
 C_{\mathcal C}^{-1}k_\nu\le h_\nu^{\mathrm{act}}
 \le C_{\mathcal C}k_\nu\quad\text{on }\mathcal C,
\]
with \(C_{\mathcal C}\) independent of \(\nu\).
The metrics \(k_\nu\) converge smoothly to the positive metric
\(k_\infty\), and \(\omega_\nu\to\omega\) smoothly on compact
subsets of \(U\). The interior estimates used in Step~2 therefore
give a subsequence converging smoothly on compact subsets; the lower
comparison bound makes its limit positive. This proves the first
assertion. The Chern curvatures and Higgs adjoints converge as well.

\noindent\textbf{Step 6. Nonexceptional growth.}
On a strict-transform chart whose closure avoids the exceptional locus,
\eqref{eq:carrier-growth} and the elementary bound
\(\log(1+2R)\le C_\epsilon+\epsilon R\), valid for \(R\ge0\) and
every \(\epsilon>0\), give, for every \(\eta>0\),
\[
 b_\nu\le C_{P,\eta}+2\eta\sum_a\log\frac1{|y_a|},
\]
with \(C_{P,\eta}\) independent of \(\nu\). Combining this with
\eqref{eq:global-endpoint-bound} and exponentiating gives
\eqref{eq:stage-growth-comparison}.
\end{proof}

% \begin{rem}\label{rem:moving-endpoints}
% Finite-domain metrics serve as selectors. At each fixed stage,
% Theorem~\ref{thm:central-he-same-lattice} identifies their
% exhaustion limit, obtained smoothly on compact sets, with the pre-existing actual endpoint by
% checking adaptedness to the same rational lattice, the common
% determinant metric \(Q_{Y,\nu}\), and equality of the central equation. A two-index
% diagonal then transfers the normalized-log no-escape estimate to this
% same family of actual endpoints. Zeroth-order growth comparison is not
% used as a substitute for a first-order flux calculation.
% \end{rem}

\begin{cor}\label{cor:nonexceptional-growth}
Retain one stable summand and the normalized global metrics of
Proposition~\ref{prop:moving-endpoints}. Suppose a subsequence satisfies
\[
 h_{\nu_m}^{\mathrm{act}}\longrightarrow H_Y
\]
smoothly on compact subsets in the fixed local holomorphic frames. Under the identification
\[
 Y\setminus B\simeq X\setminus D
\]
we also denote this limit by \(H_X\) on \(X\setminus D\). Retain the limiting comparison
metric \(k_\infty\) from
Proposition~\ref{prop:determinant-selector-limit}, viewed on this
common open set. Then, on every relatively compact SNC chart
\(P\Subset X\setminus Z\), with
\[
 D\cap P=\{z_1\cdots z_\ell=0\},\qquad |z_a|<1,
\]
and for every \(\eta>0\), there is \(C_{P,\eta}\ge1\) such that
\[
 C_{P,\eta}^{-1}
 \prod_{a=1}^\ell|z_a|^{2\eta}k_\infty
 \le H_X\le
 C_{P,\eta}
 \prod_{a=1}^\ell|z_a|^{-2\eta}k_\infty
 \quad\text{on }P\setminus D.
 \tag{7.51}\label{eq:limit-growth}
\]
In particular, for every real local multi-index \(\mathbf c\),
\[
 P^{\mathbf c}(H_X)|_{X\setminus Z}
 =P^{\mathbf c}(k_\infty)|_{X\setminus Z}
 =E^{\mathbf c}|_{X\setminus Z}.
\]
\end{cor}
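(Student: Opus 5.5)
The plan is to pass to the limit in the stage estimate \eqref{eq:stage-growth-comparison} of Proposition~\ref{prop:moving-endpoints}, and then to read off the lattices from the adaptedness of \(k_\infty\) away from \(Z\).

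First fix a relatively compact SNC chart \(P\Subset X\setminus Z\). Since \(\pi\) is biholomorphic over \(X\setminus Z\), \(P\) corresponds to a chart \(\pi^{-1}(P)\) of \(Y\) whose closure avoids the exceptional divisors. On it the boundary coordinates \(y_a\) are the pulled-back \(z_a\), up to relabeling. Proposition~\ref{prop:moving-endpoints}(2) then gives, for each \(\eta>0\), a constant \(C_{P,\eta}\) independent of \(\nu\) such that
\[
 C_{P,\eta}^{-1}\prod_a|z_a|^{2\eta}k_{\nu_m}
 \le h_{\nu_m}^{\mathrm{act}}
 \le C_{P,\eta}\prod_a|z_a|^{-2\eta}k_{\nu_m}
\]
on \(P\setminus D\). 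Fix a point of \(P\setminus D\). Both \(h_{\nu_m}^{\mathrm{act}}\to H_X\) and \(k_{\nu_m}\to k_\infty\) hold pointwise there, the latter by Proposition~\ref{prop:determinant-selector-limit}(3). The inequalities between Hermitian forms are closed conditions, so they pass to the limit and give \eqref{eq:limit-growth}.

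For the lattice identity, I first show that \(k_\infty\) is adapted to \(E_*\) on \(X\setminus Z\). This is Step~1 of the proof of Proposition~\ref{prop:downstairs-reflexive-degree-bridge}, and I would repeat its argument. On \(X\setminus Z\) the model is the identity and \(w^\infty\) recovers the original filtrations. The local metrics of Lemma~\ref{lem:doubled-reference} at the limiting weights are split monomial metrics times fixed powers of \(\tau_a\). The gluing comparison of Lemma~\ref{lem:doubled-reference}(2) holds with \(\nu\)-independent constants, so it persists in the limit and gives \(C^{-1}g\le k_\infty\le Cg\). Since powers of \(\tau_a\) are dominated by \(|z_a|^{\mp\epsilon}\) for every \(\epsilon>0\), this yields \(P^{\mathbf c}(k_\infty)=E^{\mathbf c}\) on \(X\setminus Z\).

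One point needs care. Coalesced weights make the limiting split metric block-constant across refined steps, and the logarithmic factors then only shift within a block. This does not change growth lattices, because a lattice is determined by exponents up to an arbitrary \(\epsilon\).

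Finally, combining with \eqref{eq:limit-growth}: a section in \(P^{\mathbf c}(k_\infty)\) has \(H_X\)-norm at most \(C\prod|z_a|^{c_a-\epsilon-\eta}\), and \(\eta\) is arbitrary. The lower bound gives the reverse inclusion in the same way. Hence \(P^{\mathbf c}(H_X)=P^{\mathbf c}(k_\infty)\), since Definition~\ref{defn:growth} already allows an arbitrary \(\epsilon\) loss.

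I expect the main obstacle to be the uniform-in-\(\nu\) comparison of \(k_\infty\) with the split model at the limiting weights. The difficulty is at the crossings inside \(X\setminus Z\), where the divisibility argument of Lemma~\ref{lem:doubled-reference} must be checked with \(\epsilon_*\) uniform. Everything else is a limit of closed inequalities.
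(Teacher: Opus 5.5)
Your proposal is correct and follows the paper's proof essentially step for step. You pass \eqref{eq:stage-growth-comparison} to the pointwise limit using its \(\nu\)-independent constant, take adaptedness of \(k_\infty\) on \(X\setminus Z\) from Step~1 of the proof of Proposition~\ref{prop:downstairs-reflexive-degree-bridge} (whose local calculation, as the paper notes, does not actually need that proposition's hypothesis \(r\ge2\), so repeating it rather than citing it is the right move), and split the tolerance between \(\epsilon\) and \(\eta\) to obtain both lattice inclusions.
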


\begin{proof}
Since \(\pi\) is an isomorphism over \(X\setminus Z\), the chart
\(P\) identifies with an SNC chart on \(Y\) whose closure avoids
the exceptional divisors. Apply \eqref{eq:stage-growth-comparison}
with exponent \(\eta\). Its constant is independent of the stage,
so the compact convergence of \(h_{\nu_m}^{\mathrm{act}}\) and
\(k_{\nu_m}\) allows passage to the limit at every point of
\(P\setminus D\), giving \eqref{eq:limit-growth} with the same constant.

Step~1 of the proof of
Proposition~\ref{prop:downstairs-reflexive-degree-bridge} shows that
\(k_\infty\) is adapted to \(E_*\) on \(X\setminus Z\). That local
calculation also applies in rank one.

Finally, \eqref{eq:limit-growth} holds for every \(\eta>0\).
Taking square roots and using \(\eta=\epsilon/2\), together with
the growth bound of a section with tolerance \(\epsilon/2\), gives
both inclusions between the growth lattices of \(H_X\) and
\(k_\infty\). This proves the asserted equality at every real index.
\end{proof}

\subsection{Adapted pluriharmonic metrics}
\label{subsec:adapted-pluriharmonic-metrics}

We now combine the compactness and growth results of the preceding
subsection with the energy decay in
Corollary~\ref{cor:factor-energy} to prove
Theorem~\ref{thm:harmonic-main}.

\begin{proof}[Proof of Theorem~\ref{thm:harmonic-main}]
Write
\[
 (E_*,\theta)=\bigoplus_{\alpha\in A}
 (E_{\alpha,*},\theta_\alpha)
\]
as a finite direct sum of stable summands. Fix one summand and
suppress \(\alpha\) until the last paragraph. Retain its fixed
logarithmic flag model \(\pi:(Y,B)\to(X,D)\) and the stable rational
stages constructed in Section~\ref{sec:fixed-model}. We identify
\(U=Y\setminus B=X\setminus D\).

Let \(h_\nu^{\mathrm{act}}\) be the normalized global
Hermitian--Einstein metrics of
Proposition~\ref{prop:moving-endpoints}. That proposition gives a
subsequence converging to a smooth positive Hermitian metric \(H_X\):
\[
 h_{\nu_j}^{\mathrm{act}}\longrightarrow H_X
 \quad\text{in }C^\infty_{\mathrm{loc}}(U).
\]
The normalization is by spatially constant factors and leaves the
curvature energies unchanged. The two numerical vanishing hypotheses
of Theorem~\ref{thm:harmonic-main} therefore give, by
Corollary~\ref{cor:factor-energy},
\[
 \int_U
 \left(
 |F_{h_\nu^{\mathrm{act}}}
   +[\theta,\theta^{\dagger h_\nu^{\mathrm{act}}}]|^2
 +2|\partial_{h_\nu^{\mathrm{act}}}\theta|^2
 \right)dV_{\omega_\nu}\longrightarrow0,
\]
where the norms use \(h_\nu^{\mathrm{act}}\) and \(\omega_\nu\).
On \(U\), \(\omega_\nu\to\omega\) smoothly on compact subsets.
Lemma~\ref{lem:energy-closure} consequently gives
\[
 F_{H_X}+[\theta,\theta^{\dagger H_X}]=0,
 \qquad \partial_{H_X}\theta=0
 \quad\text{on }U.
\]
Thus \(H_X\) is pluriharmonic.

Recall that the model is unchanged outside the analytic set
\(Z=\operatorname{Sing}(E)\cup Z_{\mathrm{flag}}\subset D\) of
codimension at least three from
Theorem~\ref{thm:fixed-terminal-data}.
Corollary~\ref{cor:nonexceptional-growth} gives
\[
 P^{\mathbf c}(H_X)|_{X\setminus Z}
 =E^{\mathbf c}|_{X\setminus Z}
 \quad\text{for every real local multi-index }\mathbf c.
\]
This is precisely the growth hypothesis
\eqref{eq:growth-off-Z} of Proposition~\ref{prop:acceptable-prolongation}.
That proposition establishes acceptability of \((E|_U,H_X)\), and shows
that its growth prolongations are locally free and satisfy
\[
 P^{\mathbf c}(H_X)=E^{\mathbf c}
 \quad\text{on }X
\]
at every real local multi-index. The identifications respect all
filtration inclusions, periodicity maps, and Higgs maps, and the
filtration admits a simultaneous local splitting.

Restoring the summand index, set
\[
 H=\bigoplus_{\alpha\in A}H_{\alpha,X}.
\]
The Chern connection and Higgs field are block diagonal, so the two
flatness equations hold for \(H\). The Hermitian bundle is acceptable
because the Chern curvature of each summand has bounded Poincaré norm.
For a local section \(s=\bigoplus_\alpha s_\alpha\),
\[
 |s|_H^2=\sum_{\alpha\in A}|s_\alpha|_{H_{\alpha,X}}^2.
\]
The defining growth bound for \(s\) therefore holds exactly when it
holds for every \(s_\alpha\). Hence
\[
 P^{\mathbf c}(H)
 =\bigoplus_{\alpha\in A}P^{\mathbf c}(H_{\alpha,X})
 =\bigoplus_{\alpha\in A}E_\alpha^{\mathbf c}
 =E^{\mathbf c}.
\]
Thus \(H\) is adapted to \(E_*\). Taking the direct sum of
the local frames splitting the summand filtrations shows that
\((E_*,\theta)\) is a locally abelian parabolic logarithmic Higgs
bundle, completing the proof.
\end{proof}

\section*{AI Disclosure}
The overall conception, framework, and proof strategy of this paper
were developed by the authors, drawing on the mathematical literature
and their earlier work, rather than generated by AI. In particular,
the use of perturbations to graded-semisimple parabolic Higgs objects
follows Mochizuki's treatment of the algebraic setting
\cite[Section~3.3]{Mochizuki2006}. The decision to use the
correspondence between orbifold sheaves and parabolic sheaves as the
basis of the resolution strategy came from the authors. The
compactness and boundary estimates for Hermitian--Einstein metrics in
Section~\ref{sec:compactness}, which constitute the main analytic
contribution of this paper, develop and refine ideas from the authors'
previous work and their work with collaborators, in particular Li's
joint work with Chuanjing Zhang and Xi Zhang \cite{Li-Zh-Zh} and
the work of Jiang and Li \cite{JiangLi2026}.

The reasoning agent Danus, whose underlying reasoning engine is
ChatGPT 5.6, assisted with specific parts of the argument within this
framework. While the authors were using Danus to review the logical
correctness of the resolution procedure, it identified a gap and
pointed out the need for destackification, a step that the authors
had not recognized in their initial argument. Danus proposed a
strategy for repairing this gap, leading to the addition of
Subsection~\ref{sec:destackification}. It then made corresponding
revisions to Subsection~\ref{subsec:resolving-logarithmic-flags} to
incorporate this additional step into the resolution of the
logarithmic flags.

For the estimates in Section~\ref{sec:compactness}, the authors
proposed the use of a scalar barrier and determined its role in
controlling the Hermitian--Einstein metrics and their boundary
growth. They regard this as a key idea in the argument. Danus,
using ChatGPT 5.6, helped find a suitable barrier function for this
purpose, as developed in Subsection~\ref{subsec:curvature-barriers}.

The authors have manually checked all AI-assisted arguments,
constructions, and calculations, including the destackification
repair, the resulting changes to the resolution procedure, and the
barrier construction. They take full responsibility for the
mathematical content and correctness of the paper.

\bibliographystyle{amsalpha}
\bibliography{bibliography}

\end{document}